\documentclass{article}
\PassOptionsToPackage{numbers,sort&compress}{natbib}
\usepackage[preprint]{neurips_2026}

\usepackage{amsmath,amsthm,amssymb,mathtools}
\usepackage{algorithm}
\usepackage{algpseudocode}
\usepackage{microtype}
\usepackage{hyperref}
\usepackage{enumitem}
\usepackage{graphicx}
\usepackage{xcolor}
\usepackage{caption}
\usepackage{subcaption}
\usepackage{booktabs}
\usepackage{array}
\usepackage{tikz}
\usetikzlibrary{arrows.meta,calc}
\usepackage{caption}

\DeclareMathOperator*{\argmin}{arg\,min}

\DeclareMathOperator{\BetaDist}{Beta}
\DeclareMathOperator{\range}{range}
\DeclareMathOperator{\diam}{diam}

\newtheorem{assumption}{Assumption}
\newtheorem{theorem}{Theorem}
\newtheorem{lemma}{Lemma}
\newtheorem{proposition}{Proposition}
\newtheorem{corollary}{Corollary}
\newtheorem{remark}{Remark}

\newcommand{\memo}[1]{}
\newcommand{\redTodo}[1]{}
\newcommand{\akiko}[1]{}
\newcommand{\pl}[1]{}

\newcommand{\rsfwincludegraphics}[2]{%
  \IfFileExists{#2}{\includegraphics[#1]{#2}}{%
    \fbox{\parbox[c][0.24\textheight][c]{0.9\linewidth}{\centering Missing figure asset: \texttt{\detokenize{#2}}}}%
  }%
}

\title{Random-Subspace Frank--Wolfe over \\ Strongly Convex Sets}
\author{
Pierre-Louis Poirion\\
Center for Advanced Intelligence Project\\
RIKEN\\
Tokyo, Japan
\And
Sebastian Pokutta\\
Zuse Institute Berlin and TU Berlin\\
Berlin, Germany
\And
Akiko Takeda\\
Graduate School of Information Science and Technology\\
The University of Tokyo\\
$\&$ Center for Advanced Intelligence Project\\
RIKEN\\
Tokyo, Japan
}

\begin{document}

\maketitle

\begin{abstract}
Frank--Wolfe methods avoid projections, but over curved feasible regions the
full-space linear minimization oracle (LMO) can itself become the computational
bottleneck. We introduce random-subspace Frank--Wolfe (RSFW), the first Frank--Wolfe framework, to our knowledge, that replaces the ambient LMO by exact LMOs
over random low-dimensional affine sections of a general feasible set, while
preserving feasibility in the original space. For smooth convex objectives over
compact strongly convex feasible sets, we prove a dimension-explicit
approximate-oracle inequality and derive the standard \(O(1/k)\) open-loop
rate, with high-probability and almost-sure counterparts. Under short steps and
a gradient lower bound, the same geometric control yields linear convergence,
and we extend the sublinear theory to finite-sum stochastic gradients. We also
show that random sections can improve the local curvature model controlling
short steps: for smooth objectives, the quadratic model along a sampled section
is governed by the compressed Hessian, yielding computable \(d\times d\)
curvature constants for quadratic objectives over balls and ellipsoids. These
results provide a geometric theory of oracle-side randomization in
projection-free optimization.
\end{abstract}
\section{Introduction}
We consider the constrained convex optimization problem
\begin{align}
\min_{x \in C} f(x), \quad C \subset \mathbb{R}^n.
\label{optprob}
\end{align}
Throughout, we focus on regimes where \(C\) is a strongly convex
feasible set with sufficient boundary regularity; precise assumptions are given in Section~\ref{sec:setup}.

\paragraph{Projection-free optimization and its bottleneck.}
Frank--Wolfe (FW) methods are attractive because they replace Euclidean
projections by linear minimization oracles (LMOs), thereby preserving structure
such as sparsity, atomicity, or low rank, while also providing a natural
dual-gap certificate \cite{FrankWolfe1956,Jaggi2013FW}. However, this
projection-free tradeoff is only favorable when the LMO is itself tractable.
The relative difficulty of projection and linear minimization depends strongly
on the feasible set \cite{CombettesPokutta2021Complexity}, and over curved or
dense non-polyhedral domains---such as ellipsoids, matrix norm balls, and
spectrahedra---an exact FW step may require solving a large linear system,
singular vector computation, or eigendecomposition. In such settings, the
full-space LMO becomes the dominant computational bottleneck.

\paragraph{Random-subspace Frank--Wolfe.}
To address this bottleneck, we introduce random-subspace Frank--Wolfe (RSFW), to our knowledge the first
Frank--Wolfe framework that replaces the ambient LMO by exact LMOs over random
low-dimensional affine sections of a general feasible set. At iteration $k$, RSFW samples a random $d$-dimensional subspace
$U_k \subset \mathbb{R}^n$ with $d \ll n$, and solves the linearized problem
exactly over the affine section
\[
C \cap (x_k + U_k).
\]
We refer to the LMO over this slice as a \emph{section oracle},
with randomness arising from the choice of $U_k$.
The iterate remains feasible in the ambient space, but the LMO is replaced by
an exact low-dimensional section oracle. Thus RSFW preserves the
projection-free structure of Frank--Wolfe while significantly reducing the
oracle dimension. This oracle-side viewpoint is distinct from objective-side sketching,
coordinate updates, and relaxed full-domain oracles, as detailed next.

\paragraph{Relation to prior work.}
RSFW is best viewed as an oracle-side sketching method: instead of sketching the
objective model or relaxing the full LMO, it sketches the feasible domain seen
by the LMO and solves the resulting section oracle exactly. This distinguishes
it from coordinate and block-coordinate methods, which reduce cost by updating
only part of the variable or by exploiting product structure in the feasible set
\cite{Nesterov2012CoordinateDescent,RichtarikTakac2014RBCD,LacosteJulienJaggiSchmidtPletscher2013BlockCoordinateFW}.
It also differs from sketching and random-subspace methods that compress
objective-side linear algebra, update directions, or stored iterate
representations
\cite{PilanciWainwright2016IterativeHessianSketch,LacottePilanciPavone2019AdaptiveRandomSubspaces,YurtseverUdellTroppCevher2017Sketchy},
and from inexact, lazy, weak-separation, or backtracking FW methods that relax
or avoid repeated calls to the full-domain oracle
\cite{BraunPokuttaZink2017Lazifying,BraunPokuttaTuWright2019Blended,PedregosaNegiarAskariJaggi2020Backtracking,ZhouSun2022ApproximateFW}.
A more detailed comparison, including random-embedding methods that solve the
original objective on random feasible subproblems, is given in
Appendix~\ref{app:related_work}.

\paragraph{Geometric challenge and insight.}
A central challenge in RSFW is that a random affine section may discard the
point that realizes the full Frank--Wolfe gap. In contrast to the full-space
oracle, which has access to the entire feasible region, a section oracle only
sees a low-dimensional slice of $C$, and may therefore miss the face or extreme
point that yields the maximal descent.
This issue is not merely technical. Near a sharp corner of a polytope, the
remaining region that carries FW improvement can be confined to a narrow cone or
face, which a generic random affine section through the current iterate may miss
entirely. The resulting section progress can be arbitrarily small; this failure
mode is illustrated in Appendix~\ref{app:polytope_failure}. This failure mode motivates our geometric assumptions on the
feasible set.

Our analysis shows that such degeneracy is avoided when \(C\) has both
sufficient boundary regularity and strong convexity. The boundary regularity
rules out sharp corners and provides uniform interior rolling balls, while
strong convexity rules out flat faces and supplies the outer quadratic support
needed to compare local ball gaps with the full FW gap on \(C\). Together,
these two mechanisms imply that random sections retain a dimension-explicit
fraction of the global descent geometry. Intuitively, the feasible region is
neither too sharp nor too flat near the boundary, so low-dimensional sections
still capture meaningful descent directions. This geometric mechanism underlies
our approximate-oracle guarantees for RSFW. 
\paragraph{Contributions.}
Our contributions are as follows:

(1) \textbf{Approximate oracle theory.}
A central difficulty of RSFW is that a random section may miss the point
achieving the full FW gap. We address this by establishing a
dimension-explicit approximate-oracle inequality: under suitable curvature of
the feasible set, random affine sections preserve a constant fraction of the
full FW progress in expectation. This yields the classical \(O(1/k)\) open-loop rate,
together with high-probability and almost-sure guarantees.

(2) \textbf{Linear convergence.}
We show that the oracle-side sketching mechanism remains effective for adaptive
short steps, where the analysis is more delicate since the descent depends not
only on the FW gap but also on the projected gradient length. By combining
high-probability control of the section-wise progress with a gradient lower
bound and strong convexity, we establish linear convergence of RSFW in
expectation, along with high-probability and almost-sure counterparts.

(3) \textbf{Stochastic extensions.}
We extend the sublinear theory to finite-sum stochastic gradients, where the
gradient is estimated by mini-batching while the section is sampled
independently. The key observation is
that the random subspace and the stochastic gradient can be sampled
independently, allowing the approximate-oracle argument to combine with
standard variance-reduction and growing-batch techniques
\cite{HazanKale2012ProjectionFreeOnline,HazanLuo2016VRPF,
LocatelloYurtseverFercoqCevher2019SFW,
NegiarDresdnerTsaiElGhaouiLocatelloFreundPedregosa2020SFW,
DresdnerVladareanRatschLocatelloCevherYurtsever2022OneSample}.

(4) \textbf{Curvature improvement via random sections.}
Beyond reducing dimensionality, we show that random sections can improve the
local curvature model governing short steps. For smooth objectives, the
relevant curvature along a sampled section is determined by a compressed
Hessian, rather than a conservative ambient bound. In structured settings such
as quadratic objectives over balls and ellipsoids, this yields explicit
\(d \times d\) curvature constants and can justify larger admissible step
sizes.

\paragraph{Notation.}
We write \(B(x,r)\) for the closed Euclidean ball centered at \(x\) with radius
\(r\), and \(\mathbb S^{n-1}:=\{x\in\mathbb R^n:\|x\|=1\}\). Unless otherwise
specified, \(\|\cdot\|\) denotes the Euclidean norm for vectors and the induced
operator norm for matrices. For a symmetric matrix \(A\), we denote its smallest
and largest eigenvalues by \(\lambda_{\min}(A)\) and \(\lambda_{\max}(A)\).

\section{Problem setup and algorithm}\label{sec:setup}

We consider \eqref{optprob} under the following assumptions.
\begin{assumption}[Feasible set $C$]
\label{ass:C_clean}
$C\subset \mathbb R^n$ is nonempty, compact, convex and its boundary $\partial C$ is $\mathcal{C}^2$. We assume $0\in C$, and $C$ is $\beta_C$-strongly convex as a set: there exists $\beta_C>0$ such that for all $x,y\in C$ and $\lambda\in[0,1]$,
\begin{equation}
B\!\left(\lambda x+(1-\lambda)y,\ \beta_C\,\lambda(1-\lambda)\,\|x-y\|^2\right)\subset C.
\label{eq:UC_clean}
\end{equation}
\end{assumption}

The exponent-\(2\) uniform convexity of \(C\) in \eqref{eq:UC_clean} prevents flat boundary
pieces and gives the outer quadratic support needed for the ball-gap
comparison. 
The \(\mathcal{C}^2\) boundary assumption is used only to obtain a
uniform interior rolling-ball radius; it can be weakened to \(\mathcal{C}^{1,1}\) (defined in Remark~\ref{rem:c11}) boundary, or replaced directly by the uniform rolling-ball condition
(see Remark~\ref{rem:c11} in the Appendix). Thus the set \(C\) is geometrically
sandwiched between inner and outer tangent Euclidean balls, analogously to how
a smooth strongly convex function is sandwiched between two quadratic models.

\begin{assumption}[Objective $f$]
\label{ass:f_clean}
$f:C\to\mathbb R$ is convex, continuously differentiable on $C$, and $\nabla f$ is $L$-Lipschitz on $C$: $\|\nabla f(x)-\nabla f(y)\|\le L\|x-y\|$ for all $x,y\in C$.
\end{assumption}

\paragraph{Boundary regularization.}
The explicit \(\mathcal C^{1,1}\) boundary assumption can be enforced by an arbitrarily
small outer regularization. Let \(C\subset\mathbb R^n\) be a compact
\(\beta_C\)-strongly convex convex set. For \(\varepsilon>0\), define the
parallel body $C_\varepsilon:=C+\varepsilon B_2.$
Then \(C_\varepsilon\) has \(\mathcal C^{1,1}\) boundary and is
\(\beta_{C,\varepsilon}\)-strongly convex with
$\beta_{C,\varepsilon}
    =
    \frac{\beta_C}{1+2\varepsilon\beta_C}.$
Thus the boundary regularity assumption can be obtained through an arbitrarily
small outer approximation, with
\(\beta_{C,\varepsilon}\uparrow\beta_C\) as \(\varepsilon\downarrow0\). See
Appendix~\ref{app:parallel_regularization}.

For \(x\in C\), let
$
s(x)\in\argmin_{y\in C}\langle\nabla f(x),y\rangle,\ \ \ 
\Delta(x):=\langle\nabla f(x),x-s(x)\rangle
=\max_{y\in C}\langle\nabla f(x),x-y\rangle .
$
The standard convexity inequality yields $f(x)-f^*\le \Delta(x)$, where $f^*=f(x^*)$ for any minimizer $x^*\in\argmin_C f$.

At iteration \(k\), draw a uniform \(d\)-dimensional subspace
\(U_k\subset\mathbb R^n\), represented by a matrix
\(P_k\in\mathbb R^{d\times n}\) with \(P_kP_k^\top=I_d\) and
\(U_k=\range(P_k^\top)\). Given the current iterate $x_k\in C$ and gradient $g_k:=\nabla f(x_k)$, the subspace linear minimization oracle returns
\begin{equation}
s_k\in \argmin\{\langle g_k,y\rangle:\ y\in C\cap(x_k+U_k)\},
\qquad d_k:=s_k-x_k\in U_k.
\label{eq:subLMO_clean}
\end{equation}

\begin{algorithm}[t]
\caption{Random-Subspace Frank--Wolfe (RSFW)}
\label{alg:RSFW}
\begin{algorithmic}[1]
\Require \(C\subset\mathbb R^n\), dimension \(d\), \(x_0\in C\), stepsize rule
\For{\(k=0,1,2,\ldots\)}
    \State Sample Haar--Stiefel \(P_k\), set \(U_k=\range(P_k^\top)\), and \(g_k=\nabla f(x_k)\).
    \State Compute \(s_k\in\argmin_{s\in C\cap(x_k+U_k)}\langle g_k,s\rangle\).
    \State Choose \(\alpha_k\in[0,1]\) by the prescribed stepsize rule and set \(x_{k+1}=x_k+\alpha_k(s_k-x_k)\).
\EndFor
\end{algorithmic}
\end{algorithm}
In Algorithm~\ref{alg:RSFW}, the only difference from standard Frank--Wolfe is that instead of minimizing
the linear model over all of \(C\), RSFW minimizes it over the random affine
section \(C\cap(x_k+U_k)\). This preserves feasibility because the returned
point still lies in \(C\), but replaces the ambient LMO by a \(d\)-dimensional
section oracle. The benefit is therefore oracle-side: when the full LMO over a
curved set is expensive, the section LMO can be substantially cheaper while
retaining a quantifiable fraction of the FW progress.
We consider two different stepsizea. In the open-loop analysis we use
\begin{equation}
\alpha_k=\frac{2}{\beta_0 k+2},
\label{eq:open_loop_stepsize_alg}
\end{equation}
where $\beta_0$ is the approximate-oracle constant specified in the corresponding theorem.
In the short-step analysis we use
\begin{equation}
\alpha_k
=
\min\!\left\{
1,\,
\frac{\langle g_k,x_k-s_k\rangle}
     {L_k\|s_k-x_k\|^2}
\right\},
\label{eq:short_step_alg}
\end{equation}
with the convention $\alpha_k=0$ if $s_k=x_k$. In the generic theory one may take
$L_k=L$, while in the quadratic section-wise analysis $L_k$ is replaced by the restricted
curvature constant on $U_k$.
Equivalently, writing $s_k=x_k+P_k^\top w_k$, the subspace LMO can be solved in coordinates as
\[
w_k\in\argmin_{w\in\mathbb R^d}
\{\langle P_k g_k,w\rangle:\ x_k+P_k^\top w\in C\}.
\]
Compared with standard Frank--Wolfe, the short-step rule is unchanged except
that it uses the section LMO solution \(s_k\). The curvature parameter \(L_k\)
only needs to upper bound the Lipschitz constant of \(f\) along the section
direction \(s_k-x_k\); hence it is never larger than a valid ambient Lipschitz
bound \(L\), and can be substantially smaller. The generic theory takes
\(L_k=L\), while Section~\ref{sec:quadratic_ellipsoid_gain} proves that random
sections can yield much sharper section-wise curvature bounds. The open-loop
rule replaces the classical \(\alpha_k=2/(k+2)\) by
\(\alpha_k=2/(\beta_0k+2)\).

\section{Random-subspace efficiency ratios}
\label{sec:efficiency_ratios}

This section isolates the random-subspace component of the proof. The argument
first studies a model problem in which \(C\) is a Euclidean ball and asks what
fraction of the FW gap remains visible after restricting the LMO to a random
affine section. The ratios \(\gamma\) and \(\Gamma\), defined below, quantify
this retained progress. The convergence proofs then transfer the ball estimate
to a general strongly convex feasible set by a sandwich comparison: the
rolling-ball condition supplies an interior tangent ball, while strong
convexity supplies the outer quadratic support that compares the ball gap with
the full FW gap on \(C\).

Let \(P\in\mathbb R^{d\times n}\) be Haar--Stiefel, \(PP^\top=I_d\). For
\(u\in S^{n-1}\) and \(v\in\mathbb R^n\) with \(\|v\|\le1\) and
\(\langle u,v\rangle<1\), define
\[
\gamma(P;u,v):=
\frac{\|Pu\|\sqrt{1-\|v\|^2+\|Pv\|^2}-\langle Pu,Pv\rangle}
{1-\langle u,v\rangle},
\qquad
\Gamma(P;u,v):=
\frac{\|Pu\|\,\|Pv\|-\langle Pu,Pv\rangle}
{1-\langle u,v\rangle}.
\]
Here \(\gamma\) is the exact ratio for an interior ball section, whereas
\(\Gamma\) is the simpler boundary-ball ratio used for uniform lower bounds (see
Corollary~\ref{cor:gammaIntGeGamma} in Appendix~\ref{app:preliminaries}). When \(\|v\|=1\), the two ratios coincide.

The random variable \(\Gamma(P;u,v)\) measures the fraction of FW progress on a
ball that is retained after observing the relevant directions only through a
random \(d\)-dimensional section. Its expectation is the quantity needed for the
open-loop approximate-oracle analysis, since it controls the average fraction
of full-space FW progress visible to the section oracle.

A key technical input is the following uniform expected lower bound $\mathbb{E}[\Gamma(P;u,v)]$. To this end, let \(c_{n,d}\) denote any uniform lower bound on
$\inf_{u,v\in S^{n-1}:\ \langle u,v\rangle <1}
\mathbb E[\Gamma(P;u,v)].$

\begin{center}
\begin{minipage}[c]{0.57\linewidth}
\begin{proposition}[Expected section efficiency on balls]
\label{prop:EGamma_identity}
For Haar--Stiefel \(P\) and any \(u,v\in S^{n-1}\) with
\(\rho=\langle u,v\rangle<1\), if \(d\ge3\), then
\begin{equation}
\frac dn-\frac{4(n-d)}{n(n-1)}
\le
\mathbb E[\Gamma(P;u,v)]
\le
\frac dn .
\label{eq:EGamma_lower_clean}
\end{equation}
\end{proposition}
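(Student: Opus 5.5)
The plan is to expand \(\mathbb E[\Gamma(P;u,v)]\) into a second-moment part and a correction term, using only rotation invariance of the Haar--Stiefel law, i.e.\ \(P\overset{d}{=}PQ\) for every orthogonal \(Q\). From this invariance, \(\mathbb E[P^\top P]=\frac dn I_n\). Hence \(\mathbb E\|Pu\|^2=\mathbb E\|Pv\|^2=\frac dn\) and \(\mathbb E\langle Pu,Pv\rangle=\frac dn\rho\). Since \(1-\rho>0\) is deterministic,
\[
\mathbb E[\Gamma(P;u,v)]=\frac{\mathbb E[\|Pu\|\|Pv\|]-\frac dn\rho}{1-\rho},
\]
so everything reduces to two-sided bounds on \(\mathbb E[\|Pu\|\|Pv\|]\).

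\textbf{Upper bound.} By Cauchy--Schwarz, \(\mathbb E[\|Pu\|\|Pv\|]\le(\mathbb E\|Pu\|^2\,\mathbb E\|Pv\|^2)^{1/2}=\frac dn\). Substituting gives \(\mathbb E[\Gamma]\le\frac{d/n-(d/n)\rho}{1-\rho}=\frac dn\).

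\textbf{Lower bound.} We use the identity
\[
\|Pu\|\|Pv\|=\tfrac12(\|Pu\|^2+\|Pv\|^2)-\tfrac12(\|Pu\|-\|Pv\|)^2,
\]
whose first term has expectation \(\frac dn\). It therefore suffices to show
\[
\mathbb E(\|Pu\|-\|Pv\|)^2\le c\,(1-\rho)\frac{n-d}{n(n-1)}
\]
for some constant \(c\le 8\). Dividing by \(1-\rho\) then gives the claim.

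To obtain this bound, introduce the orthonormal pair \(a=(u+v)/\|u+v\|\) and \(b=(u-v)/\|u-v\|\). If \(\rho=-1\), take \(a\) to be any unit vector orthogonal to \(u\); the argument below then goes through with \(\|u+v\|=0\). Note that \(\|u\pm v\|^2=2(1\pm\rho)\). We have
\[
\|Pu\|^2-\|Pv\|^2=\langle P(u+v),P(u-v)\rangle=\|u+v\|\,\|u-v\|\,\langle Pa,Pb\rangle .
\]
By the triangle inequality, \(\|Pu\|+\|Pv\|\ge\|P(u+v)\|=\|u+v\|\,\|Pa\|\). Dividing the squared difference of squares by \((\|Pu\|+\|Pv\|)^2\) yields
\[
(\|Pu\|-\|Pv\|)^2\le 2(1-\rho)\,\frac{\langle Pa,Pb\rangle^2}{\|Pa\|^2},
\]
with the ratio defined almost surely since \(Pa\neq0\) a.s.

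\textbf{Key step: the expectation of the last ratio.} I will show it equals \(\frac{n-d}{n(n-1)}\). Set \(e=P^\top Pa/\|Pa\|\), a unit vector in \(U=\range(P^\top)\). Then \(\langle Pa,Pb\rangle/\|Pa\|=\langle e,b\rangle\). Invariance of the law of \(P\) under orthogonal \(Q\) fixing \(a\) shows that \(\mathbb E\langle e,b\rangle^2\) is the same for every unit \(b\perp a\). Average over an orthonormal basis \(b_1,\dots,b_{n-1}\) of \(a^\perp\). Parseval gives \(\sum_i\langle e,b_i\rangle^2=1-\langle e,a\rangle^2\). Since \(e\) is the normalized orthogonal projection of \(a\) onto \(U\), we have \(\langle e,a\rangle=\|Pa\|\), so \(\langle e,a\rangle^2=\|Pa\|^2\). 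Therefore
\[
\mathbb E\langle e,b\rangle^2=\frac{1-\mathbb E\|Pa\|^2}{n-1}=\frac{n-d}{n(n-1)} .
\]
This gives \(\mathbb E(\|Pu\|-\|Pv\|)^2\le 2(1-\rho)\frac{n-d}{n(n-1)}\), hence
\[
\mathbb E[\Gamma]\ge\frac dn-\frac{n-d}{n(n-1)},
\]
which is stronger than \eqref{eq:EGamma_lower_clean}.

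The main obstacle is this symmetry computation, in particular justifying the conditional invariance argument that equates the expectations over all \(b\perp a\). Everything else is routine. The hypothesis \(d\ge3\) is not needed by this route; any \(d\ge1\) suffices. If the invariance argument proved delicate, a fallback is to write \(Pa,Pb\) via Gaussian vectors and compute the resulting Beta-type moments directly. That route loses constants, which is presumably where the factor \(4\) and the restriction \(d\ge3\) in the statement come from.
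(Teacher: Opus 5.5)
Your proof is correct. From the key step onward it takes a different and strictly better route than the paper.

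\textbf{Where the routes diverge.} The reduction \(\mathbb E[\Gamma]=\frac dn-\mathbb E(\|Pu\|-\|Pv\|)^2/(2(1-\rho))\) and the upper bound are the same as the paper's. The difference is in how the correction term is controlled.
\begin{itemize}
\item The paper uses \((\sqrt X+\sqrt Y)^2\ge Y=\|Pv\|^2\) and evaluates \(\mathbb E[(X-Y)^2/Y]\) through the Bartlett decomposition (Lemma~\ref{lem:cond_decomp}, Proposition~\ref{prop:exact_Delta2_over_Y}). Because \((X-Y)^2/Y\) is unbounded as \(Y\to0\), this needs \(\mathbb E[1/S]=(n-2)/(d-2)\). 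That, not a Gaussian detour, is where \(d\ge3\) comes from.
\item Your denominator \(\|P(u+v)\|^2\), combined with the orthonormal pair \(a,b\), makes the ratio \(\langle Pa,Pb\rangle^2/\|Pa\|^2\le\|Pb\|^2\le1\) bounded. Its mean then follows from symmetry and Parseval alone.
\end{itemize}
The step you flagged as the main obstacle is harmless. For orthogonal \(Q\) with \(Qa=a\), one has \(e(PQ)=Q^\top e(P)\) identically, so no conditioning is needed. As a cross-check, in the paper's Bartlett variables with \(e_1=a\), \(e_2=b\), your ratio equals \((1-R)T^2\), whose mean is \(\frac{n-d}{n}\cdot\frac1{n-1}\).

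\textbf{What your route buys.} Your conclusion simplifies to \(\mathbb E[\Gamma(P;u,v)]\ge\frac dn-\frac{n-d}{n(n-1)}=\frac{d-1}{n-1}\) for every \(d\ge1\).
\begin{itemize}
\item It is positive for all \(d\ge2\). The paper's \(c_{n,d}\) is nonpositive, e.g., for \(d=3\), \(n\ge9\).
\item It beats Proposition~\ref{prop:Gamma_dn_improved} by a factor of \(96\).
\item It is sharp. Write \(v=\rho u+\sqrt{1-\rho^2}\,w\). As \(\rho\to1\), \(\Gamma\to\|Pw\|^2-\langle Pu,Pw\rangle^2/\|Pu\|^2\), which is the Schur complement \((1-T^2)B\) of Lemma~\ref{lem:Bartlett_prelim}, with mean exactly \(\frac{d-1}{n-1}\). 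Dominated convergence applies because \(0\le\Gamma\le1\).
\end{itemize}

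\textbf{The paper's computation has an algebra slip.} Your route is more than a simplification here. In the proof of Proposition~\ref{prop:exact_Delta2_over_Y}, the bracket \((n^2+2n+4)\frac dn-2(dn+2d+2)+d(d+2)\frac{n-2}{d-2}\) equals \(\frac{4(n-d)(d(n-1)+2)}{n(d-2)}\), not \(\frac{2(n-d)(n-d+2)(n+1)}{n(d-2)}\). For \(n=5\), \(d=3\) it is \(22.4\), not \(19.2\). Hence the correct coefficient is \(A_{n,d}=\frac{d(n-1)+2}{(d-2)(n+1)}>1\).

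As an independent check, at \(\rho=0\) with \(d\) fixed and \(n\to\infty\), \(\mathbb E[(X-Y)^2/Y]\) behaves like \(\frac{4d}{n(d-2)}\). The paper's formula instead gives \(\frac{2}{n(d-2)}\).

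Consequences:
\begin{itemize}
\item The middle inequality of Corollary~\ref{cor:clean_bound} fails. At \(n=5\), \(d=3\), \(\rho=0\), the left side is \(\frac{14}{15}>\frac45\).
\item The \(Y\)-denominator route, carried out correctly, only yields a constant larger than \(4\) when \(d=3\).
\end{itemize}
The proposition itself is still true. It is your argument that actually establishes it, and it improves the constant \(4\) to \(1\).
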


\smallskip
\noindent
Thus \(d/n\) is the natural scale: on the comparison ball, a random
\(d\)-dimensional section retains, on average, a fraction of FW progress within
\(O(1/n)\) of \(d/n\).
\end{minipage}
\hfill
\begin{minipage}[c]{0.38\linewidth}
\centering
\resizebox{\linewidth}{!}{%
\begin{tikzpicture}[x=1cm,y=1cm,>=Latex,line cap=round,line join=round]

\def\a{3.55}
\def\b{2.20}
\def\t{42}

\pgfmathsetmacro{\rball}{\b*\b/\a}
\pgfmathsetmacro{\xk}{\a*cos(\t)}
\pgfmathsetmacro{\yk}{\b*sin(\t)}

\pgfmathsetmacro{\nnorm}{sqrt((cos(\t)/\a)^2 + (sin(\t)/\b)^2)}
\pgfmathsetmacro{\nx}{(cos(\t)/\a)/\nnorm}
\pgfmathsetmacro{\ny}{(sin(\t)/\b)/\nnorm}

\pgfmathsetmacro{\cx}{\xk - \rball*\nx}
\pgfmathsetmacro{\cy}{\yk - \rball*\ny}

\def\rot{80}
\pgfmathsetmacro{\dx}{cos(\rot)*\nx - sin(\rot)*\ny}
\pgfmathsetmacro{\dy}{sin(\rot)*\nx + cos(\rot)*\ny}

\pgfmathsetmacro{\sbx}{\cx + \rball*\dx}
\pgfmathsetmacro{\sby}{\cy + \rball*\dy}

\def\phi{18}
\pgfmathsetmacro{\ux}{cos(\phi)}
\pgfmathsetmacro{\uy}{sin(\phi)}

\pgfmathsetmacro{\tauu}{-2*\rball*(\nx*\ux + \ny*\uy)}
\pgfmathsetmacro{\sux}{\xk + \tauu*\ux}
\pgfmathsetmacro{\suy}{\yk + \tauu*\uy}

\tikzset{
  outer/.style={draw=gray!75!black, fill=gray!8, line width=0.95pt},
  ball/.style={draw=blue!75!black, fill=blue!6, line width=0.9pt},
  rad/.style={blue!75!black, dashed, line width=0.75pt},
  grad/.style={red!75!black, -Latex, line width=1.25pt},
  section/.style={green!45!black, line width=1.05pt},
  point/.style={circle, fill=black, inner sep=1.55pt},
  bpoint/.style={circle, fill=blue!75!black, inner sep=1.5pt},
  gpoint/.style={circle, fill=green!45!black, inner sep=1.55pt}
}

\draw[outer] (0,0) ellipse [x radius=\a, y radius=\b];
\node[font=\Large] at (-2.65,1.18) {$C$};
\draw[ball] (\cx,\cy) circle (\rball);
\node[blue!75!black, font=\large] at (0.1,1.35) {$B(c_k,r)$};

\node[point,label=above right:$x_k$] at (\xk,\yk) {};
\node[bpoint,label={[blue!75!black]below right:$c_k$}] at (\cx,\cy) {};
\draw[rad] (\cx,\cy) -- (\xk,\yk);
\node[blue!75!black] at ({0.55*(\cx+\xk)+0.14},{0.55*(\cy+\yk)+0.08}) {$r$};

\draw[grad] (\xk,\yk) -- ({\xk+1.30*\dx},{\yk+1.30*\dy});
\node[red!75!black, anchor=west]
at ({\xk+0.78*\dx+0.05},{\yk+0.78*\dy+0.18}) {$-\nabla f(x_k)$};

\node[bpoint,label={[blue!75!black] right:$s_B$}] at (\sbx,\sby) {};
\draw[rad] (\cx,\cy) -- (\sbx,\sby);

\draw[section]
({\xk-3.05*\ux},{\yk-3.05*\uy}) --
({\xk+1.55*\ux},{\yk+1.55*\uy});
\node[green!45!black, anchor=west]
at ({\xk-2.0*\ux+0.10},{\yk-2.0*\uy-0.0}) {$x_k+U$};

\node[gpoint,label={[green!45!black]left:$s_U$}] at (\sux,\suy) {};
\draw[green!45!black, line width=1.45pt] (\xk,\yk) -- (\sux,\suy);

\end{tikzpicture}%
}
\par\smallskip
{\footnotesize
\textbf{Figure.} Ball comparison underlying \(\Gamma(P;u,v)\). The point
\(s_B\) is the LMO solution on the ball \(B(c_k,r)\), while
\(s_U\) is the LMO solution after restricting to the affine section
\(B(c_k,r)\cap(x_k+U)\).
}
\end{minipage}
\end{center}

Proposition~\ref{prop:EGamma_identity} gives a closed uniform estimate for the
expected section efficiency and identifies \(d/n\) as the natural scale:
the average fraction of FW progress retained by a random \(d\)-dimensional section
on the comparison ball is within 
within \(O(1/n)\) of \(d/n\). This proposition is the bridge from random geometry to optimization: after the
ball-gap comparison, it gives a dimension-explicit fraction of the full FW gap
that remains visible to the random section oracle.
\begin{remark}
The main approximate-oracle constants use Proposition~\ref{prop:EGamma_identity},
namely \(c_{n,d}=d/n-4(n-d)/(n(n-1))\), when this quantity is positive.
Proposition~\ref{prop:Gamma_dn_improved} is kept only as a complementary
angle-uniform determinant-based bound with positive \((d-1)/(n-1)\) scale.
\end{remark}

\paragraph{Expected and high-probability section efficiencies.}
For the short-step analysis, the expected value of
\(\Gamma(P;u,v)\) alone is not sufficient. The one-step decrease contains the product of the section gap and the projected gradient length. After the geometric comparison argument, this leads to mixed quantities of the form
\[
    \mathbb E\!\left[\|Pu\|\,\Gamma(P;u,v)\right].
\]
A lower bound on \(\mathbb E[\Gamma(P;u,v)]\) does not by itself
control such a product, and the variables \(\|Pu\|\) and \(\Gamma(P;u,v)\) need not be positively correlated. We therefore complement the expectation identity with a high-probability lower bound for \(\Gamma(P;u,v)\). Combined, in Proposition~\ref{prop:pos_corr_general}, with a one-vector Johnson--Lindenstrauss lower bound  for \(\|Pu\|\), this yields the mixed-moment estimate needed in the short-step proof. The same
high-probability control is also used later to obtain fixed-confidence and
almost-sure guarantees for the open-loop scheme.
\begin{proposition}[Uniform high-probability lower bound for $\Gamma(P;u,v)$]
\label{prop:GammaHP}
Fix $\delta_0:=1-\cos(\tfrac{1}{10})$.
Let $P\in\mathbb R^{d\times n}$ be Haar on the Stiefel manifold, and assume $0<\varepsilon<\min\!\left\{\frac18,\frac{\delta_0}{8}\right\}.$ There exists constants \(c_\Gamma,C_\Gamma\) such that for every $u,v\in S^{n-1}$ such that $\langle u,v \rangle <1$,
\begin{equation}
\mathbb P\!\left(
\Gamma(P;u,v)\ge \frac{d}{2n}
\right)
\ge
1-C_{\Gamma}e^{-c_{\Gamma}\varepsilon^2d}.
\label{eq:Gammahp_unif}
\end{equation}
\end{proposition}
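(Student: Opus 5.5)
Write \(\rho=\langle u,v\rangle\in[-1,1)\) and decompose \(v=\rho u+\sqrt{1-\rho^2}\,w\), with \(w\perp u\) a unit vector. The denominator is then \(1-\rho\). For the numerator we use the identity
\[
\|Pu\|\|Pv\|-\langle Pu,Pv\rangle=\frac{\|Pu\|^2\|Pv\|^2-\langle Pu,Pv\rangle^2}{\|Pu\|\|Pv\|+\langle Pu,Pv\rangle},
\]
which reduces everything to concentration of the Gram matrix of \((Pu,Pw)\). By rotation invariance of the Haar--Stiefel measure, this \(2\times2\) Gram matrix \(G=[\langle Pa,Pb\rangle]_{a,b\in\{u,w\}}\) has a law that does not depend on \(u,w\). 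Its entries are \(\|Pu\|^2\sim\BetaDist(d/2,(n-d)/2)\) (mean \(d/n\)), the analogous quantity \(\|Pw\|^2\), and the cross term \(\langle Pu,Pw\rangle\), which is centered.

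\textbf{Step 1 (concentration event).} Apply the one-vector Johnson--Lindenstrauss bound to \(u\), \(w\) and \((u\pm w)/\sqrt2\), then polarize. This gives an event \(E\) of probability at least \(1-C e^{-c\varepsilon^2 d}\) on which
\[
\|Pu\|^2,\ \|Pw\|^2\in\bigl[(1-\varepsilon)\tfrac dn,\ (1+\varepsilon)\tfrac dn\bigr],\qquad |\langle Pu,Pw\rangle|\le \varepsilon\tfrac dn .
\]
In other words, \(\tfrac nd G=I_2+E'\) with \(\|E'\|\le 3\varepsilon\).

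\textbf{Step 2 (evaluate \(\Gamma\) on \(E\)).} Write \(a=\rho\) and \(b=\sqrt{1-\rho^2}\), so \(Pv=aPu+bPw\). Then
\[
\|Pu\|^2\|Pv\|^2-\langle Pu,Pv\rangle^2=b^2\det G .
\]
On \(E\) we have \(\det G\ge(1-4\varepsilon)(d/n)^2\). For the denominator, \(\|Pu\|\|Pv\|+\langle Pu,Pv\rangle\le (d/n)\bigl[(1+\rho)+O(\varepsilon)\bigr]\). Using \(b^2=(1-\rho)(1+\rho)\), we get
\[
\Gamma\ge \frac dn\cdot\frac{(1-4\varepsilon)(1+\rho)}{(1+\rho)+O(\varepsilon)} .
\]
This is at least \(d/(2n)\) whenever \(1+\rho\) is bounded below by a constant multiple of \(\varepsilon\).

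\textbf{Step 3 (near-antipodal case).} The case \(1+\rho\) small is the main obstacle: here the numerator and denominator of the identity above are both small, so the quotient form is unstable. I would instead bound \(\Gamma\) directly. On \(E\),
\[
\|Pu\|\|Pv\|-\langle Pu,Pv\rangle\ \ge\ -\langle Pu,Pv\rangle\ \ge\ \tfrac dn\bigl(|\rho|(1-\varepsilon)-b\varepsilon\bigr).
\]
Since \(1-\rho\le 2\), this yields \(\Gamma\ge \tfrac dn\cdot\tfrac{|\rho|(1-\varepsilon)-b\varepsilon+\|Pu\|\|Pv\|n/d}{2}\). Adding the term \(\|Pu\|\|Pv\|\ge(1-O(\varepsilon))d/n\) gives \(\Gamma\gtrsim (d/n)\bigl(1+|\rho|\bigr)/2-O(\varepsilon)\ge d/(2n)\) in that regime.

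The threshold \(\delta_0=1-\cos(1/10)\) and the constraint \(\varepsilon<\delta_0/8\) are presumably there to split the two cases by angle. Take \(1+\rho\ge\delta_0\) as Step 2's regime; since \(\varepsilon<\delta_0/8\), the \(O(\varepsilon)\) error is then dominated by \(1+\rho\). Take \(1+\rho<\delta_0\) (angle between \(u\) and \(-v\) less than \(1/10\)) as Step 3's regime.

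The delicate part will be tracking constants so that both cases reach exactly the factor \(1/2\). If a direct union fails, I would strengthen Step 1 by using the \((1\pm\varepsilon)\) JL bounds on \(u\), \(v\) and \(u-v\) instead, since \(\|Pu-Pv\|^2\approx(d/n)\,2(1-\rho)\) controls the denominator scale. Collecting the union-bound failure probabilities gives the constants \(C_\Gamma\) and \(c_\Gamma\).
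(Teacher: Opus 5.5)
Your argument is correct, but it rests on a different identity and a different case split from the paper's.

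The paper splits on $1-\rho$, not $1+\rho$. For separated vectors ($1-\rho\ge\delta_0$) it applies JL to $u$, $v$, $(u+v)/\|u+v\|$ and gets the additive bound $Z\ge\frac dn(1-\rho-4\varepsilon)$. For nearly aligned vectors it uses exactly your event (JL on an orthonormal pair and $(e_1\pm e_2)/\sqrt2$) to get $\|Px\|^2\ge(1-2\varepsilon)\frac dn\|x\|^2$ on $\operatorname{span}\{u,v\}$. It then applies the rescaling identity $Z=\|P(u-\lambda v)\|^2/(2\lambda)$ with $\lambda=\|Pu\|/\|Pv\|$, together with $\|u-\lambda v\|^2\ge2\lambda(1-\rho)$.

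Your quotient form $Z=(1-\rho^2)\det G/(\|Pu\|\|Pv\|+\langle Pu,Pv\rangle)$ cancels the factor $1-\rho$ exactly. So the aligned regime, which is the paper's delicate case, costs you nothing. The price is that your denominator degenerates as $\rho\to-1$, which is why you need Step~3; the paper's separated regime covers that case without special treatment.

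The constant tracking you flagged does close. On your event, $\det G\ge(1-2\varepsilon)(d/n)^2$ and $\|Pu\|\|Pv\|+\langle Pu,Pv\rangle\le\frac dn(1+\rho+3\varepsilon)$. Step~2 then only needs $(1+\rho)(1-4\varepsilon)\ge3\varepsilon$, which follows from $1+\rho\ge\delta_0>8\varepsilon$. In Step~3 you must keep the term $\|Pu\|\|Pv\|\ge(1-2\varepsilon)\frac dn$; dropping it, as in your first display, leaves you just below $d/(2n)$. Keeping it and using $1-\rho=1+|\rho|$ gives $\Gamma\ge(1-4\varepsilon)\frac dn$.

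Your route buys a single concentration event valid for all $\rho$ and makes the determinant structure explicit. The paper's rescaling identity buys uniformity in $\rho$: its aligned-case argument in fact gives $\Gamma\ge(1-2\varepsilon)\frac dn$ for every $\rho<1$. With that identity, neither case split nor the threshold $\delta_0$ is actually needed.
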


\section{Convergence guarantees}
\subsection{Open-loop convergence}
\label{sec:open_loop}

We next turn the efficiency estimates into convergence guarantees for the
open-loop stepsize rule. The only algorithmic input needed for the expectation
proof is an approximate-oracle inequality: in expectation, the random section
oracle must recover a fixed fraction of the full FW gap. This fraction is
obtained by comparing the FW gap on \(C\) with the gap on a suitable ball
contained in \(C\), and then applying the expected lower bound for
\(\Gamma(P;u,v)\). The comparison is the geometric core of the proof. The rolling-ball condition
supplies an inner tangent ball, while strong convexity supplies the outer
quadratic support comparing the local ball gap with the full FW gap on \(C\).
The ratio of these geometric scales yields the uniform constant
\(\beta_0^{\mathrm{unif}}\) from
Proposition~\ref{prop:approxOracle_global_clean}, which governs the convergence
rate.

The resulting global comparison factor is
\begin{equation}
\kappa_{\mathrm{unif}}
:=
\min\!\left\{\kappa_0,\frac{R_{\min}}{D}\right\},
\qquad
\kappa_0:=2\beta_C R_{\min},
\label{eq:kappa_unif_main}
\end{equation}
where $D:=\diam(C)$, $\beta_C$ is from Assumption~\ref{ass:C_clean}, and $R_{\min}$ is the minimum rolling-ball radius from Lemma~\ref{lem:tangentBalls_clean}. Geometrically, \(R_{\min}\) is the radius of the smallest interior rolling ball
guaranteed uniformly along the boundary of \(C\). This is made precise by the global ball--gap comparison,
Proposition~\ref{prop:BallGap_global_clean}.

For a fixed admissible \(\varepsilon\) as in Proposition~\ref{prop:GammaHP}, set
\begin{equation}\label{eq:hp_constants}
c_{\rm hp}:=\min\{c_\Gamma,c_{\rm JL}\},\ 
C_{\rm hp}:=C_\Gamma+C_{\rm JL},\ 
p_{\rm hp}:=\bigl[1-C_{\rm hp}e^{-c_{\rm hp}\varepsilon^2d}\bigr]_+,\ 
\beta_{\rm hp}:=\kappa_{\rm unif}\frac d{2n}.
\end{equation}
Here \(c_\Gamma,C_\Gamma\) are defined in the proof of Proposition~\ref{prop:GammaHP}, and
\(c_{\rm JL},C_{\rm JL}\) are the constants in the one-vector projection bound
used in the proof.
\begin{proposition}[Uniform approximate oracle in expectation]
\label{prop:approxOracle_global_clean}
Assume Assumptions~\ref{ass:C_clean}--\ref{ass:f_clean}, and let $\beta_0^{\mathrm{unif}}:=\kappa_{\mathrm{unif}}\,c_{n,d},$ where \(c_{n,d}\) is any
valid lower bound on \(\mathbb E[\Gamma(P;u,v)]\) uniform over
\(u,v\in S^{n-1}\) with \(\langle u,v\rangle<1\).
Then for every iterate $x_k\in C$ with $g_k:=\nabla f(x_k)\neq 0$,
\begin{equation}
\mathbb E\big[\langle g_k,d_k\rangle\mid x_k\big]
\le -\beta_0^{\mathrm{unif}}\,\Delta(x_k).
\label{eq:approxOracle_global_clean}
\end{equation}
\end{proposition}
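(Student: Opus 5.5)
The plan is to reduce the section oracle on \(C\) to a section oracle on a Euclidean ball contained in \(C\), and then to invoke the expected efficiency bound \(c_{n,d}\).

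\textbf{Step 1: choose the comparison ball.} Fix \(x_k\in C\) with \(g_k\neq0\) and put \(u:=-g_k/\|g_k\|\). I would choose a ball \(B(c,r)\subset C\) that contains \(x_k\), together with \(r\le R_{\min}\), according to two cases.

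If \(x_k\in\partial C\), I take the interior rolling ball of radius \(R_{\min}\) tangent at \(x_k\), from Lemma~\ref{lem:tangentBalls_clean}. If \(x_k\) is interior, I take a rolling ball of radius \(R_{\min}\) that contains \(x_k\); this is possible because \(C\) is the union of its interior rolling balls. In that case \(x_k=c+r w\) with \(\|w\|\le 1\).

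\textbf{Step 2: pass to the ball's section oracle.} Since \(B(c,r)\cap(x_k+U_k)\subset C\cap(x_k+U_k)\), the section LMO over \(C\) does at least as well as the one over the ball:
\[
\langle g_k,d_k\rangle\le \min_{y\in B(c,r)\cap(x_k+U_k)}\langle g_k,y-x_k\rangle .
\]
The right-hand side has a closed form. Writing \(v\) for the normalized offset of \(x_k\) relative to \(c\), a direct Lagrangian computation on the \(d\)-dimensional disc gives
\[
-\|g_k\|\,r\,\Bigl(\|Pu\|\sqrt{1-\|v\|^2+\|Pv\|^2}-\langle Pu,Pv\rangle\Bigr).
\]
This equals \(-\gamma(P;u,v)\) times the full ball gap \(\|g_k\|\,r\,(1-\langle u,v\rangle)\). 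By Corollary~\ref{cor:gammaIntGeGamma} we have \(\gamma\ge\Gamma\) in the form needed.

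Taking conditional expectation then gives
\[
\mathbb E[\langle g_k,d_k\rangle\mid x_k]\le -c_{n,d}\,\Delta_B(x_k),
\qquad \Delta_B(x_k):=\|g_k\|\,r\,(1-\langle u,v\rangle).
\]
When \(\|v\|<1\), I would reduce to a unit vector \(v'\) with the same ball-gap structure, so that the uniform bound over \(S^{n-1}\) applies; this is what the corollary is for. If \(\langle u,v\rangle=1\), the ball gap is zero. That case has to be handled separately, by showing it forces \(\Delta(x_k)=0\) via strong convexity.

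\textbf{Step 3: compare the ball gap with the full gap (main obstacle).} What remains is the global ball–gap comparison of Proposition~\ref{prop:BallGap_global_clean}, namely \(\Delta_B(x_k)\ge\kappa_{\mathrm{unif}}\Delta(x_k)\). I would prove it as follows.

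Let \(s=s(x_k)\), so that \(\Delta(x_k)=\langle g_k,x_k-s\rangle\le\|g_k\|D\). Strong convexity of \(C\) gives the outer quadratic bound at the LMO point:
\[
\langle -u,y-s\rangle\ \ge\ \tfrac{\beta_C}{2}\cdot(\text{const})\,\|y-s\|^2\quad\text{for all }y\in C .
\]

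In the \emph{far regime}, where \(x_k\) is not near \(s\), the ball gap is at least of order \(\|g_k\|r\). Since \(\Delta(x_k)\le\|g_k\|D\), this yields the factor \(R_{\min}/D\).

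In the \emph{near regime}, both gaps are small. The ball's gap, roughly \(r\|g_k\|\,\theta^2/2\) where \(\theta\) is the angle between the normals, must then be compared with the quadratic bound on \(\Delta\) in terms of the displacement. Curvature \(2\beta_C\) against the ball curvature \(1/r\) should yield the factor \(2\beta_C R_{\min}\).

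I expect this near-regime matching to be the delicate step. It requires tying the angle between the ball normal at \(x_k\) and \(u\) to \(\|x_k-s\|\). I would first try to use the normal of the outer supporting ball at \(s\), together with \(\mathcal C^{1,1}\) control of the normal map with constant \(1/R_{\min}\).

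Combining Steps 2 and 3 gives \(\beta_0^{\mathrm{unif}}=\kappa_{\mathrm{unif}}c_{n,d}\), which is the claim of the proposition.
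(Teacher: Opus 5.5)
Your architecture (comparison ball inside $C$, domination via Lemma~\ref{lem:subvsball}, expected section efficiency on the ball, then a ball--gap comparison) matches the paper's. However, neither of the two load-bearing steps is actually supplied.

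\textbf{Step~3 and the ball choice in Step~1.} This is the main gap. An arbitrary rolling ball containing an interior $x_k$ cannot work. If $\operatorname{dist}(x_k,\partial C)\ge 2R_{\min}$, then $B(x_k+R_{\min}g_k/\|g_k\|,R_{\min})\subset C$ has $x_k$ as its own LMO point. Its gap is therefore $0$ while $\Delta(x_k)>0$. This also refutes your claim that $\langle u,v\rangle=1$ forces $\Delta(x_k)=0$. Citing Proposition~\ref{prop:BallGap_global_clean} only helps if you use the ball it constructs.

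As for proving the comparison yourself, the tools you name bound the wrong sides of $\Delta_B\ge\kappa\Delta$. Quadratic support at $s$ gives $\Delta(x_k)\ge\beta_C\|g_k\|\|x_k-s\|^2$, which is a \emph{lower} bound on $\Delta$. $\mathcal C^{1,1}$ control of the normal map gives an \emph{upper} bound on the angle, hence on $\Delta_B$. Even with the roles swapped (Gauss-map expansion $\|\mu(x)-\mu(s)\|\ge 2\beta_C\|x-s\|$ plus the inner ball at $s$), estimating angle and displacement separately loses a square. It yields $(2\beta_C R_{\min})^2$ rather than $\kappa_0$.

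The missing idea is to compare two balls tangent at the \emph{same} boundary point $x$ with the \emph{same} inward normal $\mu$. Combining \eqref{eq:UC_clean} with the supporting half-space at $x$ and letting $\lambda\uparrow1$ gives $\langle\mu,y-x\rangle\ge\beta_C\|y-x\|^2$ on $C$, i.e.\ $C\subset B(x+\mu/(2\beta_C),1/(2\beta_C))$. A tangent ball of radius $R$ has gap $R\|g\|(1-\langle g/\|g\|,\mu\rangle)$ at $x$, with the same inner product for both radii. Hence $\Delta_{B_{\mathrm{in}}}=2\beta_C R(x)\,\Delta_{B_{\mathrm{out}}}\ge\kappa_0\Delta(x)$, with no angle estimate at all (Lemma~\ref{lem:BallGap_clean}).

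Interior points are then handled as in Proposition~\ref{prop:BallGap_global_clean}. Take $B(x_k,R_{\min})$ if $\operatorname{dist}(x_k,\partial C)\ge R_{\min}$, and otherwise the tangent ball at a nearest boundary point $\bar x$. Then $x_k$ lies on the normal segment from $\bar x$ to the center. Both gaps are affine in the base point along that segment, since the minima do not move. The two endpoints give $\kappa_0$ and $R_{\min}/D$.

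\textbf{Step~2, interior case.} The constant $c_{n,d}$ bounds $\mathbb E\,\Gamma(P;u,v)$ only for unit $v$. Corollary~\ref{cor:gammaIntGeGamma} compares $\gamma$ and $\Gamma$ at the same non-unit $v=t\mu$; it performs no reduction to a unit vector. Since $\Gamma(P;u,t\mu)=\frac{t(1-\rho)}{1-t\rho}\,\Gamma(P;u,\mu)$ with $\rho=\langle u,\mu\rangle$, the bound $\gamma\ge\Gamma$ degenerates as $t\to0$. At the center, $\Gamma=0$ while $\gamma=\|Pu\|$.

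The paper instead uses Lemma~\ref{lem:gamma_scaled_unit}, which bounds $\gamma(P;u,t\mu)$ below by a weighted average of $\|Pu\|$ and $\Gamma(P;u,\mu)$ with deterministic weights. It combines this with $\mathbb E\|Pu\|\ge\mathbb E\|Pu\|^2=d/n\ge c_{n,d}$.

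Your ``unit $v'$'' idea can be made rigorous, but by changing the ball rather than via the corollary. When the ball gap is positive, take the ball internally tangent to $B(c,r)$ at its LMO point $c-r\,g_k/\|g_k\|$ and passing through $x_k$. It lies in $B(c,r)$, has the same gap at $x_k$, and has $x_k$ on its boundary, so the unit-vector bound applies to it directly.
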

This is where the two parts of the geometry enter the optimization analysis:
\(c_{n,d}\) is the average fraction of the ball FW gap retained after random
sectioning, while \(\kappa_{\rm unif}\) transfers this ball-gap estimate to the
original set \(C\) through the rolling-ball/strong-convexity sandwich. Hence the
section LMO is an approximate FW oracle in expectation, with quality
\(\beta_0^{\rm unif}\).

\begin{theorem}[Expected open-loop convergence]
\label{thm:convergence_clean}
Assume Assumptions~\ref{ass:C_clean}--\ref{ass:f_clean}. Run RSFW with
\(\alpha_k=2/(\beta_0 k+2)\), where
\(\beta_0:=\beta_0^{\mathrm{unif}}=\kappa_{\mathrm{unif}}\,c_{n,d}\). Then, for all \(k\ge0\),
\[
\mathbb E[f(x_k)-f^*]
\le
\frac{M}{k+2/\beta_0},
\qquad
M:=\max\left\{
\frac{2(f(x_0)-f^*)}{\beta_0},
\frac{2LD^2}{\beta_0^2}
\right\}.
\]
In particular, \(\mathbb E[f(x_k)-f^*]=O(1/k)\).
\end{theorem}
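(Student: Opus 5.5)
The argument combines the standard smoothness descent inequality with the approximate-oracle inequality of Proposition~\ref{prop:approxOracle_global_clean}, and then closes a one-dimensional recursion by induction.

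\textbf{Step 1: one-step inequality.} Since $x_{k+1}=x_k+\alpha_k d_k$ with $x_k,s_k\in C$ and $\|d_k\|\le D$, $L$-smoothness (Assumption~\ref{ass:f_clean}) gives
\[
f(x_{k+1})\le f(x_k)+\alpha_k\langle g_k,d_k\rangle+\tfrac{L}{2}\alpha_k^2D^2 .
\]
The stepsize $\alpha_k=2/(\beta_0k+2)$ is deterministic, so I take conditional expectation given $x_k$. Applying Proposition~\ref{prop:approxOracle_global_clean} (the case $g_k=0$ is trivial, since then $x_k$ is optimal and $\langle g_k,d_k\rangle=0\le -\beta_0\Delta(x_k)=0$) and then $\Delta(x_k)\ge f(x_k)-f^*$ yields
\[
\mathbb E[f(x_{k+1})-f^*\mid x_k]\le (1-\alpha_k\beta_0)(f(x_k)-f^*)+\tfrac{L D^2}{2}\alpha_k^2 .
\]
Here $1-\alpha_k\beta_0\ge0$ because $\beta_0\le1$: indeed $\kappa_{\rm unif}\le1$ and $c_{n,d}\le d/n\le1$. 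Taking total expectation and writing $h_k:=\mathbb E[f(x_k)-f^*]$ gives
\[
h_{k+1}\le(1-\beta_0\alpha_k)h_k+\tfrac{LD^2}{2}\alpha_k^2 .
\]

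\textbf{Step 2: change variables.} Set $t_k:=k+2/\beta_0$, so that $\alpha_k=2/(\beta_0 t_k)$ and $\beta_0\alpha_k=2/t_k$. The recursion becomes
\[
h_{k+1}\le\Bigl(1-\tfrac{2}{t_k}\Bigr)h_k+\tfrac{2LD^2}{\beta_0^2t_k^2}.
\]
The claim is $h_k\le M/t_k$.

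\textbf{Step 3: induction.} At $k=0$ we have $t_0=2/\beta_0$, so $h_0=f(x_0)-f^*\le M\beta_0/2=M/t_0$ by the first term in the definition of $M$. For the inductive step, since $1-2/t_k\ge0$ (as $t_k\ge 2/\beta_0\ge2$), the hypothesis $h_k\le M/t_k$ together with $2LD^2/\beta_0^2\le M$ gives
\[
h_{k+1}\le \frac{M}{t_k}-\frac{2M}{t_k^2}+\frac{M}{t_k^2}=\frac{M(t_k-1)}{t_k^2}\le\frac{M}{t_k+1}=\frac{M}{t_{k+1}},
\]
where the last inequality uses $(t_k-1)(t_k+1)\le t_k^2$.

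\textbf{Main obstacle.} The probabilistic subtlety is only that Proposition~\ref{prop:approxOracle_global_clean} holds conditionally on $x_k$ with a constant uniform in $x_k$. This lets the tower property pass the bound to unconditional expectations. The constants check out as above.
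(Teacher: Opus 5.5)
Your proposal is correct and follows the paper's proof essentially step for step: the smoothness bound with $\|d_k\|\le D$, the conditional approximate-oracle inequality combined with $f(x_k)-f^*\le\Delta(x_k)$, and the substitution $t_k=k+2/\beta_0$ closed by induction via $(t_k-1)(t_k+1)\le t_k^2$. You also state explicitly that $1-2/t_k\ge 0$, which follows from $\beta_0\le 1$ and which the paper uses only implicitly in its inductive step.
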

The expectation proof uses only a lower bound, $c_{n,d}$, on the average random-section
ratio \(\mathbb{E}[\Gamma(P;u,v)]\). For path-wise open-loop guarantees we instead use the high-probability bound Proposition~\ref{prop:GammaHP}. It gives good oracle events with uniformly positive conditional probability; a weighted martingale argument controls their cumulative deviation from the mean.

\begin{theorem}[Almost-sure $O(1/k)$ convergence for the open-loop stepsize]
\label{thm:hp_open_loop_as}
Assume Assumptions~\ref{ass:C_clean}--\ref{ass:f_clean}, under the previous notations, define 
$\bar\beta:=p_{\mathrm{hp}}\beta_{\mathrm{hp}},\ \ 
\tau:=\frac{2}{\bar\beta},$
and run RSFW with open-loop stepsize $
\alpha_k:=\frac{2}{\bar\beta k+2}
=\frac{2}{\bar\beta\left(k+\frac{2}{\bar\beta}\right)}.$
Then there exists an almost surely finite random constant $C_\infty$ such that
\begin{equation}
f(x_k)-f^*\le \frac{C_\infty}{k+\tau}
\qquad\text{for all }k\ge 0
\quad\text{almost surely}.
\label{eq:as_ok_rate}
\end{equation}
In particular, $f(x_k)-f^*=O(1/k)$ almost surely.
\end{theorem}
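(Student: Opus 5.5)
The plan is to derive a pathwise recursion for \(h_k:=f(x_k)-f^*\) that involves Bernoulli-type good events, and then control the randomness with a weighted martingale argument.

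\textbf{Step 1: one-step bound.} By \(L\)-smoothness, feasibility of \(s_k\) and \(\|d_k\|\le D\),
\[
h_{k+1}\le h_k+\alpha_k\langle g_k,d_k\rangle+\tfrac{L}{2}\alpha_k^2D^2 .
\]
Let \(E_k\) be the good event on which the ball comparison (Proposition~\ref{prop:BallGap_global_clean}) combined with \(\Gamma(P_k;u_k,v_k)\ge d/(2n)\) gives \(\langle g_k,d_k\rangle\le-\beta_{\rm hp}\Delta(x_k)\). Here \(u_k,v_k\) are the unit vectors of the comparison ball at \(x_k\). The ratio \(\Gamma\) is nonnegative, so \(\langle g_k,d_k\rangle\le0\) always holds. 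By Proposition~\ref{prop:GammaHP}, together with the JL bound entering \(C_{\rm hp}\), we get \(\mathbb P(E_k\mid\mathcal F_k)\ge p_{\rm hp}\), since \(P_k\) is independent of \(\mathcal F_k\). Using \(\Delta\ge h_k\) and \(\xi_k:=\mathbf 1_{E_k}\),
\[
h_{k+1}\le(1-\alpha_k\beta_{\rm hp}\xi_k)h_k+\tfrac{L}{2}\alpha_k^2D^2 .
\]

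\textbf{Step 2: split the random factor.} Write \(\beta_{\rm hp}\xi_k=\bar\beta+\beta_{\rm hp}(\xi_k-p_k)+\beta_{\rm hp}(p_k-p_{\rm hp})\), where \(p_k:=\mathbb P(E_k\mid\mathcal F_k)\ge p_{\rm hp}\). The last term is nonnegative and can be dropped. The middle term, \(\beta_{\rm hp}\zeta_k\) with \(\zeta_k:=\xi_k-p_k\), is a bounded martingale difference. This gives
\[
h_{k+1}\le(1-\alpha_k\bar\beta)h_k-\alpha_k\beta_{\rm hp}\zeta_kh_k+\tfrac{L}{2}\alpha_k^2D^2 .
\]
Because \(\alpha_k\bar\beta=2/(k+\tau)\), multiplying by \((k+\tau)(k+1+\tau)\) is the natural weighting. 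Set \(W_k:=(k+\tau)(k+\tau-1)h_k\), or a close variant. Using \((1-\tfrac{2}{k+\tau})(k+1+\tau)(k+\tau)\le(k+\tau)(k+\tau-1)\), this yields
\[
W_{k+1}\le W_k-\frac{2\beta_{\rm hp}}{\bar\beta}(k+1+\tau)\,\zeta_kh_k+\frac{2LD^2}{\bar\beta^2}\frac{k+1+\tau}{k+\tau}.
\]
Summing, \(W_k\le W_0+c\,k+M_k\), where \(M_k=-\sum_{j<k}a_j\zeta_j\) with \(a_j:=\tfrac{2\beta_{\rm hp}}{\bar\beta}(j+1+\tau)h_j\). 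Since \(h_j\le LD^2/2\) after one step with \(\alpha_0=1\), or more simply \(h_j\le \max(h_0,LD^2)\), we have \(a_j=O(j)\).

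\textbf{Step 3: martingale control (main obstacle).} The naive bound \(a_j=O(j)\) makes \(\langle M\rangle_k=O(k^3)\), which gives \(M_k=O(k^{3/2+})\) and only an \(O(k^{-1/2})\) rate. That is too weak. The fix is a bootstrap. Once a preliminary rate \(h_j=O(j^{-\theta})\) holds almost surely, we get \(a_j=O(j^{1-\theta})\) and \(\langle M\rangle_k=O(k^{3-2\theta})\). The strong law for martingales (\(M_k=o(\langle M\rangle_k^{1/2+\epsilon})\) on divergence, and \(M_k\) convergent otherwise) then gives \(h_k=O(k^{-(1/2+\theta(1-\epsilon'))})\). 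Iterating pushes \(\theta\) toward \(1\), but only up to \(1-\epsilon\).

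Closing the final gap needs a self-normalized argument. Because \(a_j\propto(j+\tau)h_j\approx W_j/(j+\tau)\), the martingale term is linear in \(W\) itself. We would compare with a Robbins–Siegmund form instead. Divide \(W_{k+1}\) by a predictable product \(\Pi_k=\prod_{j<k}(1+\text{small})\). Alternatively, apply Robbins–Siegmund to \(V_k:=W_k/(k+\tau)^{\eta}\) for a small \(\eta>0\). Its noise increments are then square-summable given the bootstrap rate, and the drift from dropping \(\eta\) absorbs the \(c\,k\) term only up to \(k^{1-\eta}\).

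If this does not close cleanly, the fallback is to obtain \(\sup_kW_k/(k+\tau)<\infty\) directly. Write \(W_{k+1}\le W_k(1-\tfrac{2\beta_{\rm hp}}{\bar\beta}\tfrac{\zeta_k}{k+\tau}\cdot\tfrac{k+1+\tau}{k+\tau-1})+c\). Unroll the product, and bound \(\log\prod(1-\eta_j\zeta_j)\) by the convergent martingale \(\sum\eta_j\zeta_j\) (with \(\sum\eta_j^2<\infty\)) plus a summable quadratic term. The product is then almost surely bounded above and below by random constants, so \(W_k\le C\sum_{j\le k}c=O(k)\). This yields \(C_\infty\) in \eqref{eq:as_ok_rate}. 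I expect this product or martingale bookkeeping, in particular the \(\zeta_k\)-dependent multiplier, to be the delicate step.
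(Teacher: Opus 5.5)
Your fallback argument is correct and is essentially the paper's proof. The paper keeps the indicator inside the multiplicative factor, $F_{k+1}\le(1-\omega_kI_k)F_k+Q\alpha_k^2$ with $\omega_k=\beta_{\rm hp}\alpha_k$, and bounds $\prod_{i=j}^{k-1}(1-\omega_iI_i)\le\exp\bigl(-\sum_{i=j}^{k-1}\omega_iI_i\bigr)$. Since the weights $\omega_i$ are deterministic and square-summable, $\sum_i\omega_i\bigl(I_i-\mathbb E[I_i\mid\mathcal F_i]\bigr)$ is an $L^2$-bounded, hence a.s.\ convergent and bounded, martingale. Your $W_k$-weighting only moves the deterministic $\bigl((j+\tau)/(k+\tau)\bigr)^2$ decay into the weights, and your $\eta_j$ is $\omega_j$ up to a factor tending to $1$.

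The step you flag as delicate is not. Your multipliers satisfy $1-\eta_k\zeta_k\ge\frac{k+1+\tau}{k+\tau-1}(1-\alpha_k\beta_{\rm hp}\xi_k)\ge0$, so the recursion unrolls. After that, only the one-sided bound $1-x\le e^{-x}$ is needed; you need no lower bound on the product and no quadratic correction. This should be your main line. It also gives the explicit constant $C_\infty=e^{2B_\infty}(\tau F_0+8LD^2/\bar\beta^2)$, which the paper reuses, together with a Freedman bound on $B_\infty$, for the fixed-confidence corollary.

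Your claim that the additive bootstrap stalls at $\theta=1-\epsilon$ is mistaken. Since $W_k\le W_0+ck+M_k$, you only need $M_k=o(k)$, not $M_k=O(1)$. The martingale strong law delivers this as soon as $\theta$ is close enough to $1$. So finitely many rounds give $W_k=O(k)$ exactly, and that route closes too, just less economically.
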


\begin{corollary}[Fixed-confidence deterministic $O(1/k)$ bound]
\label{cor:hp_open_loop_deterministic}
Under the assumptions of Theorem~\ref{thm:hp_open_loop_as}, for every $\eta\in(0,1)$ there exists an explicit deterministic constant $C_\eta$ such that
\begin{equation}
\mathbb P\!\left(
f(x_k)-f^*\le \frac{C_\eta}{k+\tau}
\ \text{for all }k\ge 0
\right)\ge 1-\eta.
\label{eq:deterministic_confidence_rate}
\end{equation}
\end{corollary}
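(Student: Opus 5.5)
The plan is to derive the corollary from the almost-sure argument behind Theorem~\ref{thm:hp_open_loop_as}, but to replace the almost-sure finiteness of the martingale term by an explicit tail bound. That proof tracks \(h_k:=f(x_k)-f^*\) through the one-step inequality
\[
h_{k+1}\le h_k-\alpha_k\beta_{\rm hp}\mathbf 1_{G_k}\Delta(x_k)+\tfrac{L}{2}\alpha_k^2D^2 ,
\]
where \(G_k\) is the good event of Proposition~\ref{prop:GammaHP} together with the one-vector JL event. On \(G_k\), the ball-gap comparison and Proposition~\ref{prop:GammaHP} give \(\langle g_k,d_k\rangle\le-\beta_{\rm hp}\Delta(x_k)\). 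Off \(G_k\) we still have \(\langle g_k,d_k\rangle\le 0\), because \(x_k\) itself is feasible in the section.

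Next I would write \(\mathbf 1_{G_k}=p_k+\xi_k\), where \(p_k:=\mathbb P(G_k\mid\mathcal F_k)\ge p_{\rm hp}\) and \(\xi_k\) is a bounded martingale difference. Using \(\Delta(x_k)\ge h_k\) and \(\alpha_k=2/(\bar\beta(k+\tau))\), this yields a perturbed recursion
\[
h_{k+1}\le(1-\bar\beta\alpha_k)h_k+\tfrac L2\alpha_k^2D^2-\alpha_k\beta_{\rm hp}\xi_k\Delta(x_k).
\]
I would multiply by the weights \(w_{k+1}:=(k+\tau)(k+\tau+1)\), which telescope exactly as in the classical \(2/(k+2)\) proof, and obtain
\[
(k+\tau)(k+\tau+1)\,h_{k+1}\le \tau(\tau+1)h_0+c_1(k+1)+S_k ,
\]
where \(S_k=-\sum_{j\le k}c_2(j+\tau)\xi_j\Delta(x_j)\) is a martingale with increments bounded by \(c_2(j+\tau)LD^2\), since \(\Delta\le\|\nabla f\|D\) is bounded on the compact set \(C\). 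The almost-sure theorem used the strong law for weighted martingales to show \(S_k/(k+\tau)\) is bounded. Here I would instead apply a maximal Azuma--Hoeffding (or Freedman) inequality on dyadic blocks \(k\in[2^m,2^{m+1})\). Each block has \(\mathbb P(\max_{k\text{ in block}}|S_k|>t_m)\le 2\exp(-t_m^2/(2\sum_{j<2^{m+1}}c^2(j+\tau)^2))\). I would choose \(t_m\asymp 2^{3m/2}\sqrt{\log(2(m+1)^2/\eta)}\) so that the block failure probabilities sum to at most \(\eta\).

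On the complementary event, \(|S_k|\lesssim (k+\tau)^{3/2}\sqrt{\log((\log k)/\eta)}\). This is only \(o(k^2)\), so the crude bound gives \(h_{k+1}\lesssim 1/k+\sqrt{\log\log k}/\sqrt k\). That is not yet \(O(1/k)\), and it is the main obstacle.

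To fix it, I would use the self-bounding structure: the increment variance is controlled by \(\alpha_j^2\Delta(x_j)^2\), and \(\Delta\) should itself shrink along the trajectory. I would try Freedman's inequality with conditional variance \(\sum (j+\tau)^2\Delta(x_j)^2\). Combined with an inductive bootstrap of the form ``if \(h_j\le C/(j+\tau)\) for all \(j<k\) then \(\Delta\) is small'', this would give \(|S_k|\lesssim (k+\tau)\) uniformly with probability \(1-\eta\). The bootstrap needs a link \(\Delta(x)\lesssim\sqrt{h}\), which follows from \(\beta_C\)-strong convexity of \(C\) together with the gradient not vanishing; on the degenerate branch where \(\nabla f(x^*)=0\), smoothness already gives a \(\sqrt h\) bound on \(\|\nabla f\|\). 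With this bootstrap, \(C_\eta\) comes out as an explicit function of \(h_0,L,D,\bar\beta,\tau\) and \(\log(1/\eta)\).

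If the bootstrap fails, the fallback is to read the corollary directly from Theorem~\ref{thm:hp_open_loop_as}. Since \(C_\infty\) is almost surely finite, there is a quantile \(C_\eta\) with \(\mathbb P(C_\infty\le C_\eta)\ge1-\eta\), and that gives \eqref{eq:deterministic_confidence_rate} immediately. The only cost is that \(C_\eta\) is then no longer explicit, so I would make the tail of \(C_\infty\) quantitative through the block-maximal inequality above.
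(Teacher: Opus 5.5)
\textbf{There is a genuine gap in the step that is supposed to give $O(1/k)$.}

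You center the indicator while it still multiplies $\Delta(x_k)$. This makes your noise term $\alpha_k\beta_{\rm hp}\xi_k\Delta(x_k)$ scale with the FW gap rather than with $h_k$. The only control your bootstrap supplies is $\Delta(x_j)\lesssim\sqrt{h_j}\lesssim\sqrt{C/(j+\tau)}$. In the branch $\nabla f(x^*)=0$ this is genuinely the right order: for example, $f(x)=\tfrac12\|x\|^2$ on the unit ball gives $\Delta(x)=\|x\|+\|x\|^2\approx\sqrt{2h(x)}$.

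With that input, the increments of $S_k$ are of order $\sqrt{C(j+\tau)}$, and its conditional variance up to time $k$ is of order $Ck^2$. Freedman at level $t=Kk$ then gives a failure probability of order $e^{-cK^2/C}$. This does not decay from one dyadic block to the next, so summing over blocks forces $K\asymp\sqrt{C\log(\log k/\eta)}$. That is exactly the iterated-logarithm scale: a martingale whose conditional variance genuinely grows like $k^2$ has $\sup_k|S_k|/k=\infty$ almost surely. Hence:
\begin{itemize}
\item no bound $|S_k|\lesssim k+\tau$ can hold uniformly with probability $1-\eta$;
\item your bootstrap inequality $c_1+K\sqrt{C\log(\cdot)}\le C$ cannot hold for all $k$;
\item you end at $h_k\lesssim\sqrt{\log\log k}/k$.
\end{itemize}
The paper's almost-sure theorem does not go through $S_k/(k+\tau)$ at all, and for the same reason that quantity need not be bounded.

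Your fallback via a quantile of $C_\infty$ does give a deterministic constant, but not an explicit one. Making it quantitative ``through the block inequality'' fails, because that inequality controls $S_k$, which is exactly what broke. The explicit $C_\eta$ is the content of the corollary.

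\textbf{The missing idea} is to use $\Delta(x_k)\ge h_k$ \emph{before} any centering. This costs nothing because the term has a sign: $\langle g_k,d_k\rangle\le-\beta_{\rm hp}I_k\Delta(x_k)\le-\beta_{\rm hp}I_kh_k$, with $I_k:=\mathbf 1_{\mathcal E_k}$. The recursion becomes purely multiplicative, $h_{k+1}\le(1-\omega_kI_k)h_k+\tfrac L2D^2\alpha_k^2$, where $\omega_k:=\beta_{\rm hp}\alpha_k\le 1$. The paper then centers only inside the exponent of $\prod_i(1-\omega_iI_i)\le\exp(-\sum_i\omega_iI_i)$.

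The martingale $M_m:=\sum_{i<m}\omega_i\bigl(I_i-\mathbb E[I_i\mid\mathcal F_i]\bigr)$ has increments at most $\beta_{\rm hp}$. Its total conditional variance is at most $V_*:=\sum_m\omega_m^2<\infty$, because $\omega_m=2/(p_{\rm hp}(m+\tau))$ is square-summable. The maximal Bernstein--Freedman inequality therefore gives $\sup_m|M_m|\le B_\eta:=\sqrt{2V_*\log(2/\eta)}+\tfrac{\beta_{\rm hp}}{3}\log(2/\eta)$ with probability at least $1-\eta$. This holds uniformly in time, with no $\log\log$ loss.

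On that event, since $p_{\rm hp}\omega_i=2/(i+\tau)$, one has $\prod_{i=j}^{k-1}(1-\omega_iI_i)\le e^{2B_\eta}\bigl((j+\tau)/(k+\tau)\bigr)^2$. Unrolling the recursion gives $C_\eta=e^{2B_\eta}\bigl(\tau(f(x_0)-f^*)+8LD^2/\bar\beta^2\bigr)$. No bootstrap and no strong-convexity link between $\Delta$ and $h$ is needed.

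If you want to keep your additive scheme, replacing $\Delta(x_k)$ by $h_k$ before centering would at least reduce the variance of $S_k$ to order $C^2k$ under the bootstrap, which is no longer LIL-critical. The multiplicative route avoids that bookkeeping entirely.
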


\subsection{Short-step convergence}
\label{sec:short_step}

We now study the adaptive short-step rule. Unlike the open-loop recursion, the short-step descent depends on both the section FW gap and the visible gradient length. This is why the expected value of \(\Gamma(P;u,v)\) alone is not enough: the proof requires control of mixed quantities such as \(\|Pu\|\Gamma(P;u,v)\). We obtain this control by combining the
high-probability lower bound for \(\Gamma\) with a one-vector projection bound.

\begin{assumption}[Gradient lower bound]
\label{ass:grad_lower}
Assume  that $
c_0:=\inf_{x\in C}\|\nabla f(x)\|>0.$
\end{assumption}

\begin{theorem}[Expected linear convergence under short steps]
\label{thm:linear_convergence}
Assume Assumptions~\ref{ass:C_clean}--\ref{ass:f_clean} and
Assumption~\ref{ass:grad_lower}. Run RSFW with the short-step rule
\eqref{eq:short_step_alg} and \(L_k=L\). With the constants in
\eqref{eq:hp_constants}, assume \(p_{\rm hp}>0\). Then, for all \(k\ge0\),
\begin{equation}
\mathbb E[f(x_k)-f^*]
\le
(1-r_{\rm lin})^k\bigl(f(x_0)-f^*\bigr),
\qquad
r_{\rm lin}
:=
p_{\rm hp}\beta_{\rm hp}
\min\!\left\{
\frac12,\,
\frac{\beta_C c_0}{8L}
\sqrt{(1-\varepsilon)\frac dn}
\right\}.
\label{eq:linear_rate}
\end{equation}
\end{theorem}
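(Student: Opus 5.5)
We obtain the rate from a one-step conditional descent inequality of the form
\[
\mathbb E\bigl[f(x_{k+1})-f^*\mid x_k\bigr]\le (1-r_{\rm lin})\bigl(f(x_k)-f^*\bigr),
\]
then take total expectations and iterate. The starting point is the usual short-step estimate. By $L$-smoothness, writing $G_k:=\langle g_k,x_k-s_k\rangle\ge0$ (nonnegative because $x_k$ is feasible for the section LMO), the rule \eqref{eq:short_step_alg} with $L_k=L$ gives
\[
f(x_{k+1})\le f(x_k)-\min\Bigl\{\tfrac12 G_k,\ \tfrac{G_k^2}{2L\|s_k-x_k\|^2}\Bigr\}.
\]
The first branch is already of the form ``constant times the section gap''. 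The whole difficulty is the second branch, where the geometry of $C$ has to bound $G_k^2/\|s_k-x_k\|^2$ below by a multiple of $G_k$ times the visible gradient length.

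For the second branch we work on the section $C\cap(x_k+U_k)$, which is itself $\beta_C$-strongly convex. The standard argument for strongly convex sets (the Garber--Hazan type bound) says that the LMO solution over a $\beta_C$-strongly convex set satisfies
\[
\langle g,x-s\rangle\ \ge\ \tfrac{\beta_C}{4}\,\|P g\|\,\|x-s\|^2,
\]
where $Pg$ is the gradient projected onto the section directions. One gets this by placing the ball of radius $\beta_C\|x-s\|^2/4$ around the midpoint $(x+s)/2$ inside $C$, intersecting it with the affine section, and using the minimality of $s$ there. Substituting into the second branch gives
\[
\frac{G_k^2}{2L\|s_k-x_k\|^2}\ \ge\ \frac{\beta_C\|P_kg_k\|}{8L}\,G_k .
\]
Combining the two branches,
\[
f(x_k)-f(x_{k+1})\ \ge\ G_k\min\Bigl\{\tfrac12,\ \tfrac{\beta_C\|P_kg_k\|}{8L}\Bigr\}.
\]

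Next we define the good event $E_k$ on which two things hold simultaneously.
\begin{enumerate}[label=(\roman*)]
\item The section gap satisfies $G_k\ge \kappa_{\rm unif}\frac{d}{2n}\Delta(x_k)=\beta_{\rm hp}\Delta(x_k)$. This follows from Proposition~\ref{prop:GammaHP} applied to the direction pair $(u,v)$ produced by the ball-gap comparison of Proposition~\ref{prop:BallGap_global_clean}, exactly as in the proof of Proposition~\ref{prop:approxOracle_global_clean}, but in high probability rather than in expectation.
\item The one-vector Johnson--Lindenstrauss bound gives $\|P_kg_k\|^2\ge(1-\varepsilon)\frac dn\|g_k\|^2$, and hence $\|P_kg_k\|\ge c_0\sqrt{(1-\varepsilon)d/n}$ by Assumption~\ref{ass:grad_lower}.
\end{enumerate}
A union bound gives $\mathbb P(E_k\mid x_k)\ge p_{\rm hp}$. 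Off $E_k$ the decrease is still nonnegative, because the short step never increases $f$. On $E_k$ the decrease is at least
\[
\beta_{\rm hp}\min\Bigl\{\tfrac12,\ \tfrac{\beta_Cc_0}{8L}\sqrt{(1-\varepsilon)d/n}\Bigr\}\,\Delta(x_k).
\]
Multiplying by $p_{\rm hp}$ and using $\Delta(x_k)\ge f(x_k)-f^*$ yields the conditional contraction by $1-r_{\rm lin}$. Taking total expectations and inducting on $k$ then gives \eqref{eq:linear_rate}.

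The main obstacle is making event (i) rigorous. The high-probability bound must transfer through the sandwich comparison for the \emph{same} random $P_k$ that realizes the section gap: the inner tangent ball at the relevant boundary point must lie in $C$, the section of that ball must be visible from $x_k$, and the comparison with $\Delta(x_k)$ must be pathwise rather than averaged. I would first check that the proof of Proposition~\ref{prop:approxOracle_global_clean} in fact establishes a pointwise inequality $G_k\ge\kappa_{\rm unif}\Gamma(P_k;u,v)\Delta(x_k)$ before taking expectations; if so, (i) follows immediately. A secondary point is verifying that the strong-convexity inequality holds on the section with the projected gradient rather than the full gradient, which is where the factor $\sqrt{d/n}$ enters.
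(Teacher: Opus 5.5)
Your proposal is correct and follows the paper's proof essentially step for step: the clipped short-step decrease, the section strong-convexity inequality $\mathrm{gap}_k/\|d_k\|^2\ge \frac{\beta_C}{4}\|P_kg_k\|$ (Corollary~\ref{cor:ScalingSCB_subspace}), a good event of conditional probability at least $p_{\rm hp}$ built from the $\Gamma$ bound and the one-vector JL bound, and averaging to get the contraction by $1-r_{\rm lin}$. The point you flag is exactly Proposition~\ref{prop:hp_oracle_uniform}, with one correction: the pathwise ratio is not always $\Gamma$. When $x_k$ lies strictly inside the comparison ball, the ratio is $\gamma(P_k;u_k,v_k)$, which Lemma~\ref{lem:gamma_scaled_unit} bounds below by $\min\{\|P_ku_k\|,\Gamma(P_k;u_k,\mu_k)\}$ (and by just $\|P_ku_k\|$ at the center), so your JL event (ii) is also what makes event (i) hold there, via $\sqrt{(1-\varepsilon)d/n}\ge d/(2n)$.
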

\begin{remark}[Scope of the uniformly convex extension]
\label{rem:q_uniform_scope}
The results above use strong convexity of the feasible set in the Euclidean metric.
They should not be read as an optimal short-step theory for general
\((\beta_C,q)\)-uniformly convex sets with \(q>2\), such as \(\ell_p\)-balls
with \(p>2\). Extending the \(q>2\) rate would require a replacement for the
Euclidean tangent-ball comparison and for the \(\Gamma\)-coefficient estimates in
non-Euclidean section geometries, where the rotational invariance used in
Proposition~\ref{prop:EGamma_identity}, or Proposition~\ref{prop:Gamma_dn_improved} is no longer available. The present
paper therefore keeps the \(q>2\) extension as future work rather than claiming
the weaker, non-optimal constants that one could obtain from crude Euclidean
inner/outer ball bounds when such bounds are available.

The Euclidean outer-ball workaround also does not cover the motivating
\(\ell_p\) examples. Indeed, let \(C=B_p\subset\mathbb R^n\) with \(p>2\), and 
look at \(x=e_1\) (here $e_1$ denotes the first vector of the standard basis of $\mathbb{R}^n$), whose inward normal is \(\mu=-e_1\). If a Euclidean
comparison ball \(B(x+R\mu,R)\) contained \(B_p\), then for
$
y_t=\bigl((1-t^p)^{1/p},t,0,\ldots,0\bigr)\in\partial B_p
$
one would need
$
t^2+a_t^2\le 2Ra_t,
\qquad
a_t:=1-(1-t^p)^{1/p}.
$
Since \(a_t\sim t^p/p\) as \(t\downarrow0\), this forces
$
R\ge \frac{t^2+a_t^2}{2a_t}
\sim \frac p2\,t^{2-p}\to\infty.
$
Thus no finite Euclidean \(R_{\max}\) exists at this boundary point. The
missing ingredient is therefore genuinely non-Euclidean: one needs a
section-efficiency coefficient adapted to the \(p\)-power boundary geometry,
not only a Euclidean tangent-ball sandwich.
\end{remark}

For the short-step rule, Proposition~\ref{prop:GammaHP} yields a one-step good event with probability bounded below uniformly in the iterate. A supermartingale argument then gives the following almost-sure and fixed-confidence bounds.

\begin{theorem}[High-probability geometric convergence under short steps]
\label{thm:hp_linear}
Under the conditions of Theorem~\ref{thm:linear_convergence}, for any $\theta\in(0,r_{\mathrm{lin}})$ and any $\xi \in(0,1)$, with probability at least $1-\xi$,
\begin{equation}
f(x_k)-f^*\le \frac{h_0}{\xi}(1-\theta)^k
\qquad\text{for all }k\ge 0\text{ simultaneously.}
\label{eq:hp_linear}
\end{equation}
\end{theorem}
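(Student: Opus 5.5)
The statement to prove is Theorem~\ref{thm:hp_linear}. The plan is to build a nonnegative supermartingale from the one-step conditional contraction behind Theorem~\ref{thm:linear_convergence}, and then apply Ville's maximal inequality.

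Let \(h_k:=f(x_k)-f^*\) and let \(\mathcal F_k\) be the sigma-algebra generated by \(P_0,\dots,P_{k-1}\), so that \(x_k\) is \(\mathcal F_k\)-measurable. First, I will extract from the proof of Theorem~\ref{thm:linear_convergence} the conditional one-step bound
\[
\mathbb E[h_{k+1}\mid\mathcal F_k]\le (1-r_{\rm lin})\,h_k .
\]
The expected-rate theorem is obtained exactly by iterating this bound, and its proof is conditional on \(x_k\), with \(P_k\) independent of \(\mathcal F_k\).

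Concretely, the contraction comes from the following ingredients.
\begin{itemize}
\item On the good event \(G_k\), where \(\Gamma(P_k;u,v)\ge d/(2n)\) and \(\|P_ku\|^2\ge(1-\varepsilon)d/n\), the short step gives a decrease of at least
\[
\beta_{\rm hp}\min\{\tfrac12,\ \tfrac{\beta_Cc_0}{8L}\sqrt{(1-\varepsilon)d/n}\}\,\Delta(x_k)\ \ge\ (\cdots)\,h_k .
\]
This uses Propositions~\ref{prop:GammaHP} and \ref{prop:BallGap_global_clean}, the gradient lower bound, and the strong-convexity bound on \(\|s_k-x_k\|\).
\item \(G_k\) has conditional probability at least \(p_{\rm hp}\), uniformly in \(x_k\).
\item Off \(G_k\), the short step never increases \(f\), so \(h_{k+1}\le h_k\) there.
\end{itemize}
Taking the conditional expectation then gives the contraction factor \(1-p_{\rm hp}\cdot(\cdots)=1-r_{\rm lin}\). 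I will restate this briefly rather than reprove it.

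Next, fix \(\theta\in(0,r_{\rm lin})\) and define \(M_k:=h_k/(1-\theta)^k\). Then
\[
\mathbb E[M_{k+1}\mid\mathcal F_k]\le \frac{1-r_{\rm lin}}{1-\theta}\,M_k\le M_k .
\]
So \((M_k)\) is a nonnegative supermartingale with \(M_0=h_0\), where \(h_0\) is deterministic because \(x_0\) is fixed. Ville's maximal inequality, equivalently Doob's inequality for nonnegative supermartingales, gives \(\mathbb P(\sup_{k\ge0}M_k\ge \lambda)\le h_0/\lambda\). Choosing \(\lambda=h_0/\xi\), with probability at least \(1-\xi\) we have \(M_k< h_0/\xi\) for all \(k\) simultaneously, which is \eqref{eq:hp_linear}. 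If \(h_0=0\), monotonicity gives \(h_k\equiv0\) and the claim is trivial.

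The only delicate step is confirming that the contraction holds conditionally on the whole past \(\mathcal F_k\), not merely in unconditional expectation. Since \(P_k\) is drawn independently of \(\mathcal F_k\) and all bounds used are uniform in \(x_k\in C\), this should go through. I expect no real obstacle beyond this. The strict inequality \(\theta<r_{\rm lin}\) is in fact not needed for the argument, and \(\theta=r_{\rm lin}\) would also work. Strictness could be used to get almost-sure summability, \(\sum_k M_k(1-\theta')^{k}\) finite for larger \(\theta'\), but it is not required here.
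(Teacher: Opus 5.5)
Your proposal is correct and follows the paper's proof essentially step for step. The paper likewise sets $S_k=h_k/(1-\theta)^k$ and uses the conditional contraction $\mathbb E[h_{k+1}\mid\mathcal F_k]\le(1-r_{\rm lin})h_k$ from the proof of Theorem~\ref{thm:linear_convergence} (recorded as Proposition~\ref{prop:supermartingale}(i)) to make $S_k$ a nonnegative supermartingale, then applies Ville's inequality (Lemma~\ref{lem:Ville}) with $\lambda=h_0/\xi$. Your separate handling of $h_0=0$, where Ville's $\lambda>0$ requirement fails, and your observation that $\theta=r_{\rm lin}$ also works, are both valid small refinements.
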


\begin{corollary}[Almost-sure geometric convergence under short steps]
\label{cor:as_linear}
Under the conditions of Theorem~\ref{thm:linear_convergence}, for every $\theta\in(0,r_{\mathrm{lin}})$,
\begin{equation}
\frac{f(x_k)-f^*}{(1-\theta)^k}\xrightarrow[k\to\infty]{}0
\qquad\text{almost surely.}
\label{eq:as_linear_rate}
\end{equation}
\end{corollary}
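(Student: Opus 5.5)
The plan is to derive Corollary~\ref{cor:as_linear} directly from Theorem~\ref{thm:hp_linear} by a Borel--Cantelli or union-over-confidence argument, using the gap between two geometric rates. Fix \(\theta\in(0,r_{\mathrm{lin}})\) and choose an intermediate rate \(\theta'\in(\theta,r_{\mathrm{lin}})\). Write \(h_k:=f(x_k)-f^*\ge0\). Theorem~\ref{thm:hp_linear} applied with \(\theta'\) and confidence \(\xi\) says that, outside an event of probability at most \(\xi\), we have \(h_k\le (h_0/\xi)(1-\theta')^k\) for all \(k\) simultaneously.

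\textbf{Step 1.} Define the event
\[
E_\xi:=\Bigl\{h_k\le \tfrac{h_0}{\xi}(1-\theta')^k\ \text{for all }k\ge0\Bigr\}.
\]
The events \(E_\xi\) are monotone in \(\xi\): as \(\xi\) decreases the bound loosens, so \(E_\xi\) grows. Hence \(E:=\bigcup_{m\ge1}E_{1/m}\) has \(\mathbb P(E)\ge 1-1/m\) for every \(m\), and therefore \(\mathbb P(E)=1\).

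\textbf{Step 2.} On \(E\) there is a (random) \(m\) with \(h_k\le m h_0(1-\theta')^k\) for all \(k\). Consequently,
\[
\frac{h_k}{(1-\theta)^k}\le m h_0\Bigl(\frac{1-\theta'}{1-\theta}\Bigr)^k\to0,
\]
because \(0\le(1-\theta')/(1-\theta)<1\). Note that \(1-\theta>0\), since \(r_{\mathrm{lin}}\le p_{\rm hp}\beta_{\rm hp}/2<1\).

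\textbf{Alternative route.} If one prefers not to rely on the uniform-in-\(k\) form, a direct argument also works. The expected bound of Theorem~\ref{thm:linear_convergence} gives \(\mathbb E[h_k]\le(1-r_{\mathrm{lin}})^kh_0\). Markov's inequality then gives
\[
\mathbb P\bigl(h_k>\epsilon(1-\theta)^k\bigr)\le \epsilon^{-1}h_0\Bigl(\frac{1-r_{\mathrm{lin}}}{1-\theta}\Bigr)^k,
\]
which is summable in \(k\). By Borel--Cantelli, almost surely \(h_k/(1-\theta)^k\le\epsilon\) eventually. Intersecting over \(\epsilon=1/j\), \(j\in\mathbb N\), yields the claim.

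The only delicate point is the strict inequality \(\theta<r_{\mathrm{lin}}\). It supplies the slack needed to pick \(\theta'\), or equivalently to make the ratio \((1-r_{\mathrm{lin}})/(1-\theta)\) strictly less than one, so that a bounded random constant is absorbed into a vanishing geometric factor. No further obstacle is expected.
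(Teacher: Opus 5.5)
Your argument is correct, and both of your routes differ from the paper's.

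\textbf{The paper's route.} It sets $S_k:=h_k/(1-\theta)^k$. From the conditional contraction $\mathbb E[h_{k+1}\mid\mathcal F_k]\le(1-r_{\mathrm{lin}})h_k$ it observes that $S_k$ is a nonnegative supermartingale. Doob's convergence theorem then gives an a.s.\ limit $S_\infty$. The paper identifies $S_\infty=0$ via Fatou's lemma and $\mathbb E S_k\le\bigl((1-r_{\mathrm{lin}})/(1-\theta)\bigr)^k h_0\to0$.

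\textbf{Your main route.} You reuse the maximal (Ville) bound of Theorem~\ref{thm:hp_linear} at an intermediate rate $\theta'\in(\theta,r_{\mathrm{lin}})$ and take a union over confidence levels. The slack from $\theta'$ is exactly what turns the a.s.\ finite random constant into a vanishing factor, and you handle it correctly. One small point: the theorem is stated for $\xi\in(0,1)$, so start the union at $m=2$; this changes nothing.

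\textbf{Your Borel--Cantelli route.} This is the most elementary of the three. It uses only the unconditional bound $\mathbb E[h_k]\le(1-r_{\mathrm{lin}})^k h_0$ from Theorem~\ref{thm:linear_convergence}. It needs no filtration, no supermartingale property and no convergence theorem. So it applies to any nonnegative sequence whose expectations decay geometrically faster than $(1-\theta)^k$.

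\textbf{What each buys.} The paper's argument and your main route both exploit the conditional one-step contraction. That structure also yields uniform-in-$k$ pathwise control, such as $\sup_k h_k/(1-\theta')^k<\infty$ a.s., which the Borel--Cantelli route does not directly give. For the stated almost-sure conclusion, that extra structure is not needed.
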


\subsection{Finite-sum stochastic extension}\label{sec:stoch}

The approximate-oracle analysis extends to finite sums
\(f=N^{-1}\sum_{i=1}^N f_i\), where each \(f_i\) is convex and \(L\)-smooth.
At iteration \(k\), form the mini-batch estimator
\(\widehat g_k=b_k^{-1}\sum_{j=1}^{b_k}\nabla f_{i_{k,j}}(x_k)\), draw
\(U_k=\range(P_k^\top)\) independently of the mini-batch, and compute the
section LMO with \(\widehat g_k\):
\[
s_k\in\argmin_{s\in C\cap(x_k+U_k)}\langle \widehat g_k,s\rangle,\qquad
x_{k+1}=x_k+\alpha_k(s_k-x_k).
\]
The only observation needed is independence: conditional on
\((x_k,\widehat g_k)\), the random section is still Haar--Stiefel, so the
deterministic section-efficiency bound applies to \(\widehat g_k\) exactly as it
does to the true gradient.
\begin{theorem}[Finite-sum stochastic RSFW]
\label{thm:main_stoch_rsfw}
Assume the deterministic RSFW approximate-oracle inequality holds with constant
\(0<\beta_0\le1\), and let
$
\sigma_{\rm fs}^2:=
\sup_{x\in C}\frac1N\sum_{i=1}^N
\|\nabla f_i(x)-\nabla f(x)\|^2 .
$
Choose \(\alpha_k=2/(\beta_0 k+2)\) and
\(b_k=\lceil (k+2/\beta_0)^2/A_{\rm mb}\rceil\) for fixed
\(A_{\rm mb}>0\). Then there is a constant \(M_{\rm fs}\), depending only on
\(f(x_0)-f^*\), \(L\), \(\sigma_{\rm fs}\), \(\diam(C)\), \(A_{\rm mb}\), and
\(\beta_0\), such that, for all \(k\ge0\),
\[
\mathbb E[f(x_k)-f^*]\le\frac{M_{\rm fs}}{k+2/\beta_0}.
\]
\end{theorem}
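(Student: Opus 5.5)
The plan is to reproduce the standard open-loop Frank--Wolfe recursion, with the full-gap term replaced by the approximate-oracle inequality applied to the stochastic gradient, plus an extra bias term controlled by the mini-batch variance. Let $h_k:=f(x_k)-f^*$ and $D:=\diam(C)$. Since $d_k=s_k-x_k$ satisfies $\|d_k\|\le D$, $L$-smoothness gives
\[
f(x_{k+1})\le f(x_k)+\alpha_k\langle\nabla f(x_k),d_k\rangle+\tfrac{L}{2}\alpha_k^2D^2 .
\]
We split $\langle\nabla f(x_k),d_k\rangle=\langle\widehat g_k,d_k\rangle+\langle\nabla f(x_k)-\widehat g_k,d_k\rangle$. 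The second term is at most $D\|\widehat g_k-\nabla f(x_k)\|$ by Cauchy--Schwarz.

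For the first term we condition on $(x_k,\widehat g_k)$. Because $P_k$ is drawn independently of the mini-batch, $U_k$ is still Haar--Stiefel under this conditioning. The assumed deterministic approximate-oracle inequality, i.e.\ Proposition~\ref{prop:approxOracle_global_clean} with $g_k$ replaced by the fixed vector $\widehat g_k$, then gives
\[
\mathbb E[\langle\widehat g_k,d_k\rangle\mid x_k,\widehat g_k]\le-\beta_0\widehat\Delta_k,\qquad \widehat\Delta_k:=\max_{y\in C}\langle\widehat g_k,x_k-y\rangle .
\]
The case $\widehat g_k=0$ is trivial, since then both sides vanish. Next, $\widehat\Delta_k\ge\langle\widehat g_k,x_k-s(x_k)\rangle\ge\Delta(x_k)-D\|\widehat g_k-\nabla f(x_k)\|$, and convexity gives $\Delta(x_k)\ge h_k$. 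Using $\beta_0\le1$, the total bias is at most $(1+\beta_0)\alpha_kD\|\widehat g_k-\nabla f(x_k)\|\le 2\alpha_kD\|\widehat g_k-\nabla f(x_k)\|$.

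The mini-batch is drawn i.i.d.\ uniformly from $\{1,\dots,N\}$, conditionally on $x_k$. Hence $\mathbb E[\|\widehat g_k-\nabla f(x_k)\|^2\mid x_k]\le\sigma_{\rm fs}^2/b_k$, and by Jensen $\mathbb E\|\widehat g_k-\nabla f(x_k)\|\le\sigma_{\rm fs}\sqrt{A_{\rm mb}}/(k+2/\beta_0)$. Taking total expectations and writing $t:=k+2/\beta_0$, so that $\alpha_k=2/(\beta_0t)$, we obtain
\[
\mathbb E h_{k+1}\le\Bigl(1-\tfrac{2}{t}\Bigr)\mathbb E h_k+\frac{K}{t^2},\qquad K:=\frac{2LD^2}{\beta_0^2}+\frac{4D\sigma_{\rm fs}\sqrt{A_{\rm mb}}}{\beta_0}.
\]
The key point is that the growing batch $b_k\propto t^2$ makes the bias term of order $\alpha_k/t\sim1/t^2$, the same order as the curvature term.

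The final step is the standard induction. Set $M_{\rm fs}:=\max\{2h_0/\beta_0,\,K\}$; the base case holds because $t_0=2/\beta_0$. Assuming $\mathbb E h_k\le M_{\rm fs}/t$, the recursion gives
\[
\mathbb E h_{k+1}\le M_{\rm fs}\Bigl(\tfrac1t-\tfrac1{t^2}\Bigr)\le\frac{M_{\rm fs}}{t+1},
\]
since $K\le M_{\rm fs}$ and $(t-1)(t+1)\le t^2$. I expect the main obstacle to be the conditioning step: it must be justified that the approximate-oracle inequality, which is stated for the true gradient, holds for an arbitrary fixed vector $\widehat g_k$. Its proof only uses the geometry of $C$ at $x_k$ and the direction of the linear functional, so it goes through verbatim; the rest is bookkeeping.
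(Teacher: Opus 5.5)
Your proposal is correct and follows the paper's proof essentially step for step. Both arguments split off $\langle\nabla f(x_k)-\widehat g_k,d_k\rangle$ and condition on $(x_k,\widehat g_k)$ so that the geometric approximate-oracle inequality applies to the realized vector $\widehat g_k$. Both then use the $D$-Lipschitz dependence of the gap on the gradient together with $\beta_0\le 1$ to obtain the bias $2\alpha_kD\|\widehat g_k-\nabla f(x_k)\|$, and close with the same constant $K$ (the paper's $B_{\rm fs}$) and the same induction with $M_{\rm fs}=\max\{2h_0/\beta_0,K\}$. The one step you leave implicit in the induction, $1-2/t\ge 0$, holds because $t\ge 2/\beta_0\ge 2$.
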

The proof, given in the appendix, combines the deterministic approximate-oracle
estimate with the standard mini-batch variance bound and the usual scalar
induction from growing-batch stochastic Frank--Wolfe analyses
\cite{HazanLuo2016VRPF,NegiarDresdnerTsaiElGhaouiLocatelloFreundPedregosa2020SFW,DresdnerVladareanRatschLocatelloCevherYurtsever2022OneSample}.

\section{Random sections improve local quadratic models}
\label{sec:quadratic_ellipsoid_gain}

The preceding results show that random sections preserve enough Frank--Wolfe
progress to guarantee convergence. We now highlight a second effect: for
short-step rules, random sections can also improve the local curvature model
that determines the admissible step size.

This effect is relevant beyond exact quadratics. Suppose \(f\) is convex, \(\mathcal{C}^2\)
near \(C\), and has \(M_H\)-Lipschitz Hessian. Conditionally on the past,
\(Q_k:=\nabla^2 f(x_k)\) is deterministic, and for any
\(d_k\in\range(P_k^\top)\), 
$
f(x_k+t d_k)
\le
f(x_k)
+t\langle g_k,d_k\rangle
+\frac{t^2}{2}d_k^\top Q_kd_k
+\frac{M_H}{6}t^3\|d_k\|^3,\ t\in[0,1].
$
Thus, on a local regime where
$
\frac{M_H}{L_{P_k}}\alpha_k\|d_k\|\to0,\ 
L_{P_k}:=\lambda_{\max}(P_kQ_kP_k^\top),
$
the cubic remainder is negligible relative to the quadratic short-step decrease.
The exact quadratic case below isolates the compressed-Hessian mechanism seen
by RSFW.

Consider$
f(x)=\frac12x^\top Qx+\langle r,x\rangle+c,\  Q\succeq0,$
over the ellipsoid $C=\{x\in\mathbb R^n:x^\top Mx\le1\},\ M\succ0.$ For a sampled section, define
\[
A_k:=P_kMP_k^\top,
\qquad
b_k:=P_kMx_k,
\qquad
\delta_k:=1-x_k^\top Mx_k+b_k^\top A_k^{-1}b_k,
\qquad
L_{P_k}:=\lambda_{\max}(P_kQP_k^\top).
\]
When \(\delta_k>0\), define the section geometry constant
\begin{equation}
\beta_{\mathrm{sec},k}
:=
\frac{\lambda_{\min}(A_k)}
{2\sqrt{\delta_k\,\lambda_{\max}(A_k)}}.
\label{eq:beta_sec_k_main}
\end{equation}
This coefficient is the geometric curvature scale of the ellipsoidal section:
the section gap satisfies
$
\frac{\mathrm{gap}_k}{\|d_k\|^2}
\ge
\beta_{\mathrm{sec},k}\|P_kg_k\|.
$
Equivalently, if
$
R_{\min,k}:=\sqrt{\frac{\delta_k}{\lambda_{\max}(A_k)}},
\ 
R_{\max,k}:=\sqrt{\frac{\delta_k}{\lambda_{\min}(A_k)}},
$
are the smallest and largest section semi-axis bounds, then
$
\beta_{\mathrm{sec},k}
=
\frac{R_{\min,k}}{2R_{\max,k}^2}.$

\begin{proposition}[Quadratic short-step model on a random ellipsoidal section]
\label{prop:random_ellipsoid_short_step_improvement}
Let \(P_k\in\mathbb R^{d\times n}\) be an independent Haar--Stiefel matrix.
Assume \(\delta_k>0\), \(P_kg_k\neq0\), and that the quadratic short-step is on
the genuine short-step branch \(\alpha_k^{\rm quad}\in(0,1)\). Then
\begin{equation}
h_{k+1}
\le
h_k
-
\frac{\beta_{\mathrm{sec},k}}{2L_{P_k}}
\,\|P_kg_k\|\,\mathrm{gap}_k,
\label{eq:random_ellipsoid_short_step_descent_beta_sec}
\end{equation}
where \(h_k:=f(x_k)-f^*\),
\(\mathrm{gap}_k:=\langle g_k,x_k-s_k\rangle\), and \(s_k\) is the exact LMO
over \(C\cap(x_k+\operatorname{range}(P_k^\top))\).

Moreover, let
$
\bar\mu:=\frac{\operatorname{tr}(M)}{n},
\ 
\bar\lambda:=\frac{\operatorname{tr}(Q)}{n},
$
assume $\bar \lambda>0$, and define
$
r_M:=\frac{\operatorname{err}_M(d,\eta/2)}{\bar\mu},\ 
r_Q:=\frac{\operatorname{err}_Q(d,\eta/2)}{\bar\lambda},
$
where the Haar-compression error \(\operatorname{err}_H\) is given in
Proposition~\ref{prop:haar_spectral_sandwich}. If \(r_M<1\), then with
probability at least \(1-\eta\),
\begin{equation}
\beta_{\mathrm{sec},k}
\ge
\frac{\sqrt{\bar\mu}}{2\sqrt{\delta_k}}
\frac{1-r_M}{\sqrt{1+r_M}},
\qquad
L_{P_k}\le \bar\lambda(1+r_Q).
\label{eq:beta_sec_and_LP_random_bounds}
\end{equation}
Equivalently, the section curvature coefficient in
\eqref{eq:random_ellipsoid_short_step_descent_beta_sec} improves over the
conservative ambient coefficient by the factor
$
\frac{\beta_{\mathrm{sec},k}/(2L_{P_k})}
     {\beta_{\mathrm{amb}}/(2L_{\mathrm{amb}})}
\ge
\mathcal I_k(\eta),$
where
$
L_{\mathrm{amb}}:=\lambda_{\max}(Q),
\ 
\beta_{\mathrm{amb}}
:=
\frac{\lambda_{\min}(M)}{2\sqrt{\lambda_{\max}(M)}},
$
and
\begin{equation}
\mathcal I_k(\eta)
:=
\frac{\lambda_{\max}(Q)}{\bar\lambda}
\frac{\sqrt{\bar\mu\,\lambda_{\max}(M)}}
     {\lambda_{\min}(M)\sqrt{\delta_k}}
\frac{1-r_M}{(1+r_Q)\sqrt{1+r_M}}.
\label{eq:Ik_eta}
\end{equation}
\end{proposition}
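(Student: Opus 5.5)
The proof has three parts: a deterministic one-step descent on the section, random spectral bounds from the Haar sandwich, and the ratio comparison.

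\textbf{Step 1: exact quadratic descent.} Since \(f\) is quadratic and \(d_k=s_k-x_k\in\range(P_k^\top)\), write \(d_k=P_k^\top w\). Then, exactly,
\[
f(x_k+td_k)=f(x_k)-t\,\mathrm{gap}_k+\tfrac{t^2}{2}d_k^\top Qd_k,
\]
and \(d_k^\top Qd_k=w^\top P_kQP_k^\top w\le L_{P_k}\|d_k\|^2\) because \(P_kP_k^\top=I_d\). On the genuine short-step branch, the step \(\alpha_k=\mathrm{gap}_k/(L_{P_k}\|d_k\|^2)\in(0,1)\) gives
\[
h_{k+1}\le h_k-\frac{\mathrm{gap}_k^2}{2L_{P_k}\|d_k\|^2}.
\]
It remains to show \(\mathrm{gap}_k/\|d_k\|^2\ge\beta_{\mathrm{sec},k}\|P_kg_k\|\). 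Writing the gap as \(\mathrm{gap}_k\cdot\bigl(\mathrm{gap}_k/\|d_k\|^2\bigr)\) then yields \eqref{eq:random_ellipsoid_short_step_descent_beta_sec}.

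\textbf{Step 2: the section gap bound.} In coordinates \(w\), the section \(C\cap(x_k+U_k)\) is \(\{w:w^\top A_kw+2b_k^\top w+x_k^\top Mx_k\le1\}\). This equals \(\{(w-w_c)^\top A_k(w-w_c)\le\delta_k\}\) with \(w_c=-A_k^{-1}b_k\), an ellipsoid in \(\mathbb R^d\). The point \(w_0=0\) (the iterate \(x_k\)) lies in it. With \(p:=P_kg_k\), the LMO is explicit:
\[
w_s=w_c-\sqrt{\delta_k}\,A_k^{-1}p/\|p\|_{A_k^{-1}}.
\]
The cleanest route is a strong-convexity argument for the section itself. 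An ellipsoid with semi-axes between \(R_{\min,k}\) and \(R_{\max,k}\) is strongly convex as a set, with parameter \(R_{\min,k}/(2R_{\max,k}^2)=\beta_{\mathrm{sec},k}\). The standard Garber--Hazan inequality for a \(\beta\)-strongly convex set \(K\) with LMO point \(s\) and any \(x\in K\) states
\[
\langle g,x-s\rangle\ge\beta\|x-s\|^2\|g\|,
\]
obtained by applying the ball inclusion at the midpoint and comparing against the linear minimizer. Applying it inside \(U_k\) with gradient \(p\) and \(\|w\|=\|d_k\|\) gives the claim. The main obstacle is verifying the strong-convexity constant of the ellipsoid. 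I would check it through the curvature of \(\{z^\top A z\le\delta\}\): the normal curvature at any boundary point is at least \(\lambda_{\min}(A)/\bigl(\sqrt{\delta}\sqrt{\lambda_{\max}(A)}\bigr)\), which equals \(R_{\min}/R_{\max}^2\). Halving this gives the set constant \(\beta_{\mathrm{sec},k}\). Alternatively, one can compute directly, maximizing \(\|w\|^2\) over the ellipse at fixed gap.

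\textbf{Step 3: random bounds.} Apply Proposition~\ref{prop:haar_spectral_sandwich} to \(H=M\) and \(H=Q\), each with failure probability \(\eta/2\), and take a union bound. This gives, with probability at least \(1-\eta\),
\[
\bar\mu(1-r_M)\le\lambda_{\min}(A_k)\le\lambda_{\max}(A_k)\le\bar\mu(1+r_M),
\qquad
L_{P_k}\le\bar\lambda(1+r_Q).
\]
Substituting into \eqref{eq:beta_sec_k_main} gives
\[
\beta_{\mathrm{sec},k}\ge\frac{\bar\mu(1-r_M)}{2\sqrt{\delta_k\,\bar\mu(1+r_M)}},
\]
which is \eqref{eq:beta_sec_and_LP_random_bounds}; here \(r_M<1\) ensures positivity. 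Finally, dividing \(\beta_{\mathrm{sec},k}/L_{P_k}\) by \(\beta_{\mathrm{amb}}/L_{\mathrm{amb}}\) and inserting these bounds produces \(\mathcal I_k(\eta)\) in \eqref{eq:Ik_eta} by direct algebra.
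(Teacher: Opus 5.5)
\textbf{Steps 1 and 3 match the paper; Step 2 takes a different route that needs two small repairs.} Step 1 (exact quadratic expansion with $d_k^\top Qd_k\le L_{P_k}\|d_k\|^2$, and the genuine-branch decrease $\mathrm{gap}_k^2/(2L_{P_k}\|d_k\|^2)$) is the paper's argument. So is Step 3 (two Haar sandwiches at level $\eta/2$ plus a union bound), and your $\mathcal I_k(\eta)$ algebra checks out.

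In Step 2 the paper never invokes set strong convexity. It normalizes the section to the unit ball via $y=\delta_k^{-1/2}A_k^{1/2}(z+A_k^{-1}b_k)$. It then uses the elementary ball bound $\|y_x+\hat w\|^2\le 2(1+\langle y_x,\hat w\rangle)$, valid because $\|y_x\|\le1$. Finally it pays $\lambda_{\min}(A_k)$ when mapping $\|y_x-y^*\|$ back to $\|d_k\|$, and $\lambda_{\max}(A_k)$ when lower-bounding $\|A_k^{-1/2}P_kg_k\|$ by $\|P_kg_k\|/\sqrt{\lambda_{\max}(A_k)}$. You instead package that same distortion into a strong-convexity modulus of the slice and apply a generic scaling inequality. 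Your route gives a conceptual reading of $\beta_{\mathrm{sec},k}=R_{\min,k}/(2R_{\max,k}^2)$, matching the paper's remark, and transfers verbatim to any section whose modulus is known. The paper's computation is explicit, self-contained, and needs neither of the two ingredients below.

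\textbf{Repair 1: the midpoint loses a factor of 2.} With the paper's convention (ball radius $\beta\lambda(1-\lambda)\|x-y\|^2$), compare $s$ with $\lambda x+(1-\lambda)s-\beta\lambda(1-\lambda)\|x-s\|^2\,p/\|p\|$. This gives $\langle p,x-s\rangle\ge\beta(1-\lambda)\|x-s\|^2\|p\|$. Taking $\lambda=1/2$ yields only $\beta/2$; this is exactly what Corollary~\ref{cor:ScalingSCB_subspace} does. Your descent coefficient would then be $\beta_{\mathrm{sec},k}/(4L_{P_k})$, not the stated $\beta_{\mathrm{sec},k}/(2L_{P_k})$. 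You must let $\lambda\downarrow0$ instead; the resulting constant $\beta$ is sharp, since it is attained on the unit ball with $x$ on the boundary.

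\textbf{Repair 2: curvature alone does not give the modulus.} A pointwise lower bound on normal curvature does not by itself yield the global ball-inclusion property that defines set strong convexity. That passage is Blaschke's rolling theorem and would have to be cited. A cleaner check avoids curvature entirely. In your coordinates the section is $w_c+T\{y:\|y\|\le1\}$ with $T=\sqrt{\delta_k}A_k^{-1/2}$, whose singular values lie in $[R_{\min,k},R_{\max,k}]$. The unit ball is $\tfrac12$-strongly convex. $T$ maps a ball of radius $r$ onto a set containing a ball of radius $R_{\min,k}r$, and $\|T^{-1}(x-y)\|\ge\|x-y\|/R_{\max,k}$. Together these give modulus $R_{\min,k}/(2R_{\max,k}^2)$ directly. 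This is precisely the paper's distortion bookkeeping, done at the level of the set rather than the pair $(x_k,s_k)$.
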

The comparison should be read at the level of the curvature coefficient
\(\beta_{\mathrm{sec},k}/(2L_{P_k})\). The section bound is governed by the
average spectral levels \(\bar\lambda\) and \(\bar\mu\), up to the section-size
factor \(\delta_k\) and explicit Haar-compression errors. Relative to the
ambient conservative coefficient, the gain can be large when the objective
curvature or ellipsoid geometry is anisotropic.

\section{Numerical illustration}
\label{sec:numerics}

The experiments illustrate the oracle mechanisms predicted by the analysis, not
an exhaustive benchmark. All kernel experiments use HIGGS subsets~\cite{BaldiSadowskiWhiteson2014HIGGS,Whiteson2014HIGGSUCI}
with binary labels \(y_i\in\{-1,+1\}\). Given inputs \(x_i\), we form a frozen
random-feature matrix \(\Phi\in\mathbb R^{n\times m}\) and use the matrix-free
kernel \(K=\Phi\Phi^\top+\rho_K I\). The optimization variable is the kernel
coefficient vector \(a\in\mathbb R^n\), with score \((Ka)_i\), and the objective is
\(f(a)=n^{-1}\sum_i\log(1+\exp(-y_i(Ka)_i))+(\lambda/2)a^\top Ka\), over an
ellipsoid \(a^\top Ha\le1\). The non-HIGGS experiment is graph semi-supervised learning on
Covertype~\cite{Blackard1998Covertype}. We write
\(\mathcal I_{\rm lab}\) for the labeled nodes and use
\(f(u)=(2|\mathcal I_{\rm lab}|)^{-1}
\|u_{\mathcal I_{\rm lab}}-y_{\mathcal I_{\rm lab}}\|^2\), with
non-ellipsoidal constraint
\(C_G=\{u:(\mu/2)\|u\|^2+(\gamma_G/2)u^\top L_Gu+(\beta_4/4)\|u\|_4^4\le\tau\}\).

For ellipsoids, full FW uses the full LMO
\(s(g)=-H^{-1}g/\sqrt{g^\top H^{-1}g}\), while RSFW solves the same problem in
a section \(a+Sz\), using \(A=S^\top HS\), \(b=S^\top Ha\), and \(a^\top Ha\).
For the graph experiment, the full LMO has no closed form and requires solving a nonlinear optimality
system involving the graph Laplacian,
whereas RSFW compresses the graph part to
\(S^\top(\mu I+\gamma_G L_G)S\) and \(S^\top(\mu I+\gamma_G L_G)u\). The graph
short step uses the exact directional denominator
\(|\mathcal I_{\rm lab}|^{-1}\|(s-u)_{\mathcal I_{\rm lab}}\|^2\). Full FW is run once; RSFW is
averaged over independent subspace draws with one-standard-deviation bands.

Figure~\ref{fig:numerics_all} shows four regimes. In the fixed-\(L\) grid, each
method uses the smallest feasible short-step constant from the same grid, where
feasible means that the resulting short steps do not increase the objective
over the run; the selected values are \(L_{\rm full}=10^2\) and
\(L_{\rm RSFW}=1\). The section-curvature diagnostic computes curvature from
\(\lambda_{\max}((KS)^\top(KS)/(4n)+\lambda S^\top KS)\). The matrix-free
ellipsoid experiment uses \(H=K+\rho_HI\) and gives full FW a strong
feature-direct oracle, so the comparison does not rely on CG failure. The graph
experiment shows that, even on a moderate-size non-ellipsoidal problem, RSFW can
beat full FW in wall-clock time by replacing expensive ambient LMOs with cheaper
section oracles. Further details are in Appendix~\ref{app:numerics_details}.

\begin{figure}[t]
\centering
\captionsetup[subfigure]{font=footnotesize,skip=3pt}
\captionsetup{font=footnotesize,skip=4pt}
\setlength{\abovecaptionskip}{3pt}
\setlength{\belowcaptionskip}{-2pt}

\begin{subfigure}[t]{0.485\linewidth}
    \centering
    \rsfwincludegraphics{width=\linewidth}{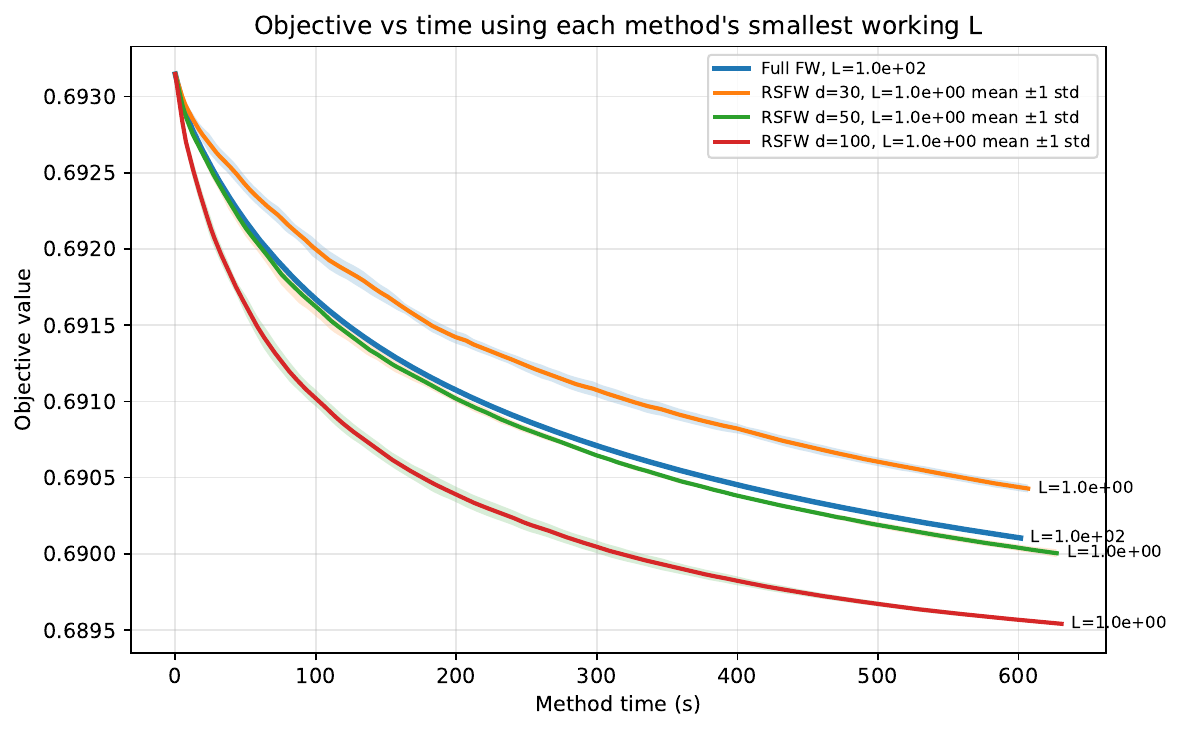}
    \caption{Fixed-\(L\) grid.}
    \label{fig:numerics_grid}
\end{subfigure}
\hfill
\begin{subfigure}[t]{0.485\linewidth}
    \centering
    \rsfwincludegraphics{width=\linewidth}{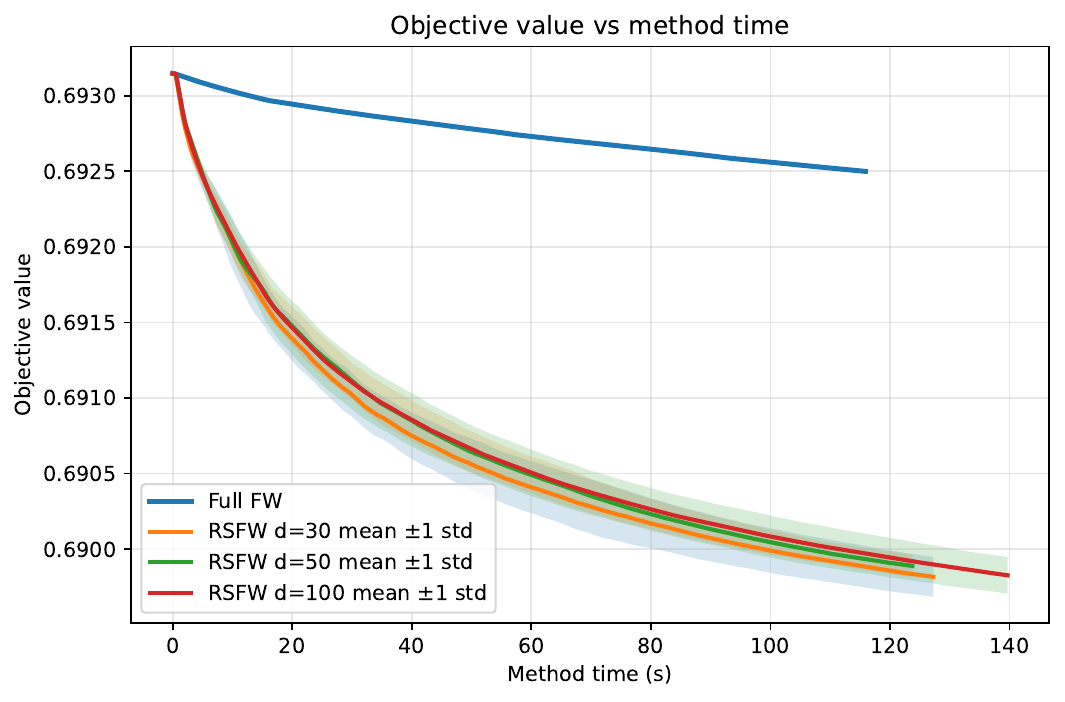}
    \caption{Section curvature.}
    \label{fig:numerics_diag}
\end{subfigure}

\vspace{0.35em}

\begin{subfigure}[t]{0.485\linewidth}
    \centering
    \rsfwincludegraphics{width=\linewidth}{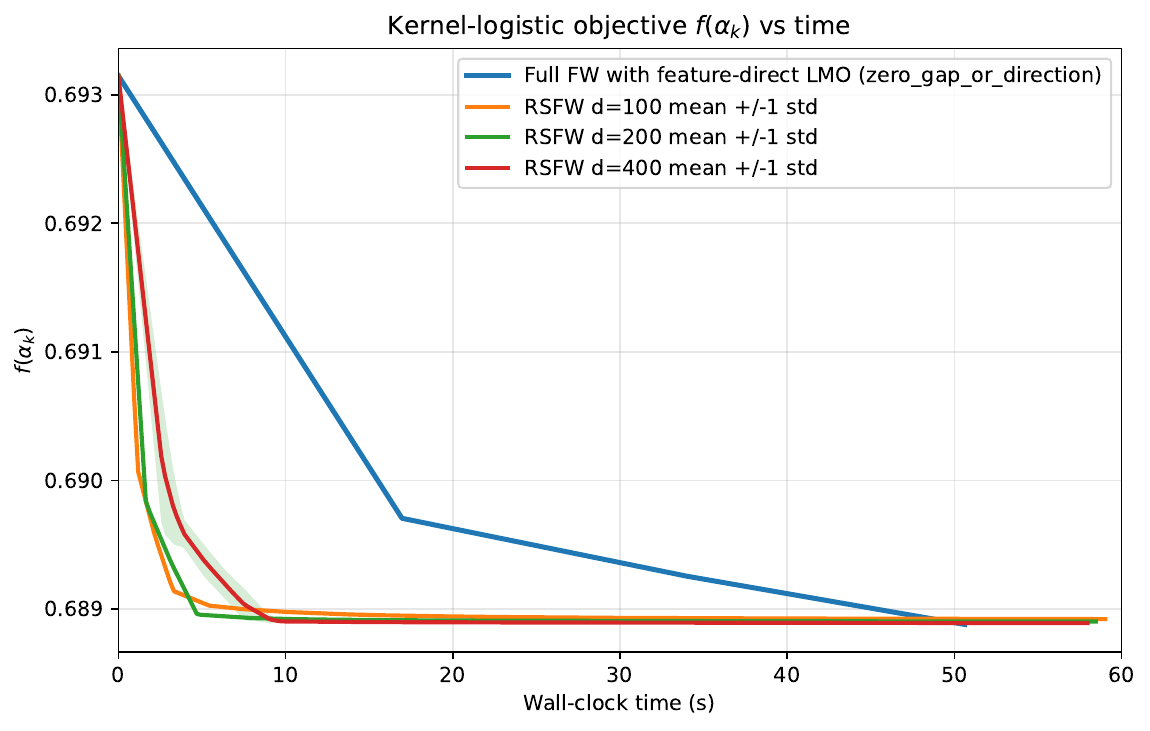}
    \caption{Matrix-free ellipsoid.}
    \label{fig:numerics_matvec}
\end{subfigure}
\hfill
\begin{subfigure}[t]{0.485\linewidth}
    \centering
    \rsfwincludegraphics{width=\linewidth}{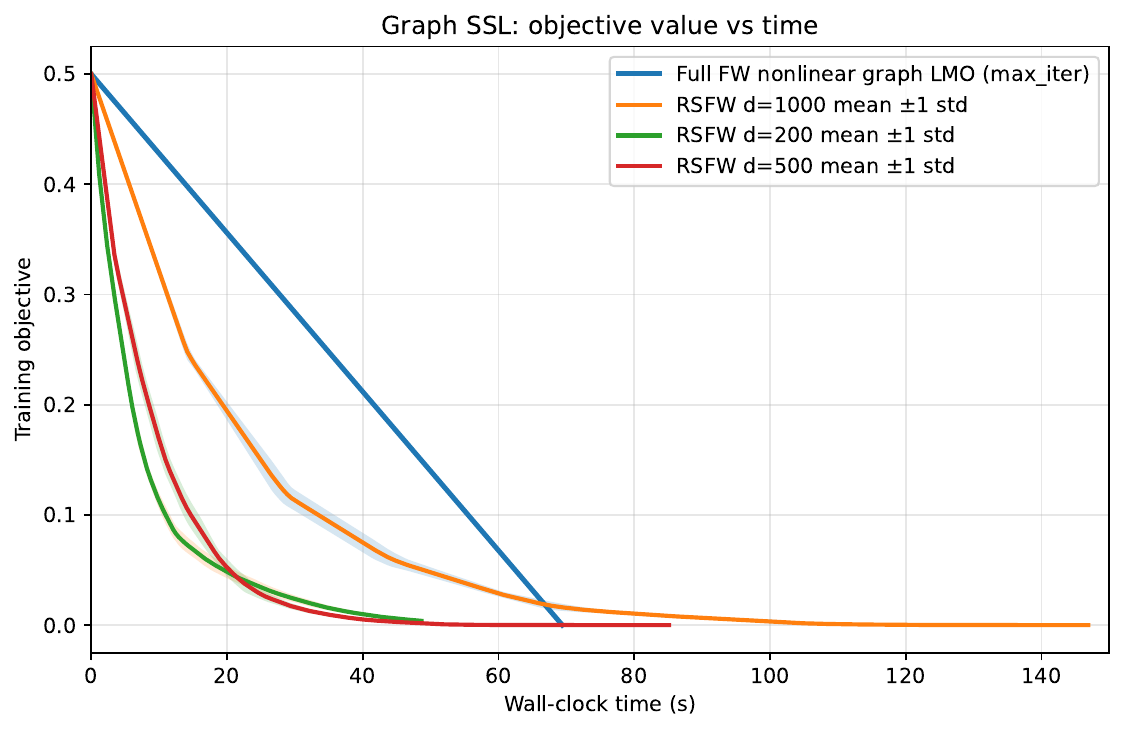}
    \caption{Graph SSL.}
    \label{fig:numerics_graph}
\end{subfigure}

\caption{Numerical illustrations of RSFW. The panels show: smallest feasible
short-step constant, section-wise curvature, matrix-free ellipsoid oracles, and
a non-ellipsoidal graph constraint.}
\label{fig:numerics_all}
\end{figure}
\section{Conclusion and limitations}
We developed a geometric convergence theory for random-subspace
Frank--Wolfe methods. The main message is that, on sufficiently curved feasible
sets, exact LMOs over random affine sections retain a dimension-explicit
fraction of the full Frank--Wolfe descent geometry. This yields
approximate-oracle guarantees, open-loop \(O(1/k)\) convergence, path-wise
variants, and linear convergence under short steps, a gradient lower bound, and
strong convexity. The same geometric viewpoint also explains why, for quadratic
models over balls and ellipsoids, short-step rules can benefit from compressed
\(d\times d\) curvature quantities rather than worst-case ambient constants.
The experiments illustrate these mechanisms on kernel-logistic and graph-based
examples, and show that RSFW can outperform full FW in wall-clock time when
ambient LMOs are substantially more expensive than section LMOs.
\paragraph{Limitations.}
The guarantees rely on genuine geometric curvature and boundary regularity; in
particular they do not cover polytopes or sharp-cornered domains. Outer
parallel regularization can turn a compact strongly convex set into a
\(\mathcal C^{1,1}\) body to which the theory applies, but this changes the feasible
region and weakens the constants, so it should be viewed as a controlled
regularized variant rather than a free removal of the assumption.

Future work includes extending the section-efficiency analysis beyond strong convexity, developing structured or data-adaptive random sections,
and designing practical inexact-oracle variants with provable guarantees.

\bibliographystyle{plain}
\bibliography{refs}

\appendix


\newpage 
\paragraph{Appendix roadmap.}
Appendix~\ref{app:related_work} discusses additional related work. Appendix~\ref{app:preliminaries} collects the deterministic and probabilistic
building blocks: tangent balls, ball-section formulas, and the two-dimensional
Haar--Stiefel Gram decomposition. Appendix~\ref{app:expected-gamma} proves the
expectation identities and lower bounds for the section-efficiency ratio
\(\Gamma\). Appendix~\ref{app:expected-to-descent} turns these ratio bounds into
approximate-oracle and convergence guarantees by comparing \(C\) with tangent
balls. Appendix~\ref{app:hp-linear} gives the high-probability and almost-sure
variants. Appendix~\ref{app:random_haar_quadratic} proves the Haar spectral
compression estimate and the quadratic ellipsoidal short-step result.
\section{Additional related work}
\label{app:related_work}

\paragraph{Geometry-dependent Frank--Wolfe theory.}
Classical Frank--Wolfe methods replace projections by linear minimization
oracles and provide natural primal-dual gap certificates
\cite{FrankWolfe1956,Jaggi2013FW}. Modern surveys emphasize the breadth of this
projection-free viewpoint and the role of oracle models in FW-type methods
\cite{BomzeRinaldiZeffiro2024FWFriends}. A large body of work shows that FW
rates depend strongly on the geometry of the feasible region. On polytopes,
away-step and pairwise variants can achieve global linear convergence
\cite{LacosteJulienJaggi2015FWLinear}. On curved domains, vanilla FW can be
faster than the worst-case \(O(1/k)\) rate; strongly convex sets yield improved
rates \cite{GarberHazan2015FW}, and uniformly convex sets provide a broader
geometric framework in the existing FW literature
\cite{KerdreuxDAspremontPokutta2021UniformConvexFW}. Recent work has also
studied affine-invariant analyses and open-loop stepsizes
\cite{KerdreuxLiuLacosteJulienScieur2021Affine,WirthKerdreuxPokutta2023OpenLoop,WirthPenaPokutta2025OpenLoop}.
Our work is complementary: instead of modifying the FW variant or focusing only
on stepsize design, we change the oracle domain by restricting each LMO to a
random affine section.

\paragraph{Coordinate, block-coordinate, and product-domain methods.}
Randomized coordinate and block-coordinate methods reduce per-iteration cost by
updating only part of the variable
\cite{Nesterov2012CoordinateDescent,RichtarikTakac2014RBCD}. Block-coordinate
Frank--Wolfe methods typically exploit product or block structure in the
feasible set \cite{LacosteJulienJaggiSchmidtPletscher2013BlockCoordinateFW},
and recent projection-free methods on product domains further develop this
viewpoint \cite{BomzeRinaldiZeffiro2025ProductDomains}. RSFW has a different
motivation: it does not require a product decomposition of \(C\). Instead, it
samples a random low-dimensional affine section of the original feasible region
and solves the LMO exactly on that section.

\paragraph{Random embeddings and random-subspace optimization.}
Random subspaces have also been used to reduce high-dimensional optimization
problems to lower-dimensional subproblems. In unconstrained optimization, a
common strategy is to choose a random direction or random subspace and then take
a gradient-free, Newton-type, regularized Newton, or model-based step in that
subspace; examples include random pursuit
\cite{StichMullerGartner2013RandomPursuit}, stochastic gradient-free subspace
methods \cite{KozakBeckerDoostanTenorio2021Subspace}, randomized subspace
regularized Newton methods \cite{FujiPoirionTakeda2025RSRNM}, and
model-based derivative-free subspace methods
\cite{CartisRoberts2023SubspaceDFO}. In Bayesian optimization, random
embeddings reduce the search to a low-dimensional embedded domain, as in REMBO
\cite{WangZoghiHutterMathesonDeFreitas2013REMBO} and subsequent embedded
Bayesian-optimization frameworks
\cite{NayebiMunteanuPoloczek2019EmbeddedBO}. Closest in spirit to our
feasible-section viewpoint are the works of Cartis, Massart, and Otemissov,
who study global optimization using feasible random embeddings, including a
bound-constrained setting where the reduced subproblem optimizes the original
objective over a random low-dimensional subspace subject to feasibility
constraints
\cite{CartisMassartOtemissov2023BoundConstrained,CartisMassartOtemissov2023GlobalEmbeddings}.
These methods are not Frank--Wolfe methods: their reduced subproblems optimize,
exactly or approximately, the original nonlinear objective or a local model. Their convergence analysis is driven by the probability that a random reduced
problem is successful; in the general setting this probability is expressed
through conic integral geometry and can be small in high-dimensional
random-intersection regimes, while more favorable polynomial bounds are obtained
in special aligned low-effective-dimensional settings. By
contrast, RSFW is an iterative convex first-order method: at each current
iterate, it solves a linear minimization problem exactly over the random
feasible section
\(C\cap(x_k+\operatorname{range}(P_k^\top))\), and the analysis quantifies the
resulting one-step Frank--Wolfe progress.

\paragraph{Sketching, subspace approximations, and representation-side randomization.}
Sketching and random-subspace methods are widely used to reduce computational,
linear-algebra, or storage costs. Examples include randomized sketches of convex
programs and iterative Hessian sketching for constrained least-squares problems
\cite{PilanciWainwright2015SharpSketches,PilanciWainwright2016IterativeHessianSketch},
Newton sketching for randomized second-order optimization
\cite{PilanciWainwright2017NewtonSketch}, randomized subspace approximations
for high-dimensional optimization
\cite{LacottePilanciPavone2019AdaptiveRandomSubspaces,LacottePilanci2022RandomizedSubspaceMethods},
randomized sketch descent for linearly constrained optimization
\cite{NecoaraTakac2021RandomSketchDescent}, and sketch-and-project methods for
linear systems and related iterative algorithms
\cite{GowerRichtarik2015SketchAndProject,DerezinskiRebrova2024SharpSketchProject}.
Sketch-based low-rank matrix optimization and sketchy representations provide
another example of reducing storage or representation costs
\cite{YurtseverUdellTroppCevher2017Sketchy}. These methods sketch or restrict
the linear algebra, Hessian or system solves, variable subspaces, constraint
systems, or stored iterate representations used by the algorithm. RSFW instead
restricts the feasible domain seen by the LMO and solves the resulting section
oracle exactly.

\paragraph{Inexact, lazy, decomposed, and stochastic Frank--Wolfe.}
Several FW variants reduce oracle cost by allowing approximate LMOs, weak
separation, lazy oracle calls, blended updates, or adaptive curvature estimates
\cite{BraunPokuttaZink2017Lazifying,BraunPokuttaTuWright2019Blended,PedregosaNegiarAskariJaggi2020Backtracking,ZhouSun2022ApproximateFW}.
Other approaches replace a difficult LMO over an intersection by calls to LMOs
over the individual component sets \cite{WoodstockPokutta2025SplittingCG}.
Stochastic FW methods instead approximate the gradient through sampling,
mini-batches, or variance reduction
\cite{HazanKale2012ProjectionFreeOnline,HazanLuo2016VRPF,LocatelloYurtseverFercoqCevher2019SFW,NegiarDresdnerTsaiElGhaouiLocatelloFreundPedregosa2020SFW,DresdnerVladareanRatschLocatelloCevherYurtsever2022OneSample}.
RSFW is orthogonal to these approaches: the section oracle is exact, and the
approximation comes from restricting the feasible region to a random affine
section. In the finite-sum setting, the random section can be sampled
independently of the stochastic gradient estimator, allowing the deterministic
oracle-side analysis to combine with standard mini-batch variance bounds.

\paragraph{Spectrahedral projection-free methods.}
Optimization over the spectrahedron is a canonical setting where projection-free
methods are attractive because projections require expensive eigenvalue
decompositions, while FW-type steps can often be implemented through leading
eigenvector computations \cite{Garber2016Spectrahedron}. Recent work has also
developed randomized FW-type methods over the spectrahedron with linear
convergence under additional structural assumptions such as quadratic growth and
strict complementarity \cite{Garber2026RandomizedSpectrahedron}. RSFW is
different in scope: it does not exploit the facial structure of the
spectrahedron, but studies the effect of restricting the LMO to random affine
sections of curved feasible sets.

\section{Technical preliminaries}
\label{app:preliminaries}

\subsection{Curvature inequality}

\begin{lemma}[Curvature inequality]
\label{lem:curvature_clean}
Under Assumption~\ref{ass:f_clean}, for any $x,y\in C$ and $\alpha\in[0,1]$, letting $x^+=(1-\alpha)x+\alpha y$, we have
\begin{equation}
f(x^+)\le f(x)+\alpha\langle \nabla f(x),y-x\rangle+\frac{L}{2}\alpha^2\|y-x\|^2.
\label{eq:curvineq_clean}
\end{equation}
\end{lemma}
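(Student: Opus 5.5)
The plan is to derive \eqref{eq:curvineq_clean} from the standard descent lemma for functions with Lipschitz gradient, applied along the segment joining \(x\) and \(y\).

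Convexity of \(C\) (Assumption~\ref{ass:C_clean}) guarantees that the whole segment \(x+t(y-x)\), \(t\in[0,\alpha]\), lies in \(C\). On this segment \(f\) is continuously differentiable with \(L\)-Lipschitz gradient, by Assumption~\ref{ass:f_clean}.

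Set \(h:=x^+-x=\alpha(y-x)\) and \(\phi(t):=f(x+th)\) for \(t\in[0,1]\). Then \(\phi\) is \(\mathcal C^1\), and the fundamental theorem of calculus gives
\[
f(x^+)-f(x)-\langle\nabla f(x),h\rangle=\int_0^1\langle \nabla f(x+th)-\nabla f(x),h\rangle\,dt .
\]
By Cauchy--Schwarz and the Lipschitz bound \(\|\nabla f(x+th)-\nabla f(x)\|\le Lt\|h\|\), the integrand is at most \(Lt\|h\|^2\). Integrating gives the bound \(\tfrac L2\|h\|^2\).

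Substituting \(h=\alpha(y-x)\) turns this into exactly \eqref{eq:curvineq_clean}.

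The only point needing care is that \(f\) is defined only on \(C\), so the Lipschitz condition and differentiability may be used only at points of \(C\). This is handled by the convexity remark in the first step. Beyond that, the argument is routine, and convexity of \(f\) is not even needed.
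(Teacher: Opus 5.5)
Your proof is correct. It is the standard descent-lemma argument: the fundamental theorem of calculus along the segment, then Cauchy--Schwarz with the \(L\)-Lipschitz gradient bound, applied with \(h=\alpha(y-x)\), and convexity of \(C\) keeps the segment feasible. The paper states this lemma without proof and treats it as exactly this standard consequence of \(L\)-smoothness, so your route is the one the paper implicitly relies on.
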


\subsection{Inner tangent balls}

\begin{lemma}[Inner tangent balls; uniform radius lower bound]
\label{lem:tangentBalls_clean}
Assume $C$ satisfies Assumption~\ref{ass:C_clean} and $\partial C$ is $\mathcal{C}^2$.
Then for every $x\in\partial C$ there exist an inward unit normal $\mu(x)\in S^{n-1}$ and a radius $R(x)>0$ such that
\begin{equation}
B(x+R(x)\mu(x),R(x))\subset C.
\label{eq:tangentBall_clean}
\end{equation}
Moreover, there exists $R_{\min}>0$ depending only on $C$ such that $R(x)\ge R_{\min}$ for all $x\in\partial C$.
\end{lemma}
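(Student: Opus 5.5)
The plan is to prove the statement in two steps: first a local rolling ball at each boundary point, then a uniform radius via compactness. Since $\partial C$ is a compact $\mathcal C^2$ hypersurface bounding a convex body, at each $x\in\partial C$ there is a unique outer unit normal $\nu(x)$. I set $\mu(x):=-\nu(x)$. The map $x\mapsto\mu(x)$ is $\mathcal C^1$ on $\partial C$.

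I will work with a local graph representation. Near $x_0\in\partial C$, after a rigid motion, $\partial C$ is the graph $y_n=\phi(y')$ with $\phi\in\mathcal C^2$, $\phi(0)=0$, $\nabla\phi(0)=0$. Here $C$ lies locally in $\{y_n\ge\phi(y')\}$ and $\phi$ is convex. Compactness of $\partial C$ gives finitely many such charts, on balls of a uniform radius $\rho>0$, with a uniform bound $\|\nabla^2\phi\|\le K$. Equivalently, $K$ bounds the principal curvatures, i.e. the second fundamental form, which is continuous on the compact set $\partial C$.

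Given this bound, I claim that any ball tangent at $x$ with center $x+R\mu(x)$ and $R\le\min\{1/K,\rho/2\}$ lies in $C$. The local part of the claim reads as follows. Within the chart, the lower hemisphere of the ball is the graph of $R-\sqrt{R^2-|y'|^2}\ge |y'|^2/(2R)\ge K|y'|^2/2\ge\phi(y')$. This holds because Taylor's theorem with $\nabla\phi(0)=0$ gives $\phi(y')\le K|y'|^2/2$. So the ball stays above the boundary graph inside the chart cylinder.

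Because $R\le\rho/2$, the whole ball lies inside the chart neighborhood, so it lies in $C$. There is a subtle point here: one must check that the chart neighborhood intersected with $\{y_n\ge\phi\}$ is exactly $C$ locally. This is part of the definition of a $\mathcal C^2$ boundary, with chart height chosen uniformly by compactness. Setting $R_{\min}:=\min\{1/K,\rho/2\}$ then gives $R(x)\ge R_{\min}$ for all $x$.

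The main obstacle is the uniformity step, i.e. obtaining the constants $\rho$ and $K$ independent of the point. I would handle it through a Lebesgue-number argument on a finite atlas. Alternatively, one can use the $\mathcal C^1$ continuity of the normal map $\mu$: its Lipschitz constant $\mathrm{Lip}(\mu)$ on the compact $\partial C$ bounds the curvature. The standard rolling-ball lemma for $\mathcal C^{1,1}$ domains then gives radius $1/\mathrm{Lip}(\mu)$. Strong convexity is not needed for this lemma; convexity only ensures that $\mu$ is well defined and inward.
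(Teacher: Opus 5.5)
Your argument is correct in substance, but it takes a different route from the paper. The paper gives no argument of its own: it invokes Theorem~7 of \cite{SlotLaurent2022RollingBall}, a rolling-ball theorem for convex bodies with $\mathcal C^2$ boundary. You instead prove the statement directly. You graph $\partial C$ over the tangent plane at $x$, bound $\phi(y')\le K|y'|^2/2$ by Taylor's theorem, compare with the lower hemisphere $R-\sqrt{R^2-|y'|^2}\ge |y'|^2/(2R)$, and use $B(x+R\mu,R)\subset B(x,2R)$ with $2R\le\rho$ to stay inside the chart. This gives $R_{\min}=\min\{1/K,\rho/2\}$.

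Your route is self-contained. It also shows that only $\mathcal C^2$ regularity (or $\mathcal C^{1,1}$, via your Lipschitz-normal variant) plus compactness is used; strong convexity plays no role, consistent with Remark~\ref{rem:c11}.

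There are two caveats. First, the uniformity step carries all the work and is only sketched. A finite atlas gives charts with $\nabla\phi=0$ at finitely many centers. A Lebesgue number only places a uniform neighborhood of a general $x$ inside one of those charts, where in general $\nabla\phi\neq0$ at $x$, so your Taylor bound is not yet available there. To use it, you must re-graph $\partial C$ over $T_x\partial C$ on a disc of uniform radius with a uniform Hessian bound. This is a quantitative implicit-function argument, using that slopes are small on small charts; alternatively, you can redo the ball comparison in the tilted chart.

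Second, your radius depends on the chart size $\rho$ and not only on curvature. The global rolling-ball results your second variant appeals to give a curvature-intrinsic radius; for convex $\mathcal C^2$ bodies, Blaschke's rolling theorem gives $1/\kappa_{\max}$, where $\kappa_{\max}$ is the largest principal curvature. Since $R_{\min}$ enters $\kappa_0=2\beta_C R_{\min}$ and hence every rate, that sharper radius is what one would want in the constants. For the lemma as stated, though, any positive $R_{\min}$ suffices.
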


\begin{proof}
This is a standard rolling-ball result for convex bodies with $\mathcal{C}^2$ boundary. In particular, Theorem~7 in \cite{SlotLaurent2022RollingBall} states that a convex body with $\mathcal{C}^2$ boundary admits an inscribed tangent ball of uniform positive radius at every boundary point. Applying that theorem to $C$ yields \eqref{eq:tangentBall_clean} and the existence of a uniform lower bound $R_{\min}>0$.
\end{proof}

\begin{remark}\label{rem:c11}
    We say that \(\partial C\) is \(\mathcal{C}^{1,1}\) if, around every boundary point,
\(\partial C\) is locally the graph of a \(\mathcal{C}^1\) function whose gradient is
Lipschitz. Equivalently, the outer normal map is locally Lipschitz on
\(\partial C\). For compact convex bodies, \(\mathcal{C}^{1,1}\) boundary is sufficient to obtain a
uniform interior rolling-ball radius. Thus the \(\mathcal{C}^2\) assumption can be
replaced by \(\mathcal{C}^{1,1}\), or more directly by the uniform interior rolling-ball
condition used in Lemma~\ref{lem:tangentBalls_clean}.
\end{remark}

\subsection{Outer parallel regularization}
\label{app:parallel_regularization}

\begin{proposition}[Parallel regularization]
\label{prop:parallel_regularization}
Let \(C\subset\mathbb R^n\) be a compact \(\beta_C\)-strongly convex convex
set. For every \(\varepsilon>0\), the parallel body
\[
    C_\varepsilon:=C+\varepsilon B_2
\]
has \(\mathcal C^{1,1}\) boundary and is \(\beta_{C,\varepsilon}\)-strongly convex with
\[
    \beta_{C,\varepsilon}
    =
    \frac{\beta_C}{1+2\varepsilon\beta_C}.
\]
Moreover \(d_H(C,C_\varepsilon)\le\varepsilon\).
\end{proposition}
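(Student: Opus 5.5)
The plan is to prove the three claims separately. The Hausdorff bound is immediate. The \(\mathcal C^{1,1}\) boundary comes from a uniform interior ball of radius \(\varepsilon\) at every boundary point. The strong convexity constant comes from a direct computation with the definition \eqref{eq:UC_clean}, which is where the exact constant \(\beta_C/(1+2\varepsilon\beta_C)\) appears.

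\emph{Hausdorff distance and boundary regularity.} Since \(C\subset C_\varepsilon\) and every point of \(C_\varepsilon\) lies within \(\varepsilon\) of \(C\), we get \(d_H(C,C_\varepsilon)\le\varepsilon\). For regularity, let \(\Pi_C\) be the Euclidean projection onto \(C\). For \(x\in\partial C_\varepsilon\) we have \(\|x-\Pi_C(x)\|=\varepsilon\), and \(B(\Pi_C(x),\varepsilon)\subset C_\varepsilon\) is an interior ball touching \(\partial C_\varepsilon\) at \(x\). Any supporting hyperplane of \(C_\varepsilon\) at \(x\) must also support this ball at \(x\), so the supporting hyperplane is unique. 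The outer unit normal is therefore \(n(x)=(x-\Pi_C(x))/\varepsilon\). The map \(I-\Pi_C\) is firmly nonexpansive, hence \(1\)-Lipschitz, so \(n\) is \(1/\varepsilon\)-Lipschitz on \(\partial C_\varepsilon\). This is the \(\mathcal C^{1,1}\) condition of Remark~\ref{rem:c11}, with uniform rolling radius \(\varepsilon\). Making the graph representation of \(\partial C_\varepsilon\) explicit is routine once the normal map is known to be Lipschitz.

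\emph{Strong convexity.} Take \(x=a+\varepsilon u\) and \(y=b+\varepsilon w\) with \(a,b\in C\) and \(\|u\|,\|w\|\le1\), and fix \(\lambda\in[0,1]\). Set \(m=\lambda a+(1-\lambda)b\) and \(r_1=\beta_C\lambda(1-\lambda)\|a-b\|^2\).
\begin{enumerate}[label=(\roman*)]
\item By \eqref{eq:UC_clean}, \(B(m,r_1)\subset C\), hence \(B(m,r_1+\varepsilon)\subset C_\varepsilon\).
\item The point \(z=\lambda x+(1-\lambda)y\) satisfies \(z=m+\varepsilon v\) with \(v=\lambda u+(1-\lambda)w\).
\item The parallelogram-type identity gives
\(\|v\|^2=\lambda\|u\|^2+(1-\lambda)\|w\|^2-\lambda(1-\lambda)\|u-w\|^2\le 1-\lambda(1-\lambda)\|u-w\|^2.\)
\item Using \(\sqrt{1-t}\le1-t/2\), we get \(\|v\|\le 1-\lambda(1-\lambda)\|u-w\|^2/2\).
\end{enumerate}
Combining (i)--(iv),
\[
B\Bigl(z,\ \lambda(1-\lambda)\bigl[\beta_C\|a-b\|^2+\tfrac{\varepsilon}{2}\|u-w\|^2\bigr]\Bigr)\subset B(m,r_1+\varepsilon)\subset C_\varepsilon .
\]
It remains to bound \(\|x-y\|^2\) by a multiple of this bracket. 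Apply the weighted Cauchy--Schwarz inequality \(\|p+q\|^2\le(\alpha+\gamma)(\|p\|^2/\alpha+\|q\|^2/\gamma)\) with \(p=a-b\), \(q=\varepsilon(u-w)\), \(\alpha=1/\beta_C\) and \(\gamma=2\varepsilon\). This gives
\[
\|x-y\|^2\le\frac{1+2\varepsilon\beta_C}{\beta_C}\Bigl(\beta_C\|a-b\|^2+\tfrac{\varepsilon}{2}\|u-w\|^2\Bigr).
\]
Hence the ball around \(z\) has radius at least \(\frac{\beta_C}{1+2\varepsilon\beta_C}\lambda(1-\lambda)\|x-y\|^2\). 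This is exactly the strong convexity of \(C_\varepsilon\) with constant \(\beta_{C,\varepsilon}=\beta_C/(1+2\varepsilon\beta_C)\).

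\emph{Main obstacle.} The delicate step is matching the two error terms to get precisely \(\beta_C/(1+2\varepsilon\beta_C)\). This requires keeping the full gain \(\varepsilon\lambda(1-\lambda)\|u-w\|^2/2\) from the curvature of the \(\varepsilon\)-ball, rather than discarding it. It then requires choosing the Cauchy--Schwarz weights in the ratio \(1/\beta_C:2\varepsilon\), which are the radii \(R=1/(2\beta_C)\) and \(\varepsilon\) up to a factor. This matches the heuristic that a \(\beta\)-strongly convex set is \(1/(2\beta)\)-ball-convex, and that the ball-convexity radius becomes \(R+\varepsilon\) after adding \(\varepsilon B_2\).
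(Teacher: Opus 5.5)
Your proof is correct. For the strong-convexity claim it takes a genuinely different route from the paper.

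\textbf{Strong convexity.} The paper works with outer supporting balls. The argument of Lemma~\ref{lem:BallGap_clean} gives \(C\subset B(x-R_C\nu,R_C)\), with \(R_C=1/(2\beta_C)\), at every boundary point \(x\) and outward normal \(\nu\). Adding \(\varepsilon B_2\) yields the outer tangent ball \(B(x-R_C\nu,R_C+\varepsilon)\) at \(y=x+\varepsilon\nu\in\partial C_\varepsilon\). The constant \(1/(2(R_C+\varepsilon))=\beta_C/(1+2\varepsilon\beta_C)\) is then read off from the outer-ball characterization of strong convexity.

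You instead verify \eqref{eq:UC_clean} directly. You write points as \(a+\varepsilon u\), keep the curvature gain \(\varepsilon\lambda(1-\lambda)\|u-w\|^2/2\) of the unit ball, and recombine with Cauchy--Schwarz weights \(1/\beta_C=2R_C\) and \(2\varepsilon\).

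The paper's argument is shorter and makes the bookkeeping \(R_C\mapsto R_C+\varepsilon\) transparent. However, it relies on the converse direction of the characterization: outer tangent balls of radius \(R\) at every boundary point imply \(\tfrac1{2R}\)-strong convexity in the sense of \eqref{eq:UC_clean}. The paper does not prove this. It holds because such a body is the intersection of its radius-\(R\) supporting balls, and each ball is \(\tfrac1{2R}\)-strongly convex by the same computation as your steps (iii)--(iv). Your argument is self-contained and never needs that step.

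\textbf{Boundary regularity.} Both arguments use the interior ball of radius \(\varepsilon\) centered at the nearest point of \(C\). The paper pairs it with an exterior ball of radius \(\varepsilon\), which in fact exists automatically for any convex set, and cites the rolling-ball characterization of \(\mathcal C^{1,1}\). Your use of the nonexpansiveness of \(I-\Pi_C\) gives an explicit \(1/\varepsilon\)-Lipschitz normal map. This matches the characterization stated in Remark~\ref{rem:c11} directly.
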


\begin{proof}
Set
\[
    R_C:=\frac{1}{2\beta_C}.
\]
The outer-ball argument used in Lemma~\ref{lem:BallGap_clean} shows that, for
every boundary point \(x\in\partial C\) and every outward unit normal \(\nu\) at
\(x\),
\[
    C\subset B(x-R_C\nu,R_C).
\]
Equivalently, \(C\) admits outer tangent balls of radius \(R_C\).

Fix \(\varepsilon>0\), let \(y\in\partial C_\varepsilon\), and let \(\nu\) be an
outward unit normal to \(C_\varepsilon\) at \(y\). Since \(C\) is compact, the
support function is attained. Using
\[
    h_{C_\varepsilon}(\nu)=h_C(\nu)+\varepsilon,
\]
choose \(x\in C\) such that \(\langle \nu,x\rangle=h_C(\nu)\). Since
\(y\in C+\varepsilon B_2\) and
\(\langle \nu,y\rangle=h_{C_\varepsilon}(\nu)\), we have
\[
    y=x+\varepsilon\nu .
\]

The outer-ball property for \(C\) gives
\[
    C\subset B(x-R_C\nu,R_C).
\]
Taking Minkowski sums with \(\varepsilon B_2\), we obtain
\[
    C_\varepsilon
    =
    C+\varepsilon B_2
    \subset
    B(x-R_C\nu,R_C)+\varepsilon B_2
    =
    B(x-R_C\nu,R_C+\varepsilon).
\]
This ball is tangent to \(C_\varepsilon\) at \(y=x+\varepsilon\nu\). Hence
\(C_\varepsilon\) has outer tangent balls of radius \(R_C+\varepsilon\) at all
boundary points. By the same outer-ball characterization of strong convexity,
\(C_\varepsilon\) is
\[
    \frac{1}{2(R_C+\varepsilon)}
    =
    \frac{\beta_C}{1+2\varepsilon\beta_C}
\]
strongly convex.

It remains to prove the boundary regularity. At the same point
\(y=x+\varepsilon\nu\), the ball \(B(x,\varepsilon)\) is contained in
\(C_\varepsilon\) and tangent to \(\partial C_\varepsilon\) at \(y\). Also, the
exterior ball \(B(y+\varepsilon\nu,\varepsilon)\) is disjoint from
\(C_\varepsilon\). Indeed, for every \(z\in C_\varepsilon\),
\(\langle\nu,z-y\rangle\le0\), and hence
\[
    \|z-(y+\varepsilon\nu)\|^2
    =
    \|z-y\|^2
    -2\varepsilon\langle\nu,z-y\rangle
    +\varepsilon^2
    \ge
    \varepsilon^2 .
\]
Thus \(C_\varepsilon\) satisfies uniform interior and exterior rolling-ball
conditions with radius \(\varepsilon\). For convex bodies, this two-sided
rolling-ball condition is equivalent to \(\mathcal C^{1,1}\) boundary regularity.

Finally, \(C\subset C_\varepsilon\), and every point of \(C_\varepsilon\) lies
within distance at most \(\varepsilon\) of \(C\). Hence
\(d_H(C,C_\varepsilon)\le\varepsilon\).
\end{proof}

\subsection{A Bartlett-type decomposition of the projected Gram matrix}

A central tool throughout this section is the two-dimensional projected Gram structure of a Haar--Stiefel random subspace.
Fix orthonormal $e_1,e_2\in\mathbb R^n$ and let $P\in\mathbb R^{d\times n}$ be Haar--Stiefel ($PP^\top=I_d$, $d\ge 2$).
Let
\[
M:=(PE)^\top(PE),\qquad E=(e_1\ \ e_2),
\]
be the corresponding $2\times 2$ Gram matrix.

\begin{lemma}
\label{lem:Bartlett_prelim}
Assume \(2\le d\le n-1\). There exist independent random variables $R$, $T$, and $B$ such that
\begin{equation}
R\sim \BetaDist\!\left(\tfrac d2,\tfrac{n-d}2\right),
\qquad
B\sim \BetaDist\!\left(\tfrac{d-1}2,\tfrac{n-d-1}2\right) \mbox{ if \(d\le n-2\) otherwise \(B\equiv 1\) },
\label{eq:Bartlett_corrected_dists}
\end{equation}
and
\begin{equation}
\mathbb E[T]=0,
\qquad
\mathbb E[T^2]=\frac{1}{n-1},
\qquad
\mathbb E[T^4]=\frac{3}{(n-1)(n+1)}.
\label{eq:Bartlett_T_moments}
\end{equation}
Moreover,
\begin{equation}
\|Pe_1\|^2 = R,
\label{eq:Bartlett_corrected_R}
\end{equation}
\begin{equation}
\langle Pe_1,Pe_2\rangle = \sqrt{R(1-R)}\,T,
\label{eq:Bartlett_corrected_cross}
\end{equation}
\begin{equation}
\|Pe_2\|^2 = (1-R)T^2 + (1-T^2)B.
\label{eq:Bartlett_corrected_norm2}
\end{equation}
In particular, the Schur complement of $M$ with respect to its $(1,1)$ entry is
\begin{equation}
S_{\mathrm{Sch}}
:=
\|Pe_2\|^2-\frac{\langle Pe_1,Pe_2\rangle^2}{\|Pe_1\|^2}
=
(1-T^2)B,
\label{eq:Bartlett_corrected_schur}
\end{equation}
and therefore
\begin{equation}
\det(M)=R\,S_{\mathrm{Sch}}.
\label{eq:Bartlett_corrected_det}
\end{equation}
\end{lemma}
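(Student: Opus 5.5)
By rotation invariance of the Haar--Stiefel law, I will swap the roles of the subspace and the vectors. Fix the subspace \(U=\range(P^\top)\), and let \((e_1,e_2)\) be a uniformly random orthonormal \(2\)-frame in \(\mathbb R^n\). The joint law of \(M=(PE)^\top(PE)\) is the same in both pictures, since \(PQ\) is again Haar--Stiefel for every orthogonal \(Q\). I will then write every quantity in an orthonormal basis adapted to \(U\oplus U^\perp\). Note that \(\|Pe_1\|^2=\|\Pi_U e_1\|^2\), and inner products of projections are computed through the orthogonal projector \(\Pi_U=P^\top P\).

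\textbf{Step 1: the first vector.} The vector \(e_1\) is uniform on \(S^{n-1}\), so \(R:=\|\Pi_U e_1\|^2\sim\BetaDist(\tfrac d2,\tfrac{n-d}2)\). This is the standard Gaussian-ratio fact \(\|g_U\|^2/(\|g_U\|^2+\|g_{U^\perp}\|^2)\). Because \(1\le d\le n-1\), we have \(R\in(0,1)\) almost surely. Write \(e_1=\sqrt R\,a+\sqrt{1-R}\,b\) with unit vectors \(a\in U\) and \(b\in U^\perp\).

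\textbf{Step 2: conditioning on \(e_1\).} Conditionally on \(e_1\), the vector \(e_2\) is uniform on the unit sphere of \(e_1^\perp\), which has dimension \(n-1\). Inside \(e_1^\perp\) I take the unit vector \(f:=\sqrt{1-R}\,a-\sqrt R\,b\), together with the \((n-2)\)-dimensional space \(W:=\{a,b\}^\perp=(U\cap a^\perp)\oplus(U^\perp\cap b^\perp)\). The two summands of \(W\) have dimensions \(d-1\) and \(n-d-1\). I then decompose \(e_2=T f+\sqrt{1-T^2}\,h\), where \(T:=\langle e_2,f\rangle\) and \(h\in W\) is a unit vector.

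By the polar decomposition of the uniform law on a sphere, \(T\) is one coordinate of a uniform point on \(S^{n-2}\). Moreover \(h\) is uniform on \(S(W)\) and independent of \(T\). The law of \(T\) is symmetric with density proportional to \((1-t^2)^{(n-4)/2}\), i.e. \(T^2\sim\BetaDist(\tfrac12,\tfrac{n-2}2)\). This gives \(\mathbb E T=0\), \(\mathbb E T^2=\tfrac1{n-1}\), and \(\mathbb E T^4=\tfrac{3}{(n-1)(n+1)}\), which is \eqref{eq:Bartlett_T_moments}.

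Set \(B:=\|\Pi_U h\|^2\). This is the squared norm of the component of a uniform unit vector of \(W\) lying in a \((d-1)\)-dimensional subspace. Hence \(B\sim\BetaDist(\tfrac{d-1}2,\tfrac{n-d-1}2)\) when \(n-d-1\ge1\), and \(B\equiv1\) when \(d=n-1\) because then \(U^\perp\cap b^\perp=\{0\}\).

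\textbf{Step 3: the algebra.} Project onto \(U\): \(\Pi_U e_1=\sqrt R\,a\) and \(\Pi_U e_2=T\sqrt{1-R}\,a+\sqrt{1-T^2}\,\Pi_U h\), with \(\Pi_U h\perp a\). This immediately gives three identities. First, \(\langle Pe_1,Pe_2\rangle=\sqrt{R(1-R)}\,T\). Second, \(\|Pe_2\|^2=(1-R)T^2+(1-T^2)B\). Third, the Schur complement is \(\|Pe_2\|^2-(1-R)T^2=(1-T^2)B\). The determinant identity \eqref{eq:Bartlett_corrected_det} is then the usual Schur formula for a \(2\times2\) matrix.

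\textbf{Main obstacle: independence of \((R,T,B)\).} The plan is to note that, conditionally on \(e_1\), the pair \((T,h)\) has a law that does not depend on \(e_1\) once we identify \(W\) with a fixed model space. Such an identification is available via a rotation fixing \(U\) and mapping \(a,b\) to fixed reference vectors. The conditional law of \((T,B)\) is therefore a fixed product law, which gives joint independence from \(R\). I expect the care to lie in making this measurable choice of rotation explicit, and in checking that \(T\perp h\) survives the conditioning.
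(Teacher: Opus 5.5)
Your proof is correct. It produces exactly the paper's variables, but the randomness sits on the other side.

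With \(a=\Pi_Ue_1/\sqrt R\) and \(b=(I-\Pi_U)e_1/\sqrt{1-R}\), your \(f=\sqrt{1-R}\,a-\sqrt R\,b\) simplifies to \((\Pi_Ue_1-Re_1)/\sqrt{R(1-R)}\), which is the paper's \(\xi\). Likewise, your \(W\) is the paper's \(\operatorname{span}\{e_1,\xi\}^\perp\), and \(U\cap a^\perp\) is the paper's residual subspace \(U'\).

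The paper fixes the frame and randomizes \(U\). It therefore needs that, conditionally on \((R,\xi)\), \(U'\) is Haar of dimension \(d-1\) inside \(\operatorname{span}\{e_1,\xi\}^\perp\), an invariance property of the Grassmannian under the stabilizer of \(e_1,\xi\) that the paper states without detail. In exchange, its identities hold pathwise on the original probability space of \(P\), exactly as the lemma is phrased.

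You fix \(U\) and randomize the frame, so every distributional claim reduces to the polar decomposition of the uniform law on the sphere of a fixed subspace. This is more elementary. Your independence step is also easier than you expect, and no measurable choice of rotation is needed. Given \(e_1\), \(T\) has a fixed law. Given \((e_1,T)\), \(h\) is uniform on \(S(W)\), so \(B=\|\Pi_{U\cap a^\perp}h\|^2\) has a fixed Beta law. This already gives joint independence of \((R,T,B)\).

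The one step to make explicit is the transfer back to fixed \(e_1,e_2\). On the full-measure event \(\{0<R<1,\ |T|<1\}\) you have \(R=\|Pe_1\|^2\), \(T=\langle Pe_1,Pe_2\rangle/\sqrt{R(1-R)}\), and \(B=S_{\mathrm{Sch}}/(1-T^2)\). So the triple is a measurable function of \(M\), and equality in law of \(M\) in the two pictures carries the joint law of \((R,T,B)\) and all the identities over to the original setting.
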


\begin{proof}
The decomposition is a standard consequence of the rotational invariance of
Haar--Stiefel projections and classical beta/Wishart identities; see, e.g.,
\cite{Muirhead1982}. First we state the overall proof idea.
\paragraph{Proof idea.}
We first expose the projection of one fixed direction \(e_1\). Its squared
projected length is a beta variable \(R\). Conditional on this projection, the
remaining \((d-1)\)-dimensional part of the random subspace is Haar inside an
\((n-2)\)-dimensional orthogonal complement. This separates the radial variable
\(R\), the angular coordinate \(T\), and the residual beta variable \(B\).

Let $\Pi:=P^\top P$ be the orthogonal projector onto the random $d$-dimensional subspace $U:=\mathrm{range}(P^\top)$.

Since $U$ is Haar on the Grassmannian, the squared length of the projection of a fixed unit vector onto $U$ has the Beta law
\[
R:=\|\Pi e_1\|^2=\|Pe_1\|^2
\sim \BetaDist\!\left(\tfrac d2,\tfrac{n-d}2\right).
\]
Also,
\[
\langle e_1,\Pi e_1\rangle=\|\Pi e_1\|^2=R,
\]
so the component of $\Pi e_1$ along $e_1$ is exactly $Re_1$.
Since $\|\Pi e_1\|^2=R$, its orthogonal component has squared norm
\[
\|\Pi e_1-Re_1\|^2 = R-R^2 = R(1-R).
\]
Hence there exists a random unit vector $\xi\in S^{n-2}\subset e_1^\perp$ such that
\begin{equation}
\Pi e_1 = R e_1 + \sqrt{R(1-R)}\,\xi.
\label{eq:A3_Pi_e1_rep}
\end{equation}
By rotational invariance in $e_1^\perp$, the random vector $\xi$ is uniform on $S^{n-2}$ and independent of $R$.

Define
\[
T:=\langle e_2,\xi\rangle\in[-1,1].
\]
Then $T$ is independent of $R$, and since $T$ is the first coordinate of a uniform point on $S^{n-2}$, its moments are
\[
\mathbb E[T]=0,
\qquad
\mathbb E[T^2]=\frac{1}{n-1},
\qquad
\mathbb E[T^4]=\frac{3}{(n-1)(n+1)}.
\]

Now let
\[
\widehat p:=\frac{\Pi e_1}{\|\Pi e_1\|}=\sqrt{R}\,e_1+\sqrt{1-R}\,\xi.
\]
Then $\widehat p\in U$.
Let
\[
E:=\mathrm{span}\{e_1,\xi\}.
\]
The residual $e_1-\Pi e_1$ lies in $E$ and is orthogonal to $U$, while $\widehat p\in U\cap E$.
Therefore the remaining part of $U$ lies in $E^\perp$.
Hence $U$ decomposes orthogonally as
\[
U=\mathrm{span}\{\widehat p\}\oplus U',
\]
where, conditionally on $(R,\xi)$, the subspace $U'$ is Haar of dimension $d-1$ inside the $(n-2)$-dimensional space $E^\perp$.

Let $\Pi'$ denote the orthogonal projector onto $U'$.
Then
\[
\Pi=\widehat p\,\widehat p^\top+\Pi'.
\]

We first compute the cross term:
\[
\langle Pe_1,Pe_2\rangle
=
\langle \Pi e_1,e_2\rangle.
\]
Using \eqref{eq:A3_Pi_e1_rep} and $\langle e_1,e_2\rangle=0$,
\[
\langle \Pi e_1,e_2\rangle
=
\sqrt{R(1-R)}\,\langle \xi,e_2\rangle
=
\sqrt{R(1-R)}\,T,
\]
which proves \eqref{eq:Bartlett_corrected_cross}.

Next,
\[
\|Pe_2\|^2=\|\Pi e_2\|^2
=
|\langle e_2,\widehat p\rangle|^2+\|\Pi' e_2\|^2,
\]
since the two components are orthogonal.
Because
\[
\langle e_2,\widehat p\rangle
=
\sqrt{1-R}\,\langle e_2,\xi\rangle
=
\sqrt{1-R}\,T,
\]
the first term equals $(1-R)T^2$.

For the second term, only the $E^\perp$-component of $e_2$ contributes.
Its squared norm is
\[
\|\operatorname{Proj}_{E^\perp} e_2\|^2 = 1-T^2.
\]
Conditional on $(R,T)$, the squared length of the projection of a fixed unit vector in $E^\perp$ onto the Haar $(d-1)$-subspace $U'$ has law
\[
B\sim \BetaDist\!\left(\tfrac{d-1}2,\tfrac{n-d-1}2\right),
\]
independent of $(R,T)$.
Therefore
\[
\|\Pi' e_2\|^2=(1-T^2)B,
\]
which proves \eqref{eq:Bartlett_corrected_norm2}.

Finally,
\[
S_{\mathrm{Sch}}
=
\|Pe_2\|^2-\frac{\langle Pe_1,Pe_2\rangle^2}{\|Pe_1\|^2}
=
\bigl((1-R)T^2+(1-T^2)B\bigr)-\frac{R(1-R)T^2}{R}
=
(1-T^2)B,
\]
which proves \eqref{eq:Bartlett_corrected_schur}. The determinant identity \eqref{eq:Bartlett_corrected_det} follows immediately.
\end{proof}

\begin{remark}[Role of the corrected Bartlett variables]
\label{rem:Bartlett_role}
Let $u=e_1$ and let
\[
v=\rho e_1+\sqrt{1-\rho^2}\,e_2,
\qquad
\rho=\langle u,v\rangle.
\]
Set $\sigma:=\sqrt{1-\rho^2}$.
Then the projected quantities satisfy
\begin{equation}
\|Pu\|^2 = R,
\label{eq:Bartlett_role_Pu}
\end{equation}
\begin{equation}
\langle Pu,Pv\rangle
=
\rho R+\sigma\sqrt{R(1-R)}\,T,
\label{eq:Bartlett_role_inner}
\end{equation}
\begin{equation}
\|Pv\|^2
=
\bigl(\rho\sqrt R+\sigma\sqrt{1-R}\,T\bigr)^2+\sigma^2(1-T^2)B.
\label{eq:Bartlett_role_Pv}
\end{equation}
Thus the radial variable $R$ is independent of the remaining shape variables $(T,B)$.
This is the decoupling used in the expectation analysis of Section~B and in the mixed-moment lower bounds stated below.
\end{remark}

\subsection{Deterministic reduction to ball sections}

\begin{lemma}[Subspace oracle dominates a ball section]
\label{lem:subvsball}
Let $x_k\in C$, let $g_k=\nabla f(x_k)$, let $U_k=\range(\Pi_k)$, and let $\mathcal B\subset C$ be any nonempty subset containing $x_k$. If $d_k=s_k-x_k$ is defined by \eqref{eq:subLMO_clean}, then
\begin{equation}
\langle g_k,d_k\rangle
\le \min\{\langle g_k,y-x_k\rangle:\ y\in \mathcal B\cap(x_k+U_k)\}.
\label{eq:subDom}
\end{equation}
\end{lemma}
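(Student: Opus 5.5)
This follows from set inclusion and the definition of the section oracle. The plan is to show that the feasible set of the right-hand minimization is a subset of the feasible set of the section LMO \eqref{eq:subLMO_clean}. Then we use the fact that minimizing the linear functional \(y\mapsto\langle g_k,y\rangle\) over a larger set gives a value that is no larger. Finally we subtract the constant \(\langle g_k,x_k\rangle\) to pass from \(s_k\) to \(d_k=s_k-x_k\).

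\emph{Step 1 (nonemptiness and inclusion).} Since \(x_k\in\mathcal B\) and \(x_k\in x_k+U_k\) (because \(0\in U_k\)), the set \(\mathcal B\cap(x_k+U_k)\) contains \(x_k\), so it is nonempty and the minimum on the right of \eqref{eq:subDom} is over a nonempty set. Because \(\mathcal B\subset C\), we have \(\mathcal B\cap(x_k+U_k)\subset C\cap(x_k+U_k)\).

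\emph{Step 2 (comparison of values).} By the definition of \(s_k\) in \eqref{eq:subLMO_clean}, \(\langle g_k,s_k\rangle\le\langle g_k,y\rangle\) for every \(y\in C\cap(x_k+U_k\)). In particular this holds for every \(y\in\mathcal B\cap(x_k+U_k)\). Subtracting \(\langle g_k,x_k\rangle\) from both sides gives \(\langle g_k,d_k\rangle\le\langle g_k,y-x_k\rangle\) for all such \(y\). Taking the infimum over \(y\) gives \eqref{eq:subDom}.

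\emph{Attainment of the minimum.} The only point that needs care is whether the minimum on the right is actually attained. If \(\mathcal B\) is not closed, the statement can be read with an infimum, and the inequality is unaffected. In the intended application \(\mathcal B\) is a closed ball, so \(\mathcal B\cap(x_k+U_k)\) is compact and the minimum is attained. Likewise, \(C\cap(x_k+U_k)\) is compact by Assumption~\ref{ass:C_clean}, so \(s_k\) exists.
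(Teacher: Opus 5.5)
Your proposal is correct and matches the paper's proof: the inclusion $\mathcal B\cap(x_k+U_k)\subset C\cap(x_k+U_k)$ means $s_k$, which minimizes over the larger set, dominates the minimum over the smaller one, and subtracting $\langle g_k,x_k\rangle$ gives \eqref{eq:subDom}. Your extra remarks are sound refinements that the paper leaves implicit: nonemptiness via $x_k\in\mathcal B\cap(x_k+U_k)$, and reading the right-hand side as an infimum when $\mathcal B$ is not closed.
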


\begin{proof}
Since $\mathcal B\subset C$, we have $\mathcal B\cap(x_k+U_k)\subset C\cap(x_k+U_k)$. The point $s_k$ minimizes $y\mapsto \langle g_k,y\rangle$ over the larger set $C\cap(x_k+U_k)$, hence also dominates the minimum over the smaller set $\mathcal B\cap(x_k+U_k)$. Subtracting $\langle g_k,x_k\rangle$ gives \eqref{eq:subDom}.
\end{proof}

\subsection{Ball-section improvement formulas}

\begin{lemma}[Exact ball-section improvement (boundary base point)]
\label{lem:ballSectionExact_clean}
Let $x_k\in\mathbb R^n$, $g_k\in\mathbb R^n$, $\mu_k\in S^{n-1}$, $r>0$, and let $U_k$ be a subspace with projector $\Pi_k$. Assume $\Pi_k g_k\neq 0$. Then the minimizer of $y\mapsto \langle g_k,y\rangle$ over the section
\begin{equation}
\{y\in U_k:\ x_k+y\in B(x_k+r\mu_k,r)\}
=\{y\in U_k:\ \|y-r\mu_k\|\le r\}
\label{eq:SectionSet}
\end{equation}
is given by
\begin{equation}
y^* = r\Pi_k \mu_k - r\|\Pi_k \mu_k\|\frac{\Pi_k g_k}{\|\Pi_k g_k\|},
\label{eq:tstar}
\end{equation}
and the corresponding improvement equals
\begin{equation}
\Delta_{k}^{\partial}(r)
:=
\langle g_k,0\rangle-\min\{\langle g_k,y\rangle:\ y\in U_k,\ \|y-r\mu_k\|\le r\}
=
r\Big(\|\Pi_k g_k\|\,\|\Pi_k \mu_k\|-\langle \Pi_k g_k,\Pi_k \mu_k\rangle\Big).
\label{eq:BallImproveExact}
\end{equation}
Moreover, in the full-space case ($U_k=\mathbb R^n$, $\Pi_k=I_n$), the corresponding improvement is
\begin{equation}
\Delta_{\mathrm{full},k}^{\partial}(r)
=
r\Big(\|g_k\|-\langle g_k,\mu_k\rangle\Big).
\label{eq:BallImproveExact_full}
\end{equation}
\end{lemma}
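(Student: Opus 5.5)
We reduce the sectioned problem to a Euclidean projection problem inside the subspace and then solve it by Cauchy--Schwarz. Writing \(y=P^\top w\) with \(w\in\mathbb R^d\), so that \(\Pi_k=P^\top P\) with \(PP^\top=I_d\), the feasible set becomes \(\{w:\|P^\top w-r\mu_k\|\le r\}\). Since \(P^\top w\in U_k\) and \(r\mu_k-r\Pi_k\mu_k\perp U_k\), Pythagoras gives
\[
\|P^\top w-r\mu_k\|^2=\|P^\top w-r\Pi_k\mu_k\|^2+r^2\bigl(1-\|\Pi_k\mu_k\|^2\bigr).
\]
Hence the section is exactly the ball in \(U_k\) centered at \(r\Pi_k\mu_k\) with radius
\[
\sqrt{r^2-r^2(1-\|\Pi_k\mu_k\|^2)}=r\|\Pi_k\mu_k\|.
\]
It is nonempty because it contains \(y=0\), which lies on its boundary.

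Next we minimize the linear functional over this ball. For \(y\in U_k\) we have \(\langle g_k,y\rangle=\langle \Pi_k g_k,y\rangle\). Write \(y=r\Pi_k\mu_k+z\) with \(z\in U_k\) and \(\|z\|\le r\|\Pi_k\mu_k\|\). Cauchy--Schwarz gives
\[
\langle \Pi_k g_k,z\rangle\ge -\|\Pi_k g_k\|\,r\|\Pi_k\mu_k\|,
\]
and, since \(\Pi_kg_k\neq0\), equality is attained at \(z=-r\|\Pi_k\mu_k\|\,\Pi_kg_k/\|\Pi_kg_k\|\). This is \eqref{eq:tstar}. The minimum value is \(r\langle\Pi_kg_k,\Pi_k\mu_k\rangle-r\|\Pi_kg_k\|\,\|\Pi_k\mu_k\|\), where we used \(\langle g_k,\Pi_k\mu_k\rangle=\langle \Pi_kg_k,\Pi_k\mu_k\rangle\) because \(\Pi_k\) is symmetric and idempotent. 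Negating this value gives \eqref{eq:BallImproveExact}.

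For the full-space case we substitute \(\Pi_k=I_n\) and \(\|\mu_k\|=1\), which yields \eqref{eq:BallImproveExact_full}; here we need \(g_k\neq0\), which is implied by the hypothesis \(\Pi_kg_k\neq 0\). If \(\Pi_k\mu_k=0\), the ball degenerates to the single point \(\{0\}\) and both \eqref{eq:tstar} and \eqref{eq:BallImproveExact} give \(0\), so this case is consistent with the formulas. There is no real obstacle. The one step that needs care is the orthogonal decomposition, which shows that the section of a ball by a subspace through a boundary point is a ball of radius \(r\|\Pi_k\mu_k\|\) centered at \(r\Pi_k\mu_k\).
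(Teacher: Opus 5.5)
Your proposal is correct and follows essentially the same route as the paper. Both use the orthogonal decomposition \(r\mu_k=r\Pi_k\mu_k+r(I-\Pi_k)\mu_k\) to identify the section as the ball in \(U_k\) of radius \(r\|\Pi_k\mu_k\|\) centered at \(r\Pi_k\mu_k\), and then minimize the linear functional over that ball to obtain the stated minimizer and improvement; your remarks on the degenerate case \(\Pi_k\mu_k=0\) and the full-space specialization are consistent additions.
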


\begin{proof}
For $y\in U_k$, decompose $r\mu_k=r\Pi_k\mu_k+r(I-\Pi_k)\mu_k$. Since $y-r\Pi_k\mu_k\in U_k$ and $r(I-\Pi_k)\mu_k\in U_k^\perp$, they are orthogonal, hence
\[
\|y-r\mu_k\|^2=\|y-r\Pi_k\mu_k\|^2+r^2\|(I-\Pi_k)\mu_k\|^2.
\]
Thus the section is the Euclidean ball in $U_k$ of radius $r\|\Pi_k\mu_k\|$ centered at $r\Pi_k\mu_k$, and minimizing a linear functional over that ball gives \eqref{eq:tstar} and \eqref{eq:BallImproveExact}. The full-space formula is the special case $\Pi_k=I_n$.
\end{proof}

\begin{lemma}[Exact ball-section improvement (interior base point)]
\label{lem:ballSectionInterior_clean}
Let $x_k\in\mathbb R^n$, $g_k\in\mathbb R^n$, $r>0$, and let $\mathcal B_k=B(c_k,r)$ be a ball. Assume $x_k\in \mathcal B_k$ and define
\begin{equation}
v_k:=\frac{c_k-x_k}{r}\in\mathbb R^n,
\qquad \|v_k\|\le 1.
\label{eq:vInteriorDef}
\end{equation}
Let $U_k$ be a subspace with projector $\Pi_k$, and assume $\Pi_k g_k\neq 0$. Then the minimizer of $y\mapsto \langle g_k,y\rangle$ over the section
\begin{equation}
\{y\in U_k:\ x_k+y\in \mathcal B_k\}=\{y\in U_k:\ \|y-rv_k\|\le r\}
\label{eq:SectionSetInterior}
\end{equation}
is
\begin{equation}
y^*=r\Pi_k v_k-r\sqrt{1-\|v_k\|^2+\|\Pi_k v_k\|^2}\,\frac{\Pi_k g_k}{\|\Pi_k g_k\|},
\label{eq:tstarInterior}
\end{equation}
and the corresponding improvement equals
\begin{align}
\Delta_{k}^{\mathrm{int}}(c_k,r)
:=&
\langle g_k,0\rangle-\min\{\langle g_k,y\rangle:\ y\in U_k,\ \|y-rv_k\|\le r\}\\
=&
r\Big(\|\Pi_k g_k\|\sqrt{1-\|v_k\|^2+\|\Pi_k v_k\|^2}-\langle \Pi_k g_k,\Pi_k v_k\rangle\Big).
\label{eq:BallImproveInterior}
\end{align}
Moreover, in the full-space case ($U_k=\mathbb R^n$, $\Pi_k=I_n$),
\begin{equation}
\Delta_{\mathrm{full},k}^{\mathrm{int}}(c_k,r)
=
r\Big(\|g_k\|-\langle g_k,v_k\rangle\Big).
\label{eq:BallImproveInterior_full}
\end{equation}
\end{lemma}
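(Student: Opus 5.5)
The plan is to mirror the proof of Lemma~\ref{lem:ballSectionExact_clean}, replacing the boundary center \(r\mu_k\) by the interior center \(rv_k\). The first step is to translate coordinates so that \(x_k\) sits at the origin. Then \(x_k+y\in B(c_k,r)\) is equivalent to \(\|y-(c_k-x_k)\|\le r\), that is, \(\|y-rv_k\|\le r\) by the definition \eqref{eq:vInteriorDef}. This gives the set identity \eqref{eq:SectionSetInterior} directly.

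The next step is to describe the section as a lower-dimensional ball inside \(U_k\). For \(y\in U_k\), split \(rv_k=r\Pi_k v_k+r(I-\Pi_k)v_k\). Since \(y-r\Pi_k v_k\in U_k\) and \((I-\Pi_k)v_k\in U_k^\perp\), Pythagoras gives
\[
\|y-rv_k\|^2=\|y-r\Pi_kv_k\|^2+r^2\bigl(\|v_k\|^2-\|\Pi_kv_k\|^2\bigr),
\]
where we used \(\|(I-\Pi_k)v_k\|^2=\|v_k\|^2-\|\Pi_kv_k\|^2\). The section is therefore the Euclidean ball in \(U_k\) centered at \(r\Pi_kv_k\) with radius
\[
\varrho:=r\sqrt{1-\|v_k\|^2+\|\Pi_kv_k\|^2}.
\]
The quantity under the root is nonnegative because \(\|v_k\|\le1\), so this is well defined. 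The section is also nonempty, since it contains \(y=0\) (here \(x_k\in\mathcal B_k\)).

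Next comes the linear minimization over this ball. For \(y\in U_k\) we have \(\langle g_k,y\rangle=\langle\Pi_kg_k,y\rangle\). Minimizing a linear functional over a ball in \(U_k\) picks the center minus the radius times the unit vector \(\Pi_kg_k/\|\Pi_kg_k\|\), which lies in \(U_k\) and is defined because \(\Pi_kg_k\neq0\). This is exactly \eqref{eq:tstarInterior}, and by Cauchy--Schwarz the minimizer is unique. Its value is
\[
r\langle\Pi_kg_k,\Pi_kv_k\rangle-\varrho\|\Pi_kg_k\|.
\]
Taking the negative of this value gives \eqref{eq:BallImproveInterior}. In the full-space case \(\Pi_k=I_n\) the root equals \(1\), which yields \eqref{eq:BallImproveInterior_full}.

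I do not expect a genuine obstacle. The only point that needs care is the radius computation: I need to confirm that the orthogonal residual enters with the right sign, so that the square root is exactly \(\sqrt{1-\|v_k\|^2+\|\Pi_kv_k\|^2}\).
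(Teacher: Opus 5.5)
Your proposal is correct and follows essentially the same route as the paper: the orthogonal splitting $rv_k=r\Pi_kv_k+r(I-\Pi_k)v_k$ shows that the section is the ball in $U_k$ with center $r\Pi_kv_k$ and radius $r\sqrt{1-\|v_k\|^2+\|\Pi_kv_k\|^2}$, after which the linear minimization and the full-space specialization are immediate. Your remarks on nonemptiness (since $x_k\in\mathcal B_k$) and on uniqueness of the minimizer are correct details that the paper leaves implicit.
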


\begin{proof}
For $y\in U_k$, orthogonal decomposition gives
\[
\|y-rv_k\|^2=\|y-r\Pi_k v_k\|^2+r^2\|(I-\Pi_k)v_k\|^2.
\]
Thus the section is the Euclidean ball in $U_k$ centered at $r\Pi_k v_k$ and with radius
\[
r\sqrt{1-\|v_k\|^2+\|\Pi_k v_k\|^2}.
\]
Minimizing the linear functional over that ball yields \eqref{eq:tstarInterior}, and evaluating the objective value gives \eqref{eq:BallImproveInterior}. Setting $\Pi_k=I_n$ yields \eqref{eq:BallImproveInterior_full}.
\end{proof}

\begin{corollary}[Interior ratio dominates the boundary expression]
\label{cor:gammaIntGeGamma}
Fix $u\in S^{n-1}$ and $v\in\mathbb R^n$ with $\|v\|\le 1$ and $\langle u,v\rangle<1$. Then
\begin{equation}
\gamma(P;u,v)\ge \Gamma(P;u,v).
\label{eq:gammaIntGeGamma}
\end{equation}
In particular, when $\|v\|=1$, the two ratios coincide.
\end{corollary}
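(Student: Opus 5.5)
The claim is that $\gamma(P;u,v)\ge\Gamma(P;u,v)$. Since both ratios have the same denominator $1-\langle u,v\rangle>0$ and the same subtracted term $\langle Pu,Pv\rangle$, it is enough to compare the numerators. The inequality therefore reduces to
\[
\|Pu\|\sqrt{1-\|v\|^2+\|Pv\|^2}\;\ge\;\|Pu\|\,\|Pv\|.
\]
Because $\|Pu\|\ge0$, this in turn follows from
\[
\sqrt{1-\|v\|^2+\|Pv\|^2}\ge\|Pv\|.
\]
That inequality holds since $1-\|v\|^2\ge0$ under the hypothesis $\|v\|\le1$, and the square root is monotone.

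When $\|v\|=1$, the term $1-\|v\|^2$ vanishes. Then $\sqrt{\|Pv\|^2}=\|Pv\|$, so the two numerators agree and $\gamma=\Gamma$.

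It is worth checking that the quantities are well defined. This needs $1-\|v\|^2+\|Pv\|^2\ge0$, which is immediate from $\|v\|\le1$, and a positive denominator, which is assumed. No obstacle is expected.

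Geometrically, the statement is the comparison between Lemma~\ref{lem:ballSectionInterior_clean} and Lemma~\ref{lem:ballSectionExact_clean}. The section radius $r\sqrt{1-\|v\|^2+\|\Pi v\|^2}$ of the interior-base-point ball is at least the radius $r\|\Pi v\|$ obtained by dropping the nonnegative term $1-\|v\|^2$.
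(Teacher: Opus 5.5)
Your proof is correct and matches the paper's argument exactly. The paper uses the same steps in the same order: $1-\|v\|^2\ge 0$ gives $\sqrt{1-\|v\|^2+\|Pv\|^2}\ge\|Pv\|$, then multiply by $\|Pu\|\ge0$, subtract $\langle Pu,Pv\rangle$, and divide by $1-\langle u,v\rangle>0$; the case $\|v\|=1$ is read off directly, as you do.
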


\begin{proof}
Since $1-\|v\|^2\ge 0$, we have
\[
\sqrt{1-\|v\|^2+\|Pv\|^2}\ge \|Pv\|.
\]
Multiplying by $\|P u\|\ge 0$, subtracting $\langle P u,P v\rangle$, and dividing by $1-\langle u,v\rangle>0$ gives the claim.
\end{proof}

\begin{lemma}[Interior ratio lower bound via a unit-direction surrogate]
\label{lem:gamma_scaled_unit}
Let \(u,\mu\in S^{n-1}\) and \(t\in[0,1]\). Set \(v:=t\mu\). Then
\begin{equation}\label{eq:gamma_scaled_unit}
\gamma(P;u,t\mu)
\ge
\frac{(1-t)\|Pu\|+t(1-\langle u,\mu\rangle)\Gamma(P;u,\mu)}
{(1-t)+t(1-\langle u,\mu\rangle)}.
\end{equation}
In particular,
\begin{equation}\label{eq:gamma_scaled_unit2}
\gamma(P;u,t\mu)\ge \min\{\|Pu\|,\Gamma(P;u,\mu)\}.
\end{equation}
\end{lemma}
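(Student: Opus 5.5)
The plan is to compare numerator and denominator of \(\gamma(P;u,t\mu)\) directly with the two terms of the claimed weighted average. The whole argument reduces to a one-variable concavity inequality.

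Write \(a:=\|P\mu\|\in[0,1]\), which uses \(PP^\top=I_d\). With \(v=t\mu\) we have \(\|v\|^2=t^2\) and \(\|Pv\|^2=t^2a^2\). Hence
\[
\gamma(P;u,t\mu)=\frac{\|Pu\|\sqrt{1-t^2(1-a^2)}-t\langle Pu,P\mu\rangle}{1-t\langle u,\mu\rangle}.
\]
The denominator splits as \(1-t\langle u,\mu\rangle=(1-t)+t(1-\langle u,\mu\rangle)\), which is exactly the denominator in \eqref{eq:gamma_scaled_unit}. It is positive because \(\langle u,v\rangle<1\) is part of the standing hypothesis on \(\gamma\).

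On the right-hand side of \eqref{eq:gamma_scaled_unit}, the term \(t(1-\langle u,\mu\rangle)\Gamma(P;u,\mu)\) equals \(t\bigl(\|Pu\|a-\langle Pu,P\mu\rangle\bigr)\) by the definition of \(\Gamma\). I read it in this cleared form, so it stays meaningful even in the degenerate case \(\langle u,\mu\rangle=1\), \(t<1\). It therefore suffices to show the numerator inequality
\[
\|Pu\|\sqrt{1-t^2(1-a^2)}-t\langle Pu,P\mu\rangle\;\ge\;(1-t)\|Pu\|+t\|Pu\|a-t\langle Pu,P\mu\rangle .
\]
The cross terms cancel, and since \(\|Pu\|\ge0\) this reduces to
\[
\phi(t):=\sqrt{1-t^2(1-a^2)}\;\ge\;(1-t)+ta .
\]

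For this scalar inequality I will use concavity. The function \(\phi\) is \(\sqrt{\cdot}\) composed with the concave quadratic \(1-t^2(1-a^2)\), and that quadratic is nonnegative on \([0,1]\). Since \(\sqrt{\cdot}\) is concave and nondecreasing, \(\phi\) is concave on \([0,1]\). Its endpoint values are \(\phi(0)=1\) and \(\phi(1)=a\). So \(\phi\) lies above its chord, which is exactly \((1-t)+ta\).

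Dividing the numerator inequality by the positive denominator gives \eqref{eq:gamma_scaled_unit}. For \eqref{eq:gamma_scaled_unit2}, the right-hand side of \eqref{eq:gamma_scaled_unit} is a convex combination of \(\|Pu\|\) and \(\Gamma(P;u,\mu)\), with nonnegative weights \((1-t)\) and \(t(1-\langle u,\mu\rangle)\). A convex combination is at least the minimum of its entries. If \(1-\langle u,\mu\rangle=0\), the combination is just \(\|Pu\|\) and the bound holds with that entry.

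There is no real obstacle here. The only step needing care is the concavity argument, which is immediate, together with the bookkeeping of the degenerate case \(\langle u,\mu\rangle=1\).
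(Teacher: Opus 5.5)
Your proposal is correct and follows essentially the same route as the paper: both cancel the common term $-t\langle Pu,P\mu\rangle$, write the denominator as $(1-t)+t(1-\langle u,\mu\rangle)$, reduce everything to the scalar inequality $\sqrt{1-t^2+t^2\|P\mu\|^2}\ge (1-t)+t\|P\mu\|$, and then get the min bound from the weighted-average argument. The only difference is minor: you prove the scalar inequality by comparing the concave function $\phi$ with its chord, whereas the paper squares both sides and notes that the difference is $2(1-\|P\mu\|)\,t(1-t)\ge 0$; your explicit handling of the degenerate case $\langle u,\mu\rangle=1$ is a small bonus that the paper leaves implicit.
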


\begin{proof}
Write
\[
a:=\|P u\|,
\qquad
b:=\|P \mu\|,
\qquad
m:=\langle P u,P \mu\rangle,
\qquad
\rho:=\langle u,\mu\rangle.
\]
Then
\[
\gamma(P;u,t\mu)
=
\frac{a\sqrt{1-t^2+t^2b^2}-tm}{1-t\rho}.
\]
We first claim that
\begin{equation}
\sqrt{1-t^2+t^2b^2}\ge (1-t)+tb.
\label{eq:sqrt_affine_lb}
\end{equation}
Indeed, after squaring both sides, \eqref{eq:sqrt_affine_lb} is equivalent to
\[
1-t^2+t^2b^2\ge (1-t+tb)^2,
\]
and the difference between the two sides is
\[
1-t^2+t^2b^2-(1-t+tb)^2
=2(1-b)t(1-t)\ge 0,
\]
because $b\le 1$ and $t\in[0,1]$.
Using \eqref{eq:sqrt_affine_lb}, we obtain
\[
\gamma(P;u,t\mu)
\ge
\frac{(1-t)a+t(ab-m)}{(1-t)+t(1-\rho)}.
\]
Now note that
\[
\Gamma(P;u,\mu)=\frac{ab-m}{1-\rho}.
\]
Therefore the last lower bound can be rewritten as
\[
\gamma(P;u,t\mu)
\ge
\frac{(1-t)\cdot 1\cdot a+t(1-\rho)\Gamma(P;u,\mu)}{(1-t)\cdot 1+t(1-\rho)}.
\]
A weighted average of two real numbers is always at least their minimum, hence
\[
\gamma(P;u,t\mu)\ge \min\{a,\Gamma(P;u,\mu)\}.
\]
This is exactly \eqref{eq:gamma_scaled_unit} and \eqref{eq:gamma_scaled_unit2}.
\end{proof}

\begin{remark}
Corollary~\ref{cor:gammaIntGeGamma} with $v=t\mu$ gives
\[
\gamma(P;u,t\mu)\ge \Gamma(P;u,t\mu)
=
\frac{t(1-\langle u,\mu\rangle)}{1-t\langle u,\mu\rangle}\,\Gamma(P;u,\mu),
\]
which can be weak when $t$ is small. Lemma~\ref{lem:gamma_scaled_unit} provides a different and more useful lower bound in this regime by comparing $\gamma(P;u,t\mu)$ to a unit-direction surrogate involving $\mu$.
\end{remark}

\begin{lemma}[JL estimate for Haar--Stiefel subspaces on finite sets]
\label{lem:JL_clean}
Let $P\in\mathbb R^{d\times n}$ be Haar on $\{P:PP^\top=I_d\}$. There exist absolute constants $c_{\mathrm{JL}},C_{\mathrm{JL}}>0$ such that for every $\varepsilon\in(0,1/2)$ and finite $\mathcal Z\subset S^{n-1}$,
\begin{equation}
\mathbb P\!\left(\forall z\in\mathcal Z:\ \left|\|Pz\|^2-\frac dn\right|\le \varepsilon\frac dn\right)
\ge 1-C_{\mathrm{JL}}|\mathcal Z|\exp(-c_{\mathrm{JL}}\varepsilon^2 d).
\label{eq:JLfinite_clean}
\end{equation}
\end{lemma}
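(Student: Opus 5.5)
The plan is to reduce to a single fixed unit vector and then take a union bound over $\mathcal Z$. Fix $z\in S^{n-1}$. By rotational invariance of the Haar measure on the Stiefel manifold, $\|Pz\|^2$ has the same law as $\|Pe_1\|^2$. By Lemma~\ref{lem:Bartlett_prelim} this law is $\BetaDist(\tfrac d2,\tfrac{n-d}2)$, with mean $d/n$. So it suffices to prove a one-vector tail bound of the form
\[
\mathbb P\!\left(\left|\|Pz\|^2-\tfrac dn\right|>\varepsilon\tfrac dn\right)\le C_{\mathrm{JL}}\exp(-c_{\mathrm{JL}}\varepsilon^2 d),
\]
with absolute constants. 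The finite-set statement then follows from the union bound over the $|\mathcal Z|$ events.

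To get the one-vector bound, I would use the Gaussian representation. Let $g=(g_1,\dots,g_n)$ be a standard Gaussian vector. Then $\|Pz\|^2 \overset{d}{=} X/(X+Y)$, where $X=\sum_{i\le d}g_i^2\sim\chi^2_d$ and $Y=\sum_{i>d}g_i^2\sim\chi^2_{n-d}$ are independent. This holds because $g/\|g\|$ is uniform on the sphere, and projecting a uniform unit vector onto a fixed $d$-subspace has the same law as projecting a fixed vector onto a Haar subspace.

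Standard chi-square concentration (Laurent--Massart or Bernstein for sub-exponential variables) gives two facts:
\[
\mathbb P\bigl(|X-d|>\tfrac{\varepsilon}{3}d\bigr)\le 2e^{-c\varepsilon^2 d},\qquad
\mathbb P\bigl(|X+Y-n|>\tfrac{\varepsilon}{3}n\bigr)\le 2e^{-c\varepsilon^2 n}\le 2e^{-c\varepsilon^2 d}.
\]
On the intersection of the complementary events, the ratio satisfies
\[
\frac{X}{X+Y}\in\frac dn\left[\frac{1-\varepsilon/3}{1+\varepsilon/3},\ \frac{1+\varepsilon/3}{1-\varepsilon/3}\right].
\]
For $\varepsilon<1/2$ this interval lies inside $\tfrac dn[1-\varepsilon,1+\varepsilon]$, by an elementary inequality such as $(1+a)/(1-a)\le 1+3a$ for $a\le 1/6$. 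This gives the one-vector bound with, for example, $C_{\mathrm{JL}}=4$.

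The main point needing care is the ratio step: the deviations of the numerator and denominator must be converted into a relative deviation of the quotient with constants that hold uniformly for $\varepsilon<1/2$. That is why I split $\varepsilon$ into thirds. The denominator's tail is in terms of $n\ge d$, so it is dominated by the $d$-rate.

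An alternative route uses the Beta law directly, via Lipschitz concentration on the sphere (Lévy's lemma) for $z\mapsto\|Pz\|$. However, that yields a rate like $e^{-c\varepsilon^2 d}$ only after a relative-scale argument. The Gaussian-ratio approach is cleaner, so I would use it.
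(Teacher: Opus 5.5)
Your proof is correct and complete. The paper does not prove Lemma~\ref{lem:JL_clean} at all. It states it as the standard Johnson--Lindenstrauss fact for Haar projections and uses it as a black box, so there is no paper argument for yours to match.

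Your route is as follows:
\begin{enumerate}
\item Use rotation invariance to reduce to a single vector.
\item Represent $\|Pz\|^2\overset{d}{=}X/(X+Y)$ with independent $X\sim\chi^2_d$ and $Y\sim\chi^2_{n-d}$.
\item Apply Laurent--Massart tails to numerator and denominator at relative accuracy $a=\varepsilon/3$.
\item Use the elementary bounds $(1-a)/(1+a)\ge 1-2a$ and $(1+a)/(1-a)\le 1+3a$, valid for $a\le 1/3$.
\item Take a union bound over $\mathcal Z$.
\end{enumerate}

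What this buys is fully explicit constants. Taking $t=a^2d/16$ in the upper tail $\mathbb P(X-d\ge 2\sqrt{dt}+2t)\le e^{-t}$ gives $C_{\mathrm{JL}}=4$ and $c_{\mathrm{JL}}=1/144$. This is not cosmetic. The paper's $p_{\mathrm{hp}}=[1-C_{\mathrm{hp}}e^{-c_{\mathrm{hp}}\varepsilon^2 d}]_+$ must be positive in Theorem~\ref{thm:linear_convergence}, and only explicit constants make that hypothesis checkable for a given $d$.

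Two small remarks.
\begin{itemize}
\item You do not actually need Lemma~\ref{lem:Bartlett_prelim}, which assumes $2\le d\le n-1$. The Gaussian representation alone covers every $1\le d\le n$, with $Y\equiv 0$ and $\|Pz\|^2\equiv 1=d/n$ when $d=n$.
\item Within the paper's own toolkit, the one-vector bound also follows from Lemma~\ref{lem:spherical_hanson_wright}. Apply it to $\theta^\top\Pi_0\theta$ with $\Pi_0$ a fixed rank-$d$ projector, so $\operatorname{tr}(\Pi_0)/n=d/n$, $\|\Pi_0\|_F=\sqrt d$, $\|\Pi_0\|_2=1$, and take $u\asymp\varepsilon^2 d$. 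That route, however, relies on Adamczak's dependent Hanson--Wright inequality and leaves the constants unspecified, so your elementary argument is the better choice here.
\end{itemize}
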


\section{Expected lower bounds for the efficiency ratio}
\label{app:expected-gamma}

This section proves the quantitative lower bounds for the section-efficiency
ratio \(\Gamma(P;u,v)\). The first result gives a sharp identity showing that
\(\mathbb E\Gamma\) is within \(O(1/n)\) of \(d/n\). The second route gives a cruder but uniformly positive lower bound of order
\(d/n\). We keep it as a complementary estimate because it exposes the
two-dimensional determinant structure and remains positive even when the sharp
identity-based lower bound is not useful.

\subsection{Exact identity for $\mathbb E[\Gamma]$}

\begin{proof}[Proof of Proposition~\ref{prop:EGamma_identity}]
Let
\[
Z(P;u,v):=\|Pu\|\,\|Pv\|-\langle Pu,Pv\rangle,
\qquad
\Gamma(P;u,v)=\frac{Z(P;u,v)}{1-\rho}.
\]
The algebraic identity
\[
2Z(P;u,v)=\|P(u-v)\|^2-(\|Pu\|-\|Pv\|)^2
\]
holds for every matrix $P$.
Taking expectations and using
\(\mathbb E\|P(u-v)\|^2=(d/n)\|u-v\|^2=2(d/n)(1-\rho)\), we obtain
\[
\mathbb E[\Gamma(P;u,v)]
=
\frac dn
-
\frac{\mathbb E(\|Pu\|-\|Pv\|)^2}{2(1-\rho)}.
\]
The lower bound follows from Corollary~\ref{cor:clean_bound} below, and the
upper bound is immediate from the nonnegativity of the correction term.
\end{proof}

Let $\Pi:=P^\top P$ be the orthogonal projector onto the random $d$-dimensional subspace $U:=\mathrm{range}(P^\top)$.
Fix unit vectors $u,v\in S^{n-1}$ and write
\begin{equation}
\rho:=\langle u,v\rangle <1.
\label{eq:def_rho_c}
\end{equation}
Define
\begin{equation}
Y:=\|Pv\|^2=\|\Pi v\|^2,
\qquad
X:=\|Pu\|^2=\|\Pi u\|^2.
\label{eq:def_Y_X}
\end{equation}

\subsection{Exact conditioning structure}

\begin{lemma}[Exact conditional decomposition]
\label{lem:cond_decomp}
By rotational invariance, assume without loss of generality that
\begin{equation}
v=e_1,
\qquad
u=\rho e_1+\sqrt{1-\rho^2}\,e_2.
\label{eq:wlog_basis}
\end{equation}
Set
\[
\sigma:=\sqrt{1-\rho^2}.
\]
Assume \(2\le d<n\), then there exist random variables $(S,T,B)$ such that:
\begin{enumerate}
\item $S\in(0,1)$ almost surely and
\begin{equation}
S\sim \BetaDist\!\left(\tfrac d2,\tfrac{n-d}{2}\right).
\label{eq:S_beta}
\end{equation}
\item $T\in[-1,1]$ is independent of $S$ and satisfies
\begin{equation}
\mathbb E[T]=0,
\qquad
\mathbb E[T^2]=\frac{1}{n-1},
\qquad
\mathbb E[T^4]=\frac{3}{(n-1)(n+1)}.
\label{eq:T_moments}
\end{equation}
\item \(B\) is independent of \((S,T)\). If \(d\le n-2\), then
\[
B\sim \BetaDist\!\left(\tfrac{d-1}{2},\tfrac{n-d-1}{2}\right),
\]
while if \(d=n-1\), we use the degenerate convention \(B\equiv1\).
\item Defining
\begin{equation}
\alpha:=\rho\sqrt S+\sigma\sqrt{1-S}\,T,
\label{eq:def_alpha}
\end{equation}
one has
\begin{equation}
X=\alpha^2+\sigma^2(1-T^2)B.
\label{eq:X_decomp}
\end{equation}
\end{enumerate}
\end{lemma}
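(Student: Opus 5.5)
We obtain this as a direct specialization of Lemma~\ref{lem:Bartlett_prelim} and Remark~\ref{rem:Bartlett_role}. The only difference is that the roles of \(u\) and \(v\) are interchanged: here \(v\) is the anchor direction \(e_1\) and \(u\) is the tilted one.

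\textbf{Step 1 (rotational invariance).} The law of \(P\) is invariant under \(P\mapsto PO\) for orthogonal \(O\in\mathbb R^{n\times n}\). Every quantity involved (\(X\), \(Y\), \(\langle Pu,Pv\rangle\)) is a function of the Gram matrix of \((Pu,Pv)\), and that Gram matrix is unchanged in law under a common rotation of \(u\) and \(v\). Since \(\rho<1\), we may take an orthonormal pair \(e_1,e_2\) with \(v=e_1\) and \(u=\rho e_1+\sigma e_2\). If \(\rho=-1\), then \(\sigma=0\) and \(e_2\) is an arbitrary unit vector orthogonal to \(e_1\); the decomposition still holds trivially.

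\textbf{Step 2 (invoke the Bartlett decomposition).} Apply Lemma~\ref{lem:Bartlett_prelim} to the orthonormal pair \((e_1,e_2)\), with \(2\le d\le n-1\). This produces independent variables \(R\), \(T\), \(B\) with the stated laws and moments. Set \(S:=R\). Items~1--3 of the present lemma are then immediate:
\begin{itemize}
\item \(S\sim\BetaDist(d/2,(n-d)/2)\), and \(S\in(0,1)\) almost surely because the Beta law is absolutely continuous when \(d<n\);
\item \(T\) has the required moments;
\item \(B\) has the required law, including the convention \(B\equiv1\) when \(d=n-1\).
\end{itemize}
Independence of \(B\) from \((S,T)\), and of \(T\) from \(S\), also comes directly from the lemma.

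\textbf{Step 3 (expand \(X\)).} Write \(\Pi u=\rho\,\Pi e_1+\sigma\,\Pi e_2\), so that
\[
X=\rho^2\|Pe_1\|^2+2\rho\sigma\langle Pe_1,Pe_2\rangle+\sigma^2\|Pe_2\|^2 .
\]
Substitute the three identities \eqref{eq:Bartlett_corrected_R}--\eqref{eq:Bartlett_corrected_norm2}:
\[
X=\rho^2S+2\rho\sigma\sqrt{S(1-S)}\,T+\sigma^2(1-S)T^2+\sigma^2(1-T^2)B .
\]
The first three terms form the perfect square \(\bigl(\rho\sqrt S+\sigma\sqrt{1-S}\,T\bigr)^2=\alpha^2\). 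This gives \eqref{eq:X_decomp}, and it coincides with \eqref{eq:Bartlett_role_Pv} after relabeling.

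\textbf{Main obstacle.} There is no real difficulty once Lemma~\ref{lem:Bartlett_prelim} is available. The one point requiring care is the justification of Step~1: the reduction to the canonical frame must preserve the joint law of \((\|Pu\|,\|Pv\|,\langle Pu,Pv\rangle)\). This follows from right-orthogonal invariance of the Haar--Stiefel distribution.
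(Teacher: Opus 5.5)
Your proposal is correct and follows the paper's proof essentially verbatim. Like the paper, you apply Lemma~\ref{lem:Bartlett_prelim} to the pair \((e_1,e_2)\), identify \(S=\|Pv\|^2=\|Pe_1\|^2=R\), and expand \(X=\|\rho Pe_1+\sigma Pe_2\|^2\) with the three Bartlett identities to complete the square; your extra remarks on the rotational-invariance reduction and the degenerate case \(\rho=-1\) are harmless details that the paper leaves implicit.
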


\begin{proof}
By Lemma~\ref{lem:Bartlett_prelim}, applied to the pair $(e_1,e_2)$, there exist independent random variables $(R,T,B)$ such that
\[
R\sim \BetaDist\!\left(\tfrac d2,\tfrac{n-d}{2}\right),
\qquad
\mbox{with the same degenerate convention \(B\equiv1\) when \(d=n-1\).}
\]
\[
\mathbb E[T]=0,
\qquad
\mathbb E[T^2]=\frac{1}{n-1},
\qquad
\mathbb E[T^4]=\frac{3}{(n-1)(n+1)},
\]
and
\begin{equation}
\|Pe_1\|^2 = R,
\label{eq:B2_R}
\end{equation}
\begin{equation}
\langle Pe_1,Pe_2\rangle = \sqrt{R(1-R)}\,T,
\label{eq:B2_cross}
\end{equation}
\begin{equation}
\|Pe_2\|^2 = (1-R)T^2 + (1-T^2)B.
\label{eq:B2_Pe2}
\end{equation}

Since $v=e_1$, we have
\[
S=\|Pv\|^2=\|Pe_1\|^2=R.
\]
Hence $S$ has the distribution \eqref{eq:S_beta}, and $T$ and $B$ are independent of $S$.

Now
\[
Pu=\rho Pe_1+\sigma Pe_2.
\]
Therefore
\begin{align*}
X
&=\|Pu\|^2 \\
&=\rho^2\|Pe_1\|^2 + 2\rho\sigma\langle Pe_1,Pe_2\rangle + \sigma^2\|Pe_2\|^2 \\
&=\rho^2 S + 2\rho\sigma\sqrt{S(1-S)}\,T + \sigma^2\bigl((1-S)T^2 + (1-T^2)B\bigr) \\
&=\bigl(\rho\sqrt S+\sigma\sqrt{1-S}\,T\bigr)^2 + \sigma^2(1-T^2)B.
\end{align*}
This is exactly \eqref{eq:X_decomp}.
\end{proof}

\subsection{Bounding $\mathbb E(\|Pu\|-\|Pv\|)^2$}

We start from the exact identity
\begin{equation}
(\sqrt X-\sqrt Y)^2=\frac{(X-Y)^2}{(\sqrt X+\sqrt Y)^2}.
\label{eq:sqrt_identity}
\end{equation}
Since $(\sqrt X+\sqrt Y)^2\ge Y$, we have the pointwise bound
\begin{equation}
(\|Pu\|-\|Pv\|)^2\le \frac{(X-Y)^2}{Y}.
\label{eq:drop_denom_to_Y}
\end{equation}

\begin{proposition}[Exact formula for \(\mathbb E((X-Y)^2/Y)\)]
\label{prop:exact_Delta2_over_Y}
Assume \(3\le d<n\). Then
\[
\mathbb E\!\left[\frac{(X-Y)^2}{Y}\right]
=
\frac{4(n-d)}{n(n-1)}(1-\rho^2)
\left(
\rho^2+
\frac{(1-\rho^2)(n-d+2)}{2(d-2)(n+1)}
\right).
\]
\end{proposition}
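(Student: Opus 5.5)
The plan is to compute the expectation directly from the exact conditional decomposition of Lemma~\ref{lem:cond_decomp}, with \(v=e_1\) and \(u=\rho e_1+\sigma e_2\). There \(Y=S\) and \(X=\alpha^2+\sigma^2(1-T^2)B\). Expanding \(\alpha^2\) and subtracting \(S\), and using \(\rho^2 S-S=-\sigma^2S\), gives
\[
X-Y=\sigma\Bigl(2\rho\sqrt{S(1-S)}\,T+\sigma W\Bigr),
\qquad
W:=(1-S)T^2+(1-T^2)B-S .
\]
Squaring and dividing by \(Y=S\) produces three contributions. The first is \(4\rho^2\sigma^2(1-S)T^2\). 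The second is the cross term \(4\rho\sigma^3\sqrt{(1-S)/S}\,T\,W\). The third is \(\sigma^4 W^2/S\).

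The cross term has zero expectation. \(W\) depends on \(T\) only through \(T^2\), so the cross term is odd in \(T\). Moreover \(T\) is a coordinate of a uniform point on \(S^{n-2}\), so its law is symmetric and all odd moments vanish; I will also use the independence of \(T\) from \((S,B)\). This needs a brief remark, since Lemma~\ref{lem:cond_decomp} only records \(\mathbb E[T]=0\), but the symmetry is clear from the construction in Lemma~\ref{lem:Bartlett_prelim}.

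For the first term, independence gives \(4\rho^2\sigma^2\,\mathbb E[1-S]\,\mathbb E[T^2]=4\rho^2\sigma^2\frac{n-d}{n}\cdot\frac1{n-1}\). This is exactly the \(\rho^2\) part of the claimed formula.

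It therefore remains to show
\[
\mathbb E\!\left[\frac{W^2}{S}\right]=\frac{2(n-d)(n-d+2)}{n(n-1)(d-2)(n+1)} .
\]
To do this, I will write \(W=A-S(1+T^2)\) with \(A:=T^2+(1-T^2)B\), which is independent of \(S\). Then
\[
\frac{W^2}{S}=\frac{A^2}{S}-2A(1+T^2)+S(1+T^2)^2 .
\]
Expanding, the only moments needed are the following. For \(S\sim\BetaDist(d/2,(n-d)/2)\), I need \(\mathbb E[1/S]=(n-2)/(d-2)\), which is finite precisely because \(d\ge3\); this is where that hypothesis enters. I also need \(\mathbb E S=d/n\). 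For \(T\), I need \(\mathbb E T^2\), \(\mathbb E T^4\), and \(\mathbb E T^6=15/((n-1)(n+1)(n+3))\). For \(B\), I need \(\mathbb E B=(d-1)/(n-2)\) and \(\mathbb E B^2=\frac{(d-1)(d+1)}{(n-2)n}\). These Beta formulas also give \(1\) when \(d=n-1\), consistent with the convention \(B\equiv1\), so that case needs no separate treatment.

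The remaining work is to compute \(\mathbb E A^2\), \(\mathbb E[A(1+T^2)]\) and \(\mathbb E(1+T^2)^2\), and to combine the three terms.

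The main obstacle is this last simplification: the rational expressions in \(n\) and \(d\) must collapse to the product form above. I would organize the computation by first collecting the coefficients of \(\mathbb E T^{2j}\). A useful sanity check is the case \(d=n-1\): there \(B\equiv1\), so \(A\equiv1\), and the target reduces to \(\frac{6}{n(n-1)(n-3)(n+1)}\), which can be verified directly. I would then confirm the general identity symbolically. The \(T^6\) moment might cancel because \(A\) is affine in \(T^2\); if it does not, the \(n+3\) factor must cancel against the numerator, which is a point to check carefully.
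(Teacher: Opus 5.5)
\textbf{The step that fails is your final simplification: the identity you set out to verify is false, so no amount of algebra will make it collapse to the stated product form.}

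Everything before that step is correct, and it is the paper's own route. The paper conditions on $(S,T)$, averages out $B$ and then $T$ to get a quadratic in $S$, and only then divides by $S$. Your expansion of $W^2/S$ with $A=T^2+(1-T^2)B$ produces the same three $S$-moments. In particular, the decomposition $X-Y=\sigma\bigl(2\rho\sqrt{S(1-S)}\,T+\sigma W\bigr)$, the vanishing cross term, and the $\rho^2$ contribution $4\rho^2\sigma^2(n-d)/(n(n-1))$ are all right. Also, $\mathbb E T^6$ is never needed, since every term of $W^2/S$ is at most quadratic in $T^2$.

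Carrying out your computation gives
\[
\mathbb E A=\frac{d}{n-1},\qquad
\mathbb E A^2=\frac{d(d+2)}{(n-1)(n+1)},\qquad
\mathbb E[A(1+T^2)]=\frac{dn+2d+2}{(n-1)(n+1)},\qquad
\mathbb E(1+T^2)^2=\frac{n^2+2n+4}{(n-1)(n+1)}.
\]
Hence
\[
\mathbb E\!\left[\frac{W^2}{S}\right]
=\frac{1}{(n-1)(n+1)}\left(\frac{d(d+2)(n-2)}{d-2}-2(dn+2d+2)+\frac{d(n^2+2n+4)}{n}\right)
=\frac{4(n-d)(dn-d+2)}{n(n-1)(n+1)(d-2)},
\]
which is not $\frac{2(n-d)(n-d+2)}{n(n-1)(d-2)(n+1)}$.

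Your proposed sanity check would have caught this. For $d=n-1$ your expression is
\[
\frac{n-2}{n-3}-\frac{2n}{n-1}+\frac{n^2+2n+4}{n(n+1)}=\frac{4(n^2-2n+3)}{n(n-1)(n+1)(n-3)},
\]
not $\frac{6}{n(n-1)(n-3)(n+1)}$. Concretely, take $n=4$, $d=3$, $\rho=0$, and write $U=w^\perp$ with $w$ uniform on $S^3$. Then $(X-Y)^2/Y=(a-(1-a)\xi^2)^2/(1-a)$, where $a=w_1^2\sim\BetaDist(\frac12,\frac32)$ and $\xi$ is uniform on $[-1,1]$. Its expectation is $\frac34-\frac16+\frac3{20}=\frac{11}{15}$, whereas the stated formula gives $\frac1{10}$.

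The correct identity is
\[
\mathbb E\!\left[\frac{(X-Y)^2}{Y}\right]
=\frac{4(n-d)}{n(n-1)}(1-\rho^2)\left(\rho^2+\frac{(1-\rho^2)(dn-d+2)}{(d-2)(n+1)}\right).
\]
The paper's proof reaches the same bracket as you and goes wrong only in simplifying it. It asserts the bracket equals $\frac{2(n-d)(n-d+2)(n+1)}{n(d-2)}$ instead of $\frac{4(n-d)(dn-d+2)}{n(d-2)}$. It then also leaves an uncancelled factor $n+1$ in the final line.

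The error propagates downstream. The corrected coefficient $\frac{dn-d+2}{(d-2)(n+1)}$ always exceeds $1$ (it is at most $\frac{d}{d-2}\le 3$). So the step $\rho^2+A_{n,d}(1-\rho^2)\le 1$ in Corollary~\ref{cor:clean_bound}, and the constant $c_{n,d}$ built on it, need revision. Your method is sound; the target is what must change.
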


\begin{proof}
Let us give first a global overview of the proof.
\paragraph{Proof idea:}
The identity for \(\mathbb E[\Gamma]\) reduces the problem to controlling
\(\mathbb E(\|Pu\|-\|Pv\|)^2\). We write
\(X=\|Pu\|^2\), \(Y=\|Pv\|^2\), use
\((\sqrt X-\sqrt Y)^2\le (X-Y)^2/Y\), and then compute the latter exactly using
the conditional decomposition from Lemma~\ref{lem:cond_decomp}. The calculation
is only algebraic: after conditioning on \(S=Y\), all odd powers of \(T\)
vanish and the remaining beta moments are explicit.

Set
\[
\sigma:=\sqrt{1-\rho^2}.
\]
By Lemma~\ref{lem:cond_decomp},
\begin{equation}
X-S=(\alpha^2-S)+\sigma^2(1-T^2)B.
\label{eq:Delta_split_corrected}
\end{equation}
Therefore
\begin{equation}
(X-S)^2
=
(\alpha^2-S)^2
+
2(\alpha^2-S)\sigma^2(1-T^2)B
+
\sigma^4(1-T^2)^2B^2.
\label{eq:expand_square_corrected}
\end{equation}

Conditional on $(S,T)$, the Beta variable
\[
B\sim \BetaDist\!\left(\tfrac{d-1}{2},\tfrac{n-d-1}{2}\right)
\]
is independent of $(S,T)$, hence
\begin{equation}
\mathbb E\big[(X-S)^2\mid S,T\big]
=
(\alpha^2-S)^2
+
2(\alpha^2-S)\sigma^2(1-T^2)\,\mathbb E[B]
+
\sigma^4(1-T^2)^2\,\mathbb E[B^2].
\label{eq:cond_second_moment_corrected}
\end{equation}
For this Beta distribution,
\begin{equation}
\mathbb E[B]=\frac{d-1}{n-2},
\qquad
\mathbb E[B^2]=\frac{d^2-1}{n(n-2)}.
\label{eq:B_moments_corrected}
\end{equation}

We now average over $T$ conditional on $S$.
Using
\[
\alpha=\rho\sqrt S+\sigma\sqrt{1-S}\,T,
\]
we get
\begin{equation}
\alpha^2-S
=
\sigma^2\bigl(-S+(1-S)T^2\bigr)
+
2\rho\sigma\sqrt{S(1-S)}\,T.
\label{eq:alpha2_minus_S_corrected}
\end{equation}
Define
\[
A(S,T):=\sigma^2\bigl(-S+(1-S)T^2\bigr),
\qquad
L(S):=2\rho\sigma\sqrt{S(1-S)},
\]
so that
\[
\alpha^2-S=A(S,T)+L(S)\,T.
\]

Since $T$ is symmetric and $\mathbb E[T]=\mathbb E[T^3]=0$, all odd terms in $T$ vanish after taking expectation.
Using \eqref{eq:T_moments}, we obtain
\begin{align}
\mathbb E\big[(\alpha^2-S)^2\mid S\big]
&=
\mathbb E\big[A(S,T)^2\mid S\big]
+
\mathbb E\big[L(S)^2T^2\mid S\big]
\notag\\
&=
\sigma^4\,
\mathbb E\!\left[\bigl(-S+(1-S)T^2\bigr)^2\mid S\right]
+
4\rho^2\sigma^2 S(1-S)\,\mathbb E[T^2]
\notag\\
&=
\sigma^4\left(
S^2
-2S(1-S)\mathbb E[T^2]
+(1-S)^2\mathbb E[T^4]
\right)
+
\frac{4\rho^2\sigma^2}{n-1}S(1-S)
\notag\\
&=
\sigma^4\left(
S^2
-\frac{2S(1-S)}{n-1}
+\frac{3(1-S)^2}{(n-1)(n+1)}
\right)
+
\frac{4\rho^2\sigma^2}{n-1}S(1-S).
\label{eq:ET_alphaS_sq_corrected}
\end{align}

Similarly,
\begin{align}
\mathbb E\big[(\alpha^2-S)(1-T^2)\mid S\big]
&=
\mathbb E\big[A(S,T)(1-T^2)\mid S\big]
\notag\\
&=
\sigma^2\,\mathbb E\!\left[\bigl(-S+(1-S)T^2\bigr)(1-T^2)\mid S\right]
\notag\\
&=
\sigma^2\left(
-S(1-\mathbb E[T^2])+(1-S)(\mathbb E[T^2]-\mathbb E[T^4])
\right)
\notag\\
&=
\sigma^2\left(
-S\frac{n-2}{n-1}
+
(1-S)\frac{n-2}{(n-1)(n+1)}
\right).
\label{eq:ET_cross_corrected}
\end{align}

Finally,
\begin{align}
\mathbb E\big[(1-T^2)^2\big]
&=
1-2\mathbb E[T^2]+\mathbb E[T^4]
\notag\\
&=
1-\frac{2}{n-1}+\frac{3}{(n-1)(n+1)}
\notag\\
&=
\frac{n(n-2)}{(n-1)(n+1)}.
\label{eq:ET_one_minus_T2_sq_corrected}
\end{align}

Substituting \eqref{eq:B_moments_corrected}, \eqref{eq:ET_alphaS_sq_corrected}, \eqref{eq:ET_cross_corrected}, and \eqref{eq:ET_one_minus_T2_sq_corrected} into \eqref{eq:cond_second_moment_corrected}, we get
\begin{align}
\mathbb E\big[(X-S)^2\mid S\big]
&=
\sigma^4\left(
S^2
-\frac{2S(1-S)}{n-1}
+\frac{3(1-S)^2}{(n-1)(n+1)}
\right)
+
\frac{4\rho^2\sigma^2}{n-1}S(1-S)
\notag\\
&\quad
+
2\sigma^2
\left[
\sigma^2\left(
-S\frac{n-2}{n-1}
+
(1-S)\frac{n-2}{(n-1)(n+1)}
\right)
\right]
\frac{d-1}{n-2}
\notag\\
&\quad
+
\sigma^4\frac{n(n-2)}{(n-1)(n+1)}\frac{d^2-1}{n(n-2)}.
\label{eq:after_substitution_corrected}
\end{align}

Collecting the $\sigma^4$-terms and simplifying yields
\begin{equation}
\mathbb E\big[(X-S)^2\mid S\big]
=
\frac{4\rho^2\sigma^2}{n-1}S(1-S)
+
\frac{\sigma^4}{(n-1)(n+1)}
\Bigl((n^2+2n+4)S^2-2(dn+2d+2)S+d(d+2)\Bigr).
\label{eq:cond_second_moment_compact_corrected}
\end{equation}

We now divide by $S$ and take expectation over $S$:
\begin{align}
\mathbb E\!\left[\frac{(X-S)^2}{S}\right]
&=
\frac{4\rho^2\sigma^2}{n-1}\,\mathbb E[1-S]
\notag\\
&\quad
+
\frac{\sigma^4}{(n-1)(n+1)}
\left(
(n^2+2n+4)\mathbb E[S]
-2(dn+2d+2)
+d(d+2)\mathbb E\!\left[\frac1S\right]
\right).
\label{eq:Delta2_over_S_split_corrected}
\end{align}

Since
\[
S\sim \BetaDist\!\left(\tfrac d2,\tfrac{n-d}{2}\right),
\]
we have
\begin{equation}
\mathbb E[S]=\frac{d}{n},
\qquad
\mathbb E[1-S]=\frac{n-d}{n},
\qquad
\mathbb E\!\left[\frac1S\right]=\frac{n-2}{d-2},
\quad d>2.
\label{eq:S_moments_corrected}
\end{equation}
Substituting \eqref{eq:S_moments_corrected} into \eqref{eq:Delta2_over_S_split_corrected} gives
\begin{align}
\mathbb E\!\left[\frac{(X-S)^2}{S}\right]
&=
\frac{4\rho^2\sigma^2(n-d)}{n(n-1)}
\notag\\
&\quad
+
\frac{\sigma^4}{(n-1)(n+1)}
\left(
(n^2+2n+4)\frac dn
-2(dn+2d+2)
+d(d+2)\frac{n-2}{d-2}
\right).
\label{eq:after_S_moments_corrected}
\end{align}

The bracket simplifies to
\begin{equation}
(n^2+2n+4)\frac dn
-2(dn+2d+2)
+d(d+2)\frac{n-2}{d-2}
=
\frac{2(n-d)(n-d+2)(n+1)}{n(d-2)}.
\label{eq:bracket_common_den_corrected}
\end{equation}
Substituting \eqref{eq:bracket_common_den_corrected} into \eqref{eq:after_S_moments_corrected}, we obtain
\begin{align}
\mathbb E\!\left[\frac{(X-S)^2}{S}\right]
&=
\frac{4\rho^2\sigma^2(n-d)}{n(n-1)}
+
\frac{2\sigma^4(n-d)(n-d+2)}{n(n-1)(d-2)(n+1)}
\notag\\
&=
\frac{4(n-d)}{n(n-1)}(1-\rho^2)
\left(
\rho^2+\frac{(1-\rho^2)(n-d+2)}{2(d-2)(n+1)}
\right).
\label{eq:almost_done_corrected}
\end{align}
This is the claimed formula.
\end{proof}

\begin{corollary}
\label{cor:clean_bound}
Assume \(3\le d<n\). Then
\begin{equation}
\mathbb E\big[(\|Pu\|-\|Pv\|)^2\big]
\le
\mathbb E\!\left[\frac{(X-Y)^2}{Y}\right]
\le
\frac{8(n-d)}{n(n-1)}(1-\rho).
\label{eq:final_bound}
\end{equation}
In particular,
\begin{equation}
\mathbb E\big[(\|Pu\|-\|Pv\|)^2\big]
\le
C_{n,d}\,\frac{1-\langle u,v\rangle}{n},
\qquad
C_{n,d}:=\frac{8(n-d)}{n-1}\le 8.
\label{eq:clean_bound_scaled}
\end{equation}
\end{corollary}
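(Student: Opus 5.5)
The first inequality in \eqref{eq:final_bound} comes straight from the pointwise bound \eqref{eq:drop_denom_to_Y}. Indeed, $(\|Pu\|-\|Pv\|)^2=(\sqrt X-\sqrt Y)^2\le (X-Y)^2/Y$ holds almost surely, and we take expectations. Since $Y=\|Pv\|^2>0$ almost surely for $d\ge1$, the right-hand side is well defined. It is integrable by Proposition~\ref{prop:exact_Delta2_over_Y}, which uses $d\ge3$ so that $\mathbb E[1/S]<\infty$.

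For the second inequality I will start from the exact formula in Proposition~\ref{prop:exact_Delta2_over_Y}, with $\rho=\langle u,v\rangle\in[-1,1)$. Factor $1-\rho^2=(1-\rho)(1+\rho)$. It then suffices to show
\[
(1+\rho)\Bigl(\rho^2+\frac{(1-\rho^2)(n-d+2)}{2(d-2)(n+1)}\Bigr)\le 2 .
\]
Set $q:=\frac{n-d+2}{2(d-2)(n+1)}$. The bracket equals $\rho^2(1-q)+q$, a convex combination of $\rho^2$ and $1$ whenever $0\le q\le1$. In that case the bracket is at most $1$. Since $1+\rho\le 2$, the product is at most $2$, and we obtain $\frac{4(n-d)}{n(n-1)}\cdot 2(1-\rho)$, which is the claim.

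The one point requiring care is verifying $q\le1$, i.e.\ $n-d+2\le 2(d-2)(n+1)$. For $d\ge3$ the right-hand side is at least $2(n+1)$, and $n-d+2\le n-1<2(n+1)$, so the inequality holds. This is the only place where $d\ge3$ matters beyond the integrability already noted. If $q$ could exceed $1$, the maximum of the bracket would be attained at $\rho=0$ and the factor $2$ would fail; the hypothesis $d\ge3$ rules this out.

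Finally, \eqref{eq:clean_bound_scaled} is a rewriting: $\frac{8(n-d)}{n(n-1)}(1-\rho)=\frac{8(n-d)}{n-1}\cdot\frac{1-\rho}{n}$, and $\frac{n-d}{n-1}\le1$ because $d\ge1$. Substituting this into the proof of Proposition~\ref{prop:EGamma_identity} gives the lower bound $\frac dn-\frac{4(n-d)}{n(n-1)}$.
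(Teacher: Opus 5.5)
Your proof is correct and follows the paper's argument essentially step for step. The pointwise bound \eqref{eq:drop_denom_to_Y} gives the first inequality, and the exact formula of Proposition~\ref{prop:exact_Delta2_over_Y}, combined with $\rho^2+q(1-\rho^2)\le 1$ (the paper shows $q\le\frac{n-1}{2(n+1)}<1$) and $1-\rho^2\le 2(1-\rho)$, gives the second. Your aside that the factor $2$ would fail if $q>1$ is overstated but harmless: for $\rho<1$ the target inequality is equivalent to $(q-1)(1+\rho)^2\le 1$, so it in fact survives for all $q\le 5/4$.
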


\begin{proof}
The first inequality is \eqref{eq:drop_denom_to_Y}. By
Proposition~\ref{prop:exact_Delta2_over_Y}, with the corrected exact formula,
\[
\mathbb E\!\left[\frac{(X-Y)^2}{Y}\right]
=
\frac{4(n-d)}{n(n-1)}(1-\rho^2)
\left(
\rho^2+
\frac{(1-\rho^2)(n-d+2)}{2(d-2)(n+1)}
\right).
\]
Set
\[
A_{n,d}:=\frac{n-d+2}{2(d-2)(n+1)}.
\]
Since \(3\le d<n\),
\[
A_{n,d}\le \frac{n-1}{2(n+1)}<1.
\]
Hence
\[
\rho^2+A_{n,d}(1-\rho^2)\le 1.
\]
Therefore
\[
\mathbb E\!\left[\frac{(X-Y)^2}{Y}\right]
\le
\frac{4(n-d)}{n(n-1)}(1-\rho^2).
\]
Finally,
\[
1-\rho^2=(1-\rho)(1+\rho)\le 2(1-\rho),
\]
so
\[
\mathbb E\!\left[\frac{(X-Y)^2}{Y}\right]
\le
\frac{8(n-d)}{n(n-1)}(1-\rho).
\]
This proves \eqref{eq:final_bound}. The scaled form
\eqref{eq:clean_bound_scaled} follows by writing
\[
\frac{8(n-d)}{n(n-1)}
=
\frac{1}{n}\frac{8(n-d)}{n-1}.
\]
\end{proof}

\subsection{A secondary angle-uniform lower bound}

The identity in Proposition~\ref{prop:EGamma_identity} gives the sharp estimate
used in the main approximate-oracle argument. We also record a cruder but useful
angle-uniform lower bound. Its proof has a different geometric meaning: the
quantity \(\Gamma(P;u,v)\) is controlled from below by 
the smallest eigenvalue of the projected Gram matrix on the plane
\(\operatorname{span}\{u,v\}\), equivalently the squared smallest singular
value of the restricted projection \(\operatorname{span}\{u,v\}\).
This naturally produces the factor \(d-1\), reflecting the two-dimensional
projected Gram determinant.

\begin{lemma}[Moments of the projected Gram determinant]
\label{lem:Dmoments_clean}
Let \(M=(PE)^\top(PE)\) be the \(2\times2\) Gram matrix associated with an
orthonormal basis \(E=(e_1\ \ e_2)\) of a two-dimensional subspace. Assume
\(2\le d\le n-1\), and let \(D:=\det(M)\). With the notation of
Lemma~\ref{lem:Bartlett_prelim}, and with the convention \(B\equiv1\) when
\(d=n-1\), one has
\[
    D=R(1-T^2)B.
\]
Consequently,
\[
    \mathbb E[D]=\frac{d(d-1)}{n(n-1)}
\]
and
\[
    \mathbb E[D^2]
    =
    \frac{d(d+2)}{n(n+2)}
    \cdot
    \frac{(d-1)(d+1)}{(n-1)(n+1)}.
\]
Moreover,
\[
    \frac{\mathbb E[D^2]}{\mathbb E[D]^2}
    =
    \frac{(d+2)(d+1)}{d(d-1)}
    \cdot
    \frac{n(n-1)}{(n+2)(n+1)}
    \le 6 .
\]
\end{lemma}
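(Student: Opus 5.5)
The plan is to start from the Bartlett-type decomposition of Lemma~\ref{lem:Bartlett_prelim}. The determinant identity \eqref{eq:Bartlett_corrected_det} together with the Schur complement formula \eqref{eq:Bartlett_corrected_schur} already gives $D=\det(M)=R\,S_{\mathrm{Sch}}=R(1-T^2)B$. In the degenerate case $d=n-1$ we use the convention $B\equiv1$. There the $(d-1)$-dimensional residual subspace fills the entire $(n-2)$-dimensional complement $E^\perp$, so the projection of any fixed unit vector of $E^\perp$ has squared length one, consistent with the lemma. Since $R$, $T$, $B$ are independent, both moments factor:
\[
\mathbb E[D^k]=\mathbb E[R^k]\,\mathbb E[(1-T^2)^k]\,\mathbb E[B^k],\qquad k=1,2 .
\]

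For the first moment, the Beta means give $\mathbb E[R]=d/n$ and $\mathbb E[B]=(d-1)/(n-2)$, the latter also valid when $d=n-1$ since then $B\equiv 1=(d-1)/(n-2)$. From \eqref{eq:Bartlett_T_moments}, $\mathbb E[1-T^2]=(n-2)/(n-1)$. The product telescopes to $d(d-1)/(n(n-1))$.

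For the second moment we need three ingredients:
\begin{itemize}
\item $\mathbb E[R^2]=\frac{d(d+2)}{n(n+2)}$, the standard second moment of $\BetaDist(a,b)$, namely $a(a+1)/((a+b)(a+b+1))$ with $a=d/2$, $a+b=n/2$.
\item $\mathbb E[(1-T^2)^2]=\frac{n(n-2)}{(n-1)(n+1)}$, already computed in \eqref{eq:ET_one_minus_T2_sq_corrected}.
\item $\mathbb E[B^2]=\frac{(d-1)(d+1)}{(n-2)n}$, which matches \eqref{eq:B_moments_corrected} and holds trivially when $d=n-1$.
\end{itemize}
Multiplying, the factors $n(n-2)$ cancel and leave $\frac{d(d+2)}{n(n+2)}\cdot\frac{(d-1)(d+1)}{(n-1)(n+1)}$, as claimed.

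The ratio is then pure algebra: dividing by $\mathbb E[D]^2=d^2(d-1)^2/(n^2(n-1)^2)$ gives the displayed expression. For the bound $\le6$, note first that $n(n-1)/((n+2)(n+1))<1$. Next, $(d+2)(d+1)/(d(d-1))$ is decreasing in $d\ge2$, and at $d=2$ it equals $12/2=6$. Since $d(d-1)>0$ for $d\ge2$, the product is at most $6$.

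I do not expect a real obstacle. The only point requiring care is the boundary case $d=n-1$, where the Beta law for $B$ degenerates. That case must be handled by checking directly that the formulas for $\mathbb E[B]$ and $\mathbb E[B^2]$ still hold, which they do.
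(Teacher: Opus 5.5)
Your proposal is correct and follows the paper's proof essentially step by step. You obtain $D=R(1-T^2)B$ from the Bartlett decomposition, use independence to factor $\mathbb E[D]$ and $\mathbb E[D^2]$ into the same three moments, check that the formulas for $\mathbb E[B]$ and $\mathbb E[B^2]$ survive the convention $B\equiv1$ when $d=n-1$, and bound the ratio by $6$ using the monotonicity of $(d+2)(d+1)/(d(d-1))$ together with its value at $d=2$.
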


\begin{proof}
By Lemma~\ref{lem:Bartlett_prelim},
\[
\|Pe_1\|^2 = R,\qquad
\langle Pe_1,Pe_2\rangle = \sqrt{R(1-R)}\,T,
\]
and
\[
\|Pe_2\|^2 = (1-R)T^2+(1-T^2)B.
\]
Thus
\[
D
=
\|Pe_1\|^2\|Pe_2\|^2-\langle Pe_1,Pe_2\rangle^2
=
R(1-T^2)B.
\]
The variables \(R,T,B\) are independent, with
\[
\mathbb E[R]=\frac dn,\qquad
\mathbb E[1-T^2]=\frac{n-2}{n-1},\qquad
\mathbb E[B]=\frac{d-1}{n-2},
\]
where the last identity is also valid under the convention \(B\equiv1\) when
\(d=n-1\). Hence
\[
\mathbb E[D]
=
\frac dn\cdot\frac{n-2}{n-1}\cdot\frac{d-1}{n-2}
=
\frac{d(d-1)}{n(n-1)}.
\]

Similarly,
\[
\mathbb E[D^2]
=
\mathbb E[R^2]\,
\mathbb E[(1-T^2)^2]\,
\mathbb E[B^2].
\]
Using
\[
\mathbb E[R^2]=\frac{d(d+2)}{n(n+2)},\qquad
\mathbb E[(1-T^2)^2]=\frac{n(n-2)}{(n-1)(n+1)},
\]
and
\[
\mathbb E[B^2]=\frac{(d-1)(d+1)}{n(n-2)},
\]
we obtain the displayed formula for \(\mathbb E[D^2]\). Dividing by
\(\mathbb E[D]^2\) gives
\[
\frac{\mathbb E[D^2]}{\mathbb E[D]^2}
=
\frac{(d+2)(d+1)}{d(d-1)}
\cdot
\frac{n(n-1)}{(n+2)(n+1)}.
\]
The second factor is at most \(1\), and the first is decreasing for \(d\ge2\)
and equals \(6\) at \(d=2\). This proves the bound.
\end{proof}

\begin{proposition}[A determinant-based lower bound for \(\mathbb E\Gamma\)]
\label{prop:Gamma_dn_improved}
Let \(P\) be Haar--Stiefel with \(d\ge2\) and \(n\ge d\). For any
\(u,v\in S^{n-1}\) with \(\rho:=\langle u,v\rangle<1\),
\[
    \mathbb E[\Gamma(P;u,v)]
    \ge
    \frac{d-1}{96(n-1)} .
\]
In particular, \(\mathbb E[\Gamma(P;u,v)]\gtrsim d/n\) uniformly over
\(\rho<1\).
\end{proposition}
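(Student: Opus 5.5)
The plan is to bound \(\Gamma(P;u,v)\) from below by the smallest eigenvalue of the projected Gram matrix on \(\operatorname{span}\{u,v\}\), and then bound the expectation of that eigenvalue using the determinant moments of Lemma~\ref{lem:Dmoments_clean}.

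\emph{Step 1 (deterministic reduction).} Write \(a=\|Pu\|\), \(b=\|Pv\|\), \(m=\langle Pu,Pv\rangle\). By AM--GM, \(ab=\tfrac12\min_{t>0}(ta^2+b^2/t)\), so
\[
ab-m=\tfrac12\min_{t>0}\bigl\|\sqrt t\,Pu-Pv/\sqrt t\bigr\|^2 .
\]
Let \(E\) be an orthonormal basis of \(\operatorname{span}\{u,v\}\) and \(M=(PE)^\top(PE)\) as in Lemma~\ref{lem:Dmoments_clean}. Every vector \(\sqrt t\,u-v/\sqrt t\) lies in this plane, so \(\|P(\sqrt t u-v/\sqrt t)\|^2\ge\lambda_{\min}(M)\,\|\sqrt t u-v/\sqrt t\|^2\). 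Minimizing over \(t\) on the right gives \(\tfrac12\min_t\|\sqrt t u-v/\sqrt t\|^2=1-\rho\), hence
\[
\Gamma(P;u,v)\ge\lambda_{\min}(M).
\]
This holds for every \(\rho<1\). If \(u,v\) are collinear (\(\rho=-1\)), any plane containing them works. The case \(d=n\) is trivial since then \(\Gamma\equiv1\), so I assume \(2\le d\le n-1\) below.

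\emph{Step 2 (eigenvalue via determinant).} Since \(M\) is \(2\times2\) and positive semidefinite, \(\lambda_{\min}(M)\ge D/\operatorname{tr}(M)\) with \(D=\det M\). It then suffices to bound \(\mathbb E[D/\operatorname{tr}M]\) from below.

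\emph{Step 3 (Cauchy--Schwarz twice).} First,
\[
\mathbb E[D]=\mathbb E\bigl[\sqrt{D/\operatorname{tr}M}\,\sqrt{D\operatorname{tr}M}\bigr]\le\sqrt{\mathbb E[D/\operatorname{tr}M]\;\mathbb E[D\operatorname{tr}M]} ,
\]
so \(\mathbb E[D/\operatorname{tr}M]\ge\mathbb E[D]^2/\mathbb E[D\operatorname{tr}M]\). Second, \(\mathbb E[D\operatorname{tr}M]\le\sqrt{\mathbb E[D^2]\,\mathbb E[(\operatorname{tr}M)^2]}\le\sqrt6\,\mathbb E[D]\sqrt{\mathbb E[(\operatorname{tr}M)^2]}\), using the ratio bound \(\le6\) from Lemma~\ref{lem:Dmoments_clean}. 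Combining, \(\mathbb E[\Gamma]\ge\mathbb E[D]/\bigl(\sqrt6\sqrt{\mathbb E[(\operatorname{tr}M)^2]}\bigr)\).

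For the trace, \(\operatorname{tr}M=\|Pe_1\|^2+\|Pe_2\|^2\). Both terms have law \(\BetaDist(d/2,(n-d)/2)\), so \(\mathbb E[(\operatorname{tr}M)^2]\le 4\,\mathbb E[R^2]=4d(d+2)/(n(n+2))\le 12\,d^2/n^2\) for \(d\ge2\). Plugging in \(\mathbb E[D]=d(d-1)/(n(n-1))\) gives
\[
\mathbb E[\Gamma]\ge\frac{d(d-1)/(n(n-1))}{\sqrt6\cdot\sqrt{12}\,d/n}=\frac{d-1}{\sqrt{72}\,(n-1)} ,
\]
which is stronger than the required \((d-1)/(96(n-1))\). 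There is ample slack for cruder estimates, for instance \(\mathbb E[(\operatorname{tr}M)^2]\le(2d/n)^2\cdot\)const obtained via \((x+y)^2\le2x^2+2y^2\).

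\emph{Main obstacle.} I expect the only nontrivial point to be Step 1: it must give a bound uniform in \(\rho\), including \(\rho\to-1\). There the naive route \(ab-m=(a^2b^2-m^2)/(ab+m)=(1-\rho^2)D/(ab+m)\) degenerates, because \(1-\rho^2\to0\) while \(1-\rho\to2\). The variational representation of \(ab\) is what avoids this loss. I will also check that the Beta moment values, and the convention \(B\equiv1\) when \(d=n-1\), are used exactly as in Lemma~\ref{lem:Dmoments_clean}.
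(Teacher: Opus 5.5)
Your proof is correct. Steps 1--2 match the paper's. The paper sets \(\lambda=\|Pu\|/\|Pv\|\) and writes \(Z=\tfrac{1}{2\lambda}\|P(u-\lambda v)\|^2\), which is your variational identity evaluated at the optimal \(t=1/\lambda\). Your infimum form avoids dividing by \(\|Pv\|\), though strictly you should write \(\inf\) rather than \(\min\) in case \(\|Pu\|\) or \(\|Pv\|\) vanishes. By allowing any plane containing \(u,v\), it also covers \(\rho=-1\), where the paper's basis vector \(e_2=(v-\rho u)/\sqrt{1-\rho^2}\) is undefined.

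The genuine difference is Step 3, the lower bound on \(\mathbb E[D/\Theta]\) with \(\Theta=\operatorname{tr}M\). The paper uses truncation, \(\mathbb E[D/\Theta]\ge t^{-1}\bigl(\mathbb E[D]-\mathbb E[D\mathbf 1_{\{\Theta>t\}}]\bigr)\), then applies Cauchy--Schwarz to the tail term and Markov with \(t=K\mathbb E[\Theta]\), choosing \(K=24\). That route needs only \(\mathbb E[\Theta]=2d/n\) and the ratio bound \(\mathbb E[D^2]\le 6\,\mathbb E[D]^2\), and it yields the constant \(1/96\). You instead use the reverse-Jensen bound \(\mathbb E[D/\Theta]\ge \mathbb E[D]^2/\mathbb E[D\Theta]\), followed by \(\mathbb E[D\Theta]\le\sqrt{\mathbb E[D^2]\,\mathbb E[\Theta^2]}\). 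This costs one extra Beta moment, the second moment of \(\Theta\), which you correctly bound by \(4\,\mathbb E[R^2]\le 12\,d^2/n^2\) (even \(8\,d^2/n^2\) since \(d\ge 2\)). In exchange it avoids the truncation and parameter tuning and gives the sharper constant \((d-1)/(\sqrt{72}\,(n-1))\), about eleven times better. Both arguments use the determinant-moment lemma in the same way. Yours is shorter and tighter; the paper's is marginally more economical in the moments of \(\Theta\) it needs.
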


\begin{proof}
If \(d=n\), then \(P\) is orthogonal and
\(\Gamma(P;u,v)=1\) almost surely, so the claim is immediate. Hence assume
\(2\le d\le n-1\).

Let \(V:=\operatorname{span}\{u,v\}\), and choose an orthonormal basis
\(E=(e_1\ \ e_2)\) of \(V\), with
\[
    e_1=u,\qquad
    e_2=\frac{v-\rho u}{\sqrt{1-\rho^2}}.
\]
Set \(M:=(PE)^\top(PE)\). For every \(z\in V\),
\[
    \|Pz\|^2\ge \lambda_{\min}(M)\|z\|^2 .
\]
Define
\[
    Z(P;u,v):=\|Pu\|\,\|Pv\|-\langle Pu,Pv\rangle.
\]
Since \(\|Pv\|>0\) almost surely, let
\[
    \lambda:=\frac{\|Pu\|}{\|Pv\|}.
\]
Then
\[
    \|Pu-\lambda Pv\|^2=2\lambda Z(P;u,v),
\]
and therefore
\[
    Z(P;u,v)
    =
    \frac{1}{2\lambda}\|P(u-\lambda v)\|^2
    \ge
    \frac{\lambda_{\min}(M)}{2\lambda}\|u-\lambda v\|^2.
\]
But
\[
    \|u-\lambda v\|^2
    =
    (\lambda-1)^2+2\lambda(1-\rho)
    \ge
    2\lambda(1-\rho).
\]
Hence
\[
    \Gamma(P;u,v)
    =
    \frac{Z(P;u,v)}{1-\rho}
    \ge
    \lambda_{\min}(M).
\]
Thus
\[
    \mathbb E[\Gamma(P;u,v)]
    \ge
    \mathbb E[\lambda_{\min}(M)].
\]

Let \(D:=\det(M)\) and \(\Theta:=\operatorname{tr}(M)\). Since \(M\succeq0\) is
\(2\times2\),
\[
    \lambda_{\min}(M)\ge \frac{D}{\Theta}.
\]
We lower bound \(\mathbb E[D/\Theta]\). For \(t>0\),
\[
\mathbb E\!\left[\frac{D}{\Theta}\right]
\ge
\frac1t\left(
    \mathbb E[D]
    -
    \mathbb E[D\mathbf 1_{\{\Theta>t\}}]
\right).
\]
By Cauchy--Schwarz,
\[
    \mathbb E[D\mathbf 1_{\{\Theta>t\}}]
    \le
    \sqrt{\mathbb E[D^2]\,\mathbb P(\Theta>t)}.
\]
Choose \(t=K\mathbb E[\Theta]\), \(K\ge1\). Markov's inequality gives
\(\mathbb P(\Theta>t)\le1/K\). Therefore
\[
\mathbb E\!\left[\frac{D}{\Theta}\right]
\ge
\frac{1}{K\mathbb E[\Theta]}
\left(
    \mathbb E[D]
    -
    \sqrt{\frac{\mathbb E[D^2]}{K}}
\right).
\]
By Lemma~\ref{lem:Dmoments_clean},
\[
    \mathbb E[D^2]\le 6\,\mathbb E[D]^2.
\]
Taking \(K=24\) gives
\[
    \sqrt{\frac{\mathbb E[D^2]}{K}}
    \le
    \frac12\mathbb E[D],
\]
so
\[
    \mathbb E\!\left[\frac{D}{\Theta}\right]
    \ge
    \frac{\mathbb E[D]}{48\,\mathbb E[\Theta]}.
\]
Finally,
\[
    \mathbb E[\Theta]
    =
    \mathbb E[\|Pe_1\|^2+\|Pe_2\|^2]
    =
    \frac{2d}{n},
\]
and Lemma~\ref{lem:Dmoments_clean} gives
\[
    \mathbb E[D]=\frac{d(d-1)}{n(n-1)}.
\]
Therefore
\[
\mathbb E[\Gamma(P;u,v)]
\ge
\frac{1}{48}
\frac{d(d-1)}{n(n-1)}
\frac{n}{2d}
=
\frac{d-1}{96(n-1)}.
\]
\end{proof}
\begin{remark}
The main approximate-oracle constant uses the sharper estimate from
Proposition~\ref{prop:EGamma_identity}, namely
\(c_{n,d}=d/n-4(n-d)/(n(n-1))\), when this quantity is positive. Proposition
\ref{prop:Gamma_dn_improved} is kept as a complementary angle-uniform bound
whose proof exposes the projected two-dimensional determinant structure and
retains a positive \((d-1)/(n-1)\) scale.
\end{remark}

\section{From ratio bounds to expected descent}
\label{app:expected-to-descent}
This section translates section-efficiency estimates into optimization
progress. The geometric step is to compare the full feasible set with tangent
balls. The probabilistic step is then applied only to those balls, where the
section improvement is explicit.
\subsection{Boundary case: factorization through $\Gamma$}

Assume $x_k\in\partial C$. Let
\[
\mu_k:=\mu(x_k),
\qquad
r_k:=R(x_k),
\]
as given by Lemma~\ref{lem:tangentBalls_clean}, so that
\[
\mathcal B_k:=B(x_k+r_k\mu_k,r_k)\subset C.
\]
If $g_k\neq 0$, define
\[
u_k:=\frac{g_k}{\|g_k\|},
\qquad
\delta_k:=1-\langle u_k,\mu_k\rangle.
\]
Then Lemma~\ref{lem:ballSectionExact_clean} yields the exact ball-section improvement
\begin{equation}
\Delta_k^{\mathrm B}(r_k)
=
r_k\|g_k\|\,\delta_k\,\Gamma(P_k;u_k,\mu_k).
\label{eq:BallImproveGamma_clean}
\end{equation}

\subsection{Boundary ball improvement versus the Frank--Wolfe gap}

For $x\in C$ and $g\neq 0$, define
\[
\Delta_C(x;g):=\max_{y\in C}\langle g,x-y\rangle.
\]
In particular, at the iterate $x_k$ we write $\Delta(x_k):=\Delta_C(x_k;g_k)$.

The next lemma is the geometric analogue of the familiar optimization
sandwich between smoothness and strong convexity. For a smooth strongly convex
function, one compares the function locally between two quadratic models. Here
we compare the feasible set \(C\) between two tangent balls. The inner tangent
ball comes from the rolling-ball condition, while the outer tangent ball is
forced by the strong convexity of \(C\). Since the Frank--Wolfe gap over a
tangent ball is explicit and scales linearly with the ball radius, these two
balls sandwich the Frank--Wolfe gap over \(C\).
\begin{lemma}[Ball--gap comparability on the boundary]
\label{lem:BallGap_clean}
Assume $C$ satisfies Assumption~\ref{ass:C_clean} and that $\partial C$ is $\mathcal{C}^2$. Let $x_k\in\partial C$, let $\mu_k:=\mu(x_k)$ be the inward unit normal, and let $R(x_k)$ be the inner rolling-ball radius from Lemma~\ref{lem:tangentBalls_clean}. Then for every $g_k=\nabla f(x_k)\neq 0$, with
\[
\hat g_k:=\frac{g_k}{\|g_k\|},
\qquad
\rho_k:=\langle \hat g_k,\mu_k\rangle,
\]
one has
\begin{equation}
\Delta_{\mathrm{ball}}^{\partial}(x_k)
:=
R(x_k)\|g_k\|(1-\rho_k)
\ge
\kappa_0\,\Delta(x_k),
\qquad
\kappa_0:=2\beta_C R_{\min}\in(0,1].
\label{eq:BallGap_boundary_clean}
\end{equation}
\end{lemma}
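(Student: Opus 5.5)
The plan is to sandwich $C$ between the inner rolling ball of Lemma~\ref{lem:tangentBalls_clean} and an outer tangent ball produced by strong convexity. The full Frank--Wolfe gap over the outer ball is explicit by the full-space formula \eqref{eq:BallImproveExact_full}, and it dominates $\Delta(x_k)$. Set $R_C:=1/(2\beta_C)$. I will show
\[
C\subset B(x_k+R_C\mu_k,R_C).
\]
Then, since a linear maximization over a larger set gives a larger value,
\[
\Delta(x_k)=\max_{y\in C}\langle g_k,x_k-y\rangle\le \max_{y\in B(x_k+R_C\mu_k,R_C)}\langle g_k,x_k-y\rangle=R_C\|g_k\|(1-\rho_k).
\]
Multiplying by $R(x_k)/R_C\ge R_{\min}/R_C=2\beta_C R_{\min}=\kappa_0$ gives \eqref{eq:BallGap_boundary_clean}.

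The key step, and the main obstacle, is deriving the outer ball from the definition \eqref{eq:UC_clean}. First, since $\partial C$ is $\mathcal C^2$, the supporting hyperplane at $x_k$ is unique. The inner tangent ball $B(x_k+R(x_k)\mu_k,R(x_k))\subset C$ forces this hyperplane to be orthogonal to $\mu_k$, so
\[
\langle \mu_k,z-x_k\rangle\ge 0\qquad\text{for all } z\in C.
\]
Now fix $y\in C$ and $\lambda\in(0,1)$, and put $z=\lambda x_k+(1-\lambda)y$ and $\varepsilon=\beta_C\lambda(1-\lambda)\|x_k-y\|^2$. By \eqref{eq:UC_clean}, $B(z,\varepsilon)\subset C$. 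In particular $z-\varepsilon\mu_k\in C$, and the supporting inequality gives $\langle\mu_k,z-x_k\rangle\ge\varepsilon$, that is,
\[
(1-\lambda)\langle\mu_k,y-x_k\rangle\ge\beta_C\lambda(1-\lambda)\|y-x_k\|^2.
\]
Dividing by $1-\lambda$ and letting $\lambda\uparrow1$ yields $\langle\mu_k,y-x_k\rangle\ge\beta_C\|y-x_k\|^2$. Expanding the square, this is exactly $\|y-x_k-R_C\mu_k\|^2\le R_C^2$, which is the claimed outer-ball inclusion.

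For the range $\kappa_0\in(0,1]$: positivity is immediate from $\beta_C>0$ and $R_{\min}>0$. For the upper bound, the inner ball at $x_k$ is contained in $C$, hence in the outer ball, and both balls are tangent at $x_k$ with the same inward normal. Comparing their farthest points $x_k+2R(x_k)\mu_k$ and $x_k+2R_C\mu_k$ along $\mu_k$ gives $R_{\min}\le R(x_k)\le R_C$, so $2\beta_C R_{\min}\le1$. Finally, the gap over the outer ball is evaluated with Lemma~\ref{lem:ballSectionExact_clean} in the full-space case $\Pi_k=I_n$, with $r=R_C$. That formula gives the improvement $r(\|g_k\|-\langle g_k,\mu_k\rangle)=R_C\|g_k\|(1-\rho_k)$, completing the argument.
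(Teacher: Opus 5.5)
Your proposal is correct and follows essentially the same route as the paper. You derive the outer tangent ball $B(x_k+R_C\mu_k,R_C)$ with $R_C=1/(2\beta_C)$ from \eqref{eq:UC_clean} by the same $\lambda\uparrow1$ limit of the supporting-halfspace inequality, bound $\Delta(x_k)$ by the explicit outer-ball gap $R_C\|g_k\|(1-\rho_k)$, rescale by $R(x_k)/R_C\ge 2\beta_C R_{\min}$, and get $\kappa_0\le1$ from the nesting of the two tangent balls; your use of the inner tangent ball to show that the supporting halfspace at $x_k$ is $\{z:\langle\mu_k,z-x_k\rangle\ge0\}$ spells out a step the paper only asserts.
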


\begin{proof}
Write $x:=x_k$, $\mu:=\mu_k$, $R_{\mathrm{in}}:=R(x_k)$, $g:=g_k$, and
\[
\hat g:=\frac{g}{\|g\|},
\qquad
\rho:=\langle \hat g,\mu\rangle.
\]
We also write
\[
\Delta_C(x):=\langle g,x\rangle-\min_{y\in C}\langle g,y\rangle.
\]
The proof has two parts. First, strong convexity of \(C\) gives an outer tangent
ball of radius \(R_{\max}=1/(2\beta_C)\). Second, the rolling-ball condition
gives an inner tangent ball of radius \(R_{\mathrm{in}}\ge R_{\min}\). The FW
gap over \(C\) is then sandwiched between the two explicit tangent-ball gaps.
\paragraph{Step 1: construction of an outer tangent ball.}
Let
\[
H:=\{w\in\mathbb R^n:\ \langle \mu,w-x\rangle\ge 0\}
\]
be the supporting halfspace to $C$ at $x$, so that $C\subset H$.

Fix any $y\in C$ and $\lambda\in(0,1)$. By the quadratic uniform convexity assumption \eqref{eq:UC_clean},
\[
B\!\left(\lambda x+(1-\lambda)y,\ \beta_C\,\lambda(1-\lambda)\,\|x-y\|^2\right)\subset C\subset H.
\]
Since every point of this ball belongs to $H$, the minimum of
\[
w\longmapsto \langle \mu,w-x\rangle
\]
over that ball must be nonnegative. The minimum is attained at
\[
w=\lambda x+(1-\lambda)y-\beta_C\lambda(1-\lambda)\|x-y\|^2\,\mu,
\]
hence
\[
0\le
\langle \mu,\lambda x+(1-\lambda)y-x\rangle
-\beta_C\lambda(1-\lambda)\|x-y\|^2.
\]
Since
\[
\lambda x+(1-\lambda)y-x=(1-\lambda)(y-x),
\]
this becomes
\[
0\le
(1-\lambda)\langle \mu,y-x\rangle
-\beta_C\lambda(1-\lambda)\|x-y\|^2.
\]
Dividing by $(1-\lambda)>0$ gives
\[
\langle \mu,y-x\rangle\ge \beta_C\,\lambda\,\|x-y\|^2.
\]
Letting $\lambda\uparrow 1$, we obtain
\begin{equation}
\langle \mu,y-x\rangle\ge \beta_C\,\|x-y\|^2,
\qquad \forall y\in C.
\label{eq:quad_support_mu_clean}
\end{equation}

Now define
\[
R_{\max}:=\frac{1}{2\beta_C},
\qquad
B_{\max}:=B(x+R_{\max}\mu,\;R_{\max}).
\]
For any $y\in\mathbb R^n$,
\[
y\in B_{\max}
\iff
\|y-(x+R_{\max}\mu)\|^2\le R_{\max}^2.
\]
Expanding the left-hand side gives
\[
\|y-x\|^2-2R_{\max}\langle \mu,y-x\rangle\le 0,
\]
that is,
\[
y\in B_{\max}
\iff
\|y-x\|^2\le 2R_{\max}\langle \mu,y-x\rangle
=
\frac{1}{\beta_C}\langle \mu,y-x\rangle.
\]
By \eqref{eq:quad_support_mu_clean}, every $y\in C$ satisfies this inequality. Therefore
\begin{equation}
C\subset B_{\max}.
\label{eq:outer_ball_mu_clean}
\end{equation}

\paragraph{Step 2: sandwich the Frank--Wolfe gap between two ball gaps.}
Let
\[
B_{\min}:=B(x+R_{\mathrm{in}}\mu,\;R_{\mathrm{in}})\subset C
\]
be the inner tangent ball at $x$. Since
\[
B_{\min}\subset C\subset B_{\max},
\]
minimizing the linear form $y\mapsto \langle g,y\rangle$ over these sets yields
\[
\min_{y\in B_{\max}}\langle g,y\rangle
\le
\min_{y\in C}\langle g,y\rangle
\le
\min_{y\in B_{\min}}\langle g,y\rangle.
\]
Subtracting from $\langle g,x\rangle$ gives
\begin{equation}
\Delta_{B_{\min}}(x)\le \Delta_C(x)\le \Delta_{B_{\max}}(x),
\label{eq:gap_sandwich_mu_clean}
\end{equation}
where
\[
\Delta_S(x):=\langle g,x\rangle-\min_{y\in S}\langle g,y\rangle.
\]

\paragraph{Step 3: compute the two ball gaps explicitly.}
Let $B(x+R\mu,R)$ be any ball tangent at $x$ with inward normal $\mu$.
The minimizer of $\langle g,\cdot\rangle$ over this ball is
\[
x+R\mu-R\hat g,
\]
so
\[
\Delta_{B(x+R\mu,R)}(x)
=
\langle g,\;x-(x+R\mu-R\hat g)\rangle
=
R\|g\|\bigl(1-\langle \hat g,\mu\rangle\bigr).
\]
Applying this first with $R=R_{\mathrm{in}}$ and then with $R=R_{\max}$ gives
\[
\Delta_{B_{\min}}(x)=R_{\mathrm{in}}\|g\|(1-\rho),
\qquad
\Delta_{B_{\max}}(x)=R_{\max}\|g\|(1-\rho).
\]
Hence
\[
\Delta_{B_{\min}}(x)
=
\frac{R_{\mathrm{in}}}{R_{\max}}\,\Delta_{B_{\max}}(x).
\]
Using \eqref{eq:gap_sandwich_mu_clean}, we obtain
\[
\Delta_{B_{\min}}(x)
=
\frac{R_{\mathrm{in}}}{R_{\max}}\,\Delta_{B_{\max}}(x)
\ge
\frac{R_{\mathrm{in}}}{R_{\max}}\,\Delta_C(x).
\]
Since $R_{\mathrm{in}}=R(x_k)\ge R_{\min}$ and $R_{\max}=1/(2\beta_C)$,
\[
\Delta_{B_{\min}}(x)\ge
\bigl(2\beta_C R_{\min}\bigr)\,\Delta_C(x).
\]
Recalling that
\[
\Delta_{B_{\min}}(x)
=
R(x_k)\|g_k\|\bigl(1-\rho_k\bigr)
=
\Delta_{\mathrm{ball}}^{\partial}(x_k),
\]
this proves
\[
\Delta_{\mathrm{ball}}^{\partial}(x_k)\ge \kappa_0\,\Delta(x_k),
\qquad
\kappa_0:=2\beta_C R_{\min}.
\]

Finally, since $B_{\min}\subset B_{\max}$ and both balls are tangent at $x$ with the same inward normal line, one necessarily has
\[
R(x_k)\le R_{\max}=\frac{1}{2\beta_C},
\]
hence
\[
\kappa_0=2\beta_C R_{\min}\le 1.
\]
This completes the proof.
\end{proof}

\begin{proposition}[Uniform ball--gap comparability on all of $C$]
\label{prop:BallGap_global_clean}
Assume Assumption~\ref{ass:C_clean} and that $\partial C$ is $\mathcal{C}^2$. Let
\[
D:=\diam(C),
\qquad
R_{\min}:=\inf_{x\in\partial C}R(x)>0.
\]
Define
\begin{equation}
\kappa_{\mathrm{unif}}
:=
\min\!\left\{\kappa_0,\frac{R_{\min}}{D}\right\},
\label{eq:kappa_unif_def_clean}
\end{equation}
where $\kappa_0$ is the boundary constant from Lemma~\ref{lem:BallGap_clean}. Then for every $x\in C$ and every $g\neq 0$, there exists a ball $\mathcal B_x\subset C$ containing $x$ such that
\begin{equation}
\Delta_{\mathcal B_x}(x;g)\ge \kappa_{\mathrm{unif}}\,\Delta_C(x;g).
\label{eq:BallGapGlobal_clean}
\end{equation}
\end{proposition}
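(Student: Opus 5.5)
We prove Proposition~\ref{prop:BallGap_global_clean} by splitting on the location of \(x\). Throughout, fix \(g\neq0\) and \(\hat g:=g/\|g\|\).

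\emph{Boundary case.} If \(x\in\partial C\), take \(\mathcal B_x:=B(x+R(x)\mu(x),R(x))\), which lies in \(C\) by Lemma~\ref{lem:tangentBalls_clean}. It contains \(x\) on its boundary. Lemma~\ref{lem:BallGap_clean} gives
\[
\Delta_{\mathcal B_x}(x;g)\ge\kappa_0\Delta_C(x;g)\ge\kappa_{\mathrm{unif}}\Delta_C(x;g).
\]

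\emph{Interior case.} If \(x\in\operatorname{int}C\), we use a ball of radius \(R_{\min}\) that passes through or contains \(x\) and is placed toward the descent direction. Let \(s\in\argmin_{y\in C}\langle g,y\rangle\); then \(\Delta_C(x;g)=\langle g,x-s\rangle\le\|g\|\,\|x-s\|\le\|g\|D\). We choose a ball \(B(c,R_{\min})\subset C\) with \(x\in B(c,R_{\min})\) such that the point \(x-R_{\min}\hat g\), or some point at least as good in the direction \(-\hat g\), lies in it. The gap of that ball at \(x\) is then \(\ge R_{\min}\|g\|\ge (R_{\min}/D)\Delta_C(x;g)\).

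We try the candidates in order:
\begin{enumerate}
\item If \(B(x,R_{\min})\subset C\), take \(\mathcal B_x=B(x,R_{\min})\). Its gap is exactly \(R_{\min}\|g\|\).
\item Otherwise, \(\operatorname{dist}(x,\partial C)<R_{\min}\). Use the union-of-balls characterization of rolling-ball sets: under the uniform interior rolling-ball condition, \(C\) is the union of all closed balls of radius \(R_{\min}\) contained in it. This follows by taking a nearest boundary point \(p\) to \(x\). The inner tangent ball at \(p\) of radius \(R_{\min}\) (which exists, since any ball of radius \(R(p)\ge R_{\min}\) contains a tangent ball of radius \(R_{\min}\)) contains \(x\), because \(x=p+t\mu(p)\) with \(t<R_{\min}\) lies on its axis.
\end{enumerate}
So \(x\in B(c,R_{\min})\subset C\) with \(c=p+R_{\min}\mu(p)\), and Lemma~\ref{lem:ballSectionInterior_clean} in full space gives the gap
\[
R_{\min}\bigl(\|g\|-\langle g,v\rangle\bigr),\qquad v=(c-x)/R_{\min},\ \|v\|\le1.
\]

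\emph{Main obstacle.} This ball gap \(R_{\min}\|g\|(1-\langle\hat g,v\rangle)\) can be small when \(\hat g\approx v\), i.e.\ when \(-g\) points toward the nearby boundary. In that regime we argue that \(\Delta_C\) is also small, using the outer ball. Move to the boundary point \(p\), where \(\mu(p)\) is parallel to \(v\). We have
\[
\langle g,x-s\rangle=\langle g,x-p\rangle+\langle g,p-s\rangle,
\]
and \(\langle g,x-p\rangle=-t\langle g,\mu(p)\rangle\). By Lemma~\ref{lem:BallGap_clean} and its outer ball \(B_{\max}\), \(\langle g,p-s\rangle\le R_{\max}\|g\|(1-\langle\hat g,\mu(p)\rangle)\). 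Hence
\[
\Delta_C(x;g)\le \|g\|\bigl(R_{\max}(1-\rho)-t\rho\bigr),\qquad \rho:=\langle\hat g,\mu(p)\rangle .
\]
On the other hand, the gap of the ball \(B(c,R_{\min})\) at \(x\) is explicitly \(\|g\|\bigl(R_{\min}-(R_{\min}-t)\rho\bigr)\ge\|g\|R_{\min}(1-\rho)+\|g\|t\rho\). Comparing the two, when \(\rho\ge0\) we get a ratio of at least \(R_{\min}/R_{\max}=\kappa_0\). When \(\rho<0\), the ball gap is at least \(R_{\min}\|g\|\ge(R_{\min}/D)\Delta_C\).

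So in all cases the ratio is \(\ge\min\{\kappa_0,R_{\min}/D\}=\kappa_{\mathrm{unif}}\), and the interior case therefore reduces to the boundary sandwich of Lemma~\ref{lem:BallGap_clean}. The delicate step is checking the inequality \(R_{\min}(1-\rho)+t\rho\ge\kappa_0\bigl(R_{\max}(1-\rho)-t\rho\bigr)\) for \(\rho\ge0\), which holds because \(\kappa_0 R_{\max}=R_{\min}\).
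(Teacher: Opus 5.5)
Your argument is essentially the paper's. You split on whether \(\operatorname{dist}(x,\partial C)\ge R_{\min}\); otherwise you pass to a nearest boundary point \(p\), use the inner tangent ball there, and combine the outer ball from Lemma~\ref{lem:BallGap_clean} with the diameter bound. The paper packages the near-boundary case as an affine function \(H(t)\) on the normal segment and checks its two endpoints. Your split on the sign of \(\rho\) is the same argument unrolled: \(H\) has slope \((1-\kappa_{\mathrm{unif}})\|g\|\rho\), so the boundary endpoint (outer ball) controls when \(\rho\ge0\) and the center (diameter bound) controls when \(\rho<0\). Using radius \(R_{\min}\) instead of \(R(p)\) makes no difference.

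There is, however, a sign error you must fix. Since \(x=p+t\mu(p)\), you have \(\langle g,x-p\rangle=+t\|g\|\rho\), so the correct bound is
\[
\Delta_C(x;g)=\Delta_C(p;g)+t\|g\|\rho\le\|g\|\bigl(R_{\max}(1-\rho)+t\rho\bigr).
\]
Your version with \(-t\rho\) is false: for \(\rho=1\) it gives a negative upper bound, while \(\Delta_C(x;g)=t\|g\|>0\).

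After correction the \(\rho\ge0\) case still goes through, but the justification changes. You now need
\[
R_{\min}(1-\rho)+t\rho\ge\kappa_0\bigl(R_{\max}(1-\rho)+t\rho\bigr),
\]
and this uses both \(\kappa_0R_{\max}=R_{\min}\) and \(\kappa_0\le1\) (the latter from Lemma~\ref{lem:BallGap_clean}). The identity \(\kappa_0R_{\max}=R_{\min}\) alone no longer suffices.

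Two smaller points. First, the identity \(x=p+t\mu(p)\) needs justification. As in the paper, \(\overline B(x,t)\subset C\) touches \(\partial C\) at \(p\), and the supporting hyperplane at a \(\mathcal C^2\) boundary point is unique. Second, drop your opening claim that some ball in \(C\) containing \(x\) always has gap \(\ge R_{\min}\|g\|\). It is false when \(\rho=1\): then every such ball has gap at most \(\Delta_C(x;g)=t\|g\|<R_{\min}\|g\|\).
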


\begin{proof}
We first give an overview of the proof,
\paragraph{Proof idea:}
The boundary comparison from Lemma~\ref{lem:BallGap_clean} must be extended to
interior points. If \(x\) is far from the boundary, the ball \(B(x,R_{\min})\)
already gives the comparison. If \(x\) is close to the boundary, project it to a
nearest boundary point \(\bar x\) and use the tangent ball at \(\bar x\). Along
the normal segment from \(\bar x\) to the ball center, both the ball gap and the
full gap are affine in the base point, so checking the two endpoints is enough.

\medskip

Fix \(x\in C\) and \(g\neq 0\).

\noindent\textbf{Case 1: $\operatorname{dist}(x,\partial C)\ge R_{\min}$.}
Then
\[
\mathcal B_x:=B(x,R_{\min})\subset C.
\]
The minimizer of $y\mapsto \langle g,y\rangle$ over $\mathcal B_x$ is
\[
y^*=x-R_{\min}\frac{g}{\|g\|},
\]
hence
\begin{equation}
\Delta_{\mathcal B_x}(x;g)=R_{\min}\|g\|.
\label{eq:deep_ball_gap_global_clean}
\end{equation}
Also, for every $y\in C$,
\[
\langle g,x-y\rangle\le \|g\|\,\|x-y\|\le D\|g\|,
\]
so
\begin{equation}
\Delta_C(x;g)\le D\|g\|.
\label{eq:diam_gap_global_clean}
\end{equation}
Combining \eqref{eq:deep_ball_gap_global_clean} and \eqref{eq:diam_gap_global_clean} gives
\[
\Delta_{\mathcal B_x}(x;g)\ge \frac{R_{\min}}{D}\,\Delta_C(x;g)\ge \kappa_{\mathrm{unif}}\,\Delta_C(x;g).
\]

\medskip
\noindent\textbf{Case 2: $\operatorname{dist}(x,\partial C)<R_{\min}$.}
Let
\[
\eta:=\operatorname{dist}(x,\partial C),
\]
and choose $\bar x\in\partial C$ such that
\begin{equation}
\|x-\bar x\|=\eta.
\label{eq:nearest_boundary_point_global_clean}
\end{equation}
We claim that
\begin{equation}
x=\bar x+\eta\,\mu(\bar x).
\label{eq:x_on_normal_ray_global_clean}
\end{equation}
If \(\eta=0\), then \(x=\bar x\in\partial C\), and the claim follows directly
from Lemma~\ref{lem:BallGap_clean} with the tangent ball at \(x\), since
\(\kappa_{\mathrm{unif}}\le \kappa_0\). Hence assume \(\eta>0\). Then the closed ball $\overline B(x,\eta)$ is contained in $C$: otherwise some point of $B(x,\eta)$ would lie outside $C$, and the segment joining it to $x$ would meet $\partial C$ at distance $<\eta$ from $x$, contradicting the definition of $\eta$.
Since $\bar x\in \partial \overline B(x,\eta)\cap\partial C$ and $\overline B(x,\eta)\subset C$, the tangent hyperplane to $\overline B(x,\eta)$ at $\bar x$ is a supporting hyperplane to $C$ at $\bar x$.
Because $\partial C$ is $\mathcal C^2$, the supporting hyperplane is unique, so the inward unit normal to $C$ at $\bar x$ coincides with the inward unit normal to the ball (this is also true if $C$ is $\mathcal{C}^{1,1}$):
\[
\mu(\bar x)=\frac{x-\bar x}{\|x-\bar x\|}.
\]
This proves \eqref{eq:x_on_normal_ray_global_clean}.

Now let
\begin{equation}
R:=R(\bar x),
\qquad
\mathcal B_{\bar x}:=B\bigl(\bar x+R\mu(\bar x),\,R\bigr)\subset C.
\label{eq:tangent_ball_at_xbar_global_clean}
\end{equation}
Since $\eta<R_{\min}\le R$, we have
\[
\|x-(\bar x+R\mu(\bar x))\|
=
\|(\bar x+\eta\mu(\bar x))-(\bar x+R\mu(\bar x))\|
=
R-\eta
\le R,
\]
hence
\begin{equation}
x\in\mathcal B_{\bar x}.
\label{eq:x_in_tangent_ball_global_clean}
\end{equation}

For $t\in[0,R]$, define
\[
z_t:=\bar x+t\,\mu(\bar x).
\]
Since $\mathcal B_{\bar x}\subset C$ and both $\bar x$ and $\bar x+R\mu(\bar x)$ belong to $\mathcal B_{\bar x}$, convexity gives
\[
z_t\in \mathcal B_{\bar x}\subset C
\qquad\forall t\in[0,R].
\]
Now define
\begin{equation}
H(t):=\Delta_{\mathcal B_{\bar x}}(z_t;g)-\kappa_{\mathrm{unif}}\,\Delta_C(z_t;g).
\label{eq:H_def_global_clean}
\end{equation}
Both terms are affine in $t$, because
\[
\Delta_{\mathcal B_{\bar x}}(z_t;g)
=
\langle g,z_t\rangle-\min_{y\in\mathcal B_{\bar x}}\langle g,y\rangle,
\qquad
\Delta_C(z_t;g)
=
\langle g,z_t\rangle-\min_{y\in C}\langle g,y\rangle,
\]
and the minima are independent of $t$.
Hence $H$ is affine on $[0,R]$.

At $t=0$, we have $z_0=\bar x\in\partial C$, so Lemma~\ref{lem:BallGap_clean} yields
\[
\Delta_{\mathcal B_{\bar x}}(\bar x;g)\ge \kappa_0\,\Delta_C(\bar x;g).
\]
Since $\kappa_{\mathrm{unif}}\le \kappa_0$, it follows that
\begin{equation}
H(0)\ge 0.
\label{eq:H0_nonneg_global_clean}
\end{equation}

At $t=R$, the point $z_R=\bar x+R\mu(\bar x)$ is the center of the tangent ball, so
\[
\Delta_{\mathcal B_{\bar x}}(z_R;g)=R\|g\|\ge R_{\min}\|g\|.
\]
Using \eqref{eq:diam_gap_global_clean} at the point $z_R\in C$, we also have
\[
\Delta_C(z_R;g)\le D\|g\|.
\]
Therefore
\[
\Delta_{\mathcal B_{\bar x}}(z_R;g)\ge \frac{R_{\min}}{D}\,\Delta_C(z_R;g)\ge \kappa_{\mathrm{unif}}\,\Delta_C(z_R;g),
\]
so
\begin{equation}
H(R)\ge 0.
\label{eq:HR_nonneg_global_clean}
\end{equation}

Since $H$ is affine and nonnegative at both endpoints, it is nonnegative on all of $[0,R]$.
In particular, at $t=\eta$ we have $z_\eta=x$ by \eqref{eq:x_on_normal_ray_global_clean}, so
\[
\Delta_{\mathcal B_{\bar x}}(x;g)\ge \kappa_{\mathrm{unif}}\,\Delta_C(x;g).
\]
Thus \eqref{eq:BallGapGlobal_clean} holds with $\mathcal B_x:=\mathcal B_{\bar x}$ in Case~2.
\end{proof}

\paragraph{Proof of Proposition~\ref{prop:approxOracle_global_clean}} 
The general proof idea is:
first choose the comparison ball supplied by
Proposition~\ref{prop:BallGap_global_clean}. On that ball, the full ball
improvement already captures a fixed fraction of the full FW gap over \(C\).
Then use the exact ball-section formulas to show that a random section captures
a \(c_{n,d}\)-fraction of the ball improvement in expectation. The only
distinction is whether \(x_k\) is at the center, on the boundary, or strictly
inside the comparison ball. Finally, the true section oracle over \(C\) can only
improve on the oracle restricted to the comparison ball.
\begin{proof}
Fix \(x_k\in C\) and write \(g_k:=\nabla f(x_k)\). If \(g_k=0\), then the
claim is immediate. Assume \(g_k\neq0\) and set
\[
u_k:=\frac{g_k}{\|g_k\|}.
\]
By Proposition~\ref{prop:BallGap_global_clean}, there exists a ball
\[
\mathcal B_k=B(c_k,r_k)\subset C
\]
containing \(x_k\) such that
\[
\Delta_{\mathcal B_k}(x_k;g_k)
\ge
\kappa_{\mathrm{unif}}\,\Delta(x_k).
\]
Define
\[
v_k:=\frac{c_k-x_k}{r_k},
\qquad
t_k:=\|v_k\|\in[0,1].
\]
We first prove that the expected section improvement on this comparison ball is
at least a \(c_{n,d}\)-fraction of the full ball improvement:
\begin{equation}
\mathbb E\!\left[
\Delta_{P_k,\mathcal B_k}(x_k;g_k)
\mid x_k
\right]
\ge
c_{n,d}\,\Delta_{\mathcal B_k}(x_k;g_k),
\label{eq:expected_ball_section_fraction}
\end{equation}
where we choose, by Proposition~\ref{prop:EGamma_identity}, 
$c_{n,d}=
\frac dn-\frac{4(n-d)}{n(n-1)},$
and where \(\Delta_{P_k,\mathcal B_k}(x_k;g_k)\) denotes the improvement obtained by
minimizing the linear form over
\(\mathcal B_k\cap(x_k+\range(P_k^\top))\).

We distinguish three cases.

\smallskip
\noindent\textbf{Case 1: \(t_k=1\).}
Then \(x_k\) lies on the boundary of \(\mathcal B_k\). Write
\[
\mu_k:=v_k\in S^{n-1},
\qquad
\rho_k:=\langle u_k,\mu_k\rangle.
\]
If \(1-\rho_k=0\), then
\(\Delta_{\mathcal B_k}(x_k;g_k)=0\), and
\eqref{eq:expected_ball_section_fraction} is trivial. Otherwise, by
Lemma~\ref{lem:ballSectionExact_clean},
\[
\Delta_{P_k,\mathcal B_k}(x_k;g_k)
=
r_k\|g_k\|(1-\rho_k)\Gamma(P_k;u_k,\mu_k),
\]
while the full ball improvement is
\[
\Delta_{\mathcal B_k}(x_k;g_k)
=
r_k\|g_k\|(1-\rho_k).
\]
Taking expectation and using Proposition~\ref{prop:EGamma_identity} gives
\[
\mathbb E[
\Delta_{P_k,\mathcal B_k}(x_k;g_k)
\mid x_k]
\ge
c_{n,d}\,\Delta_{\mathcal B_k}(x_k;g_k).
\]

\smallskip
\noindent\textbf{Case 2: \(t_k=0\).}
Then \(x_k=c_k\). By Lemma~\ref{lem:ballSectionInterior_clean},
\[
\Delta_{P_k,\mathcal B_k}(x_k;g_k)
=
r_k\|g_k\|\|P_ku_k\|,
\]
and
\[
\Delta_{\mathcal B_k}(x_k;g_k)
=
r_k\|g_k\|.
\]
Since \(0\le \|P_ku_k\|\le1\), we have
\[
\mathbb E\|P_ku_k\|
\ge
\mathbb E\|P_ku_k\|^2
=
\frac dn
\ge
c_{n,d}.
\]
Therefore \eqref{eq:expected_ball_section_fraction} holds in this case.

\smallskip
\noindent\textbf{Case 3: \(0<t_k<1\).}
Set
\[
\mu_k:=\frac{v_k}{t_k}\in S^{n-1},
\qquad
\rho_k:=\langle u_k,\mu_k\rangle.
\]
By Lemma~\ref{lem:ballSectionInterior_clean}, the ratio between the section
ball improvement and the full ball improvement is
\[
\gamma(P_k;u_k,t_k\mu_k).
\]
The proof of Lemma~\ref{lem:gamma_scaled_unit} gives the stronger bound
\[
\gamma(P_k;u_k,t_k\mu_k)
\ge
\frac{(1-t_k)\|P_ku_k\|
+t_k(1-\rho_k)\Gamma(P_k;u_k,\mu_k)}
{(1-t_k)+t_k(1-\rho_k)}.
\]
The denominator is positive because \(t_k<1\). Taking expectation and using
\[
\mathbb E\|P_ku_k\|
\ge
\mathbb E\|P_ku_k\|^2
=
\frac dn
\ge
c_{n,d},
\]
together with Proposition~\ref{prop:EGamma_identity},
\[
\mathbb E\Gamma(P_k;u_k,\mu_k)\ge c_{n,d},
\]
yields
\[
\mathbb E[
\gamma(P_k;u_k,t_k\mu_k)
\mid x_k]
\ge
c_{n,d}.
\]
Hence \eqref{eq:expected_ball_section_fraction} also holds in the off-center
interior case.

Combining the three cases, \eqref{eq:expected_ball_section_fraction} holds for
every comparison ball produced by Proposition~\ref{prop:BallGap_global_clean}.
Finally, by Lemma~\ref{lem:subvsball}, the RSFW section oracle over \(C\)
dominates the oracle restricted to the comparison ball, so
\[
-\langle g_k,d_k\rangle
\ge
\Delta_{P_k,\mathcal B_k}(x_k;g_k).
\]
Taking conditional expectations gives
\[
\mathbb E[-\langle g_k,d_k\rangle\mid x_k]
\ge
c_{n,d}\,\Delta_{\mathcal B_k}(x_k;g_k)
\ge
c_{n,d}\kappa_{\mathrm{unif}}\Delta(x_k).
\]
Equivalently,
\[
\mathbb E[\langle g_k,d_k\rangle\mid x_k]
\le
-\beta_0^{\mathrm{unif}}\Delta(x_k),
\qquad
\beta_0^{\mathrm{unif}}:=\kappa_{\mathrm{unif}}c_{n,d}.
\]
This proves \eqref{eq:approxOracle_global_clean}.
\end{proof}

\begin{proof}[Proof of Theorem~\ref{thm:convergence_clean}]
Let $F_k:=f(x_k)-f^*\ge 0$.
If $g_k=0$, then $\Delta(x_k)=0$, hence $x_k$ is optimal and the claim is trivial from that point onward.
Thus we may assume $g_k\neq 0$ whenever the approximate-oracle inequality is invoked.

By Lemma~\ref{lem:curvature_clean},
\[
F_{k+1}\le F_k+\alpha_k\langle g_k,d_k\rangle+\frac{L}{2}\alpha_k^2\|d_k\|^2.
\]
Since $C$ is compact, $\|d_k\|\le D:=\operatorname{diam}(C)$.
Taking conditional expectation given $x_k$, using \eqref{eq:approxOracle_global_clean}, and then using $F_k\le \Delta(x_k)$, we obtain
\[
\mathbb E[F_{k+1}\mid x_k]\le (1-\alpha_k\beta_0)F_k+\frac{L}{2}\alpha_k^2D^2.
\]
Taking total expectation yields
\[
a_{k+1}\le (1-\alpha_k\beta_0)a_k+\frac{L}{2}\alpha_k^2D^2,
\qquad a_k:=\mathbb E[F_k].
\]
Since
\[
\alpha_k\beta_0=\frac{2}{k+2/\beta_0},
\]
the recursion becomes
\[
a_{k+1}
\le
\left(1-\frac{2}{k+2/\beta_0}\right)a_k
+
\frac{2LD^2/\beta_0^2}{(k+2/\beta_0)^2}.
\]
We prove by induction that
\[
a_k\le \frac{M}{k+2/\beta_0},
\qquad
M:=\max\!\left\{\frac{2}{\beta_0}a_0,\ \frac{2LD^2}{\beta_0^2}\right\}.
\]
The base case $k=0$ is immediate from the definition of $M$.
Assume it holds at step $k$. Then
\begin{align*}
a_{k+1}
&\le
\left(1-\frac{2}{k+2/\beta_0}\right)\frac{M}{k+2/\beta_0}
+
\frac{2LD^2/\beta_0^2}{(k+2/\beta_0)^2}\\
&=
\frac{M(k+2/\beta_0-2)}{(k+2/\beta_0)^2}
+
\frac{2LD^2/\beta_0^2}{(k+2/\beta_0)^2}.
\end{align*}
Since $M\ge 2LD^2/\beta_0^2$, we obtain
\[
a_{k+1}
\le
\frac{M(k+2/\beta_0-1)}{(k+2/\beta_0)^2}.
\]
It remains to note that
\[
\frac{k+2/\beta_0-1}{(k+2/\beta_0)^2}
\le
\frac{1}{k+1+2/\beta_0},
\]
because
\[
(k+2/\beta_0-1)(k+1+2/\beta_0)\le (k+2/\beta_0)^2
\]
is equivalent to $k+2/\beta_0-1\ge 0$, which holds for all $k\ge 0$ since $2/\beta_0\ge 1$ whenever $\beta_0\in(0,1]$.
Thus
\[
a_{k+1}\le \frac{M}{k+1+2/\beta_0}.
\]
This closes the induction.
\end{proof}

\subsection{High-probability section efficiency and short-step consequences}
\label{app:hp-gamma-short-step}

This subsection collects the high-probability geometric estimates used in the
short-step analysis. The expected identity for \(\Gamma(P;u,v)\) is sufficient
for the expected open-loop approximate-oracle argument. The short-step analysis
requires more: the one-step decrease contains both the visible gradient length
and the section efficiency. This leads to mixed quantities such as
\[
\mathbb E\bigl[\|Pu\|\,\Gamma(P;u,v)\bigr],
\]
or more generally \(\mathbb E[R^a\Gamma(P;u,v)]\) with
\(R=\|Pu\|^2\). Since these factors need not be positively correlated, we use a
high-probability lower bound for \(\Gamma(P;u,v)\), combined with a one-vector
JL lower bound for \(\|Pu\|\). The same high-probability control is also used
in the fixed-confidence and almost-sure open-loop analyses.

\subsubsection{High-probability lower bound for \(\Gamma(P;u,v)\)}

\begin{proof}[Proof of Proposition~\ref{prop:GammaHP}]
We split into two regimes.

\paragraph{Regime B: separated vectors, \(\rho\le \rho_0\).}
Equivalently, \(\delta=1-\rho\ge\delta_0\). Define
\begin{align}
\mathcal E_B:=\Big\{&
\left|\|Pu\|^2-\tfrac{d}{n}\right|\le \varepsilon\tfrac{d}{n},\ 
\left|\|Pv\|^2-\tfrac{d}{n}\right|\le \varepsilon\tfrac{d}{n}, \nonumber\\
&
\left|\|P(u+v)\|^2-\tfrac{d}{n}\|u+v\|^2\right|
\le \varepsilon\tfrac{d}{n}\|u+v\|^2
\Big\}.
\label{eq:EB_hp}
\end{align}
Since
\[
\mathcal Z_B:=\left\{u,\ v,\ \frac{u+v}{\|u+v\|}\right\}\subset S^{n-1}
\]
has cardinality \(3\), Lemma~\ref{lem:JL_clean} gives
\begin{equation}
\mathbb P(\mathcal E_B)
\ge
1-3C_{\mathrm{JL}}e^{-c_{\mathrm{JL}}\varepsilon^2d}.
\label{eq:PB_clean}
\end{equation}

On \(\mathcal E_B\),
\[
\|Pu\|^2\ge (1-\varepsilon)\frac dn,
\qquad
\|Pv\|^2\ge (1-\varepsilon)\frac dn,
\]
and therefore
\begin{equation}
\|Pu\|\,\|Pv\|
\ge
(1-\varepsilon)\frac dn.
\label{eq:prodNorm_lb_B_clean}
\end{equation}
Also,
\begin{equation}
\|P(u+v)\|^2
=
\|Pu\|^2+\|Pv\|^2+2\langle Pu,Pv\rangle.
\label{eq:uplusv_id_clean}
\end{equation}
Since \(\|u+v\|^2=2(1+\rho)\), the event \(\mathcal E_B\) implies
\[
\|P(u+v)\|^2
\le
(1+\varepsilon)\frac dn\,2(1+\rho).
\]
Using \eqref{eq:uplusv_id_clean} together with the lower bounds on
\(\|Pu\|^2\) and \(\|Pv\|^2\), we get
\begin{align}
2\langle Pu,Pv\rangle
&\le
(1+\varepsilon)\frac dn\,2(1+\rho)
-
2(1-\varepsilon)\frac dn \nonumber\\
&=
2\frac dn\bigl(\rho+\varepsilon(2+\rho)\bigr).
\label{eq:inner_ub_pre_B_clean}
\end{align}
Thus
\begin{equation}
\langle Pu,Pv\rangle
\le
\frac dn\bigl(\rho+\varepsilon(2+\rho)\bigr)
\le
\frac dn(\rho+3\varepsilon),
\label{eq:inner_ub_B_clean}
\end{equation}
because \(\rho\le1\). Combining \eqref{eq:prodNorm_lb_B_clean} and
\eqref{eq:inner_ub_B_clean},
\begin{align}
Z(P;u,v)
&=
\|Pu\|\,\|Pv\|-\langle Pu,Pv\rangle \nonumber\\
&\ge
\frac dn\bigl((1-\varepsilon)-(\rho+3\varepsilon)\bigr)
=
\frac dn(\delta-4\varepsilon).
\label{eq:Zlb_B_clean}
\end{align}
Since \(\delta\ge\delta_0\) and
\(\varepsilon\le\delta_0/8\le\delta/8\),
\[
\delta-4\varepsilon\ge \frac{\delta}{2}.
\]
Hence on \(\mathcal E_B\),
\begin{equation}
Z(P;u,v)\ge \frac{d}{2n}\delta,
\qquad
\Gamma(P;u,v)=\frac{Z(P;u,v)}{\delta}\ge \frac{d}{2n}.
\label{eq:regimeB_hp_clean}
\end{equation}

\paragraph{Regime A: nearly aligned vectors, \(\rho\ge \rho_0\).}
Equivalently, \(\delta\le\delta_0\). Define
\[
e_1:=u,
\qquad
e_2:=\frac{v-\rho u}{\sqrt{1-\rho^2}}.
\]
Then \((e_1,e_2)\) is an orthonormal pair and
\[
v=\rho e_1+\sqrt{1-\rho^2}\,e_2.
\]
Let
\[
\mathcal Z_A
:=
\left\{
e_1,\ e_2,\ \frac{e_1+e_2}{\sqrt2},\
\frac{e_1-e_2}{\sqrt2}
\right\}\subset S^{n-1},
\]
and let \(\mathcal E_A\) be the event that the JL estimate from
Lemma~\ref{lem:JL_clean} holds simultaneously for all points of
\(\mathcal Z_A\) with parameter \(\varepsilon\). Since
\(|\mathcal Z_A|=4\), Lemma~\ref{lem:JL_clean} gives
\begin{equation}
\mathbb P(\mathcal E_A)
\ge
1-4C_{\mathrm{JL}}e^{-c_{\mathrm{JL}}\varepsilon^2d}.
\label{eq:PA_clean}
\end{equation}

We prove that on \(\mathcal E_A\),
\begin{equation}
\Gamma(P;u,v)\ge (1-2\varepsilon)\frac dn.
\label{eq:GammaA_clean}
\end{equation}
On \(\mathcal E_A\),
\begin{equation}
\left|\|Pe_1\|^2-\frac dn\right|\le \varepsilon\frac dn,
\qquad
\left|\|Pe_2\|^2-\frac dn\right|\le \varepsilon\frac dn.
\label{eq:norms_zpm_clean}
\end{equation}
Moreover,
\begin{equation}
\left\|P\frac{e_1+e_2}{\sqrt2}\right\|^2
=
\frac{\|Pe_1\|^2+\|Pe_2\|^2+2\langle Pe_1,Pe_2\rangle}{2},
\label{eq:pm_plus_clean}
\end{equation}
and
\begin{equation}
\left\|P\frac{e_1-e_2}{\sqrt2}\right\|^2
=
\frac{\|Pe_1\|^2+\|Pe_2\|^2-2\langle Pe_1,Pe_2\rangle}{2}.
\label{eq:pm_minus_clean}
\end{equation}
Subtracting \eqref{eq:pm_minus_clean} from \eqref{eq:pm_plus_clean} yields
\[
2\langle Pe_1,Pe_2\rangle
=
\left\|P\frac{e_1+e_2}{\sqrt2}\right\|^2
-
\left\|P\frac{e_1-e_2}{\sqrt2}\right\|^2.
\]
Both terms on the right are within \(\varepsilon d/n\) of \(d/n\), so
\begin{equation}
|\langle Pe_1,Pe_2\rangle|
\le
\varepsilon\frac dn.
\label{eq:inner_zpm_bound_clean}
\end{equation}

For any \(x=ae_1+be_2\in\mathrm{span}\{u,v\}\), using
\eqref{eq:norms_zpm_clean} and \eqref{eq:inner_zpm_bound_clean},
\begin{align}
\|Px\|^2
&=
a^2\|Pe_1\|^2+b^2\|Pe_2\|^2
+2ab\langle Pe_1,Pe_2\rangle \nonumber\\
&\ge
(1-\varepsilon)\frac dn(a^2+b^2)
-
2|ab|\,\varepsilon\frac dn.
\label{eq:plane_pre_clean}
\end{align}
Since \(2|ab|\le a^2+b^2\),
\begin{equation}
\|Px\|^2
\ge
(1-2\varepsilon)\frac dn\|x\|^2
\qquad
\forall x\in\mathrm{span}\{u,v\}.
\label{eq:planeLower_clean}
\end{equation}

Because \(v\in\mathrm{span}\{u,v\}\) and \(\|v\|=1\),
\eqref{eq:planeLower_clean} gives
\[
\|Pv\|^2\ge (1-2\varepsilon)\frac dn>0.
\]
Thus
\[
\lambda:=\frac{\|Pu\|}{\|Pv\|}
\]
is well-defined and positive. We compute
\begin{align}
\|Pu-\lambda Pv\|^2
&=
\|Pu\|^2+\lambda^2\|Pv\|^2
-2\lambda\langle Pu,Pv\rangle \nonumber\\
&=
2\|Pu\|^2-2\lambda\langle Pu,Pv\rangle \nonumber\\
&=
2\lambda\bigl(\|Pu\|\,\|Pv\|-\langle Pu,Pv\rangle\bigr).
\label{eq:scaled_residual_expand_clean}
\end{align}
Therefore
\begin{equation}
Z(P;u,v)
=
\frac{1}{2\lambda}\|P(u-\lambda v)\|^2.
\label{eq:Zscaled_clean}
\end{equation}
Since \(u-\lambda v\in\mathrm{span}\{u,v\}\), the plane lower bound
\eqref{eq:planeLower_clean} gives
\[
\|P(u-\lambda v)\|^2
\ge
(1-2\varepsilon)\frac dn\|u-\lambda v\|^2.
\]
Using \(\rho=1-\delta\),
\begin{align}
\|u-\lambda v\|^2
&=
1+\lambda^2-2\lambda\rho \nonumber\\
&=
1+\lambda^2-2\lambda(1-\delta)
=
(\lambda-1)^2+2\lambda\delta
\ge
2\lambda\delta.
\label{eq:geom2_clean}
\end{align}
Substituting into \eqref{eq:Zscaled_clean},
\begin{equation}
Z(P;u,v)
\ge
(1-2\varepsilon)\frac dn\,\delta.
\label{eq:ZlbAfinal_clean}
\end{equation}
Dividing by \(\delta>0\) proves \eqref{eq:GammaA_clean}. Since
\(\varepsilon<1/8\), \(1-2\varepsilon\ge 3/4\), and therefore
\begin{equation}
\Gamma(P;u,v)\ge \frac{d}{2n},
\qquad
Z(P;u,v)\ge \frac{d}{2n}\delta.
\label{eq:regimeA_final_clean}
\end{equation}

\paragraph{Conclusion.}
Set
\[
C_{\Gamma}:=4C_{\mathrm{JL}},
\qquad
c_{\Gamma}:=c_{\mathrm{JL}}.
\]
In Regime B, \eqref{eq:PB_clean} and \eqref{eq:regimeB_hp_clean} give
\[
\mathbb P\!\left(
Z(P;u,v)\ge \frac{d}{2n}\delta
\right)
\ge
1-C_{\Gamma}e^{-c_{\Gamma}\varepsilon^2d}.
\]
In Regime A, \eqref{eq:PA_clean} and \eqref{eq:regimeA_final_clean} give the
same lower bound for \(Z(P;u,v)\). Since \(\delta>0\), dividing by
\(\delta\) proves \eqref{eq:Gammahp_unif}.
\end{proof}

\begin{remark}
\label{rem:hp_oracle_global}
The bound above is pointwise in \((u,v)\), which is exactly what is needed in
the one-step analysis: after conditioning on the past, the relevant directions
are deterministic and \(P_k\) is a fresh Haar--Stiefel draw. For boundary
comparison balls, the one-step lower bound follows directly from
Proposition~\ref{prop:GammaHP}. For interior comparison balls, the exact
improvement is governed by the ratio \(\gamma\), and the additional deterministic
comparison is Lemma~\ref{lem:gamma_scaled_unit}. The proof below combines the
high-probability lower bound on \(\Gamma(P;u,v)\) with a one-vector JL lower
bound on \(\|Pu\|\).
\end{remark}

\begin{proposition}[Uniform high-probability approximate oracle]
\label{prop:hp_oracle_uniform}
Assume Assumptions~\ref{ass:C_clean}--\ref{ass:f_clean}, assume
\(\partial C\) is \(\mathcal C^2\), and let \(x_k\in C\) with
\(g_k:=\nabla f(x_k)\neq0\). Let
\[
u_k:=\frac{g_k}{\|g_k\|}.
\]
Let \(\mathcal B_k\subset C\) be a comparison ball containing \(x_k\) such that
\[
\Delta_{\mathcal B_k}(x_k;g_k)
\ge
\kappa_{\mathrm{unif}}\Delta(x_k),
\]
as in Proposition~\ref{prop:BallGap_global_clean}. Define
\[
c_{\mathrm{hp}}:=\min\{c_{\Gamma},c_{\mathrm{JL}}\},
\qquad
C_{\mathrm{hp}}:=C_{\Gamma}+C_{\mathrm{JL}},
\qquad
p_{\mathrm{hp}}
:=
\left[
1-C_{\mathrm{hp}}e^{-c_{\mathrm{hp}}\varepsilon^2d}
\right]_+,
\]
and
\[
\beta_{\mathrm{hp}}
:=
\kappa_{\mathrm{unif}}\frac{d}{2n}.
\]
Then, conditionally on
\[
\mathcal F_k:=\sigma(P_0,\ldots,P_{k-1}),
\]
there exists an event \(\mathcal E_k\), depending only on the fresh draw
\(P_k\), such that
\begin{equation}
\mathbb P(\mathcal E_k\mid\mathcal F_k)
\ge
p_{\mathrm{hp}}.
\label{eq:hp_oracle_prob_uniform}
\end{equation}
On \(\mathcal E_k\), the RSFW direction \(d_k=s_k-x_k\) satisfies
\begin{equation}
\langle g_k,d_k\rangle
\le
-\beta_{\mathrm{hp}}\Delta(x_k),
\label{eq:hp_oracle_uniform}
\end{equation}
and the projected gradient obeys
\begin{equation}
\|P_k g_k\|
\ge
\sqrt{(1-\varepsilon)\frac dn}\,\|g_k\|.
\label{eq:hp_projected_gradient}
\end{equation}
\end{proposition}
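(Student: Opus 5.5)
The plan is to define \(\mathcal E_k\) as the intersection of two events: \(\mathcal E_k^\Gamma\), on which the high-probability bound of Proposition~\ref{prop:GammaHP} holds, and \(\mathcal E_k^{\rm JL}\), the one-vector event of Lemma~\ref{lem:JL_clean} applied to \(\mathcal Z=\{u_k\}\). Conditionally on \(\mathcal F_k\), the iterate \(x_k\), the gradient \(g_k\), the comparison ball \(\mathcal B_k=B(c_k,r_k)\) and the directions \(u_k,\mu_k\) are all deterministic. Meanwhile \(P_k\) is a fresh Haar--Stiefel draw, so both pointwise-in-\((u,v)\) estimates apply conditionally. A union bound then gives
\[
\mathbb P(\mathcal E_k\mid\mathcal F_k)\ge 1-C_\Gamma e^{-c_\Gamma\varepsilon^2 d}-C_{\rm JL}e^{-c_{\rm JL}\varepsilon^2 d}\ge 1-C_{\rm hp}e^{-c_{\rm hp}\varepsilon^2d}.
\]
Since probabilities are nonnegative, this yields \eqref{eq:hp_oracle_prob_uniform}. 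On \(\mathcal E_k^{\rm JL}\) we have \(\|P_ku_k\|^2\ge(1-\varepsilon)d/n\), and multiplying by \(\|g_k\|\) gives \eqref{eq:hp_projected_gradient} directly.

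For \eqref{eq:hp_oracle_uniform}, we follow the three-case structure of the proof of Proposition~\ref{prop:approxOracle_global_clean}, replacing expectations by pathwise bounds. Set \(v_k=(c_k-x_k)/r_k\) and \(t_k=\|v_k\|\); when \(t_k>0\), set \(\mu_k=v_k/t_k\) and \(\rho_k=\langle u_k,\mu_k\rangle\). The Haar event \(\mathcal E_k^\Gamma\) is chosen for the pair \((u_k,\mu_k)\) when \(t_k>0\) and \(\rho_k<1\); otherwise we take the whole space.

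\textbf{Case \(t_k=1\).} Lemma~\ref{lem:ballSectionExact_clean} gives a section improvement of \(r_k\|g_k\|(1-\rho_k)\Gamma(P_k;u_k,\mu_k)\), which is at least \(\tfrac{d}{2n}\Delta_{\mathcal B_k}(x_k;g_k)\) on \(\mathcal E_k^\Gamma\). If \(\rho_k=1\), the ball gap is zero and we must treat this trivially; see the remark on degenerate cases below.

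\textbf{Case \(t_k=0\).} The ratio is \(\|P_ku_k\|\ge\sqrt{(1-\varepsilon)d/n}\ge d/(2n)\) on \(\mathcal E_k^{\rm JL}\), using \(d\le n\) and \(\varepsilon<1/8\).

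\textbf{Case \(0<t_k<1\).} Lemma~\ref{lem:gamma_scaled_unit}, in the form \eqref{eq:gamma_scaled_unit2}, gives \(\gamma\ge\min\{\|P_ku_k\|,\Gamma(P_k;u_k,\mu_k)\}\ge d/(2n)\) on \(\mathcal E_k\).

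In every case, then, the section improvement over \(\mathcal B_k\cap(x_k+U_k)\) is at least \(\tfrac{d}{2n}\Delta_{\mathcal B_k}(x_k;g_k)\ge\tfrac{d}{2n}\kappa_{\rm unif}\Delta(x_k)\). Lemma~\ref{lem:subvsball} shows that \(-\langle g_k,d_k\rangle\) dominates this quantity, which gives \eqref{eq:hp_oracle_uniform} with \(\beta_{\rm hp}=\kappa_{\rm unif}d/(2n)\).

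\textbf{Main obstacle.} The delicate point is bookkeeping, not estimation. The event must be fixed before seeing \(P_k\), yet it depends on \((u_k,\mu_k)\); this is legitimate because these are \(\mathcal F_k\)-measurable. We also need to check that \(P_kg_k\neq0\) on \(\mathcal E_k\), which holds by the JL bound, so the section formulas of Lemmas~\ref{lem:ballSectionExact_clean}--\ref{lem:ballSectionInterior_clean} apply. Finally, we must confirm that \(\sqrt{(1-\varepsilon)d/n}\ge d/(2n)\), which holds since \(\sqrt{x}\ge x\) for \(x\le1\). The degenerate subcase \(\rho_k=1\) with \(t_k>0\) makes the ball gap \(r_k\|g_k\|(1-t_k)\) and the ratio reduce to a multiple of \(\|P_ku_k\|\), so it is covered by \(\mathcal E_k^{\rm JL}\) alone.
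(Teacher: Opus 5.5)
Your proposal is correct and follows the paper's proof. Both intersect the one-vector JL event for $u_k$ with the $\Gamma$-event for $(u_k,\mu_k)$ and use a union bound to get $p_{\mathrm{hp}}$. Both then run the three cases $t_k=0$, $0<t_k<1$, $t_k=1$, closing with Lemma~\ref{lem:gamma_scaled_unit}, the inequality $\sqrt{(1-\varepsilon)d/n}\ge d/(2n)$, and Lemma~\ref{lem:subvsball}. Your explicit handling of the degenerate subcase $\rho_k=1$ is a small bookkeeping point the paper leaves implicit: there Proposition~\ref{prop:GammaHP} does not apply, and the weighted-average bound reduces to $\gamma\ge\|P_ku_k\|$.
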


\begin{proof}
\textbf{Proof idea:}
The high-probability oracle event is the intersection of two simple events: a
one-vector JL lower bound for \(\|P_ku_k\|\), and, when the comparison ball is
not centered, the high-probability lower bound for
\(\Gamma(P_k;u_k,\mu_k)\). The deterministic interior-ball inequality
Lemma~\ref{lem:gamma_scaled_unit} then converts these two events into a uniform
lower bound on the section improvement.

Condition on \(\mathcal F_k\). Then \(x_k\), \(g_k\), and \(u_k\) are fixed,
while \(P_k\) is an independent Haar--Stiefel draw. Let \(c_k,r_k\) be the
center and radius of \(\mathcal B_k\), and define
\[
v_k:=\frac{c_k-x_k}{r_k},
\qquad
t_k:=\|v_k\|\in[0,1].
\]

First define the one-vector projection event
\[
\mathcal E_{U,k}
:=
\left\{
\|P_ku_k\|^2
\ge
(1-\varepsilon)\frac dn
\right\}.
\]
By Lemma~\ref{lem:JL_clean},
\[
\mathbb P(\mathcal E_{U,k}\mid\mathcal F_k)
\ge
1-C_{\mathrm{JL}}e^{-c_{\mathrm{JL}}\varepsilon^2d}.
\]
If \(t_k=0\), set \(\mathcal E_k:=\mathcal E_{U,k}\). If \(t_k>0\), set
\[
\mu_k:=\frac{v_k}{t_k},
\qquad
\mathcal E_{\Gamma,k}
:=
\left\{
\Gamma(P_k;u_k,\mu_k)\ge\frac d{2n}
\right\},
\qquad
\mathcal E_k:=\mathcal E_{U,k}\cap\mathcal E_{\Gamma,k}.
\]
By Proposition~\ref{prop:GammaHP}, conditionally on \(\mathcal F_k\),
\[
\mathbb P(\mathcal E_{\Gamma,k}\mid\mathcal F_k)
\ge
1-C_{\Gamma}e^{-c_{\Gamma}\varepsilon^2d}
\qquad (t_k>0).
\]
The union bound gives
\[
\mathbb P(\mathcal E_k\mid\mathcal F_k)
\ge
1-(C_{\Gamma}+C_{\mathrm{JL}})
e^{-\min\{c_{\Gamma},c_{\mathrm{JL}}\}\varepsilon^2d}
\ge
p_{\mathrm{hp}},
\]
and in the case \(t_k=0\) the event \(\mathcal E_k=\mathcal E_{U,k}\) has even
larger probability.

It remains to prove the oracle inequality. On \(\mathcal E_k\), the deterministic
ball-section formulas from Lemmas~\ref{lem:ballSectionExact_clean},
\ref{lem:ballSectionInterior_clean}, and \ref{lem:gamma_scaled_unit} imply
\begin{equation}
\Delta_{P_k,\mathcal B_k}(x_k;g_k)
\ge
\frac d{2n}\,
\Delta_{\mathcal B_k}(x_k;g_k),
\label{eq:hp_ball_section_improvement}
\end{equation}
where \(\Delta_{P_k,\mathcal B_k}(x_k;g_k)\) denotes the FW improvement obtained
by restricting the comparison ball \(\mathcal B_k\) to the affine section
\(x_k+\range(P_k^\top)\). Indeed, the boundary case \(t_k=1\) uses the
\(\Gamma\)-event, the centered case \(t_k=0\) uses the projection event, and the
off-center interior case \(t_k\in(0,1)\) uses
\[
\gamma(P_k;u_k,v_k)
\ge
\min\{\|P_ku_k\|,\Gamma(P_k;u_k,\mu_k)\}.
\]
In all cases the lower bound is at least \(d/(2n)\), since
\[
\sqrt{(1-\varepsilon)\frac dn}\ge \frac d{2n}
\]
for \(d\le n\) and \(\varepsilon<1/2\).

By Lemma~\ref{lem:subvsball}, the RSFW section gap over \(C\) is at least the
section improvement over the comparison ball. Therefore, using the choice of
\(\mathcal B_k\),
\[
-\langle g_k,d_k\rangle
=
\langle g_k,x_k-s_k\rangle
\ge
\frac d{2n}\Delta_{\mathcal B_k}(x_k;g_k)
\ge
\kappa_{\mathrm{unif}}\frac d{2n}\Delta(x_k)
=
\beta_{\mathrm{hp}}\Delta(x_k).
\]
This proves \eqref{eq:hp_oracle_uniform}. Finally,
\eqref{eq:hp_projected_gradient} follows directly from \(\mathcal E_{U,k}\).
\end{proof}

\subsubsection{Mixed moments from high-probability section efficiency}

\begin{proposition}[Mixed-moment lower bound for \(R^a\Gamma(P;u,v)\)]
\label{prop:pos_corr_general}
Fix \(a\in(0,1]\), let \(u,v\in S^{n-1}\) with
\(\rho:=\langle u,v\rangle <1\), and let
\[
R:=\|Pu\|^2,
\]
where \(P\in\mathbb R^{d\times n}\) is Haar--Stiefel. Fix also
\[
0<\varepsilon<\min\!\left\{\frac18,\frac{\delta_0}{8}\right\}
\]
and define
\[
c_{\mathrm{hp}}:=\min\{c_{\Gamma},c_{\mathrm{JL}}\},
\qquad
C_{\mathrm{hp}}:=C_{\Gamma}+C_{\mathrm{JL}},
\qquad
p_{\mathrm{hp}}
:=
\left[
1-C_{\mathrm{hp}}e^{-c_{\mathrm{hp}}\varepsilon^2d}
\right]_+,
\]
where \([t]_+:=\max\{t,0\}\). Then
\begin{equation}
\mathbb E\bigl[R^a\Gamma(P;u,v)\bigr]
\ge
p_{\mathrm{hp}}(1-\varepsilon)^a
\left(\frac dn\right)^a
\frac{d}{2n}.
\label{eq:mixed_direct_lower}
\end{equation}
Consequently,
\begin{equation}
\mathbb E\bigl[R^a\Gamma(P;u,v)\bigr]
\ge
\vartheta_a\,\mathbb E[R^a]\,\mathbb E[\Gamma(P;u,v)],
\qquad
\vartheta_a:=\frac12(1-\varepsilon)^a p_{\mathrm{hp}}.
\label{eq:mixed_vs_product}
\end{equation}
In particular, for \(a=\tfrac12\),
\begin{equation}
\mathbb E\bigl[\|Pu\|\Gamma(P;u,v)\bigr]
\ge
\vartheta_{1/2}\,
\mathbb E\bigl[\|Pu\|\bigr]\,
\mathbb E\bigl[\Gamma(P;u,v)\bigr].
\label{eq:pos_corr_Gamma}
\end{equation}
\end{proposition}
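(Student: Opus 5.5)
The plan is to restrict the expectation to a single good event on which both factors are bounded below, and to use the nonnegativity of both factors off that event. Both \(R^a\ge0\) and \(\Gamma(P;u,v)\ge0\) hold pointwise, the latter because \(\|Pu\|\|Pv\|\ge\langle Pu,Pv\rangle\) by Cauchy--Schwarz and \(1-\rho>0\). So for any event \(\mathcal E\),
\[
\mathbb E[R^a\Gamma]\ge \mathbb E[R^a\Gamma\,\mathbf 1_{\mathcal E}].
\]

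We take
\[
\mathcal E:=\{R\ge(1-\varepsilon)d/n\}\cap\{\Gamma(P;u,v)\ge d/(2n)\}.
\]
For the first event, Lemma~\ref{lem:JL_clean} applied to \(\mathcal Z=\{u\}\) gives probability at least \(1-C_{\mathrm{JL}}e^{-c_{\mathrm{JL}}\varepsilon^2 d}\). For the second, Proposition~\ref{prop:GammaHP} gives probability at least \(1-C_\Gamma e^{-c_\Gamma\varepsilon^2 d}\), and it applies since the admissible range of \(\varepsilon\) is the same. A union bound, with \(c_{\mathrm{hp}}=\min\{c_\Gamma,c_{\mathrm{JL}}\}\) and \(C_{\mathrm{hp}}=C_\Gamma+C_{\mathrm{JL}}\), then gives \(\mathbb P(\mathcal E)\ge p_{\mathrm{hp}}\); the positive part is harmless because probabilities are nonnegative. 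On \(\mathcal E\) we have \(R^a\ge (1-\varepsilon)^a(d/n)^a\), since \(t\mapsto t^a\) is increasing for \(a>0\), and \(\Gamma\ge d/(2n)\). Multiplying these bounds by \(\mathbb P(\mathcal E)\) gives \eqref{eq:mixed_direct_lower}.

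For \eqref{eq:mixed_vs_product}, we compare with upper bounds on the two marginal expectations. Jensen's inequality for the concave map \(t\mapsto t^a\), \(a\in(0,1]\), gives \(\mathbb E[R^a]\le(\mathbb E R)^a=(d/n)^a\). Proposition~\ref{prop:EGamma_identity} gives \(\mathbb E[\Gamma]\le d/n\). This upper bound comes from the identity \(\mathbb E\Gamma=d/n-\mathbb E(\|Pu\|-\|Pv\|)^2/(2(1-\rho))\), which holds for every \(d\), so the restriction \(d\ge3\) is not needed for it. Hence \(\mathbb E[R^a]\mathbb E[\Gamma]\le (d/n)^{a+1}\), and comparing with \eqref{eq:mixed_direct_lower} gives the factor \(\tfrac12(1-\varepsilon)^a p_{\mathrm{hp}}=\vartheta_a\). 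Setting \(a=\tfrac12\), for which \(R^{1/2}=\|Pu\|\), gives \eqref{eq:pos_corr_Gamma}.

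The argument has no real obstacle. The points to check are that Proposition~\ref{prop:GammaHP} is uniform in \((u,v)\) with \(\rho<1\), so it applies pointwise here, and that the upper bound \(\mathbb E\Gamma\le d/n\) is valid without the lower-bound hypothesis \(d\ge3\). If \(d=n\), then \(R=1\) and \(\Gamma=1\) almost surely, and all the claims hold trivially.
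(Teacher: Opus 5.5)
Your proposal is correct and follows the paper's proof essentially step for step: intersect the one-vector JL event \(\{R\ge(1-\varepsilon)d/n\}\) with the event \(\{\Gamma(P;u,v)\ge d/(2n)\}\) from Proposition~\ref{prop:GammaHP}, union-bound to get probability at least \(p_{\mathrm{hp}}\), and then compare with \(\mathbb E[R^a]\le(d/n)^a\) (Jensen) and \(\mathbb E[\Gamma(P;u,v)]\le d/n\) (Proposition~\ref{prop:EGamma_identity}). Your extra remarks make explicit two points the paper leaves implicit: \(\Gamma\ge0\) pointwise, which justifies discarding the complement of the good event, and the upper bound \(\mathbb E[\Gamma]\le d/n\) holds without the hypothesis \(d\ge3\).
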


\begin{proof}
Here the general idea is that
short-step decrease contains a product of two random quantities: the projected
gradient length and the section-efficiency ratio. Instead of trying to prove
positive correlation, we lower bound both quantities on a common
high-probability event. This gives a direct mixed-moment lower bound, which is
then compared to \(\mathbb E[R^a]\mathbb E[\Gamma]\) using the simple upper
bounds \(\mathbb E[R^a]\le(d/n)^a\) and \(\mathbb E[\Gamma]\le d/n\).

Define
\[
\mathcal E_U
:=
\left\{
\|Pu\|^2\ge (1-\varepsilon)\frac dn
\right\},
\qquad
\mathcal E_\Gamma
:=
\left\{
\Gamma(P;u,v)\ge \frac{d}{2n}
\right\}.
\]
By Lemma~\ref{lem:JL_clean}, applied to the singleton set \(\{u\}\),
\[
\mathbb P(\mathcal E_U)
\ge
1-C_{\mathrm{JL}}e^{-c_{\mathrm{JL}}\varepsilon^2d}.
\]
By Proposition~\ref{prop:GammaHP},
\[
\mathbb P(\mathcal E_\Gamma)
\ge
1-C_{\Gamma}e^{-c_{\Gamma}\varepsilon^2d}.
\]
The union bound gives
\[
\mathbb P(\mathcal E_U\cap\mathcal E_\Gamma)
\ge
1-
(C_{\Gamma}+C_{\mathrm{JL}})
e^{-\min\{c_{\Gamma},c_{\mathrm{JL}}\}\varepsilon^2d}
\ge
p_{\mathrm{hp}}.
\]
On \(\mathcal E_U\cap\mathcal E_\Gamma\),
\[
R^a\Gamma(P;u,v)
\ge
\left((1-\varepsilon)\frac dn\right)^a
\frac{d}{2n}.
\]
Taking expectations gives \eqref{eq:mixed_direct_lower}.

For the product comparison, Jensen's inequality gives
\[
\mathbb E[R^a]
\le
(\mathbb E R)^a
=
\left(\frac dn\right)^a,
\]
and Proposition~\ref{prop:EGamma_identity} gives
\[
\mathbb E[\Gamma(P;u,v)]\le \frac dn.
\]
Combining these two upper bounds with \eqref{eq:mixed_direct_lower} yields
\eqref{eq:mixed_vs_product}. Taking \(a=1/2\) gives
\eqref{eq:pos_corr_Gamma}.
\end{proof}

\subsubsection{Strong convexity of random sections}

\begin{lemma}[Subspace sections inherit strong convexity]
\label{lem:subspace_SC}
If \(C\) is \(\beta_C\)-strongly convex and \(U\subset\mathbb R^n\) is a linear
subspace, then for every \(x\in C\) the section
\[
C_U:=C\cap(x+U)
\]
is \(\beta_C\)-strongly convex in \(U\) with the same parameter \(\beta_C\).
\end{lemma}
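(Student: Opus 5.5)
The plan is to verify the defining inclusion \eqref{eq:UC_clean} directly inside the affine space \(x+U\), with balls taken relative to that affine space. Fix \(x\in C\), set \(C_U:=C\cap(x+U)\), and take any \(y,z\in C_U\) and \(\lambda\in[0,1]\). Put \(w:=\lambda y+(1-\lambda)z\) and \(r:=\beta_C\lambda(1-\lambda)\|y-z\|^2\). We must show that the relative ball
\[
B_U(w,r):=B(w,r)\cap(x+U)
\]
is contained in \(C_U\).

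\textbf{Step 1 (the center lies in the affine section).} Since \(y,z\in x+U\), we can write \(y=x+a\) and \(z=x+b\) with \(a,b\in U\). Then \(w=x+\lambda a+(1-\lambda)b\in x+U\), because \(U\) is a linear subspace. Also \(y-z=a-b\in U\). The affine section is therefore closed under the convex combination, and the squared distance \(\|y-z\|^2\) is the same whether it is measured in \(U\) or in \(\mathbb R^n\), since the norm on \(U\) is the restriction of the Euclidean norm.

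\textbf{Step 2 (inherit the ambient ball).} Because \(y,z\in C\), Assumption~\ref{ass:C_clean} gives \(B(w,r)\subset C\). Intersecting both sides with \(x+U\) yields
\[
B_U(w,r)=B(w,r)\cap(x+U)\subset C\cap(x+U)=C_U.
\]
Since \(w\in x+U\), the set \(B_U(w,r)\) is exactly the closed ball of radius \(r\) centered at \(w\) in the affine space \(x+U\), equipped with the induced Euclidean metric. This is the strong-convexity inclusion for \(C_U\) with the same parameter \(\beta_C\). Convexity and compactness of \(C_U\) follow because it is the intersection of the convex compact set \(C\) with a closed affine subspace, and \(C_U\) is nonempty since it contains \(x\).

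\textbf{Main point to check.} The only subtlety is one of interpretation. Strong convexity ``in \(U\)'' must be understood relative to the affine hull \(x+U\), with relative balls, and not as full-dimensional balls in \(\mathbb R^n\), which a lower-dimensional slice could never contain. Once this convention is fixed, Step~2 is immediate. I would also note the identification with coordinates \(w\mapsto x+P^\top w\), which is an isometry from \(\mathbb R^d\) onto \(x+U\) when \(PP^\top=I_d\). Under this identification the section \(\{w: x+P^\top w\in C\}\) is \(\beta_C\)-strongly convex in \(\mathbb R^d\), and this is the form used later for the section oracle. No other obstacle is expected.
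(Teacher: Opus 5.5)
Your proof is correct and follows the paper's argument: apply the ambient strong-convexity inclusion to \(y,z\in C_U\), then intersect the resulting ball with \(x+U\) to get the relative ball inside \(C_U\). Your explicit check that the center \(\lambda y+(1-\lambda)z\) lies in \(x+U\) fills in the one detail the paper leaves implicit; it is what makes the intersection a genuine relative ball of the same radius.
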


\begin{proof}
Take \(y,z\in C_U\) and \(\lambda\in[0,1]\). By strong convexity of \(C\),
\[
B(\lambda y+(1-\lambda)z,\,
\beta_C\lambda(1-\lambda)\|y-z\|^2)
\subset C.
\]
Intersecting with the affine space \(x+U\) gives the corresponding ball in the
relative geometry of \(x+U\), hence it is contained in \(C_U\).
\end{proof}

\begin{corollary}[Scaling inequality in the subspace]
\label{cor:ScalingSCB_subspace}
Let \(C\) be \(\beta_C\)-strongly convex and let \(x_k\in C\) with
\(\Pi_k g_k\neq0\). Then the subspace FW direction
\(d_k=s_k-x_k\in U_k\) satisfies
\begin{equation}
\frac{\langle g_k,x_k-s_k\rangle}{\|d_k\|^2}
\ge
\frac{\beta_C}{4}\,\|\Pi_k g_k\|,
\label{eq:ScalingSCB_sub}
\end{equation}
and consequently
\begin{equation}
\|d_k\|\le \frac4\beta_C.
\label{eq:step_bound}
\end{equation}
\end{corollary}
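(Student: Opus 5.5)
The plan is to work entirely inside the section \(C_{U_k}:=C\cap(x_k+U_k)\) and combine Lemma~\ref{lem:subspace_SC} with the quadratic-support argument from Step~1 of the proof of Lemma~\ref{lem:BallGap_clean}, now run at the LMO output \(s_k\) rather than at the iterate. Since \(d_k\in U_k\), we have \(\langle g_k,x_k-s_k\rangle=\langle \Pi_k g_k,x_k-s_k\rangle\). So everything depends only on the projected gradient \(h:=\Pi_k g_k\neq0\), viewed as a linear functional on the affine space \(x_k+U_k\). By Lemma~\ref{lem:subspace_SC}, \(C_{U_k}\) is \(\beta_C\)-strongly convex in \(U_k\), and \(s_k\) minimizes \(\langle h,\cdot\rangle\) over it.

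First I would show that \(s_k\) lies on the relative boundary of \(C_{U_k}\) and that \(-h/\|h\|\) is an outward normal there; this follows because \(s_k\) minimizes a nonzero linear functional. Then I would repeat Step~1 of Lemma~\ref{lem:BallGap_clean} inside \(U_k\). For \(y=x_k\) and \(\lambda\in(0,1)\), the ball of radius \(\beta_C\lambda(1-\lambda)\|x_k-s_k\|^2\) centred at \(\lambda s_k+(1-\lambda)x_k\), intersected with \(x_k+U_k\), lies in \(C_{U_k}\). Hence the point \(\lambda s_k+(1-\lambda)x_k-\beta_C\lambda(1-\lambda)\|d_k\|^2\,h/\|h\|\) lies in \(C_{U_k}\), and minimality of \(s_k\) gives
\[
\langle h,\lambda s_k+(1-\lambda)x_k\rangle-\beta_C\lambda(1-\lambda)\|d_k\|^2\|h\|\ \ge\ \langle h,s_k\rangle .
\]
Rearranging, this becomes \((1-\lambda)\langle h,x_k-s_k\rangle\ge \beta_C\lambda(1-\lambda)\|d_k\|^2\|h\|\). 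Dividing by \(1-\lambda\) and letting \(\lambda\uparrow1\) (or simply taking \(\lambda=1/2\)) yields \(\langle g_k,x_k-s_k\rangle\ge \beta_C\lambda\|d_k\|^2\|\Pi_kg_k\|\). This in fact gives the stronger constant \(\beta_C\), and a fortiori the stated \(\beta_C/4\). If \(d_k=0\) the claim is vacuous, with the convention that the ratio is not needed.

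For \eqref{eq:step_bound}, the plan is to combine \eqref{eq:ScalingSCB_sub} with the trivial upper bound \(\langle g_k,x_k-s_k\rangle=\langle \Pi_kg_k,x_k-s_k\rangle\le\|\Pi_kg_k\|\,\|d_k\|\), by Cauchy--Schwarz. Dividing by \(\|\Pi_kg_k\|\|d_k\|>0\) gives \(\frac{\beta_C}{4}\|d_k\|\le1\), i.e.\ \(\|d_k\|\le4/\beta_C\).

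The main subtlety is the inclusion of the shifted point in the section: the displacement \(-h/\|h\|\) must lie in \(U_k\) so that it stays inside \(x_k+U_k\). This holds because \(h=\Pi_kg_k\in U_k\), and it is exactly why the bound involves \(\|\Pi_kg_k\|\) rather than \(\|g_k\|\).
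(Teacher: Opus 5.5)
Your proposal is correct and is essentially the paper's proof. It uses the same shifted point in the section (with the roles of $\lambda$ and $1-\lambda$ swapped), the same minimality of $s_k$ combined with $\langle g_k,\Pi_k g_k\rangle/\|\Pi_k g_k\|=\|\Pi_k g_k\|$, and Cauchy--Schwarz for the step bound. The differences are cosmetic: the paper fixes $\lambda=1/2$ to get the constant $\beta_C/2$ and $\|d_k\|\le 2/\beta_C$, your limit gives $\beta_C$ and $\|d_k\|\le 1/\beta_C$, and your preliminary step placing $s_k$ on the relative boundary is never actually used.
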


\begin{proof}
Let
\[
E_k:=C\cap(x_k+U_k),
\qquad
h_k^{\mathrm{sub}}:=\Pi_k g_k.
\]
By Lemma~\ref{lem:subspace_SC}, \(E_k\) is \(\beta_C\)-strongly convex in
\(x_k+U_k\). Since \(s_k\) minimizes \(y\mapsto\langle g_k,y\rangle\) over
\(E_k\), and \(x_k-s_k\in U_k\),
\[
\langle g_k,x_k-s_k\rangle
=
\langle h_k^{\mathrm{sub}},x_k-s_k\rangle.
\]
Fix \(\lambda\in(0,1)\) and set
\[
\widehat h_k:=\frac{h_k^{\mathrm{sub}}}{\|h_k^{\mathrm{sub}}\|}.
\]
By strong convexity of \(E_k\), the point
\[
z_\lambda
:=
\lambda x_k+(1-\lambda)s_k
-
\beta_C\lambda(1-\lambda)\|x_k-s_k\|^2\,\widehat h_k
\]
belongs to \(E_k\). Since \(s_k\) minimizes the linear form over \(E_k\),
\[
\langle g_k,z_\lambda-s_k\rangle\ge0.
\]
Using
\[
z_\lambda-s_k
=
\lambda(x_k-s_k)
-
\beta_C\lambda(1-\lambda)\|d_k\|^2\widehat h_k
\]
and
\[
\langle g_k,\widehat h_k\rangle=\|h_k^{\mathrm{sub}}\|,
\]
we get
\[
0
\le
\lambda\langle g_k,x_k-s_k\rangle
-
\beta_C\lambda(1-\lambda)\|d_k\|^2
\|h_k^{\mathrm{sub}}\|.
\]
Dividing by \(\lambda>0\) and taking \(\lambda=1/2\),
\[
\langle g_k,x_k-s_k\rangle
\ge
\frac{\beta_C}{2}\|d_k\|^2\|h_k^{\mathrm{sub}}\|.
\]
This gives the stronger bound
\[
\frac{\langle g_k,x_k-s_k\rangle}{\|d_k\|^2}
\ge
\frac{\beta_C}{2}\|\Pi_k g_k\|.
\]
In particular, the weaker bound \eqref{eq:ScalingSCB_sub} holds. The same
argument and Cauchy--Schwarz give the stronger estimate
\(\|d_k\|\le2/\beta_C\), hence also \eqref{eq:step_bound}.
\end{proof}

\subsubsection{Proof of the short-step linear theorem, Theorem~\ref{thm:linear_convergence}}
\paragraph{Proof idea.}
The clipped short step gives a deterministic decrease involving
\(\mathrm{gap}_k^2/\|d_k\|^2\). Strong convexity of the section converts this
quantity into projected-gradient length times the section gap. On the good
event from Proposition~\ref{prop:hp_oracle_uniform}, the section gap controls
the full FW gap and the projected gradient is not too small. The gradient lower
bound then turns this into a fixed fractional decrease in \(h_k\), and averaging
over the good event gives the linear contraction.

\begin{proof}
Let
\[
h_k:=f(x_k)-f^*,
\qquad
d_k:=s_k-x_k,
\qquad
\mathrm{gap}_k:=\langle g_k,x_k-s_k\rangle\ge0.
\]
If \(d_k=0\), then \(\mathrm{gap}_k=0\) and the update is trivial. Otherwise,
by \(L\)-smoothness, for every \(\alpha_k\in[0,1]\),
\[
f(x_k+\alpha_kd_k)
\le
f(x_k)
-\alpha_k\,\mathrm{gap}_k
+
\frac{L}{2}\alpha_k^2\|d_k\|^2.
\]
With the clipped short-step rule
\[
\alpha_k
=
\min\!\left\{
1,\,
\frac{\mathrm{gap}_k}{L\|d_k\|^2}
\right\},
\]
we obtain
\begin{equation}
h_{k+1}
\le
h_k
-
\min\!\left\{
\frac{\mathrm{gap}_k}{2},\,
\frac{\mathrm{gap}_k^2}{2L\|d_k\|^2}
\right\}.
\label{eq:shortstep_descent_clean}
\end{equation}
By Corollary~\ref{cor:ScalingSCB_subspace},
\begin{equation}
\frac{\mathrm{gap}_k^2}{\|d_k\|^2}
\ge
\frac{\beta_C}{4}\|\Pi_k g_k\|\,\mathrm{gap}_k.
\label{eq:gap2_bound}
\end{equation}
Combining \eqref{eq:shortstep_descent_clean} and \eqref{eq:gap2_bound},
\begin{equation}
h_{k+1}
\le
h_k
-
\min\!\left\{
\frac{\mathrm{gap}_k}{2},\,
\frac{\beta_C}{8L}\|\Pi_k g_k\|\,\mathrm{gap}_k
\right\}.
\label{eq:descent_linear}
\end{equation}

By Proposition~\ref{prop:hp_oracle_uniform}, conditionally on
\(\mathcal F_k\), with probability at least \(p_{\mathrm{hp}}\),
\[
\mathrm{gap}_k\ge \beta_{\mathrm{hp}}\Delta(x_k)\ge \beta_{\mathrm{hp}}h_k
\]
and
\[
\|\Pi_k g_k\|=\|P_kg_k\|
\ge
\sqrt{(1-\varepsilon)\frac dn}\|g_k\|
\ge
c_0\sqrt{(1-\varepsilon)\frac dn}.
\]

Thus, on \(\mathcal E_k\),
\[
\min\!\left\{
\frac{\mathrm{gap}_k}{2},\,
\frac{\beta_C}{8L}\|\Pi_k g_k\|\,\mathrm{gap}_k
\right\}
\ge
\beta_{\mathrm{hp}}
\min\!\left\{
\frac12,\,
\frac{\beta_C c_0}{8L}
\sqrt{(1-\varepsilon)\frac dn}
\right\}h_k.
\]
Outside \(\mathcal E_k\), the short-step update is nonincreasing, so the
decrease term in \eqref{eq:descent_linear} is nonnegative.
Taking conditional expectations in \eqref{eq:descent_linear} yields
\begin{equation}
\mathbb E[h_{k+1}\mid\mathcal F_k]
\le
(1-r_{\mathrm{lin}})h_k,
\label{eq:linear_conditional_contraction}
\end{equation}
where
\begin{equation}
r_{\mathrm{lin}}
:=
p_{\mathrm{hp}}\beta_{\mathrm{hp}}
\min\!\left\{
\frac12,\,
\frac{\beta_C c_0}{8L}
\sqrt{(1-\varepsilon)\frac dn}
\right\}.
\label{eq:linear_rate_constant}
\end{equation}
Taking expectations gives
\[
a_{k+1}\le (1-r_{\mathrm{lin}})a_k,
\qquad
a_k:=\mathbb E[h_k].
\]
Hence
\[
a_k\le (1-r_{\mathrm{lin}})^ka_0,
\]
which proves \eqref{eq:linear_rate}.
\end{proof}

\section{High-probability and almost-sure linear convergence}
\label{app:hp-linear}

This section upgrades expectation bounds to pathwise statements. The common
theme is to view the residuals, or residuals plus deterministic correction
terms, as nonnegative supermartingales and then apply maximal or convergence
inequalities.

\subsection{Supermartingale structure}

The key observation is that the RSFW iterates produce a
\emph{non-negative supermartingale} (or a correctable one) in all
three convergence regimes.

\begin{lemma}[Ville's maximal inequality {\cite[p.~100]{Ville1939}}]
\label{lem:Ville}
Let $(S_k)_{k\ge 0}$ be a non-negative supermartingale
adapted to a filtration $(\mathcal F_k)$.
Then for every $\lambda>0$:
\[
\mathbb P\!\left(\sup_{k\ge 0}S_k\ge\lambda\right)
\;\le\;\frac{\mathbb E[S_0]}{\lambda}.
\]
\end{lemma}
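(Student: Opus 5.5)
The plan is to deduce Ville's inequality from Doob's optional stopping theorem for nonnegative supermartingales, applied with a bounded stopping time, and then let the horizon go to infinity. Fix $\lambda>0$ and introduce the first passage time
\[
\tau:=\inf\{k\ge0:\ S_k\ge\lambda\},
\]
with $\tau=\infty$ if no such $k$ exists. This is a stopping time for $(\mathcal F_k)$ because $\{\tau\le k\}=\bigcup_{j\le k}\{S_j\ge\lambda\}\in\mathcal F_k$. For each fixed horizon $N$, the time $\tau\wedge N$ is a bounded stopping time. The optional stopping theorem for supermartingales (or simply the fact that the stopped process $S_{k\wedge\tau}$ is again a supermartingale) then gives $\mathbb E[S_{\tau\wedge N}]\le\mathbb E[S_0]$.

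Next, lower bound $S_{\tau\wedge N}$ using nonnegativity. On the event $\{\tau\le N\}$ we have $S_{\tau\wedge N}=S_\tau\ge\lambda$. On the complement, $S_{\tau\wedge N}=S_N\ge0$. Hence
\[
\lambda\,\mathbb P(\tau\le N)\le\mathbb E[S_{\tau\wedge N}]\le\mathbb E[S_0],
\]
so that $\mathbb P(\max_{k\le N}S_k\ge\lambda)\le\mathbb E[S_0]/\lambda$.

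Finally, pass to $N\to\infty$. The events $\{\max_{k\le N}S_k\ge\lambda\}$ increase in $N$ to $\{\exists k: S_k\ge\lambda\}$, so continuity from below of probability gives $\mathbb P(\exists k:S_k\ge\lambda)\le\mathbb E[S_0]/\lambda$.

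The only delicate point is the distinction between $\sup_k S_k\ge\lambda$ and the event that some $S_k$ reaches $\lambda$: the supremum may equal $\lambda$ without being attained. To handle this, I would apply the bound above with $\lambda'<\lambda$, using $\{\sup_k S_k\ge\lambda\}\subset\{\exists k:S_k\ge\lambda'\}$, and then let $\lambda'\uparrow\lambda$. This yields the stated inequality; if $\mathbb E[S_0]=\infty$ there is nothing to prove.

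I expect no real obstacle here. The only care needed is in justifying the stopped-supermartingale inequality for the bounded stopping time $\tau\wedge N$, which is standard (it follows from $S_{(k+1)\wedge\tau}-S_{k\wedge\tau}=\mathbf 1_{\{\tau>k\}}(S_{k+1}-S_k)$ and taking conditional expectations), together with the limiting argument in $\lambda'$ just described.
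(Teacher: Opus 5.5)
Your proof is correct and follows the same route as the paper, which simply invokes optional stopping with $\tau=\inf\{k:S_k\ge\lambda\}$. Your truncation to $\tau\wedge N$, the limit $N\to\infty$, and the $\lambda'\uparrow\lambda$ step for a supremum that is not attained fill in details that the paper's one-line sketch leaves implicit.
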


\begin{proof}
This is a direct consequence of the optional stopping theorem
applied to $S_k$ with the stopping time
$\tau:=\inf\{k:S_k\ge\lambda\}$.
\end{proof}

\begin{proposition}[Supermartingale estimates for RSFW iterates]
\label{prop:supermartingale}
Let \(h_k:=f(x_k)-f^\star\) and
\(\mathcal F_k:=\sigma(P_0,\ldots,P_{k-1})\).

\begin{enumerate}
\item[\textup{(i)}]
\textbf{Short-step linear case.}
Under the conditions of Theorem~\ref{thm:linear_convergence},
\[
\mathbb E[h_{k+1}\mid\mathcal F_k]
\le
(1-r_{\mathrm{lin}})h_k.
\]
In particular, \((h_k)_{k\ge0}\) is a nonnegative supermartingale.

\item[\textup{(ii)}]
\textbf{Open-loop sublinear case.}
Under the conditions of Theorem~\ref{thm:convergence_clean}, with
\[
\alpha_k=\frac{2}{\beta_0(k+2/\beta_0)},
\]
define
\[
T_k
:=
\sum_{j=k}^{\infty}
\frac{2LD^2}{\beta_0^2(j+2/\beta_0)^2},
\qquad
V_k:=h_k+T_k.
\]
Then \((V_k)_{k\ge0}\) is a nonnegative supermartingale.
\end{enumerate}
\end{proposition}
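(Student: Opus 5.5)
Part (i) is essentially already proved. The conditional contraction \eqref{eq:linear_conditional_contraction} was derived in the proof of Theorem~\ref{thm:linear_convergence}, conditionally on \(\mathcal F_k=\sigma(P_0,\ldots,P_{k-1})\). That argument never used anything beyond the fact that \(x_k\) is \(\mathcal F_k\)-measurable and \(P_k\) is a fresh Haar--Stiefel draw independent of \(\mathcal F_k\).

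So for (i) I will recall that \(x_k\), and hence \(h_k\), is \(\mathcal F_k\)-measurable, since \(x_{k+1}\) is a deterministic function of \(x_k\) and \(P_k\). I then quote \eqref{eq:linear_conditional_contraction} directly. Because \(r_{\rm lin}\in(0,1]\) and \(h_k\ge0\), this gives \(\mathbb E[h_{k+1}\mid\mathcal F_k]\le h_k\). Integrability is automatic, since \(f\) is continuous on compact \(C\) and so \(h_k\) is bounded. Hence \((h_k)\) is a nonnegative supermartingale. I will also check that \(r_{\rm lin}\le 1\), which follows from \(p_{\rm hp}\le1\), \(\beta_{\rm hp}\le1\) and \(\min\{\cdot\}\le 1/2\).

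For (ii), I will redo the one-step bound from the proof of Theorem~\ref{thm:convergence_clean}, but without taking total expectation. By Lemma~\ref{lem:curvature_clean} and \(\|d_k\|\le D\), we have
\[
h_{k+1}\le h_k+\alpha_k\langle g_k,d_k\rangle+\tfrac L2\alpha_k^2D^2 .
\]
Conditioning on \(\mathcal F_k\), I apply Proposition~\ref{prop:approxOracle_global_clean}; its conditioning on \(x_k\) is valid for \(\mathcal F_k\) because of the independence of \(P_k\). Combining this with \(h_k\le\Delta(x_k)\) gives
\[
\mathbb E[h_{k+1}\mid\mathcal F_k]\le(1-\alpha_k\beta_0)h_k+\tfrac L2\alpha_k^2D^2 .
\]
Since \(\alpha_k\beta_0\in(0,1]\) because \(2/\beta_0\ge 2\), the first term is at most \(h_k\). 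Substituting \(\alpha_k=2/(\beta_0(k+2/\beta_0))\) shows the additive term equals \(2LD^2/(\beta_0^2(k+2/\beta_0)^2)\), which is exactly \(T_k-T_{k+1}\). Therefore \(\mathbb E[V_{k+1}\mid\mathcal F_k]\le h_k+(T_k-T_{k+1})+T_{k+1}=V_k\). Here \(T_{k+1}\) is deterministic, so it passes through the conditional expectation.

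The remaining points are finiteness and nonnegativity. \(T_k\) is finite because \(\sum_j (j+c)^{-2}<\infty\), nonnegativity is clear, and boundedness of \(h_k\) gives integrability. The case \(g_k=0\) is handled separately: then \(x_k\) is optimal, so \(h_k=0\) and \(h_{k+1}\le h_k\) regardless of the step. The only mildly delicate point, and so the main obstacle, is justifying that conditioning on \(x_k\) in the approximate-oracle inequality can be replaced by conditioning on the full history \(\mathcal F_k\). I will handle it by noting that \(P_k\) is independent of \(\mathcal F_k\) and \(x_k\in\mathcal F_k\), so the conditional expectation reduces to integrating over \(P_k\) with \(x_k\) frozen.
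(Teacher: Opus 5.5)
Part (i) matches the paper exactly: it quotes \eqref{eq:linear_conditional_contraction} and uses $h_k\ge0$. For part (ii) your argument is correct but takes a heavier route than the paper's, which never invokes the approximate-oracle inequality.

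Since $x_k\in C\cap(x_k+U_k)$ is itself feasible for the section LMO, $\langle g_k,d_k\rangle\le0$ holds for every realization. The curvature inequality then gives $h_{k+1}\le h_k+\frac L2\alpha_k^2D^2=h_k+T_k-T_{k+1}$ pathwise, i.e.\ $V_{k+1}\le V_k$ surely. So $V_k$ is trivially a supermartingale. This argument uses nothing about the distribution of $P_k$ or the tie-breaking of the oracle. The step you single out as the main obstacle (passing from conditioning on $x_k$ to conditioning on $\mathcal F_k$ via independence of $P_k$) simply disappears, and so does the separate treatment of $g_k=0$.

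Your route instead first proves the sharper conditional bound $\mathbb E[h_{k+1}\mid\mathcal F_k]\le(1-\alpha_k\beta_0)h_k+(T_k-T_{k+1})$ and then discards the contraction factor. That retained factor would buy more, e.g.\ a Robbins--Siegmund argument giving $h_k\to0$ almost surely. It is not needed for the supermartingale claim.

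One small inaccuracy: in the case $g_k=0$ you assert $h_{k+1}\le h_k$ ``regardless of the step.'' This is false in general. When $g_k=0$, every point of the section minimizes the zero linear form, so the oracle may return any $s_k$. With $\alpha_k>0$ the iterate can then leave the minimizer, and $h_{k+1}>0=h_k$ is possible. What is true, and all you need, is $h_{k+1}\le h_k+\frac L2\alpha_k^2D^2$ from the curvature inequality with $\langle g_k,d_k\rangle=0$. This again yields $V_{k+1}\le V_k$, so the conclusion is unaffected.
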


\begin{proof}
\textbf{(i)}
This is exactly the conditional contraction estimate
\eqref{eq:linear_conditional_contraction}. Since \(h_k\ge0\), it follows that
\((h_k)_{k\ge0}\) is a nonnegative supermartingale.

\textbf{(ii)}
By the curvature inequality, with \(d_k=s_k-x_k\) and
\(\|d_k\|\le D:=\diam(C)\),
\[
h_{k+1}
\le
h_k+\alpha_k\langle g_k,d_k\rangle
+\frac{L}{2}\alpha_k^2D^2.
\]
Since the subspace oracle can choose \(x_k\), we have
\[
\langle g_k,d_k\rangle\le0.
\]
Therefore
\[
h_{k+1}
\le
h_k+\frac{L}{2}\alpha_k^2D^2.
\]
Moreover,
\[
\frac{L}{2}\alpha_k^2D^2
=
\frac{2LD^2}{\beta_0^2(k+2/\beta_0)^2}
=
T_k-T_{k+1}.
\]
Hence
\[
V_{k+1}
=
h_{k+1}+T_{k+1}
\le
h_k+T_k
=
V_k.
\]
Thus \((V_k)_{k\ge0}\) is pathwise nonincreasing, hence a nonnegative
supermartingale.
\end{proof}

\subsection{High-probability linear convergence}

\begin{theorem}[High-probability geometric convergence]
\label{thm:hp_linear_app}
Under the conditions of Theorem~\ref{thm:linear_convergence},
for any $\theta\in(0,r_{\mathrm{lin}})$ and $\delta\in(0,1)$, with probability
at least $1-\delta$:
\begin{equation}
h_k\;\le\;\frac{h_0}{\delta}\,(1-\theta)^k
\qquad\text{for all }k\ge 0\text{ simultaneously.}
\label{eq:hp_linear_app}
\end{equation}
\end{theorem}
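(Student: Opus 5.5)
The plan is to apply Ville's maximal inequality (Lemma~\ref{lem:Ville}) to a rescaled version of the residual sequence. By Proposition~\ref{prop:supermartingale}(i), under the conditions of Theorem~\ref{thm:linear_convergence} we have
\[
\mathbb E[h_{k+1}\mid\mathcal F_k]\le(1-r_{\mathrm{lin}})h_k.
\]
We also know $h_k\ge0$ and $h_k$ is $\mathcal F_k$-measurable, since $x_k$ depends only on $P_0,\ldots,P_{k-1}$. For fixed $\theta\in(0,r_{\mathrm{lin}})$ define
\[
S_k:=\frac{h_k}{(1-\theta)^k}.
\]
This is well defined because $\theta<r_{\mathrm{lin}}\le 1/2$, so $1-\theta>0$.

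First I will check that $(S_k)$ is a nonnegative supermartingale:
\[
\mathbb E[S_{k+1}\mid\mathcal F_k]\le\frac{1-r_{\mathrm{lin}}}{1-\theta}\,S_k\le S_k,
\]
since $1-r_{\mathrm{lin}}\le1-\theta$. Integrability is immediate because $0\le h_k\le \max_C f-f^*<\infty$, as $C$ is compact and $f$ is continuous.

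Next, apply Lemma~\ref{lem:Ville} with $\lambda:=h_0/\delta$. The initial value $S_0=h_0$ is deterministic, since $x_0$ is fixed. This gives
\[
\mathbb P\Bigl(\sup_k S_k\ge h_0/\delta\Bigr)\le\delta .
\]
On the complement event, $h_k<(h_0/\delta)(1-\theta)^k$ holds for all $k$ simultaneously, which is \eqref{eq:hp_linear_app}.

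The degenerate case $h_0=0$ needs separate handling, since then $\lambda=0$ and Ville's inequality does not apply. Here the supermartingale property forces $h_k=0$ almost surely for all $k$: $\mathbb E[h_k]\le(1-r_{\mathrm{lin}})^k h_0=0$ and $h_k\ge0$. So the claim holds with probability one.

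I expect no real obstacle; this is essentially bookkeeping. The one point to watch is the use of Ville's inequality in its limiting form. If a fully rigorous treatment is wanted, I would apply optional stopping to the bounded stopping times $\tau\wedge N$ and then let $N\to\infty$.
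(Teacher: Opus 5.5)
Your proposal is correct and follows the paper's proof: rescale to $S_k=h_k/(1-\theta)^k$, obtain the supermartingale property from the conditional contraction $\mathbb E[h_{k+1}\mid\mathcal F_k]\le(1-r_{\mathrm{lin}})h_k$, and apply Ville's inequality with $\lambda=h_0/\delta$. Your added checks on integrability and the degenerate case $h_0=0$ (where $\lambda=0$ and Ville does not apply) are small points the paper leaves implicit.
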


\begin{proof}
Define $S_k:=h_k/(1-\theta)^k$.
By Proposition~\ref{prop:supermartingale}(i),
\[
\mathbb E[S_{k+1}\mid\mathcal F_k]
=\frac{\mathbb E[h_{k+1}\mid\mathcal F_k]}{(1-\theta)^{k+1}}
\le\frac{(1-r_{\mathrm{lin}})\,h_k}{(1-\theta)^{k+1}}
=\frac{1-r_{\mathrm{lin}}}{1-\theta}\,S_k
\le S_k,
\]
since $\theta<r_{\mathrm{lin}}$ implies $(1-r_{\mathrm{lin}})/(1-\theta)<1$.
Hence $S_k$ is a non-negative supermartingale with $S_0=h_0$.
By Lemma~\ref{lem:Ville} with $\lambda=h_0/\delta$:
\[
\mathbb P\!\left(\sup_{k\ge 0}\frac{h_k}{(1-\theta)^k}
\ge\frac{h_0}{\delta}\right)\le\delta.
\]
Rearranging: with probability $\ge 1-\delta$,
$h_k\le (h_0/\delta)(1-\theta)^k$ for all $k\ge 0$.
\end{proof}

\begin{corollary}[Almost-sure geometric convergence]
\label{cor:as_linear_app}
Under the conditions of Theorem~\ref{thm:linear_convergence},
for every $\theta\in(0,r_{\mathrm{lin}})$:
\[
\frac{h_k}{(1-\theta)^k}\;\xrightarrow{k\to\infty}\;0
\qquad\text{almost surely.}
\]
\end{corollary}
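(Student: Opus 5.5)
The plan is to derive the almost-sure statement from the high-probability bound of Theorem~\ref{thm:hp_linear_app}, or more directly from the supermartingale convergence theorem. Fix $\theta\in(0,r_{\mathrm{lin}})$ and choose an intermediate rate $\theta'\in(\theta,r_{\mathrm{lin}})$. Set $S_k:=h_k/(1-\theta')^k$. The computation in the proof of Theorem~\ref{thm:hp_linear_app} uses Proposition~\ref{prop:supermartingale}(i), and it shows that $(S_k)$ is a nonnegative supermartingale with respect to $\mathcal F_k$, with $S_0=h_0$.

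The first step is to obtain pathwise boundedness. By Doob's supermartingale convergence theorem, $S_k$ converges almost surely to a finite limit $S_\infty$, so $\sup_k S_k<\infty$ almost surely. Ville's inequality (Lemma~\ref{lem:Ville}) gives the same conclusion directly: $\mathbb P(\sup_k S_k\ge\lambda)\le h_0/\lambda$, and letting $\lambda\to\infty$ shows $\sup_kS_k<\infty$ a.s. This route avoids invoking the convergence theorem.

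The second step is to transfer boundedness at rate $\theta'$ into convergence to zero at rate $\theta$. On the almost-sure event where $\sup_k S_k=:K<\infty$, we have
\[
\frac{h_k}{(1-\theta)^k}=S_k\Bigl(\frac{1-\theta'}{1-\theta}\Bigr)^k\le K\Bigl(\frac{1-\theta'}{1-\theta}\Bigr)^k\longrightarrow0,
\]
because $\theta'>\theta$ makes the ratio strictly less than $1$. This proves the corollary.

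The only subtle point is the choice of the intermediate rate. Using the supermartingale at rate $\theta$ itself gives only almost-sure convergence of $h_k/(1-\theta)^k$ to some finite limit, not necessarily to zero. Inserting $\theta'$ strictly between $\theta$ and $r_{\mathrm{lin}}$ resolves this. Such a $\theta'$ exists because the interval $(\theta,r_{\mathrm{lin}})$ is open and nonempty. Everything else is a direct reuse of the conditional contraction \eqref{eq:linear_conditional_contraction}.
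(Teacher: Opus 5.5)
Your argument is correct, but it closes the ``finite limit versus zero limit'' gap differently from the paper. The paper stays at rate \(\theta\). With \(S_k=h_k/(1-\theta)^k\), Doob's supermartingale convergence theorem gives \(S_k\to S_\infty\) a.s. Fatou's lemma then identifies the limit as zero, because the conditional contraction yields \(\mathbb E S_k\le\bigl(\tfrac{1-r_{\mathrm{lin}}}{1-\theta}\bigr)^k h_0\to0\).

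You instead give up a little rate. You use the supermartingale at an intermediate \(\theta'\) only for pathwise boundedness (via Lemma~\ref{lem:Ville}), and the geometric slack \(\bigl(\tfrac{1-\theta'}{1-\theta}\bigr)^k\) does the rest. This is in effect Theorem~\ref{thm:hp_linear_app} at rate \(\theta'\) with \(\delta\downarrow0\). It avoids Doob's convergence theorem and Fatou. It also gives slightly more: a random constant \(K\) with \(\mathbb P(K\ge\lambda)\le h_0/\lambda\) and \(h_k\le K(1-\theta')^k\) for all \(k\).

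Your closing remark overstates the obstruction, though. The rate-\(\theta\) supermartingale does force a zero limit once you also use that its expectations decay geometrically, which is exactly the paper's route. So the intermediate rate is a convenience, not a necessity. There is an even more elementary route: \(\sum_k\mathbb E[h_k/(1-\theta)^k]<\infty\) by Theorem~\ref{thm:linear_convergence}, so the series converges a.s.\ and its terms tend to zero.
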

\begin{proof}
Fix \(\theta\in(0,r_{\mathrm{lin}})\) and set
\(S_k:=h_k/(1-\theta)^k\). As in the proof of
Theorem~\ref{thm:hp_linear_app}, \((S_k)_{k\ge0}\) is a nonnegative
supermartingale. Therefore, by Doob's supermartingale convergence theorem,
\(S_k\to S_\infty\) almost surely for some \(S_\infty\ge0\). Moreover,
\[
\mathbb E S_k
\le
\left(\frac{1-r_{\mathrm{lin}}}{1-\theta}\right)^k h_0
\to0.
\]
By Fatou's lemma,
\[
\mathbb E S_\infty\le \liminf_{k\to\infty}\mathbb E S_k=0.
\]
Hence \(S_\infty=0\) almost surely, which proves
\(h_k/(1-\theta)^k\to0\) almost surely.
\end{proof}

\begin{remark}[Cost of high probability]
\label{rem:hp_cost}
Compared to the in-expectation bound
$\mathbb E[h_k]\le(1-r_{\mathrm{lin}})^k h_0$,
the high-probability bound \eqref{eq:hp_linear_app} pays:
\begin{itemize}
\item a $1/\delta$ multiplicative factor in the constant, and
\item a rate $(1-\theta)^k$ with $\theta<r_{\mathrm{lin}}$ (arbitrarily close to $r_{\mathrm{lin}}$).
\end{itemize}
In terms of iteration complexity for $h_k\le\varepsilon$:
the in-expectation bound requires
$k\ge\log(h_0/\varepsilon)/\log(1/(1{-}r_{\mathrm{lin}}))$,
while the high-probability bound requires
$k\ge\log(h_0/(\varepsilon\delta))/\log(1/(1{-}\theta))$---an
additive $O(\log(1/\delta))$ cost.
The bound is also \emph{uniform} in $k$
(it holds for all $k$ simultaneously), which is strictly stronger
than a pointwise high-probability guarantee at a single $k$.
\end{remark}

\subsection{High-probability and almost-sure convergence for the open-loop step-size}
\label{app:high-prob}

\paragraph{Good-event products and martingale control.}
The open-loop high-probability analysis differs from the expectation proof in
one respect: the approximate-oracle decrease is only guaranteed on random
good events. Proposition~\ref{prop:hp_oracle_uniform} gives, conditionally on
the past, a good event \(\mathcal E_k\) of probability at least \(p_{\mathrm{hp}}\)
on which the section oracle achieves a fixed fraction
\(\beta_{\mathrm{hp}}\Delta(x_k)\) of the full FW gap. The resulting recursion
has random multiplicative factors. The role of the martingale below is to show
that the weighted number of good events cannot deviate too far below its
conditional mean. Once this is controlled, the open-loop products decay as in
the deterministic approximate-oracle proof, up to a random multiplicative
constant.

\begin{lemma}[Maximal Bernstein--Freedman inequality]
\label{lem:max_freedman}
Let \((M_m)_{m\ge0}\) be a martingale with \(M_0=0\) and increments
\[
\Delta M_m:=M_m-M_{m-1},
\qquad m\ge1.
\]
Assume that for some deterministic constants \(V_*>0\) and \(c_*>0\),
\[
|\Delta M_m|\le c_*
\qquad\text{a.s. for all }m\ge1,
\]
and
\[
\sum_{m=1}^{\infty}
\mathbb E\!\left[(\Delta M_m)^2\mid\mathcal F_{m-1}\right]
\le V_*
\qquad\text{a.s.}
\]
Then for every \(x>0\),
\begin{equation}
\mathbb P\!\left(
\sup_{m\ge0}|M_m|
\ge
\sqrt{2V_*x}+\frac{c_*x}{3}
\right)
\le 2e^{-x}.
\label{eq:max_freedman}
\end{equation}
\end{lemma}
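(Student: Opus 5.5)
The plan is to prove the maximal Bernstein--Freedman inequality in the standard way: an exponential supermartingale, then Ville's maximal inequality (Lemma~\ref{lem:Ville}), then an optimization over the free parameter. It suffices to bound the one-sided event \(\sup_m M_m\ge \sqrt{2V_*x}+c_*x/3\) by \(e^{-x}\). Applying the same argument to the martingale \(-M_m\), whose increments satisfy the same bounds, and taking a union bound then gives the factor \(2\).

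\textbf{Step 1: exponential supermartingale.} Set \(\psi(\lambda):=(e^{\lambda c_*}-1-\lambda c_*)/c_*^2\) for \(\lambda>0\). Since \(t\mapsto (e^{t}-1-t)/t^2\) is nondecreasing and \(\Delta M_m\le c_*\), we have \(e^{\lambda\Delta M_m}\le 1+\lambda\Delta M_m+\psi(\lambda)(\Delta M_m)^2\). Taking conditional expectations and using the martingale property gives
\[
\mathbb E[e^{\lambda\Delta M_m}\mid\mathcal F_{m-1}]\le 1+\psi(\lambda)W_m\le e^{\psi(\lambda)W_m},
\qquad W_m:=\mathbb E[(\Delta M_m)^2\mid\mathcal F_{m-1}].
\]
Therefore
\[
S_m:=\exp\Bigl(\lambda M_m-\psi(\lambda)\textstyle\sum_{j\le m}W_j\Bigr)
\]
is a nonnegative supermartingale with \(S_0=1\).

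\textbf{Step 2: maximal bound.} Because \(\sum_j W_j\le V_*\) almost surely, on the event \(\{\sup_m M_m\ge y\}\) we have \(\sup_m S_m\ge e^{\lambda y-\psi(\lambda)V_*}\). Ville's inequality then yields
\[
\mathbb P(\sup_m M_m\ge y)\le \exp\bigl(-\lambda y+\psi(\lambda)V_*\bigr)\qquad\text{for every }\lambda>0.
\]
The boundedness of the increments is what keeps \(S_m\) integrable and makes the supremum over all \(m\ge0\) legitimate. The conditional variance sum need not be predictable as a whole, but this causes no trouble because only the almost-sure bound \(V_*\) enters.

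\textbf{Step 3: optimization (the main obstacle).} Minimizing over \(\lambda\) gives Bennett's exponent \(\frac{V_*}{c_*^2}h\!\left(\frac{c_*y}{V_*}\right)\) with \(h(u)=(1+u)\log(1+u)-u\). I then need two facts.

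First, the elementary bound \(h(u)\ge \frac{u^2}{2(1+u/3)}\), which gives the exponent \(\frac{y^2}{2(V_*+c_*y/3)}\).

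Second, plugging in \(y=\sqrt{2V_*x}+c_*x/3\) must give an exponent of at least \(x\). This reduces to checking \(y^2\ge 2x(V_*+c_*y/3)\), i.e.\ \(y^2-\frac{2c_*x}{3}y-2V_*x\ge0\). That inequality holds because \(y\) is at least the positive root \(\frac{c_*x}{3}+\sqrt{\frac{c_*^2x^2}{9}+2V_*x}\), which is at most \(\frac{c_*x}{3}+\sqrt{2V_*x}+\frac{c_*x}{3}\). As written, this bound is off by one \(c_*x/3\) term, so I will redo the check carefully.

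If the root comparison fails with the constant \(1/3\), the fallback is to choose \(\lambda\) directly rather than optimizing. I would take \(\lambda=\frac{1}{c_*}\log\bigl(1+\frac{c_*y}{V_*}\bigr)\), or use the standard bound \(\psi(\lambda)\le \frac{\lambda^2}{2(1-\lambda c_*/3)}\) for \(\lambda<3/c_*\), and then choose \(\lambda=\frac{\sqrt{2x/V_*}}{1+c_*\sqrt{2x/V_*}/3}\). Either route is the classical derivation of the inverted Bernstein bound \(\sqrt{2V_*x}+c_*x/3\), and the second one gives that form exactly.
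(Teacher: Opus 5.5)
Your proof is correct once you commit to the fallback, and it takes a different route from the paper. The hedge in Step~3 should be resolved, not left open: the route through $h(u)\ge u^2/(2(1+u/3))$ really does fail. At $y=\sqrt{2V_*x}+c_*x/3$ one has $y^2-\tfrac{2c_*x}{3}y-2V_*x=-c_*^2x^2/9<0$, so that route only certifies the weaker threshold $\tfrac{c_*x}{3}+\sqrt{\tfrac{c_*^2x^2}{9}+2V_*x}\le\sqrt{2V_*x}+\tfrac{2c_*x}{3}$.

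Your second fallback gives the stated constant exactly. Comparing power series, $\psi(\lambda)\le\lambda^2/(2(1-\lambda c_*/3))$ for $0<\lambda<3/c_*$, because $k!\ge 2\cdot 3^{k-2}$ for all $k\ge2$. With $a:=\sqrt{2x/V_*}$ and $\lambda=a/(1+c_*a/3)$:
\begin{itemize}
\item $1-\lambda c_*/3=1/(1+c_*a/3)$;
\item $V_*\lambda^2/(2(1-\lambda c_*/3))=x/(1+c_*a/3)$;
\item $\lambda y=(2x+c_*ax/3)/(1+c_*a/3)$.
\end{itemize}
Hence $\lambda y-\psi(\lambda)V_*\ge x$. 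The Bennett choice $\lambda=c_*^{-1}\log(1+c_*y/V_*)$ also works, since it maximizes $\lambda y-\psi(\lambda)V_*$; but seeing that it reaches $x$ requires exactly this comparison. So make the sub-gamma bound the argument and drop the Bernstein-exponent attempt.

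The paper does not derive the inequality at all. It invokes the finite-horizon maximal Freedman inequality for $(M_m)_{0\le m\le N}$, applies it to $\pm M$, takes a union bound, and lets $N\to\infty$.

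Your derivation is self-contained and reuses the paper's own Lemma~\ref{lem:Ville}. That lemma is already stated for $\sup_{k\ge0}$, so the exponential supermartingale $S_m$ together with the almost-sure cap $\sum_jW_j\le V_*$ handles the infinite horizon directly. This avoids the separate limit in $N$, which needs a small continuity argument in $x$: the event $\{\sup_m|M_m|\ge\cdot\}$ is non-strict and its supremum need not be attained.

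Your computation also exposes something the paper's one-line citation leaves implicit. The frequently quoted Bernstein-type form of Freedman's bound, $\exp(-t^2/(2(V_*+c_*t/3)))$, is not strong enough for the stated threshold $\sqrt{2V_*x}+c_*x/3$; one needs the Bennett or sub-gamma form. The paper's route buys brevity; yours buys a verified constant.
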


\begin{proof}
For each finite horizon \(N\), the usual maximal Bernstein--Freedman inequality
applied to \((M_m)_{0\le m\le N}\) gives the one-sided bound for
\(\sup_{0\le m\le N}M_m\). Applying the same inequality to \(-M_m\) and taking
a union bound gives the two-sided estimate. Letting \(N\to\infty\) yields
\eqref{eq:max_freedman}.
\end{proof}

\subsubsection{Proof of Theorem~\ref{thm:hp_open_loop_as}}
\paragraph{Proof idea.}
The open-loop recursion has good multiplicative decrease only on random good
events. We weight the good-event indicators by the step sizes and compare their
partial sums to their conditional expectations using a martingale. Since the
step-size squares are summable, this martingale has almost surely bounded
fluctuations. Therefore the random product of factors behaves like the
deterministic product in the expectation proof, up to a finite random constant.
\begin{proof}
Recall that
\[
\bar\beta:=p_{\mathrm{hp}}\beta_{\mathrm{hp}}>0,
\qquad
\alpha_k=\frac{2}{\bar\beta(k+\tau)},
\qquad
\tau=\frac{2}{\bar\beta},
\]
so that
\begin{equation}
\bar\beta\,\alpha_k=\frac{2}{k+\tau}.
\label{eq:barbeta_alpha_identity_as}
\end{equation}
Let
\[
I_k:=\mathbf 1_{\mathcal E_k},
\qquad
\omega_k:=\beta_{\mathrm{hp}}\alpha_k,
\]
where \(\mathcal E_k\) is the good event from
Proposition~\ref{prop:hp_oracle_uniform}. We work with the filtration
\[
\mathcal F_m:=\sigma(P_0,\ldots,P_{m-1}),
\]
so that \(I_i\) is \(\mathcal F_{i+1}\)-measurable and
\(\mathbb E[I_i\mid\mathcal F_i]\ge p_{\mathrm{hp}}\).

Define
\[
M_m
:=
\sum_{i=0}^{m-1}
\omega_i\Bigl(I_i-\mathbb E[I_i\mid\mathcal F_i]\Bigr),
\qquad
m\ge0.
\]
Then \((M_m)_{m\ge0}\) is a martingale with \(M_0=0\).

Since \(x_k\in C\cap(x_k+U_k)\), the subspace oracle always satisfies
\[
\langle g_k,d_k\rangle\le0.
\]
On the event \(\mathcal E_k\), Proposition~\ref{prop:hp_oracle_uniform} gives
\[
\langle g_k,d_k\rangle
\le
-\beta_{\mathrm{hp}}\Delta(x_k).
\]
Therefore, for every outcome,
\[
\langle g_k,d_k\rangle
\le
-\beta_{\mathrm{hp}}I_k\Delta(x_k)
\le
-\beta_{\mathrm{hp}}I_kF_k,
\qquad
F_k:=f(x_k)-f^*,
\]
where the last inequality uses the convexity bound \(F_k\le\Delta(x_k)\).

Applying Lemma~\ref{lem:curvature_clean} with \(y=s_k\), and using
\(\|d_k\|\le D\), gives
\begin{equation}
F_{k+1}
\le
(1-\omega_k I_k)F_k
+
Q\alpha_k^2,
\qquad
Q:=\frac{L}{2}D^2.
\label{eq:hp_recursion_as}
\end{equation}

We first show that
\[
B_\infty:=\sup_{m\ge0}|M_m|<\infty
\qquad\text{a.s.}
\]
The martingale increments satisfy
\[
|M_{m+1}-M_m|
\le
\omega_m
\le
\omega_0
=
\beta_{\mathrm{hp}},
\]
where we use \(\alpha_0=1\). Moreover,
\[
\sum_{m=0}^{\infty}
\mathbb E\!\left[(M_{m+1}-M_m)^2\mid\mathcal F_m\right]
\le
\sum_{m=0}^{\infty}\omega_m^2
<\infty,
\]
because \(\alpha_m=O(1/m)\). Hence \(M_m\) converges almost surely and in
\(L^2\), and therefore \(B_\infty<\infty\) almost surely.

Fix a sample point for which \(B_\infty<\infty\). For integers
\(0\le j\le k-1\),
\begin{align}
\sum_{i=j}^{k-1}\omega_i I_i
&=
\sum_{i=j}^{k-1}\omega_i\,\mathbb E[I_i\mid\mathcal F_i]
+
\sum_{i=j}^{k-1}
\omega_i\Bigl(I_i-\mathbb E[I_i\mid\mathcal F_i]\Bigr)
\nonumber\\
&\ge
p_{\mathrm{hp}}\sum_{i=j}^{k-1}\omega_i
-
2B_\infty
=
\bar\beta\sum_{i=j}^{k-1}\alpha_i
-
2B_\infty.
\label{eq:weighted_sum_lower_as_1}
\end{align}
Using \eqref{eq:barbeta_alpha_identity_as},
\begin{equation}
\sum_{i=j}^{k-1}\omega_i I_i
\ge
2\sum_{i=j}^{k-1}\frac1{i+\tau}
-
2B_\infty.
\label{eq:weighted_sum_lower_as_3}
\end{equation}

Since \(0\le \omega_iI_i\le1\), we may use \(1-z\le e^{-z}\) to obtain
\begin{align}
\prod_{i=j}^{k-1}(1-\omega_iI_i)
&\le
\exp\!\left(-\sum_{i=j}^{k-1}\omega_iI_i\right)
\nonumber\\
&\le
\exp\!\left(
2B_\infty
-
2\sum_{i=j}^{k-1}\frac1{i+\tau}
\right)
\nonumber\\
&\le
e^{2B_\infty}
\left(\frac{j+\tau}{k+\tau}\right)^2,
\label{eq:product_bound_as_2}
\end{align}
where the last inequality uses
\[
\sum_{i=j}^{k-1}\frac1{i+\tau}
\ge
\int_{j+\tau}^{k+\tau}\frac{ds}{s}
=
\log\!\left(\frac{k+\tau}{j+\tau}\right).
\]

Unrolling \eqref{eq:hp_recursion_as} gives
\begin{equation}
F_k
\le
F_0\prod_{i=0}^{k-1}(1-\omega_iI_i)
+
Q\sum_{j=0}^{k-1}
\alpha_j^2
\prod_{i=j+1}^{k-1}(1-\omega_iI_i).
\label{eq:unrolled_as}
\end{equation}
Applying \eqref{eq:product_bound_as_2} first with \(j=0\), and then with
\(j+1\) in place of \(j\), yields
\begin{align}
F_k
&\le
e^{2B_\infty}F_0
\left(\frac{\tau}{k+\tau}\right)^2
+
Qe^{2B_\infty}
\sum_{j=0}^{k-1}
\alpha_j^2
\left(\frac{j+1+\tau}{k+\tau}\right)^2
\nonumber\\
&=
e^{2B_\infty}F_0
\left(\frac{\tau}{k+\tau}\right)^2
+
\frac{Qe^{2B_\infty}}{(k+\tau)^2}
\sum_{j=0}^{k-1}
\frac{4(j+1+\tau)^2}{\bar\beta^2(j+\tau)^2}.
\label{eq:unrolled_as_2}
\end{align}
Because \(\bar\beta\le1\), we have \(\tau=2/\bar\beta\ge2\), and hence
\[
\frac{j+1+\tau}{j+\tau}\le2
\qquad\text{for all }j\ge0.
\]
Therefore
\[
\sum_{j=0}^{k-1}
\frac{4(j+1+\tau)^2}{\bar\beta^2(j+\tau)^2}
\le
\frac{16k}{\bar\beta^2}.
\]
Substituting this into \eqref{eq:unrolled_as_2},
\[
F_k
\le
e^{2B_\infty}F_0
\left(\frac{\tau}{k+\tau}\right)^2
+
\frac{16Qe^{2B_\infty}}{\bar\beta^2}
\frac{k}{(k+\tau)^2}.
\]
Using
\[
\left(\frac{\tau}{k+\tau}\right)^2
\le
\frac{\tau}{k+\tau},
\qquad
\frac{k}{(k+\tau)^2}\le \frac1{k+\tau},
\]
and \(Q=LD^2/2\), we conclude
\[
F_k
\le
\frac{e^{2B_\infty}}{k+\tau}
\left(
\tau F_0+\frac{8LD^2}{\bar\beta^2}
\right).
\]
Thus \eqref{eq:as_ok_rate} holds with
\[
C_\infty
:=
\exp(2B_\infty)
\left(
\tau F_0+\frac{8LD^2}{\bar\beta^2}
\right).
\]
\end{proof}

\begin{proof}[Proof of Corollary~\ref{cor:hp_open_loop_deterministic}]
Let
\[
V_*:=\sum_{m=0}^{\infty}\omega_m^2,
\qquad
c_*:=\beta_{\mathrm{hp}}.
\]
For the martingale \(M_m\) defined in the proof of
Theorem~\ref{thm:hp_open_loop_as}, we have
\[
|M_{m+1}-M_m|\le c_*,
\qquad
\sum_{m=0}^{\infty}
\mathbb E\!\left[(M_{m+1}-M_m)^2\mid\mathcal F_m\right]
\le V_*.
\]
Applying Lemma~\ref{lem:max_freedman} gives
\[
\mathbb P\!\left(
\sup_{m\ge0}|M_m|
\ge
\sqrt{2V_*x}+\frac{c_*x}{3}
\right)
\le
2e^{-x}.
\]
Taking \(x=\log(2/\eta)\), we obtain with probability at least \(1-\eta\),
\[
B_\infty
\le
B_\eta
:=
\sqrt{2V_*\log\frac2\eta}
+
\frac{\beta_{\mathrm{hp}}}{3}
\log\frac2\eta.
\]
Therefore, with probability at least \(1-\eta\),
\[
C_\infty
\le
C_\eta
:=
\exp(2B_\eta)
\left(
\tau F_0+\frac{8LD^2}{\bar\beta^2}
\right).
\]
Combining this deterministic bound on \(C_\infty\) with
Theorem~\ref{thm:hp_open_loop_as} proves the claim.
\end{proof}

\subsection{Finite-sum stochastic extension}
\label{app:finite_sum_stochastic}

We prove Theorem~\ref{thm:main_stoch_rsfw}.

\paragraph{Proof idea.}
Condition on the sampled mini-batch gradient \(\widehat g_k\). The subspace is
still Haar and independent, so the deterministic approximate-oracle inequality
applies to the realized vector \(\widehat g_k\). The only price is the
difference between the true gap for \(g_k\) and the realized gap for
\(\widehat g_k\), which is controlled by the diameter of \(C\) and the
mini-batch variance. Choosing \(b_k\) of order \(k^2\) makes this stochastic
error summable at the same scale as the usual smoothness error.

Let
\[
f(x)=\frac1N\sum_{i=1}^N f_i(x),
\qquad x\in C,
\]
where each \(f_i\) is convex and \(L\)-smooth, so that \(f\) is also
\(L\)-smooth. At iteration \(k\), let
\[
g_k:=\nabla f(x_k),
\qquad
\widehat g_k
=
\frac1{b_k}\sum_{j=1}^{b_k}\nabla f_{i_{k,j}}(x_k)
\]
be a mini-batch estimator sampled independently of the Haar subspace
\(P_k\). We assume
\[
\mathbb E[\widehat g_k\mid x_k]=g_k,
\qquad
\mathbb E[\|\widehat g_k-g_k\|^2\mid x_k]
\le
\frac{\sigma_{\mathrm{fs}}^2}{b_k}.
\]
The stochastic RSFW oracle is
\[
s_k\in
\argmin\{\langle \widehat g_k,s\rangle:
s\in C\cap(x_k+\range(P_k^\top))\},
\qquad
d_k:=s_k-x_k,
\]
and the update is
\[
x_{k+1}=x_k+\alpha_kd_k,
\qquad
\alpha_k=\frac{2}{\beta_0 k+2}.
\]

\begin{proof}[Proof of Theorem~\ref{thm:main_stoch_rsfw}]
Let
\[
h_k:=f(x_k)-f^*,
\qquad
e_k:=\widehat g_k-g_k,
\qquad
D:=\diam(C).
\]
By \(L\)-smoothness,
\begin{equation}
f(x_{k+1})
\le
f(x_k)
+
\alpha_k\langle g_k,d_k\rangle
+
\frac{L}{2}\alpha_k^2\|d_k\|^2.
\label{eq:stoch_smooth_step}
\end{equation}
Since \(x_k,s_k\in C\), we have \(\|d_k\|\le D\). Moreover,
\[
\langle g_k,d_k\rangle
=
\langle \widehat g_k,d_k\rangle
-
\langle e_k,d_k\rangle
\le
\langle \widehat g_k,d_k\rangle
+
D\|e_k\|.
\]

Now condition on \((x_k,\widehat g_k)\). Since \(P_k\) is independent of the
mini-batch, it is still Haar--Stiefel under this conditioning. The
approximate-oracle inequality is geometric and therefore applies to the
realized vector \(\widehat g_k\):
\begin{equation}
\mathbb E_{P_k}
\bigl[-\langle \widehat g_k,d_k\rangle
\mid x_k,\widehat g_k\bigr]
\ge
\beta_0\,\Delta_C(x_k;\widehat g_k),
\label{eq:stoch_realized_approx_oracle}
\end{equation}
where
\[
\Delta_C(x;g):=\max_{s\in C}\langle g,x-s\rangle .
\]
Also, \(\Delta_C(x;\cdot)\) is \(D\)-Lipschitz:
\[
|\Delta_C(x;g)-\Delta_C(x;\tilde g)|
\le
D\|g-\tilde g\|.
\]
Indeed, for all \(s\in C\), \(\|x-s\|\le D\). Hence
\[
\Delta_C(x_k;\widehat g_k)
\ge
\Delta_C(x_k;g_k)-D\|e_k\|.
\]
Combining this with \eqref{eq:stoch_realized_approx_oracle}, and using
\(\beta_0\le1\), gives
\[
\mathbb E_{P_k}
\bigl[\langle g_k,d_k\rangle
\mid x_k,\widehat g_k\bigr]
\le
-\beta_0\Delta_C(x_k;g_k)
+
2D\|e_k\|.
\]
Taking expectation over the mini-batch and using Jensen's inequality,
\[
\mathbb E[\|e_k\|\mid x_k]
\le
\sqrt{\mathbb E[\|e_k\|^2\mid x_k]}
\le
\frac{\sigma_{\mathrm{fs}}}{\sqrt{b_k}}.
\]
Therefore, from \eqref{eq:stoch_smooth_step},
\begin{equation}
\mathbb E[h_{k+1}\mid x_k]
\le
h_k
-
\alpha_k\beta_0\Delta_C(x_k;g_k)
+
2\alpha_kD\frac{\sigma_{\mathrm{fs}}}{\sqrt{b_k}}
+
\frac{L}{2}\alpha_k^2D^2.
\label{eq:stoch_one_step}
\end{equation}
By convexity,
\[
h_k=f(x_k)-f^*\le \Delta_C(x_k;g_k).
\]
Thus
\begin{equation}
\mathbb E[h_{k+1}\mid x_k]
\le
(1-\alpha_k\beta_0)h_k
+
2\alpha_kD\frac{\sigma_{\mathrm{fs}}}{\sqrt{b_k}}
+
\frac{L}{2}\alpha_k^2D^2.
\label{eq:stoch_gap_recursion_conditional}
\end{equation}

Set
\[
a_k:=\mathbb E h_k,
\qquad
t_k:=k+\frac{2}{\beta_0}.
\]
Then
\[
\alpha_k
=
\frac{2}{\beta_0 k+2}
=
\frac{2}{\beta_0 t_k},
\qquad
\alpha_k\beta_0=\frac{2}{t_k}.
\]
Choose
\[
b_k=
\left\lceil \frac{t_k^2}{A_{\mathrm{mb}}}\right\rceil
\]
for some fixed \(A_{\mathrm{mb}}>0\). Then
\[
\frac{1}{\sqrt{b_k}}
\le
\frac{\sqrt{A_{\mathrm{mb}}}}{t_k}.
\]
Taking total expectation in
\eqref{eq:stoch_gap_recursion_conditional}, we obtain
\begin{equation}
a_{k+1}
\le
\left(1-\frac{2}{t_k}\right)a_k
+
\frac{B_{\mathrm{fs}}}{t_k^2},
\label{eq:stoch_scalar_recursion}
\end{equation}
where
\begin{equation}
B_{\mathrm{fs}}
:=
\frac{2LD^2}{\beta_0^2}
+
\frac{4D\sigma_{\mathrm{fs}}\sqrt{A_{\mathrm{mb}}}}{\beta_0}.
\label{eq:Bfs_def}
\end{equation}

Let
\[
M_{\mathrm{fs}}
:=
\max\left\{
\frac{2a_0}{\beta_0},
B_{\mathrm{fs}}
\right\}.
\]
We prove by induction that
\[
a_k\le \frac{M_{\mathrm{fs}}}{t_k}
=
\frac{M_{\mathrm{fs}}}{k+2/\beta_0}.
\]
The base case follows from \(M_{\mathrm{fs}}\ge 2a_0/\beta_0\). Assume the
claim holds at step \(k\). Using
\eqref{eq:stoch_scalar_recursion} and \(M_{\mathrm{fs}}\ge B_{\mathrm{fs}}\),
\[
a_{k+1}
\le
\left(1-\frac{2}{t_k}\right)\frac{M_{\mathrm{fs}}}{t_k}
+
\frac{M_{\mathrm{fs}}}{t_k^2}
=
\frac{M_{\mathrm{fs}}(t_k-1)}{t_k^2}.
\]
Since
\[
\frac{t_k-1}{t_k^2}
\le
\frac{1}{t_k+1},
\]
we obtain
\[
a_{k+1}
\le
\frac{M_{\mathrm{fs}}}{t_k+1}
=
\frac{M_{\mathrm{fs}}}{k+1+2/\beta_0}.
\]
This closes the induction and proves
\[
\mathbb E[f(x_k)-f^*]
\le
\frac{M_{\mathrm{fs}}}{k+2/\beta_0}.
\]
\end{proof}

\section{Random Haar spectral bounds and quadratic ellipsoidal short steps}
\label{app:random_haar_quadratic}

This appendix proves the Haar spectral estimate used in
Proposition~\ref{prop:random_ellipsoid_short_step_improvement}, and then proves
the ellipsoidal short-step bound. The proof uses a direct Haar argument rather
than Gaussian normalization. The key input is a spherical Hanson--Wright
inequality, obtained from the dependent Hanson--Wright inequality for random
vectors with the convex concentration property
\cite{Adamczak2015DependentHW}, together with standard concentration on the
sphere \cite{Ledoux2001Concentration}.

\subsection{Direct Haar compression}

For a symmetric matrix \(H\), write
\[
\bar\lambda_H:=\frac{\operatorname{tr}(H)}{n},
\qquad
H_c:=H-\bar\lambda_H I_n.
\]
The matrix \(H_c\) is the centered anisotropic part of \(H\). Since
\(PP^\top=I_d\),
\[
PHP^\top-\bar\lambda_H I_d=PH_cP^\top.
\]
The next lemma is a spherical quadratic-form concentration bound. We derive it
from the Hanson--Wright inequality for dependent random vectors satisfying the
convex concentration property \cite{Adamczak2015DependentHW}, together with
standard concentration of Lipschitz functions on the Euclidean sphere
\cite{Ledoux2001Concentration}.
\begin{lemma}[Spherical Hanson--Wright consequence]
\label{lem:spherical_hanson_wright}
Let \(\theta\sim\mathrm{Unif}(S^{n-1})\), and let
\(B=B^\top\in\mathbb R^{n\times n}\). There exist universal constants
\(c,C>0\) such that, for every \(u\ge0\),
\begin{equation}
\mathbb P\!\left(
\left|
\theta^\top B\theta-\frac{\operatorname{tr}(B)}{n}
\right|
\ge
C\left(
\frac{\|B\|_F}{n}\sqrt u
+
\frac{\|B\|_2}{n}u
\right)
\right)
\le
2e^{-cu}.
\label{eq:spherical_hanson_wright}
\end{equation}
In particular, if \(\operatorname{tr}(B)=0\), then
\begin{equation}
\mathbb P\!\left(
|\theta^\top B\theta|
\ge
C\left(
\frac{\|B\|_F}{n}\sqrt u
+
\frac{\|B\|_2}{n}u
\right)
\right)
\le
2e^{-cu}.
\label{eq:spherical_hanson_wright_centered}
\end{equation}
\end{lemma}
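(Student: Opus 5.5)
We derive the spherical bound from the Gaussian/dependent Hanson--Wright machinery, using the uniform vector \(\theta\sim\mathrm{Unif}(S^{n-1})\) directly. The starting point is that \(\sqrt n\,\theta\) satisfies the convex concentration property with a universal constant. By Lévy/Ledoux concentration on the sphere, every \(1\)-Lipschitz \(F:\mathbb R^n\to\mathbb R\) satisfies
\[
\mathbb P\bigl(|F(\theta)-\mathbb E F(\theta)|\ge t\bigr)\le 2e^{-c n t^2},
\]
see \cite{Ledoux2001Concentration}. Rescaling gives, for \(X:=\sqrt n\,\theta\), the bound \(\mathbb P(|F(X)-\mathbb EF(X)|\ge t)\le 2e^{-ct^2}\) for all \(1\)-Lipschitz \(F\), in particular for convex ones. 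This is exactly the convex concentration property with constant \(K=O(1)\).

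Next, we apply the Hanson--Wright inequality of \cite{Adamczak2015DependentHW} for random vectors with the convex concentration property. For mean-zero \(X\) with constant \(K\) and any fixed matrix \(B\), it gives
\[
\mathbb P\bigl(|X^\top BX-\mathbb E X^\top BX|\ge t\bigr)\le 2\exp\!\Bigl(-c\min\Bigl\{\tfrac{t^2}{K^4\|B\|_F^2},\tfrac{t}{K^2\|B\|_2}\Bigr\}\Bigr).
\]
Two inputs need to be checked: \(\mathbb E X=0\), which holds by symmetry, and \(\mathbb E X^\top BX=\operatorname{tr}(B\,\mathbb E XX^\top)=\operatorname{tr}(B)\), since \(\mathbb E\theta\theta^\top=I_n/n\).

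Now we invert the tail. Set \(t=C(\|B\|_F\sqrt u+\|B\|_2u)\) with \(C\) large relative to \(K\). Then
\[
\min\{t^2/\|B\|_F^2,\ t/\|B\|_2\}\ge C\min\{C u,\ u\}\gtrsim u,
\]
so the probability is at most \(2e^{-cu}\). Dividing the event by \(n\), using \(X^\top BX=n\,\theta^\top B\theta\), yields \eqref{eq:spherical_hanson_wright} with the stated \(1/n\) scaling of both terms. The centered version \eqref{eq:spherical_hanson_wright_centered} is the special case \(\operatorname{tr}(B)=0\).

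The main obstacle is confirming that Adamczak's theorem applies to our situation. It is stated for the convex concentration property in the sense of \(1\)-Lipschitz convex functions. We must make sure the spherical Lipschitz concentration (including its median-versus-mean form) gives that property with an absolute constant after the \(\sqrt n\) rescaling. Only the dimension-free sub-Gaussian constant \(c\,n t^2\to ct^2\) matters, and absorbing the median/mean shift costs only universal constants. If any form mismatch remains, a fallback is to write \(\theta=g/\|g\|\) with Gaussian \(g\), apply the classical Hanson--Wright inequality to \(g\), and control \(\|g\|^2/n\) by a separate Bernstein bound. The tail for \(\|g\|^2/n\) is absorbed into the \(\|B\|_2u/n\) term because \(|\operatorname{tr}B|/n\le\|B\|_2\); this route also uses only universal constants.
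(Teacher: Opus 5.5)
Your proposal is correct and is essentially the paper's proof: rescale to \(X=\sqrt n\,\theta\), get the convex concentration property from Lipschitz concentration on the sphere, apply Adamczak's dependent Hanson--Wright inequality with \(\mathbb E X^\top BX=\operatorname{tr}(B)\), and invert the tail at \(t=C(\|B\|_F\sqrt u+\|B\|_2u)\) before dividing by \(n\). Your explicit checks that \(\mathbb E X=0\) (a hypothesis of Adamczak's theorem) and that the median-to-mean shift costs only universal constants are details the paper leaves implicit; the Gaussian fallback is unnecessary, but if you used it, the \(\sqrt{nu}\) part of the deviation of \(\|g\|^2\) would have to be absorbed into the Frobenius term via \(|\operatorname{tr}B|\le\sqrt n\,\|B\|_F\), not into the \(\|B\|_2u/n\) term.
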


\begin{proof}
Set \(X:=\sqrt n\,\theta\). Then
\[
\mathbb E[XX^\top]=I_n,
\]
so \(X\) is isotropic. Concentration of Lipschitz functions on the Euclidean
sphere implies that \(X\) satisfies the convex concentration property with a
universal constant \cite{Ledoux2001Concentration}. Therefore, Adamczak's
dependent Hanson--Wright inequality applies \cite{Adamczak2015DependentHW},
and gives
\[
\mathbb P\!\left(
|X^\top B X-\mathbb E[X^\top B X]|
\ge t
\right)
\le
2\exp\!\left(
-c\min\left\{
\frac{t^2}{\|B\|_F^2},
\frac{t}{\|B\|_2}
\right\}
\right).
\]
Since
\[
X^\top B X=n\,\theta^\top B\theta,
\qquad
\mathbb E[X^\top B X]=\operatorname{tr}(B),
\]
taking
\[
t=C\left(\|B\|_F\sqrt u+\|B\|_2u\right)
\]
with \(C\) large enough and dividing by \(n\) gives
\eqref{eq:spherical_hanson_wright}. The centered case follows when
\(\operatorname{tr}(B)=0\).
\end{proof}

For \(\eta\in(0,1)\), define the Haar-compression error
\begin{equation}
\operatorname{err}_H(d,\eta)
:=
C\left(
\frac{\|H_c\|_F}{n}\sqrt{d+\log\frac{2}{\eta}}
+
\frac{\|H_c\|_2}{n}\left(d+\log\frac{2}{\eta}\right)
\right),
\label{eq:haar_compression_error_app}
\end{equation}
where \(C>0\) is a universal numerical constant.

\begin{proposition}[Direct Haar spectral sandwich]
\label{prop:haar_spectral_sandwich}
Let \(H\succeq0\), let \(P\in\mathbb R^{d\times n}\) be Haar--Stiefel with
\(PP^\top=I_d\), and let \(\eta\in(0,1)\). Then, with probability at least
\(1-\eta\),
\begin{equation}
\bigl\|PHP^\top-\bar\lambda_H I_d\bigr\|
\le
\operatorname{err}_H(d,\eta).
\label{eq:haar_spectral_sandwich_norm}
\end{equation}
Consequently,
\begin{equation}
\bigl[\bar\lambda_H-\operatorname{err}_H(d,\eta)\bigr]_+
\le
\lambda_{\min}(PHP^\top)
\le
\lambda_{\max}(PHP^\top)
\le
\bar\lambda_H+\operatorname{err}_H(d,\eta).
\label{eq:haar_spectral_sandwich_eigs}
\end{equation}
Equivalently, if \(\bar\lambda_H>0\) and
\(r_H:=\operatorname{err}_H(d,\eta)/\bar\lambda_H<1\), then all eigenvalues of
\(PHP^\top\) lie in
\begin{equation}
\bar\lambda_H[1-r_H,1+r_H].
\label{eq:haar_spectral_sandwich_relative}
\end{equation}
\end{proposition}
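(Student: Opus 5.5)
The plan is the standard net argument on the unit sphere of \(\mathbb R^d\), combined with Lemma~\ref{lem:spherical_hanson_wright}. The key identity is \(PHP^\top-\bar\lambda_H I_d=PH_cP^\top\), which holds because \(PP^\top=I_d\). For a fixed unit vector \(w\in S^{d-1}\), the vector \(\theta:=P^\top w\) is uniformly distributed on \(S^{n-1}\) by Haar invariance. Hence
\[
w^\top PH_cP^\top w=\theta^\top H_c\theta ,
\]
a centered spherical quadratic form, since \(\operatorname{tr}(H_c)=0\). Applying \eqref{eq:spherical_hanson_wright_centered} with \(B=H_c\) then gives, for each fixed \(w\) and every \(u\ge0\),
\[
|w^\top PH_cP^\top w|\le C\left(\frac{\|H_c\|_F}{n}\sqrt u+\frac{\|H_c\|_2}{n}u\right)
\]
with probability at least \(1-2e^{-cu}\).

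Next I will pass from fixed directions to the operator norm. Take a \(1/4\)-net \(\mathcal N\) of \(S^{d-1}\) with \(|\mathcal N|\le 9^d\). For the symmetric matrix \(A:=PH_cP^\top\), the standard estimate is \(\|A\|\le (1-2\cdot\tfrac14)^{-1}\max_{w\in\mathcal N}|w^\top Aw|=2\max_{w\in\mathcal N}|w^\top Aw|\). A union bound over \(\mathcal N\) shows that the fixed-\(w\) bound holds simultaneously on all of \(\mathcal N\) with probability at least \(1-2\cdot 9^d e^{-cu}\).

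To make this failure probability at most \(\eta\), I choose \(u=c^{-1}\bigl(d\log 9+\log(2/\eta)\bigr)\). This choice satisfies \(u\le c'(d+\log(2/\eta))\) for a universal \(c'\). Substituting it into the deviation bound and absorbing the factor \(2\), \(\sqrt{c'}\) and \(c'\) into the universal constant in \eqref{eq:haar_compression_error_app} yields \eqref{eq:haar_spectral_sandwich_norm}.

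The eigenvalue sandwich \eqref{eq:haar_spectral_sandwich_eigs} then follows from Weyl's inequality applied to \(PHP^\top=\bar\lambda_H I_d+A\). The lower bound can additionally be truncated at \(0\) because \(PHP^\top\succeq0\). Dividing by \(\bar\lambda_H\) gives the relative form \eqref{eq:haar_spectral_sandwich_relative}.

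The main obstacle is justifying the uniformity of \(P^\top w\) and tracking the constants. For the first, if \(Q\) is Haar orthogonal then \(PQ\) is again Haar--Stiefel, so the law of \(P^\top w\) is rotation invariant on \(S^{n-1}\), hence uniform. For the constants, the net argument only inflates them by universal factors, which is acceptable because \(C\) in \(\operatorname{err}_H\) is declared universal.
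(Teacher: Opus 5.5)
Your proposal is correct and follows the paper's proof essentially step for step. Both use the centering identity $PHP^\top-\bar\lambda_H I_d=PH_cP^\top$, the uniformity of $P^\top z$ on $S^{n-1}$, the centered spherical Hanson--Wright bound, a $1/4$-net of size at most $9^d$ with the factor-$2$ net inequality, a union bound with $u\asymp d+\log(2/\eta)$, and Weyl's inequality together with $PHP^\top\succeq0$. Your explicit choice $u=c^{-1}(d\log 9+\log(2/\eta))$ and your invariance argument for the law of $P^\top w$ only spell out details that the paper leaves implicit.
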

\paragraph{Proof idea.}
For each fixed unit vector \(z\in S^{d-1}\), the vector \(P^\top z\) is uniform
on \(S^{n-1}\). A spherical Hanson--Wright bound therefore controls the
quadratic form \(z^\top PH_cP^\top z\). A \(1/4\)-net and a union bound upgrade
this pointwise control to an operator-norm bound on \(PH_cP^\top\), giving the
spectral sandwich around the average eigenvalue \(\operatorname{tr}(H)/n\).
\begin{proof}
Let
\[
B_P:=PH_cP^\top.
\]
For every fixed \(z\in S^{d-1}\), the vector \(P^\top z\) is uniformly
distributed on \(S^{n-1}\). Since \(\operatorname{tr}(H_c)=0\), Lemma
\ref{lem:spherical_hanson_wright} gives, for every \(u\ge0\),
\[
\mathbb P\!\left(
|z^\top B_Pz|
\ge
C\left(
\frac{\|H_c\|_F}{n}\sqrt u
+
\frac{\|H_c\|_2}{n}u
\right)
\right)
\le
2e^{-cu}.
\]

Let \(\mathcal N\) be a \(1/4\)-net of \(S^{d-1}\) with
\[
|\mathcal N|\le 9^d.
\]
Choose
\[
u=C_0\left(d+\log\frac{2}{\eta}\right)
\]
with \(C_0\) large enough. A union bound gives, with probability at least
\(1-\eta\),
\[
\max_{z\in\mathcal N}|z^\top B_Pz|
\le
C\left(
\frac{\|H_c\|_F}{n}\sqrt{d+\log\frac{2}{\eta}}
+
\frac{\|H_c\|_2}{n}\left(d+\log\frac{2}{\eta}\right)
\right).
\]
For a symmetric matrix \(B_P\), the standard net inequality gives
\[
\|B_P\|\le2\max_{z\in\mathcal N}|z^\top B_Pz|.
\]
Absorbing the factor \(2\) into the universal constant \(C\), and using
\[
B_P=PHP^\top-\bar\lambda_HI_d,
\]
proves \eqref{eq:haar_spectral_sandwich_norm}. The eigenvalue bounds follow
from Weyl's inequality and \(H\succeq0\). If
\(r_H=\operatorname{err}_H(d,\eta)/\bar\lambda_H<1\), the relative form follows
immediately.
\end{proof}

\subsection{Random ellipsoidal section and short-step descent}
\paragraph{Proof idea.}
The proof has two independent parts. First, apply the Haar spectral sandwich to
\(M\) and \(Q\), which replaces the ambient spectral extremes by compressed
quantities close to \(\operatorname{tr}(M)/n\) and \(\operatorname{tr}(Q)/n\).
Second, for a fixed section, rewrite the ellipsoidal section as a Euclidean
ball by completing the square. In those normalized coordinates, the exact LMO is
explicit, and a ball-geometry estimate gives
\(\mathrm{gap}_k/\|d_k\|^2\ge
\beta_{\mathrm{sec},k}\|P_kg_k\|\). Combining this with the exact quadratic
short-step decrease proves the result.
\begin{proof}[Proof of Proposition~\ref{prop:random_ellipsoid_short_step_improvement}]
Apply Proposition~\ref{prop:haar_spectral_sandwich} to \(H=M\) with failure
probability \(\eta/2\), and to \(H=Q\) with failure probability \(\eta/2\). By
the union bound, with probability at least \(1-\eta\), both spectral sandwiches
hold. Since \(M\succ0\), \(\bar\mu>0\), and \(r_M<1\), we get
\[
\bar\mu(1-r_M)
\le
\lambda_{\min}(P_kMP_k^\top)
\le
\lambda_{\max}(P_kMP_k^\top)
\le
\bar\mu(1+r_M).
\]
Since \(Q\succeq0\) and \(\operatorname{tr}(Q)>0\), \(\bar\lambda>0\), and the
same spectral sandwich gives
\[
\bigl[\bar\lambda-\operatorname{err}_Q(d,\eta/2)\bigr]_+
\le
\lambda_{\min}(P_kQP_k^\top)
\le
\lambda_{\max}(P_kQP_k^\top)
\le
\bar\lambda(1+r_Q).
\]

It remains to prove the short-step estimate on this event. The argument is
deterministic once \(P_k\) is fixed. Write
\[
P=P_k,\qquad x=x_k,\qquad g=g_k,\qquad A=A_k,\qquad b=b_k,\qquad
\delta=\delta_k.
\]
Since \(M\succ0\) and \(P^\top:\mathbb R^d\to\mathbb R^n\) is injective,
\[
A=PMP^\top\succ0.
\]
Every point in the affine section \(x+\operatorname{range}(P^\top)\) has the
unique form \(x+P^\top z\), \(z\in\mathbb R^d\). The ellipsoidal constraint is
\[
(x+P^\top z)^\top M(x+P^\top z)
=
z^\top A z+2b^\top z+x^\top Mx
\le1.
\]
Completing the square gives
\begin{equation}
(z+A^{-1}b)^\top A(z+A^{-1}b)
\le
\delta,
\qquad
\delta:=1-x^\top Mx+b^\top A^{-1}b.
\label{eq:ellipsoid_completed_square_corrected}
\end{equation}
Because \(x\in C\), \(z=0\) is feasible, and therefore
\[
b^\top A^{-1}b\le \delta.
\]
If \(\delta=0\), the section is a singleton. We now assume \(\delta>0\). The
change of variables
\[
y:=\frac1{\sqrt\delta}A^{1/2}(z+A^{-1}b)
\]
identifies the section with the unit ball \(\|y\|\le1\). The current point
\(x\), corresponding to \(z=0\), is mapped to
\[
y_x:=\frac1{\sqrt\delta}A^{-1/2}b,
\qquad
\|y_x\|\le1.
\]
The linear objective over the section is, up to an additive constant,
\[
\sqrt\delta\,\langle A^{-1/2}Pg,y\rangle.
\]
Assume \(Pg\neq0\), and set
\[
w:=A^{-1/2}Pg.
\]
The minimizing point in the normalized ball is
\[
y^*:=-\frac{w}{\|w\|}.
\]
Returning to \(z\)-coordinates,
\[
z^*
=
-A^{-1}b
-
\sqrt\delta\,
\frac{A^{-1}Pg}{\sqrt{(Pg)^\top A^{-1}(Pg)}}.
\]
Thus \(s=x+P^\top z^*\). With \(d_k=s-x=P^\top z^*\) and
\(\mathrm{gap}:=\langle g,x-s\rangle\), we have
\[
d_k
=
\sqrt\delta\,P^\top A^{-1/2}(y^*-y_x),
\]
and
\[
\mathrm{gap}
=
\sqrt\delta\,\langle w,y_x-y^*\rangle
=
\sqrt\delta\left(\langle w,y_x\rangle+\|w\|\right).
\]
Since \(\|y_x\|\le1\),
\begin{align*}
\|y_x-y^*\|^2
&=
\left\|y_x+\frac{w}{\|w\|}\right\|^2 \\
&=
\|y_x\|^2+1+2\left\langle y_x,\frac{w}{\|w\|}\right\rangle \\
&\le
2\left(1+\left\langle y_x,\frac{w}{\|w\|}\right\rangle\right) \\
&=
\frac{2}{\|w\|}
\left(\langle w,y_x\rangle+\|w\|\right).
\end{align*}
Therefore
\[
\|y_x-y^*\|^2
\le
\frac{2\,\mathrm{gap}}{\sqrt\delta\,\|w\|}.
\]
Because \(P^\top\) is an isometry from \(\mathbb R^d\) to
\(\operatorname{range}(P^\top)\),
\[
\|d_k\|^2
=
\delta\|A^{-1/2}(y^*-y_x)\|^2
\le
\frac{\delta}{\lambda_{\min}(A)}\|y^*-y_x\|^2.
\]
Combining the last two displays gives
\[
\|d_k\|^2
\le
\frac{2\sqrt\delta}{\lambda_{\min}(A)\|w\|}\,\mathrm{gap}.
\]
Thus
\[
\frac{\mathrm{gap}}{\|d_k\|^2}
\ge
\frac{\lambda_{\min}(A)}{2\sqrt\delta}\|w\|.
\]
Using
\[
\|w\|=\|A^{-1/2}Pg\|
\ge
\frac{\|Pg\|}{\sqrt{\lambda_{\max}(A)}},
\]
we obtain
\begin{equation}
\frac{\mathrm{gap}}{\|d_k\|^2}
\ge
\frac{\lambda_{\min}(A)}
{2\sqrt{\delta\,\lambda_{\max}(A)}}\|Pg\|.
\label{eq:ellipsoid_gap_over_step_corrected}
\end{equation}
Define
\begin{equation}
\beta_{\mathrm{sec}}(P,x)
:=
\frac{\lambda_{\min}(A)}
{2\sqrt{\delta\,\lambda_{\max}(A)}}.
\label{eq:beta_sec_app}
\end{equation}
Then \eqref{eq:ellipsoid_gap_over_step_corrected} becomes
\[
\frac{\mathrm{gap}}{\|d_k\|^2}
\ge
\beta_{\mathrm{sec}}(P,x)\|Pg\|.
\]

Now use the quadratic model. Since \(f\) has Hessian \(Q\), for every
\(\alpha\in[0,1]\),
\[
f(x+\alpha d_k)
=
f(x)+\alpha\langle g,d_k\rangle
+\frac{\alpha^2}{2}d_k^\top Qd_k.
\]
Writing \(d_k=P^\top z^*\), and using \(\|P^\top z^*\|=\|z^*\|\),
\[
d_k^\top Qd_k
=
(z^*)^\top P QP^\top z^*
\le
L_P\|d_k\|^2,
\qquad
L_P:=\lambda_{\max}(PQP^\top).
\]
With the quadratic short-step choice
\[
\alpha_k^{\rm quad}
=
\min\left\{
1,
\frac{\mathrm{gap}}{L_P\|d_k\|^2}
\right\},
\]
defining $h_k:=f(x_k)-f^*$, one has
\[
h_{k+1}
\le
h_k
-
\min\left(
\frac{\mathrm{gap}}{2},
\frac{\mathrm{gap}^2}{2L_P\|d_k\|^2}
\right).
\]
On the genuine short-step branch \(\alpha_k^{\rm quad}\in(0,1)\),
\[
h_{k+1}
\le
h_k-\frac{\mathrm{gap}^2}{2L_P\|d_k\|^2}.
\]
Combining this with \eqref{eq:ellipsoid_gap_over_step_corrected} gives
\begin{equation}
h_{k+1}
\le
h_k
-
\frac{\beta_{\mathrm{sec}}(P,x)}{2L_P}\|Pg\|\,\mathrm{gap}.
\label{eq:ellipsoid_beta_sec_descent_app}
\end{equation}

Finally, the spectral bounds imply
\[
\beta_{\mathrm{sec}}(P,x)
\ge
\frac{\sqrt{\bar\mu}}{2\sqrt{\delta}}
\frac{1-r_M}{\sqrt{1+r_M}},
\qquad
L_P\le \bar\lambda(1+r_Q).
\]
Therefore
\[
\frac{\beta_{\mathrm{sec}}(P,x)}{2L_P}
\ge
\frac{\sqrt{\bar\mu}}{4\bar\lambda}
\frac{1-r_M}{(1+r_Q)\sqrt{\delta(1+r_M)}}.
\]
Returning to \(P=P_k\), \(x=x_k\), and \(\delta=\delta_k\), this proves
\eqref{eq:random_ellipsoid_short_step_descent_beta_sec} and
\eqref{eq:beta_sec_and_LP_random_bounds}.
\end{proof}

\subsection{Why curvature and boundary regularity matter: a corner failure mode}
\label{app:polytope_failure}

The geometric assumptions in the main results are not only proof conveniences.
They rule out a basic failure mode of random-section Frank--Wolfe near sharp
faces or corners. The issue is angular rather than purely dimensional: the
directions that yield meaningful Frank--Wolfe progress may be confined to a
cone of very small solid angle. Even when this cone is full-dimensional, a
random low-dimensional affine section can intersect it in a useful way with
probability that is exponentially small in the ambient dimension.

Consider the simplex-like polytope
\[
C=\{x\in\mathbb R^n:\ x_i\ge 0,\ \sum_{i=1}^n x_i\le 1\}
\]
and the quadratic objective
\[
f(x)=\frac12\|x-a\|^2,
\qquad
a=5e_1 .
\]
The constrained minimizer is the vertex \(x^*=e_1\). If RSFW is initialized at
a strictly interior point, for example
\[
x_0=\frac{\delta}{n}{\bf 1},
\qquad
0<\delta\ll1,
\]
then the full FW direction points toward the vertex \(e_1\). A random
\(d\)-dimensional affine section \(x_k+U_k\), with \(d\ll n\), almost surely
does not contain that vertex. More importantly, near the relevant corner the
section must capture a narrow feasible descent cone in order to recover the
remaining FW progress. This cone is full-dimensional inside the ambient space,
but its solid angle can be small. Thus the obstruction is not only
that the section misses a particular atom; it is that the useful descent
geometry occupies too small an angular region.

This phenomenon is not limited to perfectly flat polytopes. If the facets are
slightly curved while the simplex-like vertices are kept sharp, one obtains a
strongly convex but nonsmooth feasible body. At such a corner there is no
positive-radius inner tangent ball: any Euclidean ball touching the set at the
corner necessarily exits the feasible region. Hence the rolling-ball comparison
used in Proposition~\ref{prop:BallGap_global_clean} fails. The feasible descent
region can again collapse into a narrow cone, and a random low-dimensional
section may capture this cone only with exponentially small probability in
\(n\). This is why the main theory requires both curvature of the feasible set
and boundary regularity.

Figure~\ref{fig:polytope_failure} illustrates the failure mode for the polytope
with \(n=100\) and \(d=10\). The method makes many nontrivial section moves, but
the objective gap stagnates away from zero. This is not a contradiction of the
RSFW theory: the simplex violates the curvature and rolling-ball hypotheses,
and the section-efficiency constants used in the main analysis need not be
bounded away from zero.

For smooth strongly convex bodies satisfying a uniform rolling-ball condition,
the situation is different. The boundary has quantitative curvature without
sharp corners. Random affine sections through the current iterate then contain
feasible chords whose size can be controlled in terms of the projected gradient
direction. This prevents the useful descent region from collapsing into an
arbitrarily narrow cone and is the geometric mechanism behind the
approximate-oracle inequalities used in the main text.

\begin{figure}[t]
\centering
\includegraphics[width=0.9\linewidth]{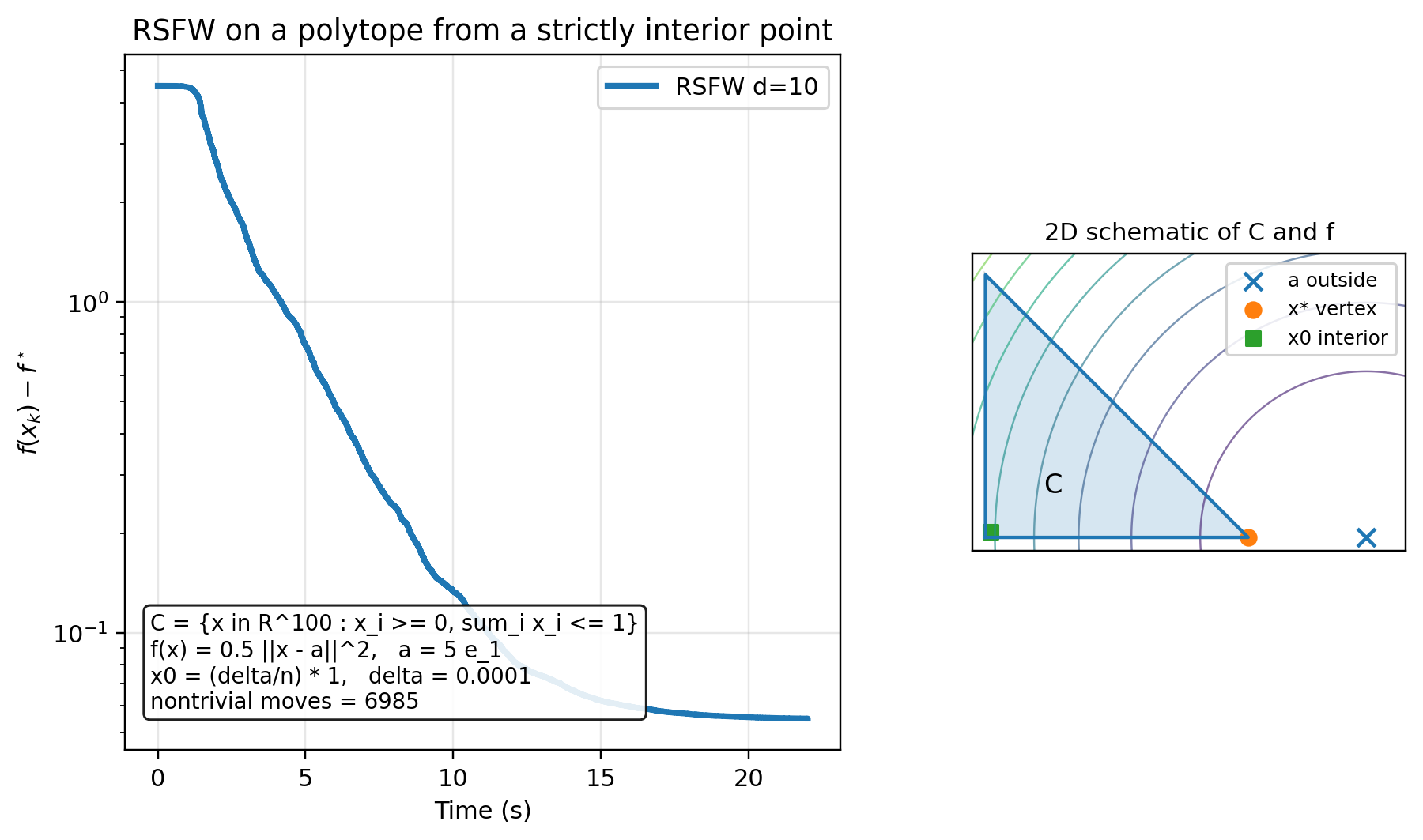}
\caption{
Failure mode of vanilla RSFW outside the smooth strongly convex setting. Left:
objective gap for RSFW with \(d=10\) on
\(C=\{x\in\mathbb R^{100}:x_i\ge0,\ \sum_i x_i\le1\}\) and
\(f(x)=\frac12\|x-5e_1\|^2\), initialized from the strictly interior point
\(x_0=(\delta/n){\bf 1}\). The method makes many nontrivial section moves but
stagnates away from the optimum. Right: schematic explanation in two
dimensions. Near a sharp corner, useful descent directions can occupy a very
small angular region, so a random low-dimensional section may see only an
unfavorable local slice of the feasible region.
}
\label{fig:polytope_failure}
\end{figure}

\section{Numerical details}
\label{app:numerics_details}
This appendix gives the implementation details for the four numerical
experiments in Section~\ref{sec:numerics}. All HIGGS experiments use binary
kernel-logistic regression. Given training data
\[
    (x_i,y_i)_{i=1}^n,
    \qquad
    y_i\in\{-1,+1\},
\]
we build a random-feature matrix
\(\Phi\in\mathbb R^{n\times m}\), whose \(i\)-th row is the feature vector
\(\phi(x_i)^\top\). The corresponding matrix-free kernel approximation is
\[
    K=\Phi\Phi^\top+\rho_K I_n,
    \qquad
    Kv=\Phi(\Phi^\top v)+\rho_Kv .
\]
The optimization variable is the coefficient vector \(a\in\mathbb R^n\), and
\((Ka)_i\) is the prediction score on training point \(x_i\). The objective is
\[
    f(a)=\frac1n\sum_{i=1}^n
    \log\!\left(1+\exp(-y_i(Ka)_i)\right)
    +\frac{\lambda}{2}a^\top Ka .
\]
The random-feature map \(\Phi\) is sampled once and then frozen before
optimization, so all methods optimize the same deterministic objective. The
feasible region is an ellipsoid
\[
    C=\{a\in\mathbb R^n:a^\top Ha\le R^2\}.
\]
For a full FW step, the ellipsoid LMO is
\[
    s(g)=
    -\frac{R\,H^{-1}g}{\sqrt{g^\top H^{-1}g}} .
\]
For RSFW, we draw a sketch basis \(S\in\mathbb R^{n\times d}\) and solve over
\(s=a+Sz\). The ellipsoid constraint becomes
\[
    z^\top A z+2b^\top z+a^\top Ha\le R^2,
    \qquad
    A=S^\top H S,\quad b=S^\top Ha .
\]
Once these reduced quantities are available, the section LMO is solved exactly
by dense \(d\times d\) linear algebra.

\paragraph{Fixed-\(L\) curvature grid.}
This experiment tests whether RSFW can use smaller stable short-step constants
than full FW when all methods choose from the same prescribed grid. The
ellipsoid is \(H=D\), where \(D\) is diagonal and has condition number \(10^4\).
The reported run uses HIGGS with
\[
    n_{\rm train}=20000,\qquad n_{\rm test}=10000,\qquad m=10000,
\]
\[
    \rho_K=10^{-3},\qquad \lambda=2\cdot10^{-2},\qquad R=1,
\]
and horizon \(100\). We test \(d\in\{30,50,100\}\) and the common grid
\[
    \mathcal L=\{10^{-3},10^{-2},10^{-1},1,10,10^2,10^3\}.
\]
For each method, we keep the smallest value in \(\mathcal L\) that does not
increase the objective over the run. In the reported run this selected
\[
    L_{\rm full}=10^2,\qquad L_{\rm RSFW}=1
\]
for the tested RSFW section dimensions. The plot reports \(f(a_k)\) against
wall-clock time for the selected values. RSFW is repeated over five fixed seeds,
and the shaded region shows one empirical standard deviation.

\paragraph{Section-curvature diagnostic.}
This experiment compares the ambient trace-based curvature bound with
representative section-wise curvature estimates. The run uses HIGGS with
\[
    n_{\rm train}=20000,\qquad n_{\rm test}=10000,\qquad m=1024,
\]
\[
    \rho_K=10^{-3},\qquad \lambda=2\cdot10^{-2},\qquad R=1,
\]
ellipsoid condition number \(10^6\), and horizon \(100\). Full FW uses the
trace-based ambient bound. RSFW uses one representative section curvature
estimate at initialization, with safety factor \(1.5\),
\[
    L_S
    =
    1.5\,
    \lambda_{\max}\!\left(
    \frac{(KS)^\top(KS)}{4n}+\lambda S^\top K S
    \right),
\]
and reuses this value during the run. The tested section dimensions are
\(d\in\{30,50,100\}\). Objective values are plotted against method time. Full FW
gap values, when computed, are evaluated offline from saved snapshots and are
not included in the RSFW timed loop.

\paragraph{Matrix-free ellipsoid oracle.}
This experiment uses the same HIGGS kernel-logistic objective but sets
\[
    H=K+\rho_H I,
    \qquad
    K=\Phi\Phi^\top+\rho_K I .
\]
The run uses HIGGS with
\[
    n_{\rm train}=50000,\qquad n_{\rm test}=10000,\qquad m=8192,
\]
\[
    \rho_K=10^{-8},\qquad \rho_H=10^{-10},\qquad
    \lambda=2\cdot10^{-2},\qquad R=1,
\]
short-step constant \(L=200\), and horizon \(100\). The current main-text plot
gives full FW a feature-space/direct full ellipsoid oracle as a stronger
baseline, rather than relying on a failed ambient CG solve. RSFW uses
\(d\in\{100,200,400\}\) and is repeated over five fixed seeds. The objective
plot is cropped to the first \(60\) seconds to emphasize the informative
transient regime.

In the feature-sketch implementation, RSFW forms
\[
    A=S^\top H S
     =
    (\Phi^\top S)^\top(\Phi^\top S)
    +(\rho_K+\rho_H)S^\top S,
    \qquad
    b=S^\top Ha,
\]
together with \(a^\top Ha\), and never stores \(HS\) or \(KS\) as
\(n\times d\) matrices. Once these reduced quantities are available, the section
LMO is solved exactly by dense \(d\times d\) linear algebra.

\paragraph{Graph-based semi-supervised experiment.}
This experiment tests RSFW on a vector-valued non-ellipsoidal feasible region.
We build a \(k\)-nearest-neighbor graph on a Covertype subset, let \(L_G\) be the
normalized graph Laplacian, and select a labeled set \(\mathcal I_{\rm lab}\). The
optimization variable is \(u\in\mathbb R^n\), interpreted as a score on each
node. The objective is the labeled squared loss
\[
    f(u)=\frac{1}{2|\mathcal I_{\rm lab}|}
    \sum_{i\in\mathcal I_{\rm lab}}(u_i-y_i)^2,
\]
and the constraint is
\[
    C_G=
    \left\{
    u:
    \frac{\mu}{2}\|u\|_2^2
    +\frac{\gamma_G}{2}u^\top L_Gu
    +\frac{\beta_4}{4}\sum_{i=1}^n u_i^4
    \le \tau
    \right\}.
\]
For \(\mu,\beta_4>0\), the defining function is smooth with uniformly positive
definite Hessian, and its gradient does not vanish on the boundary
\(\{h=\tau\}\). Thus \(C_G\) is a smooth strongly convex body; it is
non-ellipsoidal when \(\beta_4>0\). The graph term is the standard Dirichlet
energy encouraging neighboring nodes to have similar scores, while the quartic
term makes the constraint genuinely nonquadratic.

The full LMO at gradient \(g=\nabla f(u)\) solves
\[
    \min_{s\in C_G}\langle g,s\rangle .
\]
Since the constraint is active unless \(g=0\), the Karush--Kuhn--Tucker
optimality conditions introduce a multiplier \(\zeta\ge0\) and give
\[
    g+\zeta\big((\mu I+\gamma_G L_G)s+\beta_4s^{\circ 3}\big)=0,
    \qquad h(s)=\tau .
\]
We refer to this nonlinear system as the graph LMO optimality system. Hence, each full oracle call requires a nonlinear graph solve. In the
implementation, we use bisection on the multiplier \(\zeta\) and damped
Newton-CG for the inner nonlinear equation.

For RSFW, we draw a scaled dense Gaussian sketch
\(S\in\mathbb R^{n\times d}\) without QR and solve over \(s=u+Sz\). The reduced
constraint uses
\[
    S^\top(\mu I+\gamma_G L_G)S,
    \qquad
    S^\top(\mu I+\gamma_G L_G)u,
\]
so the graph part is compressed once per iteration. The remaining quartic term
is evaluated through \(u+Sz\), and the small constrained problem in \(z\) is
solved by dense nonlinear optimization. No full nonlinear graph solve is used in
the RSFW oracle.

The short-step rule uses the exact directional curvature of the objective.
Since
\[
    \nabla^2 f=|\mathcal I_{\rm lab}|^{-1}P_{\mathcal I_{\rm lab}},
\]
where \(P_{\mathcal I_{\rm lab}}\) is the diagonal projector onto labeled coordinates, for
any direction \(D=s-u\),
\[
    f(u+\eta D)
    =
    f(u)+\eta\langle\nabla f(u),D\rangle
    +\frac{\eta^2}{2|\mathcal I_{\rm lab}|}\|D_{\mathcal I_{\rm lab}}\|^2.
\]
The implemented closed-form short step is therefore
\[
    \eta
    =
    \min\left\{
    \frac{\langle\nabla f(u),u-s\rangle}
    {|\mathcal I_{\rm lab}|^{-1}\|D_{\mathcal I_{\rm lab}}\|^2},
    1
    \right\},
\]
with the usual convention when \(D_{\mathcal I_{\rm lab}}=0\). No backtracking or line
search is used.

In the current main-text graph run we use Covertype with
\[
    n=50000,\qquad k=20,\qquad |\mathcal I_{\rm lab}|=1001,
\]
and
\[
    \tau=1000,\qquad
    \mu=10^{-6},\qquad
    \gamma_G=0.05,\qquad
    \beta_4=0.01 .
\]
Full FW is run once, while RSFW is repeated over five fixed independent seeds;
the main plot reports the mean objective curve with one-standard-deviation
bands. The time-based plot is cropped to the first \(150\) seconds.

\paragraph{Compute environment and code.}
The reported experiments were run on a laptop with an Intel Core Ultra 7 155H
processor, \(64\)GB RAM, and an NVIDIA GeForce RTX 4070 Laptop GPU with \(8\)GB
memory. The experiments are CPU-based unless otherwise specified. The code will
be made available with the final version of the paper. The HIGGS and Covertype datasets are obtained from the UCI Machine Learning
Repository and are licensed under CC BY 4.0
\cite{Whiteson2014HIGGSUCI,Blackard1998CovertypeUCI}.

\begin{table}[t]
\centering
\small
\caption{Main numerical settings. All kernel-logistic experiments use HIGGS;
the graph SSL experiment uses Covertype.}
\label{tab:numerics_settings}
\begin{tabular}{@{}lccccc@{}}
\toprule
Experiment & dataset & problem size & \(m\) & geometry & \(d\) \\
\midrule
Fixed-\(L\) grid
& HIGGS & \(n_{\rm train}=20000\) & \(10000\)
& \(H=D,\ \kappa(H)=10^4\)
& \(30,50,100\) \\
Section-curvature diagnostic
& HIGGS & \(n_{\rm train}=20000\) & \(1024\)
& \(\kappa(H)=10^6\)
& \(30,50,100\) \\
Matrix-free ellipsoid oracle
& HIGGS & \(n_{\rm train}=50000\) & \(8192\)
& \(H=K+\rho_H I\)
& \(100,200,400\) \\
Graph SSL
& Covertype & \(n=50000\) & --
& \(C_G\)
& see legend \\
\bottomrule
\end{tabular}
\end{table}

\end{document}